\newtheorem{theorem}{Theorem}[section]
\newtheorem{proposition}{Proposition}[section]
\newtheorem{definition}{Definition}[section]
\newtheorem{lemma}{Lemma}[section]
\newtheorem*{note*}{Note}
\newtheorem*{lemma*}{Lemma}
\newtheorem*{examples*}{example}
\newtheorem*{example*}{example}
\newtheorem*{corollary*}{Corollary}
\newtheorem*{theorem*}{Theorem}
\newtheorem*{Reminder*}{Reminder}
\newtheorem*{Acknowledgement*}{Acknowledgment}
\newtheorem*{NotationConvention*}{Notation Convention}
\def\dotminus{\mathbin{\ooalign{\hss\raise1ex\hbox{.}\hss\cr
  \mathsurround=0pt$-$}}}
\author{Ali Hamad \footnote{Affiliation: University of Ottawa, Email: ahama099@uottawa.ca  ORCID: 0009-0005-5472-6776}}
\title{Bundles of metric structures as left ultrafunctors}
\date{June 2026}
\begin{document}
\oddsidemargin 0in
\evensidemargin 0in
\topmargin=0in
\textwidth=6in
\textheight=8in
\maketitle
\begin{abstract}
We pursue the study of ultracategories initiated by Makkai, and more recently
Lurie, by looking at properties of ultracategories of complete metric structures,
i.e. coming from continuous model theory, instead of ultracategories of
models of first-order theories. Our main result is that for any continuous
theory $\mathbb{T}$, there is an equivalence between the category of left
ultrafunctors from a compact Hausdorff space $X$ to the category of
$\mathbb{T}$-models and a notion of bundle of $\mathbb{T}$-models over
$X$. The notion of bundle of $\mathbb{T}$-models is new but recovers many
classical notions, like bundles of Banach spaces, bundles
of Hilbert spaces and (semi-)continuous fields of $\mathrm{C}^{*}$-algebras.
\end{abstract}
\tableofcontents


\section*{Introduction}

 Ultracategories are a categorical axiomatisation of the idea of a ``category
with ultraproduct functor'':
\begin{equation*}
\int _{X} \bullet d\mu : A^{X} \to A,
\end{equation*}
for each set $X$ and ultrafilter $\mu $ on $X$, which are meant to capture
the idea of ultraproduct of a family of objects. The notion has been originally
introduced by Makkai in \cite{makkai88strong,MAKKAI198797}; Makkai's ultracategories
were studied further in \cite{ZawadowskiPhD}, where they were used to answer
questions regarding the class of formulas preserved by an interpretation
in coherent logic, and in \cite{marmolejo} where the idea of continuous
families of models was first introduced. A simplified (non-equivalent)
version of the definition was given more recently by Lurie in
\cite{lurie2018ultracategories}, which is the notion we use in the present
paper. The primary goal of the introduction of Ultracategories was to study
the ultraproduct construction in model theory, and to show a conceptual
completeness theorem stating (in Lurie's case) that for any coherent theory
$\mathbb{T}$, we have an equivalence of categories between
$\mathrm{Lult}(\mathbb{T}\text{-}\mathrm{Models},\mathsf{Set})$, and the
classifying topos of $\mathbb{T}$. Here, $\mathrm{Lult}$ is short for Left
ultrafunctors, which are ultraproduct respecting functors in a lax sense.

It is also possible to have a geometric ``flavour'' of ultracategories.
A classic result by Manes establishes that compact Hausdorff spaces are
exactly algebras, for the ultrafilter monad. Decoding this, the algebra
structure can be viewed as a function that associates to every ultrafilter
on a compact Hausdorff space its unique limit, and this completely determines
the topology of the space. It turns out that Manes' theorem can be expressed
in an ``ultracategoric'' way. Namely, the convergence of ultrafilters data
defines an ultrastructure on the underlying set of the compact Hausdorff
space (viewed as a discrete category). Moreover, we can upgrade this into
an equivalence of categories between \emph{ultrasets} (ultracategories for which
there is no non-identity morphisms), and \emph{Compact Hausdorff spaces}
\cite[theorem 3.1.5]{lurie2018ultracategories}.

A fundamental result by Lurie~\cite[3.4]{lurie2018ultracategories} is the
statement that a \emph{left ultrafunctor} from a compact Hausdorff space $X$
regarded as an ultraset to $\mathsf{Set}$ is the same thing as a sheaf
over $X$. This theorem captures the idea that a sheaf is nothing but a
continuous family of sets depending on a parameter from a set $X$. This
point of view is captured by another classic result stating an equivalence
between $Sh(X)$ and \'{e}tale bundles over $X$.

This paper presents two goals; the first is to study new examples of ultracategories
namely those of metric structures (like Banach spaces, Hilbert spaces,
$\mathrm{C}^*$-algebras, tracial von Neumann algebras) and the second
is to extend the left ultrafunctor-\'{e}tale bundles equivalence of Lurie
to this class of new examples. In other words, we want to show that for
any ultraset (compact Hausdorff space) left ultrafunctors from this ultraset
to the category of Banach spaces (Hilbert spaces, $\mathrm{C}^{*}$-algebras
etc.) are equivalent to the already known notions of Banach (Hilbert, $
\mathrm{C}^{*}$  etc.) bundles.

It turned out that a very good framework to deal with these metric structures
is continuous model theory. In our work, we used
\cite{farah2021model} as a reference for this. The major advantage of continuous
model theory is that it allows us to consider a natural notion of ultraproduct
on the category of models, and hence makes this category an ultracategory.
This confirms that we are not deviating from the logic point of view when
studying ultracategories, and in spirit this confirms also that ultracategories
are a very natural setting when doing model theory, i.e. when constructing
the category of models, one should care about objects, morphisms, and ultraproducts.

Although the simplest structure studied by continuous model theory is a
complete metric space bounded by a certain constant, it is capable of axiomatising
many structures like Banach spaces, Hilbert spaces, $\mathrm{C}^{*}$-algebras,
preduals of von Neumann algebras, and von Neumann algebras with a faithful
normal state (usually called sigma-finite von Neumann algebras, or
$\mathrm{W}^{*}$ probability spaces). For $\mathrm{C}^{*}$-algebras see
\cite[page 11 example 2.2.1]{farah2021model}, for sigma-finite von Neumann
algebras see \cite{dabrowski2019continuous}, or for a different approach
in \cite{arulseelan2025totally}, for tracial von Neumann algebras see
\cite{goldbring2022survey} and for a general survey on continuous model
theory see \cite{hart2023introduction}.

One reason to expect this class of ultracategories to be fundamentally
different from the class of ultracategories captured by coherent logic
(this is the intersection between first-order logic and the logic we study
in topos theoretic setting namely geometric logic) is the fact that, for
most relevant structures captured by continuous first-order logic, the
category of left ultrafunctors to $\mathsf{Set}$ turns out to be trivial, this is
shown in the subsection \ref{not_coherent_VTEX1}. On the other hand this category
turns out to be none other than the classifying topos of the theory for
coherent theories by Makkai-Lurie Conceptual completeness.

Bundle theory plays an important role in functional analysis. They are, as their name suggests, families of metric structures depending on a parameter.
Algebras of operators fields (this means bundles whose fibres are operator
algebras) were first introduced in the works of Fell and Dixmier
\cite{fell_structure_1961,dixmier1982c}, to study non-commutative
$\mathrm{C}^{*}$-algebras. The $\mathrm{C}^{*}$-algebraic flavour of these
usually goes by the name of \emph{continuous fields of $\mathrm{C}^{*}$-algebras},
which were introduced in the references stated before
\cite{fell_structure_1961,dixmier1982c} (for other various equivalent definitions
and results regarding these see
\cite{dadarlat2009continuous,dupre1974hilbert,Niels,Williams2007CrossedPO}).
Continuous fields of $\mathrm{C}^{*}$-algebras come in upper semi-continuous
and continuous versions, with the semi-continuous version being more prominent.
On the other hand the study of the von Neumann algebra version of these
is much more recent and due to the work of Ozawa
\cite{ozawa2013dixmier}, who introduced \emph{tracially continuous $\mathrm{W}^*$-bundles}
and these were studied extensively in the works of Evington and Pennig
\cite{evington2016locally}. These can serve as a tool to study the trace
space of a von Neumann algebra, when all fibres have the same underlying
von Neumann algebra.

Our main result turns out to be the following: for a compact Hausdorff
space, there is an equivalence between left ultrafunctors from $X$ to the
category of models of a continuous theory and the notion that we introduce,
which we called ``\emph{bundles of models of a continuous first-order
theory}'':

\begin{theorem*} \ref{third_theorem_VTEX1}
Let $X$ be a compact Hausdorff space then there is
an equivalence of categories between the category of left ultrafunctors
from $X$ to the category of models of a continuous first order theory
$\mathbb{T}$ and the category of bundles of models of $\mathbb{T}$.
\end{theorem*}

and
\begin{theorem*} \ref{functoriality_in_the_compact_space_VTEX1} This equivalence depends functorially
on the compact Hausdorff space $X$.
\end{theorem*}

These bundles of models recover already studied notions like continuous
and semi-continuous bundles of Banach spaces (see
\cite{hofmann1977bundles} for semi-continuous Banach bundles or
\cite{Fell1977} for the continuous version), bundles of Hilbert spaces,
continuous fields of $\mathrm{C}^{*}$-algebras and $\mathrm{W}^*$-bundles (bundles
of tracial von Neumann algebras) which we already discussed, this uncovers
that left ultrafunctors to categories of metric structures are the adequate
notion of continuous families of this metric structure. This touches on
another important point, which is the fact that the ultraproduct construction
can be regarded as a generalised topology on a specific category. Another
advantage of this new notion of bundles of models, is that it may help
uncover new notions of bundles in functional analysis (not studied in this
paper), like a non-commutative counterparts of tracially continuous
$\mathrm{W}^*$-bundles.

This idea of continuous family of models was already present in the work
of Marmolejo \cite{marmolejo}. But in his case the attention was only towards
coherent first-order theories. In his definition, for $X$ a topological
space, and $\mathcal{P}$ a pretopos. A continuous family of models is nothing
but a pretopos morphism from $\mathcal{P}$ to $Sh(X)$. These are also going
to be equivalent to left ultrafunctors from $X$ to
$\mathrm{Mod}(\mathcal{P})$, if $X$ is compact Hausdorff
\cite[Corollary 2.2.7]{lurie2018ultracategories}.

\subsection*{Outline of results and methodology}

Sections~\ref{1} and \ref{2} provide an overview of the literature on Ultracategories
and on the category of complete metric spaces. The goal is only to introduce
the definitions and results we will need throughout the paper.

\subsubsection*{Establishing the equivalence for bounded complete metric spaces}

In sections~\ref{3} and \ref{4}, we study the case
$\mathcal{M}=\mathsf{k}\text{-}\mathrm{CompMet}$ the category of complete
metric spaces where the distance function is bounded by a certain
$k$ and with \emph{contractions} as morphisms, that is morphisms satisfying
$d(f(x),f(y)) \leqslant d(x,y)$, or $1$-Lipschitz maps. In section~\ref{3}, we defined what we mean by the category of bundles of complete
metric spaces. The next step is to define an assignment that gives a bundle
of bounded metric spaces, for each left ultrafunctor from $X$ to
$\mathsf{k}\text{-}\mathrm{CompMet}$. In section~\ref{4}, we constructed
an inverse process to the previous one, which leads us to our first important
theorem:

\begin{theorem*}~\ref{first_theorem_VTEX1} There exists an equivalence of categories between
$\mathrm{Lult}(X,\mathsf{k\text{-}CompMet})$ and
$\mathrm{Bun}(\mathsf{k\text{-}CompMet},\allowbreak X)$.
\end{theorem*}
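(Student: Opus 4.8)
The plan is to establish the equivalence by constructing two functors and showing they are mutually inverse (up to natural isomorphism). The two constructions have already been set up in the preceding sections: one functor $\Phi$ sends a left ultrafunctor $F \colon X \to \mathsf{k\text{-}CompMet}$ to a bundle in $\mathrm{Bun}(\mathsf{k\text{-}CompMet},X)$, and a second functor $\Psi$ runs in the opposite direction, building a left ultrafunctor out of a bundle. So the real content of the theorem is not to invent these assignments but to verify that they are functorial, well-defined, and inverse to one another. First I would confirm that $\Phi$ and $\Psi$ are genuine functors: that they act correctly on the morphisms of each category (natural transformations of left ultrafunctors on one side, bundle morphisms on the other) and respect composition and identities. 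This is mostly bookkeeping, but it is where one must check that a morphism of left ultrafunctors really does induce a fibrewise contraction compatible with the bundle structure, and conversely.

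The heart of the argument is the pair of natural isomorphisms $\Psi \circ \Phi \cong \mathrm{id}$ and $\Phi \circ \Psi \cong \mathrm{id}$. For a bundle $\pi \colon E \to X$, the fibre over a point $x \in X$ should be recovered as the value at $x$ of the associated left ultrafunctor, so I would begin by identifying, for each $x \in X$, the stalk-like object produced by $\Psi$ with the actual fibre $E_x$, and then promote this fibrewise identification to a global isomorphism of bundles by checking it is compatible with the topology/continuous sections that define the bundle structure. In the other direction, starting from a left ultrafunctor $F$, the constructions of sections \ref{3} and \ref{4} should reproduce $F$ on objects (the value $F(x)$ at each point) and, crucially, on the ultrastructure: one must verify that the ultraproduct functors $\int_X \bullet\, d\mu$ are sent to the metric ultraproducts of fibres in the expected way, so that the recovered functor agrees with $F$ as a \emph{left ultrafunctor}, not merely as a pointwise assignment.

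The main obstacle I anticipate is precisely this compatibility with the ultrastructure. A left ultrafunctor carries, in addition to its values on points, the coherence data of the transition maps $F\bigl(\int_X x\, d\mu\bigr) \to \int_X F(x)\, d\mu$ comparing the image of an ultralimit with the metric ultraproduct of the images, subject to Lurie's coherence axioms. Showing that $\Phi$ and $\Psi$ correctly transport this data — and that the natural isomorphisms I build are compatible with it — is more delicate than the pointwise bijection of fibres, because it couples the topology of $X$ (through which ultrafilters converge) with the metric ultraproduct construction on $\mathsf{k\text{-}CompMet}$. Concretely, the key lemma will be that a continuous section of the bundle, evaluated along an ultrafilter $\mu$ converging to $x$, has ultralimit equal to its value at $x$; this is the bundle-theoretic incarnation of the left-ultrafunctor transition map, and verifying that the two notions match is where the bounded-metric hypothesis (ensuring the metric ultraproduct is well-behaved and the diameter bound $k$ is preserved) does the real work. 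Once this lemma is in hand, naturality in $X$-morphisms and in bundle morphisms follows by routine diagram chasing, and the equivalence is complete.
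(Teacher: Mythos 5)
There is a genuine gap, and it sits exactly at the step you yourself identify as the heart of the matter. Your key lemma presupposes that, given a bundle $\pi\colon E \to X$ and a point $f$ in a fibre $E_x$, there exists a continuous local section through $f$ whose ultralimit along an ultrafilter $\mu \to x$ you can then compare with $f$. But the bundle axioms of Definition \ref{bundle definition} (upper semi-continuity of the fibrewise distance, $\pi$ continuous and open, the $\epsilon$-enlargement axiom) give you no sections at all: nothing in them produces even one continuous local section through a prescribed point. For Banach bundles the existence of enough sections over (locally para)compact bases is a nontrivial theorem of Douady and Dal Soglio-H\'erault, whose proof uses the linear structure (averaging, partitions of unity) and has no analogue for bare $k$-bounded complete metric fibres. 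So your proposed construction of the left ultrastructure on $\Psi(E)$ — and hence of the natural isomorphism $\Phi \circ \Psi \cong \mathrm{id}$ — cannot get started for a general bundle: you have no mechanism for producing the element of $\int_X E_y\, d\mu$ that $\sigma_\mu(f)$ is supposed to be. The paper is explicit about this: the section-based description of $\sigma_\mu$ appears only as an auxiliary result, valid \emph{only when the bundle has enough cross sections}.

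What replaces sections in the paper is the Cauchy-filter construction of Section \ref{sigma}: for each open neighbourhood $W$ of $f$ one forms
\[
A_W=\bigl\{\,(b_y)_{y \in X} \;\bigm|\; \exists\, U \in \mu,\ \epsilon>0 \ \text{ with }\ \textstyle\coprod_{y \in U}B(b_y,\epsilon) \subseteq W \,\bigr\},
\]
shows (using the $\epsilon$-enlargement axiom to see $A_W \neq \varnothing$, and the $\epsilon$-thin neighbourhood basis of Lemma \ref{thin lemma}, itself a consequence of upper semi-continuity) that these sets generate a Cauchy filter in the complete ultraproduct $\int_X E_y\, d\mu$, and defines $\sigma_\mu(f)$ as its limit. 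Completeness of the metric ultraproduct is what does the real work here, not the bound $k$ per se. The elements of $A_W$ are exactly the ``generalised sections'' your intuition is reaching for, but they are built from the topology alone, with no continuity-of-sections hypothesis. A secondary, less serious difference: the paper does not verify functoriality of both assignments and then exhibit two natural isomorphisms, as you propose; it proves a hom-set bijection $\mathrm{Hom}(\mathcal{L}(\mathcal{F}),E) \simeq \mathrm{Hom}(\mathcal{F},\mathcal{R}(E))$ natural in $\mathcal{F}$, which gives $\mathcal{R}$ its action on morphisms for free, and then checks that the unit and counit of the resulting adjunction are isomorphisms. Your organization could be made to work, but only after the missing construction above is supplied; as written, the proposal fails precisely where the paper's proof is hardest.
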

\noindent
Here $\mathrm{Bun}(\mathsf{k\text{-}CompMet},X)$ are bundles of bounded
complete metric spaces bounded by $k$ over $X$.

\subsubsection*{Extending the equivalence to structures of continuous model theory}

First, we introduced continuous model theory, an extension
of classic first-order logic that allows the axiomatisation of structures
of metric nature. Continuous models are going to be our main building block;
they come equipped with a natural notion of ultraproduct turning them into
ultracategories.

This is an entirely new class of ultracategories. In particular we have
the following theorem
\begin{theorem*}~\ref{not_coherent_theorem_VTEX1} Let $\mathcal{M}$ be an ultracategory of models
of a continuous first order theory, that satisfies the following two conditions:
\begin{enumerate}
\item For any $A ,B \in \mathcal{M}$, $\mathrm{Hom}(A,B)$ is connected
with the topology of pointwise convergence.
\item The category $\mathcal{M}$ has a zero object.
\end{enumerate}
then the only left ultrafunctors $\mathcal{M} \to \mathsf{Set}$ are constants.
\end{theorem*}

In particular this theorem shows that these categories of models are not
models of coherent first order logic, since, if they were, then the category
$\mathrm{Lult}(\mathcal{M},\mathsf{Set})$ should be the classifying topos
of this theory.

In the context of model theory, one can think of a signature as a theory
with an empty set of axioms, in the sense that $\Sigma $-structures are
the models of the empty theory in the signature $\Sigma $, models
of such theory turn out to be exactly the complete bounded metric spaces.
In section~\ref{5}, we generalise the theorem above to the continuous version
of such empty theories: We extended our work from just defining bundles
of bounded complete metric spaces to defining bundles of structures of
continuous model theory. And we show that our previous result can be extended
to:

\begin{theorem*}~\ref{second_theorem_VTEX1} Let $X$ be a compact Hausdorff space then there
is an equivalence of categories between
\newline
$\mathrm{Lult}(X,\mathrm{CompMet}_{\mathfrak{L}})$ and the category
$\mathrm{Bun}(\mathrm{CompMet}_{\mathfrak{L}},X)$.
\end{theorem*}

\subsubsection*{Extending the equivalence to models of continuous model theory}

Finally in section~\ref{6}, we show that the equivalence above is restricted
to one between ``bundles of models of a continuous theory'', i.e. the bundles
of structures in which every fibre is a model of the theory and left ultrafunctors
taking value in the category of models. In particular:

\begin{theorem*}~\ref{third_theorem_VTEX1} Let $X$ be a compact Hausdorff space then there is
an equivalence of categories between
\newline
$\mathrm{Lult}(X,\mathrm{CompMet}_{\mathfrak{L},\mathbb{T}})$ and the category
$\mathrm{Bun}(\mathrm{CompMet}_{\mathfrak{L},\mathbb{T}},X)$.
\end{theorem*}

\subsubsection*{Showing that the construction is natural in the compact Hausdorff space}

Let $Y \xrightarrow{f} X$ be a continuous map between compact Hausdorff
spaces, and suppose we have a left ultrafunctor $\mathcal{F}$ from
$X$ to $\mathcal{M}$ where $\mathcal{M}$ is the category of models of some
continuous model theory (could be as simple as complete metric spaces bounded
by a certain $k$ or more complex like Banach spaces). Since continuous maps
between compact Hausdorff spaces are a particular example of left ultrafunctors, then the composition $\mathcal{F} \circ f $ gives a left ultrafunctor
$Y \to \mathcal{M}$, and this allows the construction of the category
$\mathsf{CompHaus}^{\mathsf{o}}_{\mathcal{M}}$ in which the objects are left ultrafunctors
from some compact Hausdorff space $X$ to $\mathcal{M}$ and as morphisms
between $ \mathcal{F} :X \rightarrow \mathcal{M}$ and
$ \mathcal{G}: Y \rightarrow \mathcal{M}$ consists of a pair
$(f,\alpha )$ where $f $ is a continuous map from $X$ to $Y$ and
$\alpha $ is a natural transformation of left ultrafunctors from
$\mathcal{F}$ to $\mathcal{G} \circ f$, and this category is fibred over
$\mathsf{CompHaus}$.

In section~\ref{7}, we show that the bundle over $Y$ resulting from the
composition $\mathcal{F} \circ f$ is the pullback along
$Y \xrightarrow[]{f} X$ in $\mathsf{Top}$, and we extended the equivalence
between $\mathrm{Bun}(\mathcal{M},X)$ and
$\mathrm{Lult}(X,\mathcal{M})$ to an equivalence between
$\mathrm{CompMet}_{X}$ and $\mathrm{Bun}$ the category of bundles of
$\mathcal{M}$ over any compact Hausdorff space.

\subsubsection*{Examples}

Section~\ref{8} is dedicated to showing examples where our notion of bundles
agrees with previously existing notions of bundles of metric structures
used in functional analysis. And we showed that this notion corresponds
exactly to our notion of bundles. We establish that the two slightly different
notions of bundles of Banach spaces  (namely semi-continuous
and continuous bundles), already in the literature (see
\cite{hofmann1977bundles} for semi-continuous bundles and
\cite{Fell1977} for continuous ones) already correspond to two slightly
different continuous theories of Banach spaces whose categories
of models are, respectively, Banach spaces with linear contractions and
Banach spaces with linear isometries. After that, we show that bundles of
$\mathrm{C}^{*}$ algebras \cite[Appendix C]{Williams2007CrossedPO}, also
called (semi)-continuous fields of $\mathrm{C}^{*}$-algebras, is the notion
of bundle that corresponds to the continuous theory of
$\mathrm{C}^{*}$-algebras, we also show that $\mathrm{W}^{*}$-bundles (see
\cite[section 5]{ozawa2013dixmier} or
\cite[subsection 3.1]{bosa2019covering} or
\cite[definition 2.1]{evington2016locally}) is the notion of bundles which
corresponds to the continuous model theory of tracial von Neumann algebras
\cite{goldbring2022survey}.

\subsubsection*{Giving an alternative proof of Lurie's result}

As mentioned above, in \cite{lurie2018ultracategories} Lurie shows that
for a compact Hausdorff space $X$, there is an equivalence of categories
between $\mathrm{Sh}(\mathcal{O}(X))$ and left ultrafunctors from
$X$ to $\mathsf{Set}$. On the other hand, it is known that there is an
equivalence between $\mathrm{Sh}(\mathcal{O}(X))$ and the category of
\'{e}tale bundles over $X$. The category of sets is equivalent to the category
of discrete metric spaces, which is axiomatisable using continuous
model theory. In section~\ref{9} we show that bundles of discrete metric
spaces are equivalent to \'{e}tale bundles, which allows us to write the
following chain of equivalences for any compact Hausdorff space:%
\begin{align*}
\mathrm{Sh}(\mathcal{O}(X)) &\simeq \{
\textbf{\'{e}tale bundles over } \ X \}\\
& \simeq \{
\textbf{bundles of discrete metric spaces over} \ X\} \\
& \simeq \{
\mathbf{Left \ ultrafunctors}(X,\mathsf{Set}) \}.
\end{align*}

This allows to reobtain the result shown by Lurie \cite[3.4.4]{lurie2018ultracategories},
while giving it an entirely different proof. Our construction relies on
the \'{e}tale space description of sheaves, while the one given by Lurie
uses, more or less, the functorial description.

\subsubsection*{A non example}

Finally in section~\ref{10}, we study a new notion of ``bundles of pointed
complete spaces over $X$'', and we show that when the space $X$ is compact Hausdorff such concept is equivalent to left ultrafunctors from $X$ to the category
of pointed complete metric spaces. The reason this does not fit the framework
of bundles of models is that we do not know if it is possible
to have a continuous first-order axiomatisation of complete pointed metric
spaces.

\section{Preliminary constructions}
\label{1}

\subsection*{Definition of an ultracategory}

Following \cite{lurie2018ultracategories}
\label{definition_of_an_ultracategory_VTEX1}
%
\begin{definition}
\label{defn1.1}
An ultrastructure on a category $A$ consists of the following data:
\begin{enumerate}
\item For every set $X$, and every ultrafilter $\mu$ on $X$, a functor from $A^{X}$ to $A$, which we are going
to call the ultraproduct functor and we denote it by
\begin{equation*}
\int _{X} \bullet \ d \mu .
\end{equation*}
\item Given a set $X$, a family of ultrafilters on $X$
$(\nu _{s})_{s \in S}$ and an ultrafilter $\mu $ on $S$, we require the
existence of a morphism
$\Delta _{\mu ,\nu _{\bullet}}: \int _{X} M_{x} d(\int _{S} \nu _{s} d
\mu ) \rightarrow \int _{S} (\int _{X} M_{x} d \nu_{s} ) d \mu $, which is
natural in the family $(M_{x})_{x \in X}$. The map
$\Delta _{\mu ,\nu _{\bullet}}$ is called \emph{the categorical Fubini transform}.
\item For every principal ultrafilter $\delta _{x_{0}}$ on a set $X$, we
require a natural family of isomorphisms $\epsilon _{X,x_{0}}$ from
$\int _{X} M_{x} d \delta _{x_{0}}$ to $M_{x_{0}}$.
\end{enumerate}
This data is required to satisfy the following axioms:
\begin{enumerate}[label=\Alph{enumi}]
\item Given a family of ultrafilters $(\nu _{s})_{s
\in S}$ on a set $X$, and a family of objects of $A$,
$(M_{x})_{x \in X}$ then the map
$\Delta _{ \delta _{s_{0}},\nu _{\bullet}}: \  \int _{X} M_{x} d
\int \nu _{s} d \delta _{s_{0}} \rightarrow \int _{S} \int _{X} M_{x} d
\nu _{s} d \delta _{s_{0}}$, is the inverse of the map
$\epsilon _{S,s_{0}}$ from
$\int _{S} \int _{X} M_{x} d \nu _{s} d \delta _{s_{0}}$ to
$\int _{X} M_{x} d \nu _{s_{0}}$.
\item Suppose that we have a monomorphism of sets (injective
function) $f: \  X \rightarrow Y $ then the categorical Fubini transform
from
$\int _{Y} M_{y} df \mu =\int _{Y} M_{y} d \int _{X} \delta _{f(x)} d
\mu $ to $\int _{X} \int _{Y} M_{y} d \delta _{f(x)} d
\mu \simeq \int _{X}M_{f(x)}d\mu $ is an isomorphism.
\item Suppose that we have a set $R$ and an ultrafilter $\lambda $ on it,
and suppose we have $(\mu _{r})_{r \in R}$ a family of ultrafilters on
a set $S$, and $(\nu _{s})_{s \in S}$ is a family of ultrafilters on some
set $T$, then the following diagram commutes:
\begin{equation*}
\begin{tikzcd}
\int _{T} M_{t} d \rho
\arrow[rrr, "{\Delta _{\lambda , \int _{S} \nu _{s}d\mu _{\bullet}}}"]
\arrow[dd, "{\Delta _{\int _{R} \mu _{r} d \lambda ,\nu _{\bullet}}}"]
& & & \int _{R} (\int _{T} M_{t} d \int _{S} \nu _{s} d \mu _{r}) d
\lambda
\arrow[dd, "{\int _{R} \Delta _{\mu _{r},\nu _{\bullet}}d \lambda}"]
\\
& & &
\\
\int _{S} \int _{T} M_{t} d \nu _{s} d \int _{R} \mu _{r} d \lambda
\arrow[rrr, "{\Delta _{\lambda ,\mu _{\bullet}}}"] & & & \int _{R} (
\int _{S} (\int _{T} M_{t} d \nu _{s})d \mu _{r}) d \lambda
\end{tikzcd}
\end{equation*}
where
$\rho =\int _{R} (\int _{S} \nu _{s} d\mu _{r}) d\lambda  =\int _{S}
\nu _{s} d (\int _{R} \mu _{r} d \lambda)$ (here
$\int _{S} \nu _{s} d \mu $ is defined by \newline
$B \in \int _{S} \nu _{s} d \mu \iff \{s \in S \mid \ B \in \nu _{s}
\} \in \mu $).
\end{enumerate}
\end{definition}

Now we define an ultracategory to be a category with an ultrastructure.

\subsection*{Left ultrafunctors}

Suppose that $\mathsf{{M}}$ and $\mathsf{{N}}$ are two ultracategories, we define a
left ultrafunctor from $\mathsf{M}$ to $\mathsf{N}$ to be a functor equipped
with a left ultrastructure.

\begin{definition}%
\label{defn1.2}
A left ultrastructure on a functor consists of the following: for every
ultrafilter $\mu $ on a set $X$ and every family of objects
$(M_{x})_{x \in X}$ of $\mathsf{{M}}$, we have a family of morphisms in
$\mathsf{{N}}$-we call all of them $\sigma _{\mu}$ by abuse of language-from
$F(\int _{X} M_{x} d\mu )$ to $\int _{X} F(M_{x})d \mu $.

Such that they satisfy the following axioms:
\begin{enumerate}
\item[0.] The following diagram commutes for every family of morphisms
$(\psi _{x})_{x \in X}$ from $M_{x}$ to $N_{x}$ in $\mathsf{{M}}$:
\begin{equation*}
\begin{tikzcd}
F(\int _{X} M_{x} d \mu ) \arrow[rr, "\sigma _{\mu}"]
\arrow[dd, "F(\int _{X} \psi _{x} d \mu )"] & & \int _{X}F(M_{x})d
\mu \arrow[dd, "\int _{X}F(\psi _{x}) d \mu "]
\\
& &
\\
F(\int _{X} N_{x} d \mu ) \arrow[rr, "\sigma _{\mu}"] & & \int _{X}F(N_{x})d
\mu
\end{tikzcd}
\end{equation*}
\item[1.] For every principal ultrafilter $\delta _{x_{0}}$ the following diagram
commutes:
\begin{equation*}
\begin{tikzcd}
F(\int _{X} M_{x} d \delta _{x_{0}}) \arrow[rr, "\sigma _{\delta_{x_{0}}}"]
\arrow[rd, "{F(\epsilon _{X,x_{0}})}"] & & \int _{X} F(M_{x}) d
\delta _{{x_{0}}} \arrow[ld, "{\epsilon _{X,x_{0}}}"]
\\
& F(M_{x_{0}}) &
\end{tikzcd}
\end{equation*}
\item[2.] For any sets $S$ and $T$, any ultrafilter $\mu $ on $S$ and any family
of ultrafilters $(\nu _{s})_{s \in S}$ on $T$ indexed by $S$, the following
diagram commutes:
\[
\begin{tikzcd}
F(\int _{T} M_{t} d (\int _{S} \nu _{s} d \mu ))
\arrow[rrrr, "\sigma _{\int _{S}\nu _{s} d \mu}"]
\arrow[dd, "{F(\Delta _{\mu ,\nu _{\bullet}})}"] & & & & \int _{T} F(M_{t})
d \int _{S} \nu _{s} d\mu
\arrow[dd, "{\Delta _{\mu ,\nu _{\bullet}}}"]
\\
& & & &
\\
F(\int _{S} (\int _{T} M_{t} d\nu _{s}) d\mu )
\arrow[rr, "\sigma _{\mu}"] & & \int _{S}F(\int _{T}M_{t}d\nu _{s})d
\mu \arrow[rr, "\int _{S} \sigma _{\nu _{s}} d \mu "] & & \int _{S}
\int _{T} F(M_{t}) d \nu _{s} d \mu
\end{tikzcd}
\]%
\end{enumerate}
\end{definition}

\begin{note*}
\normalfont The dual notion is a right ultrafunctor in which the comparison
maps go in the other direction, we omit writing the axioms which can be
found in~\cite{lurie2018ultracategories}.
\end{note*}
%
\begin{definition}
\label{defn1.3}
An ultrafunctor is a left ultrafunctor for which all the comparison maps
are isomorphisms.
\end{definition}

\subsection*{Natural transformations of left ultrafunctors}

Suppose that $\mathsf{{M}}$ and $\mathsf{{N}}$ are two ultracategories, and let
$F$, $G$ be left ultrafunctors between $\mathsf{{M}}$ and $\mathsf{{N} }$, a natural
transformation of left ultrafunctors from $F$ to $G$, is a natural transformation
$\phi $ satisfying the additional condition:
\noindent
For every family $(M_{i})$ of objects in $\mathsf{{M}}$ and for every ultrafilter
$\mu $ on $I$ the following diagram commutes:
\begin{equation*}
\begin{tikzcd}
{F(\int _{I} M_{i} d\mu )} &&& {\int _{I}F(M_{i})d\mu}
\\
\\
{G(\int _{I} M_{i} d\mu )} &&& {\int _{I}G(M_{i})d\mu}
\arrow["{\phi _{\int _{I}M_{i}d\mu}}", from=1-1, to=3-1]
\arrow["{\sigma _{\mu}}"', from=1-1, to=1-4]
\arrow["{\sigma ^{\prime}_{\mu}}", from=3-1, to=3-4]
\arrow["{\int _{I}\phi _{M_{i}}d\mu}"', from=1-4, to=3-4]
\end{tikzcd}
\end{equation*}
A natural transformation of right ultrafunctors is defined similarly.

\subsection{Some ultracategories
constructions}
\label{sec1.1}

\subsubsection*{Ultrasets}

You may have noticed that at this point we are using the notation
$\int \nu _{s} d\mu $ to denote the ultrafilter defined by
$A \in \int \nu _{s} d\mu \ \text{iff} \ \{s \mid \ A \in \nu _{s}\}
\in \mu $. This notation is not a coincidence, as this is a special case
of ultracategories.

\begin{definition}
\label{defn1.4}
An ultraset is a small ultracategory with no non-identity morphisms.
\end{definition}
Now the next theorem is due to Lurie~\cite[theorem 3.1.5]{lurie2018ultracategories}
%
\begin{theorem}
\label{thm1.1}
There is an equivalence of categories between ultrasets (with either left
ultrafunctors or ultrafunctors, they are the same in this case), and the
category of compact Hausdorff spaces with continuous maps.
\end{theorem}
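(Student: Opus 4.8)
The plan is to reduce the statement to \emph{Manes' theorem}, which identifies the category of compact Hausdorff spaces with the category of Eilenberg--Moore algebras for the ultrafilter monad $\beta$ on $\cat{Set}$. Recall that a $\beta$-algebra is a map $\xi\colon\beta X\to X$ satisfying the unit law $\xi\circ\eta_X=\mathrm{id}_X$ and the associativity law $\xi\circ m_X=\xi\circ\beta(\xi)$, where $\eta_X\colon X\to\beta X$ sends $x$ to the principal ultrafilter $\delta_x$ and $m_X\colon\beta\beta X\to\beta X$ is $\Lambda\mapsto\int_{\beta X}\nu\,d\Lambda$. Since an ultraset is a set $X$ regarded as a discrete category, the decisive simplification is that every morphism of $X$ is an identity: in particular the unit isomorphisms $\epsilon_{X,x_0}$ and the Fubini transforms $\Delta_{\mu,\nu_\bullet}$, being (iso)morphisms in a discrete category, are forced to be identities, so all the coherence constraints of the ultrastructure degenerate into honest equalities between objects of $X$.

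First I would extract from an ultrastructure on $X$ the single operation $\xi\colon\beta X\to X$, $\xi(\mu):=\int_X x\,d\mu$, obtained by integrating the identity family $(x)_{x\in X}$. The unit isomorphisms $\epsilon_{X,x_0}$ (datum (3) of the ultrastructure), being identities, give $\int_X x\,d\delta_{x_0}=x_0$, that is $\xi\circ\eta_X=\mathrm{id}_X$. The Fubini transforms together with Axiom~C, read off on the identity family and using the pushforward description established below, give $\int_X x\,d\bigl(\int_{\beta X}\nu\,d\Lambda\bigr)=\int_{\beta X}\bigl(\int_X x\,d\nu\bigr)d\Lambda$, which is exactly $\xi\circ m_X=\xi\circ\beta(\xi)$. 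Thus $(X,\xi)$ is a $\beta$-algebra, and Manes' theorem endows $X$ with a compact Hausdorff topology whose convergence is governed by $\xi$.

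Next I would check that no information is lost in passing to $\xi$, i.e. that the whole family of functors $\int_S\bullet\,d\mu$ is recovered from $\xi$. Viewing a family $(M_s)_{s\in S}$ as a function $f\colon S\to X$, I would prove $\int_S M_s\,d\mu=\xi(f_*\mu)$, where $f_*\mu=\int_S\delta_{f(s)}\,d\mu$ is the pushforward ultrafilter. For injective $f$ this is precisely Axiom~B combined with the unit law $\int_X M_y\,d\delta_{f(s)}=M_{f(s)}$; the general case follows by factoring $f$ through its graph $S\hookrightarrow S\times X$ and using naturality, so that the ultrastructure is completely determined by the $\beta$-algebra structure. Running these computations in reverse defines, for any $\beta$-algebra $(X,\xi)$, an ultrastructure by the formula $\int_S (M_s)_s\,d\mu:=\xi(f_*\mu)$ with $\epsilon$ and $\Delta$ taken to be identities; the two monad-algebra identities then yield Axioms~A, B and C. These two assignments are visibly mutually inverse, giving a bijection between ultrastructures on $X$ and compact Hausdorff topologies on $X$.

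On morphisms, a left ultrafunctor $F\colon X\to Y$ between ultrasets is an underlying function together with comparison maps $\sigma_\mu\colon F(\int_X M_x\,d\mu)\to\int_X F(M_x)\,d\mu$; since $Y$ is discrete each $\sigma_\mu$ is an identity, which forces $F\bigl(\xi_X(\mu)\bigr)=\xi_Y\bigl((\beta F)\mu\bigr)$, i.e. $F$ is a homomorphism of $\beta$-algebras, hence a continuous map by Manes' theorem. Conversely every continuous map is a morphism of algebras, and the left-ultrafunctor axioms $0$--$2$ then hold automatically because all diagrams in the discrete category $Y$ commute; moreover the only natural transformation between two such left ultrafunctors is the identity (existing only when they coincide), so each hom-set consists of continuous maps and nothing more. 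Together with essential surjectivity from Manes' theorem, this gives a fully faithful, essentially surjective functor, hence the asserted equivalence (indeed an isomorphism) of categories. The main obstacle is the bookkeeping of the third paragraph: verifying that Axiom~B together with naturality really does pin down every $\int_S\bullet\,d\mu$ as a pushforward along $\xi$, and that the hexagon of Axiom~C matches the associativity law $\xi\circ m_X=\xi\circ\beta(\xi)$ on the nose. Once this faithful translation between the Makkai--Lurie axioms and Manes' axioms is in place, the equivalence is immediate.
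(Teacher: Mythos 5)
The paper does not actually prove this statement: it is imported as Lurie's Theorem 3.1.5, and the only thing the paper adds is the description of $\int_X a_x\,d\mu$ as the limit of the pushforward ultrafilter. So your proposal supplies a proof where the paper has only a citation. Your route --- reduce to Manes' identification of compact Hausdorff spaces with algebras for the ultrafilter monad, after observing that in a discrete category all the coherence data of an ultrastructure collapses to equalities between objects --- is exactly the mechanism behind Lurie's proof and behind the remark in the paper's introduction, so the overall architecture is correct and is the intended one.

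There is, however, one step that fails as written: the reindexing claim $\int_S M_s\,d\mu=\xi(f_*\mu)$ for a general (non-injective) $f\colon S\to X$. Factoring $f$ through its graph $\gamma\colon S\to S\times X$ only trades the problem for an equivalent one: Axiom B applied to the monomorphism $\gamma$ identifies $\int_S f(s)\,d\mu$ with $\int_{S\times X}p(t)\,d\gamma_*\mu$ (where $p$ is the projection to $X$), and to identify the latter with $\xi(p_*\gamma_*\mu)=\xi(f_*\mu)$ you need reindexing along the non-injective map $p$ --- precisely what is being proved. ``Naturality'' cannot bridge this: naturality of the ultraproduct functors is naturality in the family over a \emph{fixed} index set, and with a discrete target it carries no information. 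The repair is immediate and in fact shortens your proof: datum (2) of the ultrastructure, applied to the family of principal ultrafilters $(\delta_{f(s)})_{s\in S}$ and the identity family $(x)_{x\in X}$, provides a morphism $\Delta_{\mu,\delta_{f\bullet}}\colon\int_X x\,df_*\mu\to\int_S\bigl(\int_X x\,d\delta_{f(s)}\bigr)d\mu=\int_S f(s)\,d\mu$, the right-hand identification coming from the unit isomorphisms; since $X$ is discrete, the mere \emph{existence} of this morphism forces the desired equality, for all $f$ at once. For the same reason Axiom B is never needed (in a discrete category every morphism is an identity, so all Fubini transforms are automatically isomorphisms), and the associativity law $\xi\circ m_X=\xi\circ\beta(\xi)$ likewise follows from the existence of the Fubini transform for the family $\nu_\bullet=\mathrm{id}\colon\beta X\to\beta X$ without invoking Axiom C. With this substitution the rest of your argument (the converse construction, the identification of left ultrafunctors with algebra homomorphisms, and the triviality of natural transformations) goes through as stated.
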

Suppose that $S$ is a compact Hausdorff space and let
$(a_{x})_{x \in X}$ be a family of points of $S$ indexed by $X$ and $\mu $ an ultrafilter on
$X$. Then this equivalence is exhibited by defining
$\int _{X} a_{x} d \mu $ as the unique limit of the pushforward of the ultrafilter
$\mu $ by the map $x \mapsto a_{x}$.

Now, a particular case of the former is $\beta X$ the set of ultrafilters
on $X$ which is a Stone Space (has a totally separated compact Hausdorff
topology), thus an ultraset, which justifies the notation
$\int \nu _{s} d\mu $.

 Before continuing we show a very useful result regarding
ultracategories that we will be using later:

\begin{lemma}
\label{ultresult}
Suppose that we have a map of sets $f$ from $Y$ to $X$, where $X$ is a
compact Hausdorff space (seen as an ultraset), and suppose that we
have a left ultrafunctor $\mathcal{F}$ from $X$ to $\mathcal{M}$, here
$\mathcal{M}$ is an arbitrary ultracategory. Let $\mu $ be an ultrafilter
on $Y$ such that $f\mu$ converges to $x_{0}$. Then the following diagram commutes:
\begin{equation*}
\begin{tikzcd}
\mathcal{F}(x_{0})=\mathcal{F}(\int _{X} x d \int _{Y}\delta _{f(y)}d
\mu )=\mathcal{F}(\int _{Y} f(y) d\mu )
\arrow[rrrrr, "\sigma _{f \mu}"] \arrow[rrrrrdd, "\sigma _{\mu}"] & & & & &
\int _{X} \mathcal{F}(x) df\mu \arrow[dd, "{\Delta _{\mu , f}}"]
\\
& & & & &
\\
& & & & & \int _{Y} \mathcal{F}(f(y))d\mu
\end{tikzcd}
\end{equation*}
\end{lemma}

\begin{proof}
We use the following diagram:
\[
\fontsize{8}{9}\selectfont
\begin{tikzcd}[column sep=20,row sep=30]
{\mathcal{F}(\int _{X} x d \int _{Y}\delta _{f(y)}d\mu )} &&&&&& {
\int _{X} \mathcal{F}(x) df\mu}
\\
\\
{\mathcal{F}(\int _{Y} \int _{X} xd \delta _{f(y)}d\mu )=\mathcal{F}(
\int _{Y} f(y) d\mu )} && {\int _{Y}\mathcal{F}(\int _{X}xd\delta _{f(y)})d
\mu} &&&& {\int _{Y} \int _{X} \mathcal{F}(x)d \delta _{f(y)} d \mu}
\\
&& {\mathcal{}}
\\
\\
&&&&&& {\int _{Y} \mathcal{F}(f(y))d\mu}
\arrow["{\sigma _{f \mu}}"', from=1-1, to=1-7]
\arrow["{\mathrm{id}}", from=1-1, to=3-1]
\arrow["{\Delta _{\delta _{f(\bullet )},\mu}}"', from=1-7, to=3-7]
\arrow["{\Delta _{f,\mu}}", shift left=5, curve={height=-35pt}, from=1-7, to=6-7]
\arrow["{\sigma _{\mu}}", from=3-1, to=3-3]
\arrow["{\int _{Y} \sigma _{\delta _{f(y)}} d\mu}"{description}, from=3-3, to=3-7]
\arrow["{\int _{Y} \mathcal{F}(\epsilon _{X,f(y)}) d\mu =\mathrm{id}}"', from=3-3, to=6-7]
\arrow["{\int _{Y} \epsilon _{X,f(y)} d \mu}"', from=3-7, to=6-7]
\end{tikzcd}
\]
The upper diagram commutes by axiom $(2)$ of
\cite[definition 1.4.1]{lurie2018ultracategories}, and the lower diagram
commutes by axiom $(1)$ of
\cite[definition 1.4.1]{lurie2018ultracategories}, and hence, the outermost
diagram commutes which is exactly what we wanted to show.
\end{proof}

In the case of categories of models of continuous logic (we are going to
introduce these later); if $g \in \mathcal{F}(x_0) $, then
$\sigma _{f \mu}(g)=(b_{x})_{x \in X}$ implies that
$\sigma _{\mu}(g)=(b_{f(y)})_{y \in Y}$.

\medskip
\noindent\textbf{Ultrasets corresponding to compact subspaces of $\mathbb{R}$}
One particular case of compact Hausdorff space is compact subsets of the
real line $\mathbb{R}$, in this section, we are going to give a nice characterisation
of the ultraproduct functor for such sets, which will come in handy when
studying the ultraproduct of metric spaces.

Let $X$ be a set and let $\mu $ be an ultrafilter on $X$ and suppose that
$\phi $ is a function taking values in $M$ where $M$ is a compact subset
of $\mathbb{R}$ (we can take $M$ for simplicity to be a closed interval).
Now take the ultraproduct $\int _{X}\phi (x)d\mu $. This is the limit of
the ultrafilter $\phi \mu $ (the pushforward of $\mu $ by $\phi $), which
translates to the fact that $\phi \mu $ contains the neighbourhood filter
of $\int _{X}\phi (x)d\mu $. In other words, for arbitrarily small
$\epsilon $ the set
$\{z \in X \mid  |\phi (z)-\int _{X}\phi (x)d\mu |<\epsilon \} \in \mu $ (such
ultraproduct is what is usually referred to as an ultralimit and usually
denoted by $\lim _{\mu}\phi (x)$).

Now, we claim the following:
\begin{lemma*}
$\int _{X}\phi (x)d\mu =\mathrm{Inf}_{U \in \mu}\mathrm{Sup}_{x \in U}
\phi (x)$.
\end{lemma*}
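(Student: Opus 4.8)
The plan is to prove the identity by a double-inequality argument. Write $L := \int_X \phi(x)\,d\mu$ for the ultralimit and $R := \inf_{U\in\mu}\sup_{x\in U}\phi(x)$ for the right-hand side. Since $\phi$ takes values in a bounded closed interval, every supremum $\sup_{x\in U}\phi(x)$ is finite, so $R$ is a well-defined real number; the existence and uniqueness of $L$ are guaranteed by the compactness discussion preceding the statement. The engine of both inequalities is the defining property of the ultralimit recalled just above: for every $\epsilon>0$ the set $A_\epsilon := \{x \in X : |\phi(x) - L| < \epsilon\}$ belongs to $\mu$, together with the fact that $\mu$ is a filter, so that any two of its members intersect (and hence are nonempty).

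First I would establish $L \le R$. Fix $U \in \mu$ and $\epsilon > 0$. Then $U \cap A_\epsilon \in \mu$, so it is nonempty, and any $x$ in it satisfies $\phi(x) > L - \epsilon$; hence $\sup_{x\in U}\phi(x) \ge L - \epsilon$. Letting $\epsilon \to 0$ gives $\sup_{x\in U}\phi(x) \ge L$ for every $U \in \mu$, and taking the infimum over $U$ yields $L \le R$.

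For the reverse inequality $R \le L$, I would use $A_\epsilon$ itself as a test set: every $x \in A_\epsilon$ satisfies $\phi(x) < L + \epsilon$, so $\sup_{x\in A_\epsilon}\phi(x) \le L + \epsilon$, and since $A_\epsilon \in \mu$ this forces $R \le L + \epsilon$. Letting $\epsilon \to 0$ yields $R \le L$, and combining the two inequalities proves the lemma.

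The argument is essentially routine; the only point requiring care is the logical direction in which the defining neighbourhood property of the ultralimit is used, since $A_\epsilon$ must be made to control $\phi$ both from above (giving $R\le L$ directly) and, via its nonempty intersection with an arbitrary $U\in\mu$, from below (giving $L\le R$). An alternative, equally short route is to verify directly that $R$ satisfies the defining property of the ultralimit, namely that $\{x:|\phi(x)-R|<\epsilon\}\in\mu$ for each $\epsilon>0$, and then invoke uniqueness of limits in the Hausdorff space $M$; but the double-inequality presentation above seems cleanest and avoids reproving uniqueness.
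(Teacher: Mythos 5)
Your proof is correct and follows essentially the same route as the paper's: both establish the identity by a double inequality, using the defining property of the ultralimit (that each set $\{x : |\phi(x)-L|<\epsilon\}$ lies in $\mu$) together with the filter property of $\mu$, and your second inequality is word-for-word the paper's argument. The only cosmetic difference is that the paper proves the lower-bound half by contradiction (producing two disjoint sets that would both lie in $\mu$), whereas you argue directly from the nonemptiness of $U \cap A_\epsilon$ --- the same idea in contrapositive form.
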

\begin{proof}%
Let us call $m=\int _{X}\phi (x)d\mu $. First, let us prove that $m$ is
a lower bound for
$\{ \mathrm{Sup}_{x \in U}\phi (x) \mid U \in \mu \}$. To do this, suppose
by contradiction that there exists some $U \in \mu $ such that
$m>\mathrm{Sup}_{x \in U}\phi (x)$, let us call
$\epsilon =m-\mathrm{Sup}_{x \in U}\phi (x)$, then the set
$\{z \in X \mid |\phi (z) -m|<\epsilon \} \in \mu $ but
$\{z \in X \mid |\phi (z) -m|<\epsilon \}\subseteq \{z \in X \mid m-
\epsilon <\phi (z) \}$. But this would imply that
$V= \{z \in X \mid \mathrm{Sup}_{x \in U}\phi (x)<\phi (z) \} \in
\mu $, but $V \bigcap U =\varnothing $ on one hand, and on the
other hand $V \bigcap U \in \mu $, hence a contradiction ($
\varnothing \in \mu $).%
\newline So $m$ is a lower bound for
$\{ \mathrm{Sup}_{x \in U}\phi (x) \mid U \in \mu \}$. To prove it is the greatest
lower bound, notice that for any $\epsilon >0$ the set
$ V_{\epsilon}=\{x \in X \mid \phi (x) <m+ \epsilon \}\in \mu $ thus
$\mathrm{Sup}_{x \in V_{\epsilon}} \phi (x) \leq m+\epsilon $ so
$\mathrm{Inf}_{U \in \mu}\mathrm{Sup}_{x \in U}\phi (x) \leq m +
\epsilon $, and since $\epsilon $ was arbitrary then we get that
$\mathrm{Inf}_{U \in \mu}\mathrm{Sup}_{x \in U}\phi (x)\leq m $, and thus
since $m$ is a lower bound, we get that
$m=\mathrm{Inf}_{U \in \mu}\mathrm{Sup}_{x \in U}\phi (x)$.%
\end{proof}

\begin{note*}
The dual statement $\int _{X}\phi (x)d\mu =\mathrm{Sup}_{U \in \mu}\mathrm{Inf}_{x \in U}
\phi (x)$ can be shown similarly.
\end{note*}

\subsubsection*{Ultracategories arising from directed colimits}

\begin{theorem}
\label{categorical_ultraproduct_VTEX1}
Suppose that we have a category $\mathcal{M} $ that has products
and directed colimits, then in this case it has an ultrastructure given
by:
\begin{equation*}
\int _{X} M_{x} d\mu =\varinjlim _{ U \in \mu}(\prod _{x \in U}M_{x}).
\end{equation*}
\end{theorem}
Here we consider the set of sets of $\mu $ as a directed
set by reverse inclusion.
%
\begin{lemma}
\label{categorical_ultraproduct_lemma_VTEX1}
Suppose that $\mathcal{M}$ is a full subcategory of an ultracategory
$\mathcal{M}^{+}$ which is closed under the ultraproduct functor, then
it is an ultracategory with such restriction of the ultraproduct functor.
\end{lemma}

Theorem~\ref{categorical_ultraproduct_VTEX1} and Lemma~\ref{categorical_ultraproduct_lemma_VTEX1} are just restating proposition 1.3.7
of \cite{lurie2018ultracategories}, and a proof can be found there.

The Lemma~\ref{categorical_ultraproduct_lemma_VTEX1} allows us not only to consider
categories having directed colimits and products, but full subcategories
of those closed under the ultraproduct construction given by such directed
colimit of products. The main example of such ultracategories is the ultracategory
of models of a first-order theory, which is a full subcategory of the category
of structures of the same signature (similarity type).

We highlight this construction in the case of $\mathsf{Set}$, and this extends to
all first-order theories. The construction is an application of \ref{categorical_ultraproduct_VTEX1}:%
\newline First, we define the ultraproduct of non-empty sets by
\begin{equation*}
\int _{I} M_{i} d\mu = \prod _{I} M_{i}/\sim .
\end{equation*}
Here $\sim $ identifies tuples that agree on any set of the ultrafilter,
and you can notice that this is just a direct limit of products in
$\mathsf{{Set}}$. Now in the case where some sets of $(M_{i})$ are empty we have
two cases, either the set
$\{i \in I \mid M_{i} =\varnothing \} \in \mu $, in this case we define
$\int _{I} M_{i} d\mu =\varnothing $, otherwise the set
$I^{\prime}= \{ i \in I \mid M_{i} \neq \varnothing \} \in \mu $ so we define%
\begin{equation*}
\int _{I} M_{i} d\mu = \prod _{I^{\prime}} M_{i}/\sim .
\end{equation*}
\noindent
In other words, we restrict our attention to a set of the ultrafilter for
which the sets $M_{i}$ are non-empty. So in what follows,
we are going to denote the elements of the ultraproduct
$\int _{I} M_{i} d \mu $ by $(a_{i})_{i \in J}$ where $J \in \mu $.

Now suppose that we have a first-order theory with signature
$\langle \mathcal{S}_{1},\dots ,\mathcal{S}_{n},\mathcal{R}_{1},
\dots ,\mathcal{R}_{n'},f_{1},\dots ,f_{n''} \rangle $ and a set of axioms
$\mathscr{A}$. The category of structures has ultraproducts, resulting
from applying Theorem~\ref{categorical_ultraproduct_VTEX1} and Lemma~\ref{categorical_ultraproduct_lemma_VTEX1}, which are constructed as follows:
Suppose we have structures $(E_{i})_{i \in I}$, in what follows we are
going to denote by $E_{i}^{\mathcal{S}_{j}}$ the set of sort
$\mathcal{S}_{j}$ corresponding to $E_{i}$. Now let
$\mu $ be some ultrafilter on $I$ and define
$\int _{I} E_{i} d \mu $ as follows: For each sort $\mathcal{S}_{j}$,
$(\int _{I} E_{i} d \mu )^{\mathcal{S}_{j}}=\int _{I} E_{i}^{\mathcal{S}_{j}}
d \mu $.

Now for a relation symbol $\mathcal{R}$ with formal domain
$S_{1} \times \dots \times S_{l} $, here each
$S_{m} \in \{\mathcal{S}_{j}\}_{j=1}^{n}$, we define
$ \mathcal{R}^{\int _{I} E_{i} d\mu}$ by
$((a^{m}_{i})_{i \in I})_{1 \leq m \leq l}\in \mathcal{R}^{\int _{I} E_{i} d\mu}$
iff $\{ i \in I \ : (a^{m}_{i})_{1 \leq m \leq l} \in \mathcal{R}^{E_{i}} \} \in \mu $.

Next, for a function symbol $f$ with formal domain $S_{1}
\times \dots \times S_{l} $ and formal range $S^{\prime}$, we define
$f^{\int _{I} E_{i} d \mu }((a^{m}_{i}) _{i \in I})_{1 \leq m \leq l}$ by
$(f^{E_i}(a^{m}_{i})_{1 \leq m \leq l})_{i \in I}$. 

Then we can regard the category of models in $\mathsf{{Set}}$ of
$\mathscr{A}$ as a full subcategory of the category of structures of similarity
type
$\langle \mathcal{S}_{1},\dots ,\mathcal{S}_{n},\mathcal{R}_{1},
\dots ,\mathcal{R}_{n'},f_{1},\dots ,f_{n''} \rangle $.%
\newline As a result of {\L}os theorem, this subcategory is closed under the categorical
ultraproduct of the category of structures, which allows the application
of Lemma~\ref{categorical_ultraproduct_lemma_VTEX1}. Notice that we assumed finitely many sorts, function, and relation symbols but this is not necessary at all.

\section{The ultracategory $\mathsf{k}\text{-}\mathrm{CompMet}$}
\label{2}

Given $k$ a positive real number, we denote by
$\mathsf{k\text{-}CompMet}$ the category of $k$-bounded complete metric
spaces, with contractions ($1$-Lipschitz functions) as morphisms. More
precisely, the objects are the complete metric spaces satisfying
$d(x,y) \leqslant k $ for all $x,y$, and the morphisms are functions satisfying
$d(f(x),f(y)) \leq d(x,y)$ for all $x,y$.

\begin{proposition}
\label{prop2.1}
The category $\mathsf{k\text{-}CompMet}$ has all products and all directed
colimits.
\end{proposition}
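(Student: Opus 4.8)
The plan is to prove existence of the two constructions separately: products first, then directed colimits, in each case giving the explicit set underlying the (co)limit, equipping it with a metric, checking the metric is complete and $k$-bounded, and verifying the universal property against the morphisms of $\mathsf{k\text{-}CompMet}$, namely contractions.

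For products, given a family $(M_i)_{i \in I}$ of $k$-bounded complete metric spaces, I would take the underlying set to be the cartesian product $\prod_{i \in I} M_i$ and define the metric by the supremum $d\big((x_i)_i,(y_i)_i\big) = \sup_{i \in I} d_i(x_i,y_i)$. The supremum is finite (indeed $\leqslant k$) precisely because each factor is $k$-bounded, so $k$-boundedness of the product is immediate and is exactly what makes this construction land inside the category; this is the one place where the bound $k$ is doing real work. That $d$ is a metric is routine (symmetry and the triangle inequality pass through the supremum). Completeness follows the standard argument: a Cauchy sequence in the product is coordinatewise Cauchy, each coordinate converges by completeness of $M_i$, and the supremum metric controls all coordinates uniformly so the coordinatewise limit is the limit in the product. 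Finally the projections $\pi_i \colon \prod_j M_j \to M_i$ are contractions since $d_i(x_i,y_i) \leqslant \sup_j d_j(x_j,y_j)$, and given any cone of contractions $f_i \colon N \to M_i$ the unique mediating map $x \mapsto (f_i(x))_i$ is a contraction because $\sup_i d_i(f_i(x),f_i(y)) \leqslant \sup_i d(x,y) = d(x,y)$. This gives the universal property.

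For directed colimits, let $(M_i, \varphi_{ij})$ be a directed diagram with transition contractions $\varphi_{ij} \colon M_i \to M_j$ for $i \leqslant j$. The natural candidate is to form the colimit of underlying sets, $\varinjlim_i M_i = \big(\bigsqcup_i M_i\big)/\!\sim$ where $[x_i] \sim [x_j]$ iff $\varphi_{ik}(x_i) = \varphi_{jk}(x_j)$ for some $k \geqslant i,j$, and to define a distance by $\bar d([x_i],[x_j]) = \lim_{k} d_k(\varphi_{ik}(x_i), \varphi_{jk}(x_j))$, or equivalently the infimum over $k \geqslant i,j$ of $d_k(\varphi_{ik}(x_i),\varphi_{jk}(x_j))$; since the transition maps are contractions this quantity is nonincreasing in $k$, so the infimum and limit agree and $\bar d$ is well defined on equivalence classes and is again $k$-bounded. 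One checks $\bar d$ is a pseudometric (triangle inequality by choosing a common upper index), and it is a genuine metric on the quotient by construction of $\sim$. The subtle point is completeness: the colimit of the underlying sets with this pseudometric need not be complete, so I would take the colimit in $\mathsf{k\text{-}CompMet}$ to be the completion of $\big(\varinjlim_i M_i, \bar d\big)$. The cocone maps $M_i \to \varinjlim_j M_j \hookrightarrow \widehat{\varinjlim_j M_j}$ are contractions, and the universal property is verified by showing any cocone of contractions into a complete $k$-bounded space factors uniquely through the algebraic colimit (by the set-level universal property, and the factoring map is automatically a contraction with respect to $\bar d$) and then extends uniquely to the completion by the universal property of completion for uniformly continuous, in particular contractive, maps into a complete space.

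The main obstacle I expect is completeness of the directed colimit, and more precisely making sure the completion step is compatible with the universal property rather than destroying it. The delicate verification is that a contraction out of $\big(\varinjlim_i M_i,\bar d\big)$ extends uniquely along the dense inclusion into the completion, and that this extension is still a contraction and still commutes with all the cocone legs; this rests on the density of the algebraic colimit in its completion together with uniqueness of contractive extensions to a complete target. By contrast, the product construction and both the well-definedness of $\bar d$ and its pseudometric axioms are routine. I would therefore spend most of the care on the colimit, and simply remark that products and finite data verifications are standard.
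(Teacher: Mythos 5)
Your product construction is exactly the paper's (the supremum metric on the cartesian product, with $k$-boundedness guaranteeing the sup is finite), and your overall architecture for directed colimits --- set-level colimit, tail-infimum distance, then completion --- also matches the paper's. But there is a genuine error in the colimit part: you claim that $\bar d$ ``is a genuine metric on the quotient by construction of $\sim$''. This is false. The algebraic equivalence relation $\sim$ only identifies $x_i$ and $x_j$ when they become \emph{equal} at some finite stage, whereas $\bar d$ can vanish without that ever happening. Concretely, take $I=\mathbb{N}$, let each $M_n=\{a,b\}$ be the two-point space with $d_n(a,b)=1/n$, and let every transition map be the identity on underlying sets (these are contractions, since the distances decrease). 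No transition map ever identifies $a$ with $b$, so the algebraic colimit has two distinct classes, yet $\bar d([a],[b])=\inf_n 1/n=0$. So $\bar d$ is only a pseudometric on $\varinjlim_i M_i$, and the colimit in $\mathsf{k\text{-}CompMet}$ must additionally identify points whose distances tend to zero cofinally --- exactly the identification the paper makes explicit (``$f=g$ iff $\forall \epsilon>0$ there exists $k\geqslant i,j$ such that $d_k(f,g)<\epsilon$''), and exactly why the paper routes the construction through the category $\mathsf{k\text{-}PsMet}$ of $k$-bounded pseudo-metric spaces, of which $\mathsf{k\text{-}CompMet}$ is a reflective subcategory: the reflector both quotients by distance zero \emph{and} completes.

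The gap is repairable without changing your plan: interpret the completion step as the Cauchy completion of the \emph{pseudometric} space $(\varinjlim_i M_i,\bar d)$, which automatically collapses distance-zero pairs; and in the universal-property verification, observe that any contraction from a pseudometric space into a metric space sends pairs at pseudodistance zero to equal points, hence factors through the metric quotient, after which your extension-along-a-dense-inclusion argument applies verbatim. But as written, the assertion that separation holds ``by construction of $\sim$'' is a step that fails, and it is precisely the subtlety this proposition turns on --- you anticipated that completeness would be the delicate issue, whereas the real danger is the separation axiom.
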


We give a sketch of this typical construction:

The product of a family $(B_{i})_{i \in I} $ of $k$-bounded complete metric
spaces, is computed by taking the products of their underlying sets, and
equipping it with the distance:
\begin{equation*}
d((b_{i}),(c_{i}))=\mathrm{Sup}_{I}(d(b_{i},c_{i})).
\end{equation*}

It should be noted that if we were working with unbounded metric spaces,
without allowing for the possibility that $d(x,y)$ can be infinite, then
this construction would not work and the resulting category would not have
all products. This is the main reason why we work with this specific category
$\mathsf{k\text{-}CompMet}$.

For directed colimits,  we first compute the colimits inside
the category $\mathsf{k\text{-}PsMet}$ of $k$-bounded pseudo-metric spaces
with contractions as morphisms. That is, we remove the requirement that
$d(x,y)=0 \Rightarrow x = y$, as well as the completeness requirement.

The category $\mathsf{k\text{-}CompMet}$ is reflective in
$\mathsf{k\text{-}PsMet}$: to each $k$-bounded pseudo-metric space, one
can associate a metric space by quotienting it by the relation
$x \sim y$ if $d(x,y)=0$, and take the completion of the resulting metric
space. Hence colimits in $\mathsf{k\text{-}CompMet}$ can be obtained by
first taking the colimit in the category $\mathsf{k\text{-}PsMet}$ and
then applying this quotient-completion construction (left
adjoint).

Finally, directed colimits in
$\mathsf{k\text{-}PsMet}$ are computed as follows: Let $I$ be a directed
set viewed as a category and let $B$ be a functor from this directed set
to the category of $k$ bounded pseudo-metric spaces. Then one first takes
the colimit of the underlying sets:
$\varinjlim _{I}B_{i} \simeq \coprod _{i \in I}B_{i}/\approx $ where
$\coprod $ denotes the disjoint union and the equivalence relation is the
relation generated by: $f \approx g$ iff if $f \in B_{i_{1}}$ and
$g \in B_{i_{2}}$ and $i_{1} \leq i_{2}$, then
$\epsilon _{i_{1},i_{2}}(f)=g$ (here $\epsilon _{i_{1},i_{2}}$ is the image
by the functor $B$ of the morphism between $i_{1}$ and $i_{2}$ in the directed
set viewed as a category). And we equip it with the following pseudo-metric:
if $f\in B_{i}$ and $g \in B_{j}$ then
$d(f,g)=\mathrm{Inf}_{i,j\leq l}d_{l}(f,g)$. In particular, it should be
noted that if $f \in B_{i}$ and $g \in B_{j}$, then in the
colimits in $\mathsf{k\text{-}CompMet}$, we have that $f=g$ iff
$\forall \epsilon >0$, there exists $l \geqslant i,j$ such that
$d_{l}(f,g) < \epsilon $.

We can deduce from this the following by virtue of Theorem~\ref{categorical_ultraproduct_VTEX1}:

\begin{proposition}
\label{prop2.2}
The category $\mathsf{k\text{-}CompMet}$ has an ultrastructure, where the
ultraproduct functors are given by:
\begin{equation*}
\int _{S}M_{s} d\mu =\varinjlim _{U \in \mu} \left ( \prod _{s \in U}M_{s}
\right ).
\end{equation*}
where $\mu $ is seen as a category with an arrow $A \to B$ if
$B \subseteq A$.
\end{proposition}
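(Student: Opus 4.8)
The plan is to reduce this statement entirely to Theorem \ref{categorical ultraproduct}, which asserts that any category with all products and all directed colimits carries an ultrastructure whose ultraproduct functor is exactly the colimit-of-products formula $\varinjlim_{U \in \mu} \prod_{s \in U} M_s$. Since the preceding Proposition has already established that $\mathsf{k\text{-}CompMet}$ has all products and all directed colimits, most of the work is done, and what remains is to check that the indexing data appearing in the formula genuinely assembles into one of the directed-colimit diagrams whose existence we have guaranteed.

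First I would record that the poset underlying $\mu$ — the sets belonging to the ultrafilter, ordered so that there is an arrow $A \to B$ precisely when $B \subseteq A$ — is a directed category. This is immediate from the defining properties of an ultrafilter: for any $U, V \in \mu$ we have $U \cap V \in \mu$, and $U \cap V$ receives arrows from both $U$ and $V$ (since $U \cap V \subseteq U$ and $U \cap V \subseteq V$). Hence every finite subdiagram in $\mu$ admits an upper bound, so the colimits $\varinjlim_{U \in \mu}$ are directed colimits of exactly the kind covered by the Proposition.

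Next, for each $U \in \mu$ the product $\prod_{s \in U} M_s$ exists by the product half of the Proposition, and the transition maps of the diagram $U \mapsto \prod_{s \in U} M_s$ are the evident projections $\prod_{s \in U} M_s \to \prod_{s \in V} M_s$ induced by an inclusion $V \subseteq U$; these are contractions, since forgetting coordinates can only decrease the supremum that defines the product metric. With the diagram assembled, the existence of its directed colimit is again guaranteed, so the formula does define a functor on the relevant power category. At this point the hypotheses of Theorem \ref{categorical ultraproduct} hold verbatim, and I would simply invoke that theorem to obtain the entire ultrastructure — the Fubini transforms $\Delta_{\mu,\nu_\bullet}$, the isomorphisms $\epsilon_{X,x_0}$, and Axioms A, B, C — rather than reconstructing this data by hand.

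The main point to watch is not really an obstacle but a bookkeeping check: one must confirm that the colimits demanded by the formula are precisely the directed ones shown to exist above, so that no colimit over a non-directed shape sneaks in, and that the supremum distance on the products is compatible with the transition projections. Once this is verified there is no genuine difficulty, since everything substantive is packaged inside Theorem \ref{categorical ultraproduct}; the proof is in essence a one-line application of that theorem together with the immediately preceding Proposition.
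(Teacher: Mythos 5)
Your proposal is correct and takes essentially the same route as the paper: the paper offers no separate proof, simply deducing the proposition from Theorem \ref{categorical ultraproduct} together with the immediately preceding proposition that $\mathsf{k\text{-}CompMet}$ has all products and directed colimits, exactly as you do. The additional bookkeeping you supply (directedness of $\mu$ under reverse inclusion via closure under finite intersections, and contractivity of the transition projections for the sup metric) is sound and merely makes explicit what the paper leaves implicit.
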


We can however give a slightly more explicit description of this ultraproduct
construction. We fix $S$ a set and $\mu $ an ultrafilter on $S$; suppose
we have a family of non-empty complete $k$-bounded metric spaces
$(M_{s})_{s \in S}$ (that is an object in
$\mathsf{k\text{-}CompMet}^{S}$).

We endow the set-theoretic product $\prod _{s \in S}M_{s}$ with the equivalence
relation defined by $(f_{s}) \sim (g_{s})$ iff for every
$\epsilon >0$ the set
$\{s \in S \mid \ d_{s}(f_{s},g_{s})<\epsilon \} \in \mu $, and the distance
given by:
\begin{equation*}
d((f_{s})_{s \in S},(g_{s})_{s \in S})= \lim _{\mu} d(f_{s},g_{s}) =
\operatorname{\mathrm{Inf}}_{ M \in \mu}\mathrm{Sup} _{s \in M}d(f_{s},g_{s}).
\end{equation*}

\begin{theorem}%
\label{thm2.1}
The distance defined above makes ($\prod _{s \in S}M_{s}/\sim $) a complete
metric space, which identifies up to canonical isometry with the ultraproduct
$\varinjlim _{U \in \mu} \left ( \prod _{s \in U}M_{s} \right )$.
\end{theorem}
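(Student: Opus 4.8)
The plan is to treat the three assertions in turn: that the displayed formula defines a genuine metric on $\prod_{s\in S}M_s/\sim$, that this metric space is complete, and that it realizes the categorical ultraproduct $\varinjlim_{U\in\mu}\bigl(\prod_{s\in U}M_s\bigr)$. I would open by invoking the ultralimit lemma of Section~\ref{1}: applied to $\phi(s)=d(f_s,g_s)$, which takes values in the compact set $[0,k]$, it gives at once the equality $\lim_{\mu}d(f_s,g_s)=\inf_{M\in\mu}\sup_{s\in M}d(f_s,g_s)$, so the two expressions for $d$ coincide and the ultralimit exists.

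For the metric axioms I would first check that $d$ is a pseudometric on $\prod_{s\in S}M_s$: symmetry is immediate, and the triangle inequality follows by taking the $\inf$--$\sup$ (equivalently the ultralimit) of the pointwise inequality $d(f_s,h_s)\le d(f_s,g_s)+d(g_s,h_s)$, using that $\lim_\mu$ is monotone and additive on bounded functions. Well-definedness on the quotient and separatedness then become the same statement: by the characterization of the ultralimit of a nonnegative bounded function, $\lim_\mu d(f_s,g_s)=0$ holds \emph{iff} $\{s: d(f_s,g_s)<\epsilon\}\in\mu$ for every $\epsilon>0$, which is exactly the relation $(f_s)\sim(g_s)$. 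Combined with the reverse triangle inequality $|d(f_s,g_s)-d(f'_s,g'_s)|\le d(f_s,f'_s)+d(g_s,g'_s)$, this shows $d$ descends to a well-defined metric (not merely a pseudometric) on the quotient.

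The substantial analytic step is completeness. Given a Cauchy sequence of classes, I would pass to a subsequence $([x^n])_n$ with $d([x^n],[x^{n+1}])<2^{-n}$ and set $A_n=\{s: d(x^n_s,x^{n+1}_s)<2^{-n}\}\in\mu$ and $C_n=\bigcap_{m\le n}A_m\in\mu$. Writing $\rho(s)=\sup\{n: s\in C_n\}\in\{0,1,\dots,\infty\}$, I would define a representative $(y_s)$ by $y_s=\lim_n x^n_s$ when $\rho(s)=\infty$ (the sequence $(x^n_s)_n$ is then Cauchy in the complete space $M_s$, so the limit exists in $M_s$), and $y_s=x^{\max(\rho(s),1)}_s$ otherwise. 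A telescoping estimate shows that for every $s\in C_N$ one has $d(y_s,x^N_s)\le 2^{-N+1}$; since $C_N\in\mu$ this yields $d([y],[x^N])\le 2^{-N+1}\to 0$, so the subsequence converges to $[y]$ and hence so does the original Cauchy sequence.

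Finally, to identify the space with the colimit I would use the explicit description of directed colimits in $\mathsf{k\text{-}CompMet}$ recalled before the statement. Since $S\in\mu$, the factor $\prod_{s\in S}M_s$ is one of the objects of the diagram and each transition map is a restriction, so the pseudometric on the colimit $\varinjlim_{U}^{\mathsf{k\text{-}PsMet}}\prod_{s\in U}M_s$ between the classes of $a\in\prod_{s\in U}M_s$ and $b\in\prod_{s\in V}M_s$ is $\inf_{W\in\mu,\,W\subseteq U\cap V}\sup_{s\in W}d(a_s,b_s)$. I would define a comparison map into $\prod_{s\in S}M_s/\sim$ by extending a tuple arbitrarily off its domain and taking its $\sim$-class; this is independent of the extension (two extensions agree on a set of $\mu$) and of the colimit representative, and the cofinality identity $\inf_{M\in\mu}\sup_{s\in M}\psi(s)=\inf_{M\in\mu,\,M\subseteq U\cap V}\sup_{s\in M}\psi(s)$ shows it preserves distances. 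Being distance-preserving it factors through the separated quotient as an isometric embedding, which extends uniquely over the completion, i.e.\ over $\varinjlim_{U}\prod_{s\in U}M_s$ in $\mathsf{k\text{-}CompMet}$; and it is already surjective on the image of $\prod_{s\in S}M_s$, since $[(f_s)_{s\in S}]$ is the image of $(f_s)_{s\in S}$. Hence the comparison map is a bijective isometry. I expect the completeness argument and the cofinality bookkeeping matching the two infima to be the only genuinely delicate points.
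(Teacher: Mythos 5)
Your proposal is correct, but it is worth pointing out that the paper itself does not prove this theorem at all: its entire ``proof'' is the citation ``For a proof see \cite{yaacov2008model}'', so what you have written is a self-contained replacement for an argument the paper delegates to the literature. Your first two steps (the metric axioms via monotonicity and additivity of $\lim_\mu$, and completeness via the subsequence with $d([x^n],[x^{n+1}])<2^{-n}$, the decreasing sets $C_n\in\mu$, and the case split on whether $\rho(s)$ is finite) are sound and are essentially the standard argument found in the cited reference. The genuinely additional content is your third step: \cite{yaacov2008model} never phrases the ultraproduct categorically, and the paper takes the colimit description from \cite[Proposition 1.3.7]{lurie2018ultracategories} without verifying that it agrees with the concrete $\inf$--$\sup$ metric on $\prod_{s\in S}M_s/\sim$; your comparison map (extend a tuple off its domain, pass to $\sim$-classes) together with the cofinality identity matching $\inf_{M\in\mu}$ against $\inf_{M\in\mu,\,M\subseteq U\cap V}$ is exactly the bookkeeping needed, and it is correct, since the colimit in $\mathsf{k\text{-}CompMet}$ is by the paper's own recipe the completion of the separated quotient of the $\mathsf{k\text{-}PsMet}$-colimit. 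One byproduct worth making explicit: because your isometric embedding of the separated quotient is already surjective onto the complete space $\prod_{s\in S}M_s/\sim$, the separated quotient is itself complete, so the completion step in that recipe adds nothing here; this is consistent, not circular, precisely because you established completeness of the quotient before invoking the extension-over-completion argument.
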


For a proof see \cite[Ultraproduct of metric spaces]{yaacov2008model}.

\begin{note*}
\normalfont We should be more precise that the construction above would
work if the family of metric spaces $(M_{s})_{s \in S}$ are all non-empty,
if some $M_{s}$ are empty we can do the same trick as in the case of
$\mathsf{Set}$ and looking whether
$S^{\prime}=\{s \in S \mid M_{s} = \varnothing \} $ is in the ultrafilter or
not.

Although we are going to write proofs assuming that no metric space is
empty, this trick can always be used so our proofs also encompass the case
where some metric spaces are allowed to be empty.
\end{note*}

\subsection{Description of the categorical Fubini transform $\Delta $ in the category $\mathsf{k\text{-}CompMet}$}
\label{description}

\begin{theorem}
\label{thm2.2}
Let $\mathsf{k\text{-}CompMet}$ denote the category of complete metric
spaces bounded by a certain $k$ with contractions as morphisms, and let
$S, T$ be sets and let $\nu _{\bullet}=(\nu _{s})_{s \in S}$ be a collection
of ultrafilters on $T$ and let $(M_{t})_{t \in T} $ be a collection of
complete metric spaces indexed by $T$, then we have:
\begin{equation*}
\Delta _{\mu ,\nu _{\bullet}}((b_{t})_{t\in T})=((b_{t})_{t\in T})_{s \in S}.
\end{equation*}
\end{theorem}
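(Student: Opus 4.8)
The plan is to compute $\Delta_{\mu,\nu_\bullet}$ directly from its construction in the directed-colimit ultrastructure of Theorem \ref{categorical ultraproduct}, using throughout the explicit model of the ultraproduct as a quotient $\prod M/\!\sim$ of a product equipped with the $\mathrm{Inf}\,\mathrm{Sup}$ distance. Write $\rho=\int_S \nu_s d\mu$ for the iterated ultrafilter on $T$, so that the source is $\int_T M_t d\rho=\varinjlim_{W\in\rho}\prod_{t\in W}M_t$, and write $N_s=\int_T M_t d\nu_s=\varinjlim_{V\in\nu_s}\prod_{t\in V}M_t$ for the inner ultraproducts, so that the target is $\int_S N_s d\mu=\varinjlim_{U\in\mu}\prod_{s\in U}N_s$. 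Here $\mu$ is a fixed ultrafilter on $S$.

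First I would unwind the canonical map produced by the universal properties. A representative of an element of the source is a tuple $(b_t)_{t\in W}$ for some $W\in\rho$. By the very definition of $\rho$, the set $U_W:=\{s\in S \mid W\in\nu_s\}$ lies in $\mu$. For each $s\in U_W$ we have $W\in\nu_s$, so the colimit coprojection defining $N_s$ sends $(b_t)_{t\in W}$ to its $\nu_s$-class; assembling these over $s\in U_W$ yields the tuple $\big((b_t)_{t}\big)_{s\in U_W}\in\prod_{s\in U_W}N_s$, whose class in $\varinjlim_{U}\prod_{s\in U}N_s$ is, by construction, the element $\Delta_{\mu,\nu_\bullet}((b_t)_t)$. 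This class is precisely $((b_t)_{t\in T})_{s\in S}$ as stated, and checking that it is independent of the choice of $W$ and compatible with the colimit over $\rho$ is a routine diagram chase.

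It then remains to verify that this assignment descends to the quotients, i.e. respects the equivalence relations defining the two ultraproducts. Here I would compute distances. In the target, the distance between the $\mu$-classes of $((b_t))_s$ and $((b'_t))_s$ is $\lim_\mu d_{N_s}=\lim_\mu \lim_{\nu_s} d(b_t,b'_t)$, while in the source it is $\lim_\rho d(b_t,b'_t)$. Using the characterisation $\lim_\mu \phi=\mathrm{Inf}_{U\in\mu}\mathrm{Sup}_{s\in U}\phi(s)$ from the ultralimit Lemma, together with the definition of $\rho$, one obtains the Fubini identity
\[ \lim_{\rho} d(b_t,b'_t)=\lim_\mu \lim_{\nu_s} d(b_t,b'_t), \]
which shows at once that the map is well defined and an isometry onto its image. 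For well-definedness alone it already suffices to note the inclusion $\{s\mid \{t\mid d(b_t,b'_t)<\epsilon\}\in\nu_s\}\subseteq\{s\mid \lim_{\nu_s}d(b_t,b'_t)\le\epsilon\}$ valid for every $\epsilon>0$.

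The step I expect to be the main obstacle is making the second paragraph fully rigorous: one must be careful that the abstract $\Delta$ coming from Lurie's construction is exactly the ``reinterpret the same representative in each inner ultraproduct'' map, which requires tracking the product projections and colimit coprojections and their naturality rather than simply writing down a formula. The only genuine computation, as opposed to bookkeeping, is the ultralimit Fubini identity above; I would prove it purely from the $\mathrm{Inf}\,\mathrm{Sup}$ formula and the definition $A\in\rho\iff\{s\mid A\in\nu_s\}\in\mu$, estimating the iterated supremum from above and below by $\lim_\rho d(b_t,b'_t)\pm\epsilon$.
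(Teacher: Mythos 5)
Your proposal is correct in substance, but it takes a route that differs from the paper's at the one step that actually matters. The paper proceeds in two moves: (i) it checks, exactly as you do in your third paragraph, that $(b_t)_{t\in T}\mapsto ((b_t)_{t\in T})_{s\in S}$ is well defined on equivalence classes (via the inclusion $\{s\mid \{t\mid d(b_t,b'_t)<\epsilon\}\in\nu_s\}\subseteq\{s\mid d_{\nu_s}((b_t),(b'_t))\le\epsilon\}$) and is a contraction; (ii) it identifies this concrete map with the abstract $\Delta_{\mu,\nu_\bullet}$ not by unwinding the directed-colimit construction, but by invoking Lurie's Proposition 1.2.8, which characterizes $\Delta_{\mu,\nu_\bullet}$ as the \emph{unique} map making the squares with the quotient maps $q^{T_0}_{\int_S\nu_s d\mu}$, $q^{T_0}_{\nu_s}$, $q^{S_0}_\mu$ commute for all $S_0\in\mu$ and all $T_0$ with $T_0\in\nu_s$ for every $s$; commutativity of that square for the displayed formula is immediate. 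This uniqueness citation is precisely the tool that discharges what you honestly flag as your ``main obstacle'': in your plan the identification of the colimit-coprojection map with $\Delta$ is asserted ``by construction'' and then deferred to a diagram chase through coprojections and their naturality, which is doable but is exactly the bookkeeping the paper avoids. What your route buys in exchange is a sharper quantitative conclusion: your iterated-ultralimit Fubini identity $\lim_\rho d=\lim_\mu\lim_{\nu_s}d$ shows $\Delta_{\mu,\nu_\bullet}$ is an isometry onto its image, strictly more than the paper's contraction remark, and your element-level unwinding makes the formula transparent without appealing to uniqueness. If you want the shortest complete write-up, keep your well-definedness computation but replace the colimit unwinding by the uniqueness characterization from Lurie's Proposition 1.2.8.
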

\begin{proof}
We need to make sure that the map
$ (b_{t})_{t \in T} \mapsto ((b_{t})_{t \in T})_{s \in S} $ is well-defined:
Suppose that
\begin{equation*}
(b_{t})_{t\in T}=(b^{\prime}_{t})_{t\in T}.
\end{equation*}
Now we know that for any $\epsilon >0$ the set
\begin{equation*}
\{ t \in T \mid d_{t}(b_{t},b^{\prime}_{t})<\epsilon \} \in \int _{S}
\nu _{s} d\mu ,
\end{equation*}
which translates to the fact that for any $\epsilon >0$ the set
\begin{equation*}
\{s \in S \mid \{ t \in T \mid d_{t}(b_{t},b^{\prime}_{t})<\epsilon \}
\in \nu _{s} \}\in \mu ,
\end{equation*}
so, we get that for any $\epsilon >0$%
\begin{equation*}
\{s \in S \mid \ d_{\nu_s}((b_{t})_{t\in T},(b^{\prime}_{t})_{t\in T})\leq
\epsilon \} \in \mu ,
\end{equation*}
so
\begin{equation*}
((b^{\prime}_{t})_{t\in T})_{s \in S}=((b_{t})_{t\in T})_{s \in S}.
\end{equation*}

So the map defined the way above is well-defined, and we can see that it
is a contraction.

Now the map $\Delta _{\mu ,\nu _{\bullet}}$ for the ultrastructure on
$\mathsf{k\text{-}CompMet}$ is the unique map that makes the following
diagram commute for every set $S_{0}\subseteq S$ satisfying
$S_{0} \in \mu $ and every set $T_{0} \subseteq T$ such that
$T_{0} \in \nu _{s} \ \forall s \in S$
\cite[proposition 1.2.8]{lurie2018ultracategories}
\begin{equation*}
\begin{tikzcd}
{\prod _{t \in T_{0}}M_{t}} && {\prod _{s \in S_{0}}\int _{T}M_{t}d\nu _{s}}
\\
\\
{\int _{T}M_{t}d(\int _{S}\nu _{s} d\mu )} && {\int _{S}(\int _{T}M_{t}d
\nu _{s})d\mu}
\arrow["{(q^{T_{0}}_{\nu _{s}})_{s \in S}}", from=1-1, to=1-3]
\arrow["{q^{T_{0}}_{\int _{S}\nu _{s}d\mu}}"', from=1-1, to=3-1]
\arrow["{\Delta _{\mu ,\nu _{\bullet}}}"', from=3-1, to=3-3]
\arrow["{q^{S_{0}}_{\mu}}", from=1-3, to=3-3]
\end{tikzcd}
\end{equation*}

Now clearly the map
$ (b_{t})_{t \in T} \mapsto ((b_{t})_{t \in T})_{s \in S} $ makes the diagram
above commutative, for every set $S_{0}\subseteq S$ satisfying
$S_{0} \in \mu $, and every set $T_{0} \subseteq T$ such that
$T_{0} \in \nu _{s}$ for all $s \in S$.
\end{proof}

One particular case which is important to consider is when we have a map
of sets $p$ from $S$ to $T$ and then we consider the family
$\delta _{\bullet}=(\delta _{p(s)})_{s \in S}$,  which is
the family of all the principal ultrafilters of the points in the image
of $p$. Then in this case we get the map
\begin{equation}
\Delta _{\mu ,\delta _{\bullet}}((b_{t})_{t \in T})=(b_{p(s)})_{s \in S}.
\label{eq:1}
\end{equation}
Here, $\int _{T}M_{t} d{\delta _{p(s)}}$ was identified with
$M_{p(s)}$ (more precisely, without this identification, the above is a
description of the ultraproduct diagonal map as defined in
\cite[Notation 1.3.3]{lurie2018ultracategories}).

\section{The bundle (the first functor)}
\label{3}

In this section, we are going to define the category of bundles of complete
metric spaces bounded by some constant $k$ over some compact Hausdorff
space $X$, which we are going to denote by
\newline $\mathsf{Bun}(\mathsf{k\text{-}CompMet},X)$ or alternatively
$\mathsf{Bun}(\mathsf{k\text{-}CompMet})/X$, and construct a functor from
the category
\newline $\text{Lult}(X,\mathsf{k\text{-}CompMet})$, to the category
$\mathsf{Bun}(\mathsf{k\text{-}CompMet},X)$.

\subsection{Bundles of complete metric spaces}
\label{sec3.1}

\begin{definition}
\label{defn3.1}
A function $f$ from a topological space $E$ to
$\mathbb{R}\bigcup \{-\infty ,\infty \}$ is said to be upper semi-continuous
(respectively lower semi-continuous) at a point $a$ iff
for every $y>f(a)$ (respectively $y<f(a)$) there exists a neighbourhood
$V$ of $a$ such that for every $x \in V \ f(x)<y$ (respectively
$f(x)>y$).

A function $f$ from a topological space $E$ with values in
$\mathbb{R} \bigcup \{-\infty ,\infty \}$ is upper semi-continuous (respectively
lower semi-continuous) iff it is upper semi-continuous (respectively lower
semi-continuous) at every point of its domain.
\end{definition}

\begin{note*}
\normalfont It is easy to see that being upper semi-continuous is equivalent
to being continuous when equipping $[-\infty ,+\infty ]$ with the topology
generated by $\{ [-\infty ,b), \ b \in (-\infty ,\infty ]  \}$ which is
called the left order topology. The subspace topology of the left order
topology of $[0,+\infty ]$ is generated by sets of the form $[0,b)$ with
$b \in (0,+\infty ]$.
\end{note*}
%
\begin{definition}
\label{V}
Let $E$ be a topological space and let $\pi $ be a surjection from
$E$ to $X$, such that for each $x \in X$ $\pi ^{-1}(x)$ is a metric space
with distance $d_{x}$, and let $V$ be an open set then we
define
\begin{equation*}
V_{\epsilon}=\{ f \in E \mid \ \exists g \in V \ \pi (f) =\pi (g) \
\text{and} \ d_{\pi (f)}(f,g)<\epsilon \}.
\end{equation*}
\end{definition}

\begin{definition}
\label{defn3.3}
In the same context as Definition~\ref{V}, let $V,W$ be
open sets in $E$. We say that $V \subseteq _{\epsilon} W $ if  $V \subseteq V_{\epsilon} \subseteq W$.
\end{definition}

Let $A,B,C$ be topological spaces and let $f$ (respectively $g$) be a continuous
map from $A$ to $C$ (respectively from $B$ to $C$). We define the fibre
product space $A \times _{C} B$ to be the space
$\{ \ (x,y) \in A \times B \ \mid f(x)=g(y) \ \}$ with the subspace topology
of $A \times B$, this space is the pullback of $f,g$ in the category of
topological spaces.

Now we need to give an adequate definition of a continuous family of complete
metric spaces bounded by some constant $k$:
%
\begin{definition}
\label{bundle_definition_VTEX1}
A bundle of complete metric spaces bounded by $k$ is defined
to be a triple $(E,X,\pi ) $ with $ \pi \  : E \rightarrow X$ a surjection
such that for every $x \in X$ $\pi ^{-1}(x)$ is a complete metric space
bounded by $k$, if it satisfies the axioms:
\begin{itemize}
\item Axiom(1): The global distance function defined from
$E \times _{X} E$ to $[0,k]$ is upper semi-continuous.
\item Axiom(2): $\pi $ is continuous and open.
\item Axiom(3): For every open set $W$ and every $f \in W$, there exists
an open neighbourhood $V$ of $f$ and $\epsilon >0$ such that
$V \subseteq _{\epsilon} W$.
\end{itemize}
\end{definition}

\begin{definition}
\label{defn3.5}
If $(E,X,\pi )$ and $(E^{\prime},X,\pi ^{\prime})$ are two bundles with base space
$X$, we define a map of bundles $\psi $ to be a continuous map from
$E$ to $E^{\prime}$ such that the following diagram commutes:
\begin{equation*}
\begin{tikzcd}
E && {E^{\prime}}
\\
& X \arrow["\pi "', from=1-1, to=2-2]
\arrow["\psi ", from=1-1, to=1-3]
\arrow["{\pi ^{\prime}}", from=1-3, to=2-2]
\end{tikzcd}
\end{equation*}
and such that for each $x \in X$ the map $\psi |_{\pi ^{-1}(x)}$ is a contraction.
\end{definition}
This makes bundles with a fixed base space a category. The case where the
base space is allowed to vary will be treated in section~\ref{7}.

\subsection{The Bundle's topology}
\label{sec3.2}

Given a compact Hausdorff space $X$, and a left ultrafunctor
$\mathcal{F}$ from $X$ to $\mathsf{k}\text{-}\mathrm{CompMet}$ we want to
endow the space $\coprod _{x \in X}\mathcal{F}(x)$ with a canonical topology
making it a bundle as in our Definition~\ref{bundle_definition_VTEX1}. A common
idea usually used in the definition of bundles, is that the bundle space
is some sort of section space (or germs of section space) to the projection
map, the definition that we gave starts from the realisation that the image
by the left-ultrastructure maps of a point in the base space can be regarded
as some sort of ``generalised'' local section maps at this point, and hence
one can use these to define a topology on the space
$\coprod _{x \in X}\mathcal{F}(x)$ similar in spirit to
\cite[13.18]{fell1988representations} (constructing a bundle from a family
of sections is abundant in functional analysis literature, the same kind
of idea can be seen for example in \cite{evington2016locally} or
\cite{Niels,Williams2007CrossedPO}).%
\label{tau}

Let us call $\mathcal{L}$ the assignment that we are going to define, which
gives a bundle for each left ultrafunctor from a compact Hausdorff set
$X$ (ultraset) to $\mathsf{k}\text{-}\mathrm{CompMet}$.
%
\begin{theorem}
\label{topology}
Let $X$ be an ultraset, and let $\mathcal{F}$ be a left ultrafunctor from
$X$ to $\mathsf{k}\text{-}\mathrm{CompMet}$, let
$E=\coprod _{x \in X}\mathcal{F}(x)$ then there is a unique topology
$\tau $ on $E=\coprod _{x \in X}\mathcal{F}(x)$ such that an ultrafilter
$\eta $ converges to a point $f \in E$ iff:
\begin{enumerate}[leftmargin=2em]
\item[$C_{1}$] : $\pi \eta $ converges to $\pi f$.
\item[$C_{2}$] : for any $\epsilon > 0$ if
$\sigma _{\pi \eta }(f)=(b_{x})_{x \in X}$ then
$\coprod _{x \in X}B(b_{x},\epsilon )\in \eta $.
\end{enumerate}
 And this topology is characterised by a set $U$ being open iff for any
ultrafilter $\eta $ converging to a point $f \in U$ then
$U \in \eta $.
\end{theorem}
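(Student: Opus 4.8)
The statement really has three parts: uniqueness, the open-set characterization, and (implicitly) the existence of \emph{some} topology realizing the prescribed convergence. I would dispose of uniqueness and the characterization first, since they rest only on the standard fact that a topology on a set is completely determined by its ultrafilter convergence. Indeed, in any topological space a set $U$ is open iff it is a neighbourhood of each of its points, and a convergent ultrafilter contains every neighbourhood of its limit; conversely, if $U \in \eta$ for every $\eta$ converging to a point of $U$, then $U$ must be open, for otherwise some $f \in U$ would have $U \notin \ca{N}_f$ (its neighbourhood filter), and then $\ca{N}_f \cup \{E \setminus U\}$ would have the finite intersection property and extend to an ultrafilter converging to $f$ yet missing $U$, a contradiction. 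Thus the displayed open-set criterion holds in \emph{any} topology inducing the prescribed convergence; this simultaneously proves the ``characterized by'' clause and forces uniqueness, so the only candidate is $\tau := \{U \subseteq E : U \in \eta \text{ whenever } \eta \to f \in U\}$, where $\to$ abbreviates the relation defined by $C_1 \wedge C_2$.

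It remains to prove existence, i.e.\ that this $\tau$ is a topology whose convergence is \emph{exactly} $C_1 \wedge C_2$. That $\tau$ is closed under arbitrary unions and finite intersections is automatic, using only that ultrafilters are upward closed and closed under finite intersection. The inclusion $C_1 \wedge C_2 \Rightarrow \tau$-convergence is equally immediate, since by construction every $\tau$-open neighbourhood of $f$ lies in any $\eta \to f$. The entire content is therefore the reverse inclusion: if $\eta$ contains every $\tau$-open neighbourhood of $f$, then $C_1$ and $C_2$ hold for $(\eta,f)$. Equivalently, I must show the relation $C_1 \wedge C_2$ is \emph{topological}. By the relational (Barr-style) description of topological spaces --- or by the elementary pretopology argument --- this reduces to two conditions: \textbf{reflexivity}, that the principal ultrafilter $\dot f$ converges to $f$; and \textbf{transitivity}, the iterated-limit law that if $\eta_s \to g_s$ for $\mu$-almost all $s$ and $(g_\bullet)_*\mu \to f$, then $\int_S \eta_s\, d\mu \to f$.

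Reflexivity is short: $C_1$ for $\dot f$ is the convergence of the principal ultrafilter $\delta_{\pi f}$ to $\pi f$ in $X$, while $C_2$ follows from axiom 1 of a left ultrafunctor, which identifies $\sigma_{\delta_{\pi f}}(f)$ with $f$ under $\epsilon_{X,\pi f}$, so that $f$ itself lies in every representing ball. For transitivity I split into the two conditions. The $C_1$-part reduces to the classical iterated-limit theorem inside the genuine compact Hausdorff space $X$: since $\pi$ commutes with the Kowalsky integral, $\pi(\int_S \eta_s\, d\mu) = \int_S (\pi\eta_s)\, d\mu$, each $\pi\eta_s \to \pi g_s$, and $(\pi g_\bullet)_*\mu \to \pi f$, so the iterated limit is $\pi f$. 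Along the way one checks that $\pi^{-1}(O)$ is $\tau$-open for every open $O \subseteq X$ (a direct application of the prescribed $C_1$), so $\pi$ is continuous and $C_1$ for $\tau$-convergent ultrafilters is free.

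The hard part --- and the main obstacle --- is the $C_2$-part of transitivity, precisely where the coherence of the left ultrafunctor (axiom 2, the compatibility of $\sigma$ with the Fubini transform $\Delta$) is indispensable. Writing $\zeta = \int_S \eta_s\, d\mu$, $\mu_s = \pi\eta_s$ and $\rho = (\pi g_\bullet)_*\mu$, I would expand $\sigma_{\pi\zeta}(f)$ around the commuting coherence square: using the explicit description of $\Delta$ in $\mathsf{k\text{-}CompMet}$ as the reindexing map $(b_t)_t \mapsto ((b_t)_t)_s$, one identifies $\sigma_{\pi\zeta}(f)$ with the double family $\big(\sigma_{\mu_s}(c_s)\big)_s$, where $(c_s)_s = \sigma_\mu(f)$ and naturality of $\sigma$ identifies $c_s$ with $(\sigma_\rho f)_{\pi g_s}$. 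The inner hypotheses $C_2(\eta_s,g_s)$ control balls around $\sigma_{\mu_s}(g_s)$, while the outer hypothesis $C_2(\rho,f)$ gives $d(g_s,c_s) < \epsilon$ for $\mu$-almost all $s$; since each $\sigma_{\mu_s}$ is a contraction, $d\big(\sigma_{\mu_s}(g_s),\sigma_{\mu_s}(c_s)\big) < \epsilon$, and combining these through the triangle inequality and the explicit ultraproduct metric $\lim_\mu d = \inf_U \sup_U d$ yields $\coprod_x B\big((\sigma_{\pi\zeta}f)_x, 2\epsilon\big) \in \eta_s$ for $\mu$-almost all $s$, hence $\in \zeta$. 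As $\epsilon>0$ is arbitrary, this is $C_2(\zeta,f)$. The delicate bookkeeping --- matching single- and double-indexed representatives through $\Delta$ and absorbing the metric errors into a doubled $\epsilon$ --- is the real crux; everything else is formal.
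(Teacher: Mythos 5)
Your proposal is correct and takes essentially the same route as the paper: the paper also reduces everything to the characterization of topological convergence relations by reflexivity and the iterated-limit law (it cites Wyler's theorem, which is the same content as your Barr-style reduction), verifies reflexivity via axiom (1) of left ultrafunctors, gets the $C_1$ half of transitivity from the ultraset structure of $X$, and proves the $C_2$ half via axiom (2), the explicit reindexing description of $\Delta$ in $\mathsf{k\text{-}CompMet}$, and the same triangle-inequality bookkeeping with an arbitrary $\epsilon$. The only cosmetic difference is that the paper first replaces the representative of $\sigma_{\mu}(f)$ by the family of limit points $(t(s))_{s \in S}$ itself and estimates directly, whereas you keep $c_s$ and absorb the error through the contraction property of $\sigma_{\mu_s}$; the two are interchangeable.
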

\begin{proof}
\begin{lemma*}
The condition $C_{2}$ is well-defined, that is, it does not depend on the
representative of the equivalence class of $(b_{x})_{x \in X}$.
\end{lemma*}
\begin{proof}
Suppose
$\sigma _{\pi \eta}(f)=(b_{x})_{x \in X}=(b^{\prime}_{x})_{x \in X}$, and suppose
that for any
$\epsilon > 0 \ \coprod _{x \in X}B(b_{x},\epsilon )\in \eta $. Let
$\epsilon >0$ then:
\begin{equation*}
S =\{x \in X \mid d_{\pi (x)}(b_{x},b^{\prime}_{x})<\epsilon /2\} \in \pi
\eta ,
\end{equation*}
so
\begin{equation*}
\coprod _{x \in S}\mathcal{F}(x) \in \eta ,
\end{equation*}
so
\begin{equation*}
\coprod _{x \in S}\mathcal{F}(x) \bigcap \coprod _{x \in X}B(b_{x},
\epsilon /2)= \coprod _{x \in S}B(b_{x},\epsilon /2) \in \eta .
\end{equation*}
Now let $g \in \coprod _{x \in S}B(b_{x},\epsilon /2)$ then
\begin{equation*}
d(g,b^{\prime}_{x})\leq d(g,b_{x})+d(b_{x},b^{\prime}_{x})<\epsilon /2+
\epsilon /2=\epsilon ,
\end{equation*}
hence
\begin{equation*}
\coprod _{x \in S}B(b_{x},\epsilon /2) \subseteq \coprod _{x \in X }B(b^{\prime}_{x},
\epsilon ),
\end{equation*}
and this implies that
\begin{equation*}
\coprod _{x \in X}B(b^{\prime}_{x},\epsilon ) \in \eta .\qedhere
\end{equation*}
\end{proof}
Now back to the proof of the theorem, the proof relies on
\cite[Theorem 4.4]{wyler1996convergence}, namely every relation satisfying
conditions UQ1 and UQ4 of \cite{wyler1996convergence} defines a topology
characterised by this relation being the convergence relation on ultrafilters,
we are going to summarise these conditions in the following theorem:

\begin{theorem}
\label{Wyler}
Let $X$ be a set and let $\beta X$ be the set of all ultrafilters on
$X$ (the Stone-\v{C}ech compactification of its discrete structure), let
$q$ be a relation on $\beta X \times X$ satisfying the following:
\begin{itemize}
\item UQ1: $\forall x \in X$ $\delta _{x} q x$, here $\delta _{x}$ is the
principal ultrafilter at $x$.
\item UQ4: If $t : S \rightarrow X$ and $u:S \rightarrow \beta X$ are maps
such that $u(s)\ q\ t(s)$ for every $s \in S$, and if
$t \phi \ q \ x $ for an ultrafilter $\phi $ on $S$, then
$\int _{S} u(s) d\phi \ q \ x$.
\end{itemize}
Here the ultrafilter $\int _{S} u(s) d\phi $ is the ultrafilter defined
in \ref{definition_of_an_ultracategory_VTEX1}.

Then there exists a topology on $X$ characterised by being the unique topology
such that the ultrafilter $\phi $ converges to $x \in X$ in the usual sense
iff $\phi q x$.
\end{theorem}
\noindent
\textbf{Property UQ1}
In what follows, let us say that for an ultrafilter $\mu $ on $E$ and a
point $f\in E$, $\mu q f$ if $(\mu ,f)$ satisfies conditions $C_{1}$ and
$C_{2}$ of \ref{topology}. We need to prove that $\delta _{f}qf$. First,
it is obvious that $\pi (\delta _{f})=\delta _{\pi (f)}$ so it converges
to $\pi (f)$, so $\delta _{f}$ has property $C_{1}$ of \ref{topology}. Now if $\sigma _{\delta_{\pi (f)}}(f)=(b_{x})_{x \in X}$ then the
equivalence class is completely determined by $b_{\pi (f)}=f$ (using
\cite[definition 1.4.1(1)]{lurie2018ultracategories}), so
$\delta _{f}$ satisfies property $C_{2}$ of \ref{topology}, so
$\delta _{f}qf$.

\medskip
\noindent
\textbf{Property UQ4}
Now let us prove that the convergence relation defined by
the two properties above ($C_{1}$ and $C_{2}$) satisfies the second condition
of \ref{Wyler}: let $S$ be a set and let $t$ be a map of sets from $S$
to $E$, let $u$ be a map from $S$ to $\beta E$ (where $\beta E$ is the
set of all ultrafilters on E) such that each $u(s)qt(s)$ (that means satisfies
the conditions $C_{1}$ and $C_{2}$), and suppose $t \mu q f \in E$, we
need to show that $\int _{S}u(s)d\mu q f$. In what follows, we will be
calling $\int _{S}u(s)d\mu $, $\alpha $ to make writing easier.

Now showing that $\pi \alpha $ converges to $\pi f$ (in the usual sense
of converging in a topological space) is trivial. To see why we know that
we have a morphism $\Delta _{\mu , \pi u \bullet }$ from
$\int _{X} x d \pi \alpha $ to
$\int _{S}(\int _{X} x \ d \pi u(s))d\mu =\int _{S} \pi (t(s)) d
\mu $ the latter can be shown to be equal to
$\int _{X} x \ d \pi t \mu =\pi (f)$ (this follows from
the fact that $t \mu $ satisfies the two conditions). Since
the only morphisms in ultrasets are identities, this proves that
$\pi \alpha $ converges to $\pi (f)$. Thus $\alpha $ satisfies property
$C_{1}$ of \ref{topology}.

Now it remains to show that for any $\epsilon >0$, and supposing
$\sigma _{\pi \alpha}(f)=(b_{x})_{x \in X}$ then
$\coprod _{x \in X}B(b_{x},\epsilon ) \in \alpha $. 
Now we state the following lemma:

\begin{lemma*}%
Suppose that $\sigma _{\mu}(f)=(q^{\prime}_{s})_{s \in S}$ and
$\sigma _{t \mu}(f)=(q_{e})_{e \in E}$ and $\sigma_{\pi t \mu}(f)=(q''_{x})_{x \in X}$ then
$(q^{\prime}_{s})_{s \in S}=(q_{t(s)})_{s \in S}=(q''_{\pi(t(s))})_{s \in S}=(t(s))_{s\in S}$.%
\end{lemma*}

\begin{proof}
Since $t\mu $ converges to $f$, condition $C_{1}$ gives that
$\pi t \mu = \pi (t \mu )$ converges to $\pi (f)$. So, applying
Lemma~\ref{ultresult} to the map $\pi t : S \rightarrow X$
and the ultrafilter $\mu $ on $S$, we get
$\sigma _{\mu}(f)=(q''_{\pi t(s)})_{s \in S}$. Similarly we can show that $(q_{e})_{e \in E}=(q''_{\pi(e)})_{e \in E}$ by applying Lemma~\ref{ultresult} to the map $\pi: E \to X$ and the ultrafilter $t\mu$ on $E$, and hence $(q^{\prime}_{s})_{s \in S}=(q_{t(s)})_{s \in S}=(q''_{\pi(t(s))})_{s \in S}$.

Now let $\epsilon>0$, by the fact $t\mu $ converges to $f$, we get that
\begin{equation*}
\coprod _{x \in X}B(q''_{x},\epsilon )\in t \mu ,
\end{equation*}
and this implies that
\begin{equation*}
t^{-1}(\coprod _{x \in X}B(q''_{x},\epsilon )) \in \mu ,
\end{equation*}

Any element of this set satisfies
\begin{equation*}
d_{\pi(t(s))}(t(s), q''_{\pi(t(s))})< \epsilon
\end{equation*}

and so, using the fact that $(q_{t(s)})_{s \in S}=(q''_{\pi(t(s))})_{s \in S}$ we get%
\begin{equation*}
\{s \in S \ \mid d_{\pi(t(s))}(q_{t(s)},t(s))< \epsilon \} \in \mu ,
\end{equation*}
and hence 
\begin{equation*}
(t(s))_{s \in S}=(q_{t(s)})_{s \in S}
\end{equation*}
which completes the proof of the lemma.%
\end{proof}

Now let $\epsilon >0$, let us start by writing the diagram
\cite[definition 1.4.1(2)]{lurie2018ultracategories} for the family
$(\pi (u(s)))_{s \in S}$; as the reader can verify easily, $\pi \alpha = \pi \int_S u(s) d \mu = \int_S \pi u(s) d\mu$:
\[
\begin{tikzcd}
F(\pi (f)) \arrow[dd, "\sigma _{\mu}"']
\arrow[rrrrr, "\sigma _{\pi \alpha}"] & & & & & \int _{X}F(x)d \pi
\alpha \arrow[dd, "{\Delta _{\mu ,\pi u_{\bullet}}}"]
\\
& & & & &
\\
\int _{S}F(\pi t(s))d\mu
\arrow[rrrrr, "\int _{S}\sigma _{\pi (u(s))}d\mu "'] & & & & & \int _{S}(
\int _{X}F(x)d\pi u(s))d\mu
\end{tikzcd}
\]
since $f \in F(\pi (f))$ and since
$\sigma _{\mu}(f)=(t(s))_{s\in S}$ then the commutativity of the diagram
tells us that if $\sigma _{u(s)}(t(s))= (a^{s}_{x})_{x \in X}$ then
\begin{equation*}
((a^{s}_{x})_{x \in X})_{s \in S} =((b_{x})_{x \in X})_{s \in S},
\end{equation*}
which translates to the fact that for any $\epsilon ^{\prime}$ we have that
\begin{equation*}
\{s \in S \ \mid \  d_{\pi u(s)}((a^{s}_{x})_{x \in X},(b_{x})_{x \in X})<
\epsilon ^{\prime}\}\in \mu ,
\end{equation*}
in particular
\begin{equation*}
\{s \in S \ \mid \  d_{\pi (u(s))}((a^{s}_{x})_{x \in X},(b_{x})_{x \in X})<
\epsilon /2\}\in \mu .
\end{equation*}
We also know that
\begin{equation*}
\coprod _{x \in X}B(b_{x},\epsilon /2) \in t \mu ,
\end{equation*}
which implies that
\begin{equation*}
t^{-1}(\coprod _{x \in X}B(b_{x},\epsilon /2))\in \mu .
\end{equation*}
It follows that their intersection
\begin{equation*}
\{s \in S \ \mid \ d_{\pi (u(s))}((a^{s}_{x})_{x \in X},(b_{x})_{x \in X})<
\epsilon /2\} \bigcap t^{-1}(\coprod _{x \in X}B(b_{x},\epsilon /2))
\in \mu .
\end{equation*}
Now our goal is to show that
\begin{equation*}
\{s \in S \ \mid \ d_{\pi (u(s))}((a^{s}_{x})_{x \in X},(b_{x})_{x \in X})<
\epsilon /2\}\bigcap t^{-1}(\coprod _{x \in X}B(b_{x},\epsilon /2))
\subseteq \{s \in S \ \mid  \coprod _{x \in X}B(b_{x},\epsilon ) \in u(s)\},
\end{equation*}
to do so consider any $s$ in the intersection, we have for such $s$
\begin{equation*}
t(s) \in B(b_{\pi t(s)}, \epsilon /2),
\end{equation*}
and on the other hand
\begin{equation*}
d_{\pi u(s)}((a^{s}_{x})_{x \in X},(b_{x})_{x \in X})<\epsilon /2,
\end{equation*}
which implies that the set
\begin{equation*}
\{x \in X \ \mid  d_{x}(b_{x},a^{s}_{x})<\epsilon /2\}\in \pi u(s),
\end{equation*}
which is equivalent to saying that
\begin{equation*}
\pi ^{-1}(\{x \in X \mid d_{x}(b_{x},a^{s}_{x})<\epsilon /2\}) \in u(s).
\end{equation*}
Now we already know that
\begin{equation*}
\coprod _{x \in X}B(a^{s}_{x},\epsilon /2) \in u(s),
\end{equation*}
thus
\begin{equation*}
\pi ^{-1}(\{x \in X \mid d_{x}(b_{x},a^{s}_{x})<\epsilon /2\}) \bigcap
\coprod _{x \in X}B(a^{s}_{x},\epsilon /2) \in u(s),
\end{equation*}
So it remains to show that this is a subset of
$\coprod _{x \in X}B(b_{x},\epsilon )$,
\newline
to do so let
$h \in \pi ^{-1}(\{x \in X \ d_{x}(b_{x},a^{s}_{x})<\epsilon /2\})
\bigcap \coprod _{x \in X}B(a^{s}_{x},\epsilon /2)$, we have that
\begin{equation*}
d_{\pi (h)}(h,b_{\pi (h)})\leq d_{\pi (h)}(h,a^{s}_{\pi (h)})+d_{\pi (h)}(a^{s}_{\pi (h)},b_{\pi (h)})<
\epsilon /2 +\epsilon /2=\epsilon ,
\end{equation*}
so
\begin{equation*}
h \in \coprod _{x \in X}B(b_{x},\epsilon ),
\end{equation*}
and this proves that
\begin{equation*}
\pi ^{-1}(\{x \in X \ d_{x}(b_{x},a^{s}_{x})<\epsilon /2\}) \bigcap
\coprod _{x \in X}B(a^{s}_{x},\epsilon /2) \subseteq \coprod _{x \in X}B(b_{x},
\epsilon ),
\end{equation*}
hence
\begin{equation*}
\coprod _{x \in X}B(b_{x},\epsilon ) \in u(s),
\end{equation*}
and thus
\begin{equation*}
\{s \in S \ \mid \ d_{\pi (u(s))}((a^{s}_{x})_{x \in X},(b_{x})_{x \in X})<
\epsilon /2\}\bigcap t^{-1}(\coprod _{x \in X}B(b_{x},\epsilon /2))
\subseteq \{s \in S \ \mid  \coprod _{x \in X}B(b_{x},\epsilon ) \in u(s)\},
\end{equation*}

so as a result $\coprod _{x \in X}B(b_{x},\epsilon ) \in \alpha $, and
this is true for any $\epsilon > 0$. So $\alpha $ satisfies property
$C_{2}$ of \ref{topology}, so we may deduce that $\alpha q f$.
\end{proof}
\begin{note*}%
\normalfont By definition, the topology characterised by properties
$C_{1}$ and $C_{2}$ of \ref{topology} makes $\pi $ continuous, since the
condition $C_{1}$ implies that if $\eta $ converges to $f$, then
$\pi \eta $ converges to $\pi (f)$.
\end{note*}
%

\subsection{Characterisation of the topology}
\label{charecterisation}

\begin{theorem}%
\label{condition}
\label{charecterisation_theorem_VTEX1}
Let $X$ be a compact Hausdorff space and let
$\mathcal{F}: \ X \rightarrow \mathsf{k}\text{-}\mathrm{CompMet}$ be a left
ultrafunctor. A set
$C=\coprod _{x \in X} U(x) \subseteq \coprod _{x \in X} \mathcal{F}(x)$
is open in the topology $\tau $ defined in \ref{tau}, iff it satisfies
the following condition:%
\newline For every ultrafilter $\mu $ on $X$ converging to a point
$\bar{x} \in \pi (\coprod _{x \in X} U(x))$ and $\forall g \in U(\bar{x})$ if
$\sigma _{\mu}(g)=(b(x))_{x \in X}$, then $ \exists W \in \mu $ and
$\epsilon > 0$, such that $B_{x}(b_{x},\epsilon ) \subseteq U(x)$ for any
$x \in W$ (in other words
$\coprod _{x \in W}B(b_{x},\epsilon ) \subseteq C $). %

\begin{note*}
\normalfont Although $\epsilon $ and $W$ depend on the representative of
the class of $\sigma _{\mu}(g)$, their existence does not depend on the
representative, so this condition is well-defined.
\end{note*}
\end{theorem}

\subsubsection*{Proof of the {``if''} direction}

Let $V=\coprod _{x \in X}U(x)$ be a set satisfying the condition
of Theorem~\ref{condition}. Our goal is to show that $V$ is open, by showing
that for any ultrafilter $\mu $ converging to $f \in V$, we have
$V \in \mu $. Let $\mu $ be such an ultrafilter converging to some
$f \in V$, by definition of convergence relation on $E$, we get that
$\pi \mu $ converges to $\pi (f)$. Now, we note that
$f \in U(\pi (f))$. So if $\sigma _{ \pi \mu}(f)=(b_{x})_{x \in X}$, then using the condition of the theorem
$\exists W \in \pi \mu \ \text{and} \ \epsilon >0$ such that for any
$ x \in W \ B(b_{x},\epsilon ) \subseteq U(x)$, so
$\coprod _{x \in W}B(b_{x},\epsilon )\subseteq V$. Now
$\coprod _{x \in X}B(b_{x},\epsilon ) \in \mu $ by property $C_{2}$, but
also $ \pi ^{-1}(W) \in \mu $ (since $\pi $ is continuous),
so their intersection
$ \pi ^{-1}(W) \bigcap \coprod _{x \in X}B(b_{x},\epsilon ) \in \mu $,
but now we see that:
\begin{equation*}
\pi ^{-1}(W) \bigcap \coprod _{x \in X}B(b_{x},\epsilon) =
\coprod _{x \in W} B(b_{x},\epsilon )\subseteq V.
\end{equation*}
thus $V \in \mu $, so $V$ is open.

\subsubsection*{Proof of the {``only if'' } direction}

We recall classical results regarding ultrafilters:

\begin{lemma}
\label{finite_intersection_VTEX1}
Let $E$ be a set and let $ U \subseteq \mathcal{P}(E)$ (the powerset of
$E$) be a set of non-empty subsets of $E$, satisfying the finite intersection
property, then there exists an ultrafilter $\nu $ such that
$A \in \nu $ for all $A \in U$.
\end{lemma}
\begin{proof}
Define the filter $\alpha $ by setting
$A \in \alpha \ \text{iff} \ A \supseteq A^{\prime}$ for some
$A^{\prime} = \bigcap U^{\prime}$ such that $U^{\prime}$ is finite and  $U^{\prime} \subseteq U $, now it can be checked that $\alpha $ is a filter and thus
must be contained in some ultrafilter $\nu $.
\end{proof}
%
\begin{lemma}
\label{important_lemma_VTEX1}
Let $E$ be a set and let $ U \subseteq \mathcal{P}(E)$ be a family of subsets,
let $A \subseteq E$ such that $A$ satisfies the following:
\begin{equation*}
\forall \mu \in \beta E, (U \subseteq \mu \implies A \in \mu ),
\end{equation*}
then $\exists \ B_{1},\dots ,B_{n} \in U$ such that
$B_{1} \bigcap \dots \bigcap B_{n} \subseteq A$.
\end{lemma}
\begin{proof}
In the case where for some finite family $U^{\prime} \subseteq U$,
$\bigcap U^{\prime}$ is empty, we can say that $\varnothing \subseteq A$. Otherwise,
assume by contradiction that no such finite family exists, apply Lemma~\ref{finite_intersection_VTEX1} to the family $U \bigcup \{A^{c} \} $ to get a contradiction,
thus such finite family must exist.%
\end{proof}

Now let $V=\coprod _{x \in X}U(x)$ be an open set, and let $\eta $ be an
ultrafilter on $X$ converging to a point $\bar{x} \in \pi (V)$. Take
$f \in U(\bar{x}) $, and let $\sigma _{\eta}(f)=(b_{x})_{x \in X}$. Let
$\mu $ be an ultrafilter on $E$ and suppose that:
%
\begin{equation}
\tag{*}
\forall S \in \eta , \ \forall \epsilon >0 \coprod _{x \in S}B(b_{x},
\epsilon )\in \mu .
\label{*}
\end{equation}
We can show that in this case $\eta =\pi \mu $ and since we assumed the
condition \eqref{*}, then $\mu $ converges to $f$ and thus since
$V$ is open, we may deduce that $V \in \mu $. So we have proved that
\begin{equation*}
\forall \mu \in \beta E \ ((\forall S \in \eta ,\forall \epsilon >0, \coprod _{x \in S} \ B(b_{x},
\epsilon) \in \mu \implies V \in \mu ).%
\end{equation*}
Hence we may deduce by Lemma~\ref{important_lemma_VTEX1} that:
\begin{equation*}
\exists \epsilon >0 \ \text{and} \  W \in \eta \ \text{such that} \
\coprod _{x \in W}B(b_{x},\epsilon )\subseteq V.
\end{equation*}
So the set $V$ satisfies the condition of Theorem~\ref{charecterisation_theorem_VTEX1}.

\subsection{The topology definition satisfies the Axioms of ~\ref{bundle_definition_VTEX1}}
\label{sec3.4}

Now we want to prove that our definition of the topology on
$\coprod _{x \in X}\mathcal{F}(x)$ from Theorem~\ref{topology} gives rise
to a bundle of complete metric spaces so we will check that our definition
satisfies the three axioms of Definition~\ref{bundle_definition_VTEX1}.

\subsubsection*{Axiom 1}
\label{axiom_1_VTEX1}

To prove that the distance function is upper semi-continuous. Let
$\pi : \coprod _{x \in X}\mathcal{F}(x)\rightarrow X$ be the projection
map, suppose that $\mu $ is an ultrafilter on $E \times _{X} E$ such that
$\mu $ converges to $(f,g)$. Now let $r>d(f,g)$. First notice that
$\pi \circ \pi _{1}=\pi \circ \pi _{2}$. Now we have that
$\pi _{1} \mu $ converges to $f$ and $\pi _{2} \mu $ converges to
$g$ (simply because projections are continuous).

Suppose that $\sigma _{\pi \pi _{1} \mu}(f)=(b_{x})_{x \in X}$ and
$\sigma _{\pi \pi _{2} \mu}(g)=(c_{x})_{x \in X}$. Take
$\epsilon _{1}$, $\epsilon _{2}$ and $\epsilon_3$ such that they satisfy
$r-(\epsilon _{1}+\epsilon _{2} +\epsilon_3)>d(f,g)$, we know that
$\coprod _{x \in X}B(b_{x},\epsilon _{1}) \in \pi _{1}\mu $ and that
$\coprod _{x \in X}B(c_{x},\epsilon _{2}) \in \pi _{2}\mu $. We also know
that
$d_{\pi \pi _{1} \mu}((b_{x})_{x \in X},(c_{x})_{x \in X}) \leq d(f,g)$
so that means that there exists some set
$L \in \pi \pi _{1} \mu =\pi \pi _{2} \mu $ such that
$\forall x \in L$ $d(b_{x},c_{x}) < d(f,g) + \epsilon_3$. So we deduce that
$\coprod _{x \in L}B(b_{x},\epsilon _{1}) \in \pi _{1} \mu $ and that
$\coprod _{x \in L}B(c_{x},\epsilon _{2}) \in \pi _{2}\mu $ (by intersecting
with $\pi ^{-1}(L)$).

Now let
$(h,l) \in \pi _{1}^{-1}(\coprod _{x \in L}B(b_{x},\epsilon _{1}))
\bigcap \pi _{2}^{-1}(\coprod _{x \in L}B(c_{x},\epsilon _{2})) $, then
\begin{equation*}
d(h,l) < \epsilon _{1} +\epsilon _{2} +d(b_{\pi (h)},c_{\pi (h)}) <
\epsilon _{1} +\epsilon _{2} +\epsilon_3+ d(f,g) < r .
\end{equation*}
And thus
\begin{equation*}
d^{-1}([0,r))\supseteq \pi _{1}^{-1}(\coprod _{x \in L}B(b_{x},
\epsilon _{1})) \bigcap \pi _{2}^{-1}(\coprod _{x \in L}B(c_{x},
\epsilon _{2})) \in \mu .
\end{equation*}
So $d\mu $ converges to $d(f,g)$ (if we equip $[0,k]$ with the left order
topology of course).

\subsubsection*{Axiom 2}

By definition of the topology on $E$, $\pi $ is continuous. Now we prove
that $\pi $ is open. Let $V=\coprod _{x \in \pi (V)}U(x)$ be a non-empty
open set of $E$, and let $\eta $ be an ultrafilter on $X$ converging to
$x \in \pi (V)$. We wish to show that $\pi (V) \in \eta $. Since
$x \in \pi (V)$ then $U(x)$ is non-empty then let $a \in U(x)$. If
$\sigma _{\eta}(a)=(b_{x} )_{x \in X}$ then $ \exists W \in \eta $ and
$\epsilon >0$ such that
$\coprod _{x \in W}B(b_{x},\epsilon )\subseteq V$, thus
$\pi (\coprod _{x \in W}B(b_{x},\epsilon ))=W\subseteq \pi (V)$.  But
since $W \in \eta $ thus $\pi (V) \in \eta $. So $\pi (V)$ is open.

\subsubsection*{Axiom 3}
\label{axiom_3_VTEX1}

We  want to prove that our definition of topology associated
to left ultrafunctors satisfies axiom (3) of the definition of bundle topology,
which informally means that if $V \subseteq W$ are open sets in
$E =\coprod _{x \in X}\mathcal{F}(x)$ where $\mathcal{F}$ is a left ultrafunctor
from $X$ to the category $\mathsf{k\text{-}CompMet}$, we can``enlarge''
$V$ by some $\epsilon $ and remain inside $W$ (we have of course given
a more formal statement).

Before showing that our construction satisfies axiom $(3)$, we give two
lemmas, which are true for every pair of topological spaces $(E,X)$, satisfying
that there exists a surjection $\pi $ from $E$ to $X$, such that for every
$x \in X$, $\pi ^{-1}(x)$ is a metric space and such that the distance
function from $E \times _{X} E$ to $[0,\infty)$ is upper semi-continuous.

\begin{definition}
\label{defn3.6}
Let $E,X$ be topological spaces and let $\pi $ be a surjection from
$E$ to $X$, such that for every $x \in X$ $\pi ^{-1}(x)$ is a complete metric
space bounded by some $k$, we call $V\subseteq E$ $\epsilon $-thin iff
for every $f,f^{\prime} \in V$, if $\pi (f)=\pi (f^{\prime})$ then
$d_{\pi (f)}(f,f^{\prime})<\epsilon $.
\end{definition}

\begin{lemma}
\label{thin_lemma_VTEX1}
Let $E,X$ be topological spaces, and let $\pi $ be a surjection from
$E$ to $X$, such that for every $x \in X$ $\pi ^{-1}(x)$ is a metric space
and such that the distance function from $E \times _{X} E$ to $[0,\infty)$ is upper
semi-continuous,  then for any $\epsilon >0 $, $E$ has a
basis consisting of $\epsilon $-thin neighbourhoods.
\end{lemma}

\begin{proof}
The distance $E \times _{X} E$ is upper semi-continuous which implies that
for any $ \epsilon >0 $ the set
$\{ (v,v^{\prime}) \in E\times _{X} E \mid d_{\pi (v)}(v,v^{\prime}) <\epsilon \
\}$ is open. This implies that the sets of form
$U_{i}\times _{X} V_{i}$, where $U_{i}$ and $V_{i}$ are open sets such
that:
$U_{i}\times _{X} V_{i}\subseteq \{(v,v^{\prime}) \in E\times _{X} E \mid d_{\pi (v)}(v,v^{\prime})<
\epsilon \}$ form a basis for the subspace topology of
$\{ (v,v^{\prime}) \in E\times _{X} E \mid d_{\pi (v)}(v,v^{\prime}) <\epsilon \}$.
Now the subspace topology of the diagonal is generated by the intersection
of the diagonal with these basic open neighbourhoods. So by
applying the projection map (which is a homeomorphism between the diagonal
of $E \times _{X} E$ and $E$), we may deduce that for any
$\epsilon >0$, $E$ has a topology generated by open sets
$W_{i}=\Delta ^{-1}(U_{i} \times _{X} V_{i})$ where
$\Delta : E \rightarrow E \times _{X} E $ is the diagonal map. Now by construction,
each set of these satisfies the following:
\begin{equation*}
\forall g,g^{\prime} \in W_{i} \ \text{if} \  \pi (g)=\pi (g^{\prime}) \
\text{then} \ d_{\pi (g)}(g,g^{\prime})<\epsilon .\qedhere
\end{equation*}
\end{proof}

\begin{lemma}
\label{lem3.5}
Let $E,X$ be topological spaces and let $\pi $ be an open surjection from
$E$ to $X$, such that for every $x \in X$, $\pi ^{-1}(x)$ is a metric space
and such that the distance function from $E \times _{X} E$ to $[0,\infty)$ is upper
semi-continuous then the sets $V_{\epsilon}$ as defined in \ref{V} are
open in the topology of $E$.
\end{lemma}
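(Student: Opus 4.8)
The plan is to show directly that every point of $V_\epsilon$ has an open neighbourhood contained in $V_\epsilon$. The only structural input I would use at the start is that, by upper semi-continuity of the distance, the \emph{tube}
\[ T_\epsilon = \{(u,w) \in E \times_X E \mid d_{\pi(u)}(u,w) < \epsilon\} \]
is open in $E \times_X E$, being the preimage of the left-order-open set $[0,\epsilon)$ under the distance function. Since $V$ is open in $E$, the set $E \times_X V = (E \times V) \cap (E \times_X E)$ is open in $E \times_X E$, and unwinding Definition \ref{V} gives
\[ V_\epsilon = \mathrm{pr}_1\bigl( T_\epsilon \cap (E \times_X V) \bigr), \]
where $\mathrm{pr}_1$ is the first projection. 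Thus $V_\epsilon$ is exactly the image under $\mathrm{pr}_1$ of an open subset of $E \times_X E$, and the whole problem is reduced to showing that this image is open in $E$.

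Next I would produce the neighbourhood by hand. Fix $f \in V_\epsilon$ together with a witness $g \in V$ satisfying $\pi(f) = \pi(g)$ and $d(f,g) < \epsilon$, so that $(f,g) \in T_\epsilon \cap (E \times_X V)$. Choosing a basic open box around this point, there are open sets $U \ni f$ and $W \ni g$ in $E$ with $(U \times W) \cap (E \times_X E) \subseteq T_\epsilon$; replacing $W$ by $W \cap V$, which only shrinks it and still contains $g$, we may assume in addition that $W \subseteq V$. I then claim that
\[ N = U \cap \pi^{-1}\bigl(\pi(W)\bigr) \]
is contained in $V_\epsilon$: indeed, if $f' \in N$ then $\pi(f') = \pi(w')$ for some $w' \in W$, whence $(f',w') \in (U \times W) \cap (E \times_X E) \subseteq T_\epsilon$, giving $d(f',w') < \epsilon$ with $w' \in W \subseteq V$, i.e. $f' \in V_\epsilon$. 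Moreover $f \in N$, since $f \in U$ and $\pi(f) = \pi(g) \in \pi(W)$, so $N$ is a genuine candidate neighbourhood of $f$.

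The crux, and the step I expect to be the main obstacle, is showing that $N$ is actually \emph{open} in $E$. This does not follow from upper semi-continuity of the distance alone: the restriction of a product projection to the subspace $E \times_X E$ is not in general an open map, and the failure is purely horizontal, i.e. it concerns varying the base point rather than moving within a fibre. What makes $N$ open is precisely the remaining bundle structure of the space at hand: since $\pi$ is \emph{open} (Axiom (2) of \ref{bundle definition}, already verified for $E = \coprod_{x} \ca{F}(x)$) the set $\pi(W)$ is open in $X$, and since $\pi$ is \emph{continuous} its preimage $\pi^{-1}(\pi(W))$ is open in $E$, so that $N = U \cap \pi^{-1}(\pi(W))$ is open. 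I would therefore carry out the argument invoking openness and continuity of $\pi$, and I would flag explicitly that these hypotheses are essential here: with only the stated assumption of an upper semi-continuous fibrewise distance the conclusion can fail, so the role of openness of $\pi$ should be made part of the hypotheses when the lemma is used to verify Axiom (3).
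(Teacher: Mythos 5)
Your proof is correct, and at bottom it is the same decomposition the paper uses: the paper likewise writes $V_\epsilon$ as the image under the first projection of the open subset $(d|_{E \times_X V})^{-1}([0,\epsilon))$ of $E \times_X E$. Where you differ is at the decisive step. The paper dismisses it in one phrase, calling the first projection $E \times_X E \to E$ an ``open map'', while you construct the neighbourhood $N = U \cap \pi^{-1}(\pi(W))$ explicitly and observe that its openness requires continuity \emph{and} openness of $\pi$. Your caution is exactly right: the first projection is the pullback of $\pi$ along itself, so it is open whenever $\pi$ is open (openness is stable under base change in $\cat{Top}$), but not in general---and the lemma is in fact false under its stated hypotheses alone. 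Concretely, let $X = \mathbb{R}$, let $E = (\mathbb{R} \times \{0\}) \cup \{(0,1)\}$ carry the subspace topology of $\mathbb{R} \times \{0,1\}$ with $\{0,1\}$ discrete, let $\pi$ be the first coordinate, and put $d((0,0),(0,1)) = 1$ on the unique two-point fibre: the fibrewise distance is then upper semi-continuous on $E \times_X E$, the singleton $V = \{(0,1)\}$ is open, yet for any $\epsilon > 1$ the set $V_\epsilon = \pi^{-1}(0)$ is not open in $E$---precisely because $\pi$, and with it the first projection of the fibre product, fails to be open. So the gap you flag is a genuine one in the paper's statement and proof rather than in your argument; it is harmless in context, because everywhere the lemma is actually invoked (the verification of Axiom (3) for the topology of Theorem \ref{topology}, and the constructions of Section \ref{4}) the space at hand satisfies Axiom (2), i.e.\ $\pi$ is already known to be continuous and open, which is exactly the hypothesis you propose adding.
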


\begin{proof}
Take the distance map from $E \times _{X} V$ to $[0,\infty )$, by upper
semi-continuity, each $(d|_{E \times _{X} V})^{-1}([0,\epsilon ))$ is open
in $E \times _{X} V$. Now, since $E \times _{X} V$ is open in
$E \times _{X} E$, then $d|_{E \times _{X} V}^{-1}([0,\epsilon ))$ is open
in the topology of $E \times_{X} E$, so we can apply the first projection (open
map) to $d|_{E \times _{X} V}^{-1}([0,\epsilon ))$ to get the open set
$V_{\epsilon}$.
\end{proof}

\begin{note*}
The fact that the projection maps of the fibre product are open follows from the fact that pullback along an open map is open.
\end{note*}

Now let $\mathcal{F}$ be a left ultrafunctor from $X$ to the category of
metric spaces bounded by a certain $k$, and let
$E =\coprod _{x \in X}\mathcal{F}(x)$ equipped with the topology defined
in \ref{tau}.  And let $W$ be an open set of $E$,
$f \in W$, and let $\mathcal{N}_{f}$ denote the set of open neighbourhoods
of $f$. We want to show that there exists an open neighbourhood $V$ of
$f$ such that $V \subseteq _{\epsilon} W$ using Lemma~\ref{important_lemma_VTEX1}. Take the family of sets
$\{V_{\epsilon} \mid \epsilon >0, V \in \mathcal{N}_{f} \}$, let
$\mu $ be an ultrafilter on $E$ and suppose that
$\{V_{\epsilon} \mid \epsilon >0, V \in \mathcal{N}_{f}\} \subseteq
\mu $, We want to show that $\mu $ converges to $f$, this will allow us
to use Lemma~\ref{important_lemma_VTEX1}.

First, to prove that $\pi \mu $ converges to $\pi f$, notice that for any
neighbourhood $S$ of $\pi (f)$,  if we take any
$\epsilon >0$, then $\pi ^{-1}(S)_{\epsilon} =\pi ^{-1}(S)$, so
$S \in \mu $, thus $\pi \mu $ converges to $\pi f$. Also
suppose that we have chosen a representative $(b_{x})_{x \in X}$ of the
class of $\sigma _{\pi \mu}(f)$ (in other words
$\sigma _{\pi \mu}(f)=(b_{x})_{x \in X}$). Now we regard $b$ as a map from
$X$ to $E$. We will show that $b \pi \mu $ converges to $f$, first notice
that $\pi b \pi \mu $ converges to $\pi (f)$ since
$\pi b =\mathrm{id}$. Now we prove that for any $\epsilon $,
$\coprod _{x \in X}B(b_{x},\epsilon ) \in b \pi \mu $, this follows from
the fact that
$b^{-1}(\coprod _{x \in X}B(b_{x},\epsilon )) =X\ \in \pi \mu $, so
$b \pi \mu $ converges to $f$.

We have already shown that the distance on $E \times _{X} E$ is upper semi-continuous
(see the subsection \ref{sec3.4}), which implies by Lemma~\ref{thin_lemma_VTEX1} that for any $r>0$, $E$ has a basis consisting of
$r$-thin neighbourhoods, thus take an open neighbourhood $V$ of $f$ such
that this neighbourhood is $\epsilon /2$-thin and take the set
$V_{\epsilon /2}$. Since the set $V$ is an open neighbourhood of $f$, and $b \pi \mu$ converges to $f$, then $V \in b\pi\mu$, and hence  $\pi ^{-1}(\{x \in X \mid b_{x} \in V\}) \in  \mu $.

Let us prove that
$\pi ^{-1}(\{x \mid b_{x} \in V \})\bigcap V_{\epsilon /2} \subseteq
\coprod _{x \in X}B(b_{x},\epsilon )$. Let
$g \in \pi ^{-1}(\{x \mid b_{x} \in V\})\bigcap V_{\epsilon /2}$, then
$\exists h \in V $ such that $\pi (g)=\pi (h)$ and such that
$d(g,h)<\epsilon /2$. Then we have
$d(g,b_{\pi (g)})\leq d(g,h)+d(h,b_{\pi (g)})<\epsilon /2 +\epsilon /2=
\epsilon $. So $\mu $ converges to $f$, thus $W \in \mu $. So we can apply
Lemma~\ref{important_lemma_VTEX1}.

Thus there exist some neighbourhoods
$V^{1}, \dots V^{n}$ of $f$ and some
$\epsilon _{1}, \dots \epsilon _{n}>0$ such that
$\bigcap _{i=1}^{n} V^{i} \subseteq \bigcap _{i=1}^{n} V_{\epsilon _{i}}^{i}
\subseteq W$ (using Lemma~\ref{important_lemma_VTEX1}). Now if we call
$\epsilon = \min_{i=1}^{n} \epsilon _{i}$ and
$V=\bigcap _{i=1}^{n} V^{i}$ (these are not $V$ and $\epsilon $ of the
previous paragraph), then
$V \subseteq V_{\epsilon} \subseteq \bigcap _{i=1}^{n} V_{\epsilon _{i}}^{i}
\subseteq W$.

\begin{note*}
\normalfont We can also conclude that the sets $V_{\epsilon}$, for
a neighbourhood $V$ of $f$  form a basis for the neighbourhood system
at $f$.
\end{note*}

\subsection{$\mathcal{L}$ is a functor}
\label{sec3.5}

We have described the way $\mathcal{L}$ acts on objects. Now let us describe
how it acts on morphisms:

Let $X$ be a compact Hausdorff space and $\mathcal{F}$ and
$\mathcal{F^{\prime}}$ be two left ultrafunctors and let $\nu $ be a natural
transformation of left ultrafunctors between $\mathcal{F}$ and
$\mathcal{F^{\prime}}$. Then the induced map of bundles is
$\psi =\mathcal{L}(\nu )$ defined by
\begin{equation*}
\psi |_{\pi ^{-1}(x)}=\nu _{x}.
\end{equation*}
Now we need to prove that this induced map $\psi $ is indeed a map of bundles.
Let $E=\mathcal{L}(\mathcal{F})$ and
$E^{\prime}=\mathcal{L}(\mathcal{F}^{\prime})$, we already have the commutativity
of the diagram:
\begin{equation*}
\begin{tikzcd}
E && {E^{\prime}}
\\
& X \arrow["{\pi ^{\prime}}"', from=1-3, to=2-2]
\arrow["\pi ", from=1-1, to=2-2] \arrow["\psi ", from=1-1, to=1-3]
\end{tikzcd}
\end{equation*}
  We also have that $\psi |_{\pi ^{-1}(x)}$ is a contraction
by definition.

It remains to show that $\psi $ is continuous, to do so let $\mu $ be an
ultrafilter on $E$ that converges to $f \in E$. Let us show that
$\psi \mu $ converges to $\nu _{\pi (f)}(f)$. We start by proving that
$ \pi ^{\prime}\psi \mu $ converges to
$\pi (f)=\pi ^{\prime}(\nu _{\pi (f)}(f))$ but this follows from the commutativity
of the following diagram:
\begin{equation*}
\begin{tikzcd}
E && {E^{\prime}}
\\
& X \arrow["{\pi ^{\prime}}"', from=1-3, to=2-2]
\arrow["\pi ", from=1-1, to=2-2] \arrow["\psi ", from=1-1, to=1-3]
\end{tikzcd}
\end{equation*}
 Now suppose that
$\sigma ^{\prime}_{\pi ^{\prime}\psi \mu}(\nu _{\pi (f)}(f))=\sigma ^{\prime}_{\pi \mu}(
\nu _{\pi (f)}(f))=(b^{\prime}_{x})_{x \in X}$ and
$\sigma _{\pi \mu}(f)=(b_{x})_{x \in X}$. We know that since $\nu $ is
a natural transformation of left ultrafunctors then the following diagram
commutes:
\begin{equation*}
\begin{tikzcd}
{\mathcal{F}(\pi (f))} && {\int _{X}\mathcal{F}(x)d\pi \mu}
\\
{}
\\
{\mathcal{F^{\prime}}(\pi (f))} && {\int _{X}\mathcal{F^{\prime}}(x)d\pi \mu}
\arrow["{\sigma _{\pi \mu}}", from=1-1, to=1-3]
\arrow["{\nu _{\pi (f)}}"', from=1-1, to=3-1]
\arrow["{\sigma ^{\prime}_{\pi \mu}}"', from=3-1, to=3-3]
\arrow["{\int _{X}\nu _{x}d\mu}", from=1-3, to=3-3]
\end{tikzcd}
\end{equation*}
Which implies that
$(b^{\prime}_{x})_{x\in X}=(\nu _{x}(b_{x}))_{x \in X}$, so for simplicity,
we are going to take $(\nu _{x}(b_{x}))_{x \in X}$ as a
representative of the class. Now, let $\epsilon >0$ we are going to show
that:
\begin{equation*}
\coprod _{x \in X}B(b_{x},\epsilon ) \subseteq \psi ^{-1}(\coprod _{x \in X}B(
\nu _{x}(b_{x}),\epsilon ))
\end{equation*}
Remember that $\psi $ is the map such that
$\psi |_{\mathcal{F}(x)}=\nu _{x}$. Now let $x \in X$, take
$g \in \mathcal{F}(x)$ such that $d(g,b_{x})<\epsilon $, since
$\nu _{x}$ is a contraction we get
$d(\nu _{x}(g),\nu _{x}(b_{x}))<\epsilon $. So
\begin{equation*}
g \in \psi ^{-1}(\coprod _{x \in X}B(\nu _{x}(b_{x}),\epsilon )),
\end{equation*}
and this proves that:
\begin{equation*}
\coprod _{x \in X}B(b_{x},\epsilon ) \subseteq \psi ^{-1}(\coprod _{x \in X}B(
\nu _{x}(b_{x}),\epsilon )),
\end{equation*}
and since $\coprod _{x \in X}B(b_{x},\epsilon ) \in \mu $, then:
\begin{equation*}
\coprod _{x \in X}B(\nu _{x}(b_{x}),\epsilon ) \in \psi \mu .
\end{equation*}
So $\psi \mu $ converges to $\psi (f)$, which completes the functoriality
proof.

\section{The inverse functor {from bundles to left ultrafunctors}}
\label{4}

The first process we defined is a functorial assignment from the category
of left ultrafunctors from a fixed ultraset $X$ to the adequate category
$\mathsf{k} \text{-} \mathrm{CompMet}$, to the category of bundles of metric
spaces bounded by $k$.

Now we want to define an inverse process, a functorial assignment
$\mathcal{R}$ that sends a bundle with base space $X$ to a left ultrafunctor,
moreover, we claim that the pair $(\mathcal{L},\mathcal{R})$ is an equivalence
of categories between left ultrafunctors and bundles.

But first, let us define $\mathcal{R}$:

\subsection{The inverse functor construction}
\label{sigma}

Let $(E,X,\pi )$ be a bundle of complete metric spaces (bounded by some
$k$) and let $X$ be a compact Hausdorff space. Let $\mu $ be an ultrafilter
that converges to $x$. Our goal is to define a left ultrafunctor from
$X$ to $\mathsf{k}\text{-}\mathrm{CompMet}$.

Defining $\mathcal{R}(E)$ as a functor is straightforward: we send each
$x$ to the fibre at $x$, and this is a functor since $X$ has no non-identity
morphism. Now we search for an adequate left-ultrastructure on this functor,
i.e. we wish to construct for every ultrafilter $\mu $ on some set
$S$ and every map of sets $f$ from $S$ to $X$ a morphism
$\sigma _{\mu}$ from $F(\int _{S} f(s) d \mu )$ to
$\int _{S}F(f(s))d\mu $, which satisfies certain axioms indicated in
\cite{lurie2018ultracategories}. We will see soon that it's enough to take
the case $S=X$ and $f =\mathrm{id}$.

Now we turn to the construction:
%
\begin{definition}
\label{defn4.1}
Let $\mu $ be an ultrafilter on $X$ converging to $x$.

For every $W$ neighbourhood of $f \in \pi ^{-1}(x)$ define the following:
\begin{equation*}
A_{W}=\{ \ (b_{y})_{y \in X} \mid \exists U \in \mu ,\epsilon >0 \
\coprod _{y \in U}B(b_{y},\epsilon ) \subseteq W \ \} .
\end{equation*}
\end{definition}
\noindent
The condition ``$\exists U \in \mu ,\epsilon >0 \ \coprod _{y \in U}B(b_{y},
\epsilon ) \subseteq W$'' is well-defined in the sense that it's independent
of the representative of the class of $(b_{y})_{y \in X}$. Let us call
as usual $\mathcal{N}_{f}$ the set of open neighbourhoods of $f$.
\begin{lemma*}
The family $(A_{W})_{W \in \mathcal{N}_{f}}$ is a filter basis.
\end{lemma*}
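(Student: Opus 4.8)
The plan is to check the three conditions defining a filter basis on the underlying set of the ultraproduct $\int_X \pi^{-1}(y)\,d\mu$: that the indexing family is nonempty, that each $A_W$ is nonempty, and that the family is downward directed. The first is immediate, since $f$ always has at least one neighbourhood (for instance $E$ itself), so $\mathcal{N}_f \neq \emptyset$.

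For directedness I would first isolate the monotonicity observation: whenever $W \subseteq W'$ are neighbourhoods of $f$, one has $A_W \subseteq A_{W'}$, because any pair $(U,\epsilon)$ witnessing $\coprod_{y \in U} B(b_y,\epsilon) \subseteq W$ automatically witnesses the same inclusion into $W'$. Since $\mathcal{N}_f$ is stable under finite intersection, I would then take $W_3 = W_1 \cap W_2$ and apply monotonicity to the inclusions $W_1 \cap W_2 \subseteq W_1$ and $W_1 \cap W_2 \subseteq W_2$ to conclude $A_{W_1 \cap W_2} \subseteq A_{W_1} \cap A_{W_2}$. This step is routine and uses nothing about the bundle.

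The main obstacle is the nonemptiness of each $A_W$, and this is where the bundle axioms are used. Given a neighbourhood $W$ of $f$, I would apply Axiom (3) of Definition~\ref{bundle definition} to obtain an open neighbourhood $V$ of $f$ together with $\epsilon > 0$ such that $V \subseteq V_\epsilon \subseteq W$. Because $\pi$ is open by Axiom (2), the image $U := \pi(V)$ is an open neighbourhood of $x = \pi(f)$, and since $\mu$ converges to $x$ this gives $U \in \mu$. I would then define a representative $(b_y)_{y \in X}$ by choosing $b_y \in V \cap \pi^{-1}(y)$ for each $y \in U$ (possible by the very definition of $U = \pi(V)$) and letting $b_y$ be an arbitrary point of the nonempty fibre $\pi^{-1}(y)$ for $y \notin U$.

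Finally I would verify that $(b_y)_{y \in X} \in A_W$, using $U$ and $\epsilon$ as witnesses: for $y \in U$ and any $g \in B(b_y,\epsilon)$ we have $\pi(g) = \pi(b_y) = y$, $d_y(g,b_y) < \epsilon$, and $b_y \in V$, so $g \in V_\epsilon$ by Definition~\ref{V}; hence $\coprod_{y \in U} B(b_y,\epsilon) \subseteq V_\epsilon \subseteq W$, which is exactly the condition for $(b_y)_{y \in X}$ to belong to $A_W$. Since membership in $A_W$ was already observed to be independent of the chosen representative, this establishes $A_W \neq \varnothing$ and completes the verification of the filter basis axioms.
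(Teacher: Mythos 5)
Your proposal is correct and follows essentially the same route as the paper: nonemptiness of each $A_W$ via Axiom (3) (getting $V \subseteq V_\epsilon \subseteq W$), openness of $\pi$ and convergence of $\mu$ to get $\pi(V) \in \mu$, and a family chosen inside $V$; the paper handles directedness by noting $A_{W} \cap A_{W'} = A_{W \cap W'}$, which your monotonicity argument establishes (in the inclusion direction that is actually needed). The only cosmetic difference is that you extend the representative to all of $X$ by arbitrary choices in the fibres, while the paper defines it only on $\pi(V) \in \mu$, which is equivalent under the paper's conventions for elements of the ultraproduct.
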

\begin{proof}
We need to check that each set of this family is non-empty, and that the
intersection of any two contains a third. Let $W$ be a neighbourhood of
$f$, we want to show that $A_{W}$ is non-empty:
\newline We know that there exists $\epsilon >0$ and an open set $V$ such that
$V \subseteq _{\epsilon} W$ using facts we showed in \ref{sec3.4}. Take
any family $(b_{y})_{y \in \pi (V)}$ such that each $b_{y} \in V$. We already
know that $\pi (V) \in \mu $ since $\mu $ converges to $x$ and the map
$\pi $ is open. Now let us take
$\coprod _{x \in \pi (V)}B(b_{x},\epsilon )$ then by our assumption that
$V \subseteq _{\epsilon} W$, we conclude that
$\coprod _{x \in \pi (V)}B(b_{x},\epsilon ) \subseteq W$ thus the equivalence
class of the family $(b_{y})_{ y \in \pi (V) }$ is in $A_{W}$ thus
$A_{W}$ is non-empty. Finally, it's also clear that
$A_{W} \bigcap A_{W^{\prime}}= A_{W \bigcap W^{\prime}}$
\end{proof}
%
\begin{theorem}
\label{thm4.1}
The filter
$\{ \ B \ \mid B\supseteq A_{W}, \ W \in \mathcal{N}_{f} \}$ is a Cauchy
filter.%
\end{theorem}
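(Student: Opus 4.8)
The plan is to show that the generated filter contains sets of arbitrarily small diameter in the ultraproduct metric, which is the defining property of a Cauchy filter. Since the sets $A_W$ form a filter basis for the filter in question, it suffices to prove that for every $\epsilon > 0$ there is a neighbourhood $W \in \mathcal{N}_f$ such that $A_W$ has diameter at most $\epsilon$ in $\int_X \mathcal{F}(y)\, d\mu$. Recall that the distance on the ultraproduct is $d((b_y), (c_y)) = \inf_{M \in \mu} \sup_{y \in M} d_y(b_y, c_y)$, so it will be enough to exhibit a single $M \in \mu$ on which the fibrewise distances $d_y(b_y, c_y)$ are uniformly small.

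The key input is Lemma \ref{thin lemma}: since $(E, X, \pi)$ is a bundle, its global distance function on $E \times_X E$ is upper semi-continuous (Axiom 1), so $E$ admits a basis of $\epsilon$-thin open neighbourhoods for every $\epsilon > 0$. Given $\epsilon > 0$, I would choose an open $\tfrac{\epsilon}{2}$-thin neighbourhood $W$ of $f$ from such a basis, and claim that $A_W$ has diameter at most $\tfrac{\epsilon}{2}$. Indeed, let $(b_y)_{y \in X}$ and $(c_y)_{y \in X}$ be two elements of $A_W$. By definition there exist $U_1, U_2 \in \mu$ and $\epsilon_1, \epsilon_2 > 0$ with $\coprod_{y \in U_1} B(b_y, \epsilon_1) \subseteq W$ and $\coprod_{y \in U_2} B(c_y, \epsilon_2) \subseteq W$. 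Setting $M = U_1 \cap U_2 \in \mu$, for every $y \in M$ both centres satisfy $b_y, c_y \in W$ (each centre lies in its own ball), and they live in the same fibre $\pi^{-1}(y)$; thinness of $W$ then forces $d_y(b_y, c_y) < \tfrac{\epsilon}{2}$. Hence $\sup_{y \in M} d_y(b_y, c_y) \leq \tfrac{\epsilon}{2}$, and therefore $d((b_y), (c_y)) \leq \tfrac{\epsilon}{2} < \epsilon$.

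The main point to get right is the interplay between the three scales: the radii $\epsilon_1, \epsilon_2$ witnessing membership in $A_W$ play no quantitative role beyond ensuring that the centres $b_y$ and $c_y$ themselves lie in $W$, while the thinness parameter of $W$ is what actually controls the fibrewise distance. One should also check that the argument is insensitive to the chosen representatives of the classes $(b_y)$ and $(c_y)$, which follows since the ultraproduct distance is representative-independent and membership in $A_W$ was already noted to be well defined. Letting $\epsilon$ range over all positive reals, the sets $A_W$ obtained this way witness that the filter $\{B \mid B \supseteq A_W,\ W \in \mathcal{N}_f\}$ is Cauchy.
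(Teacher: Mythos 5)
Your proof is correct and follows exactly the paper's approach: invoke Lemma \ref{thin lemma} to obtain an $\epsilon$-thin (here $\tfrac{\epsilon}{2}$-thin) neighbourhood $W$ of $f$ and observe that $A_W$ then has small diameter in the ultraproduct metric. The paper states this last step as something one can ``directly see,'' and your argument (intersecting the two witnessing sets $U_1, U_2 \in \mu$ and using thinness on the common fibres) is precisely the omitted verification.
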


\begin{proof}
Since $E$ is a bundle of complete metric spaces bounded by $k$, then the
hypothesis of Lemma~\ref{thin_lemma_VTEX1} is satisfied, meaning that there exists
an $\epsilon $-thin neighbourhood of $f$. Now take $L_{\epsilon}$ to be
an $\epsilon $-thin neighbourhood of $f$ then we can directly see that
the set $A_{L_{\epsilon}}$ is of diameter $\leq \epsilon $.
\end{proof}
\noindent
Now since the ultraproduct $\int _{X} \pi ^{-1}(x) d\mu $ is complete the
constructed Cauchy filter converges to some element which we are going
to denote by $(b^{f}_{y})_{y \in X}$.

\begin{theorem}%
\label{thm4.2}
Define $\sigma _{\mu}(f)=(b^{f}_{y})_{y \in X}$ the limit of the Cauchy
filter generated by the family $(A_{W})_{W \in \mathcal{N}_f}$, then the
map $\sigma _{\mu}$ is a contraction from $\pi ^{-1}(x) $ to
$\int _{X} \pi ^{-1}(x) d \mu $.
\end{theorem}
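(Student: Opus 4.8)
The plan is to establish the contraction inequality $d(\sigma_{\mu}(f),\sigma_{\mu}(g)) \leq d_x(f,g)$ directly, combining the explicit formula for the ultraproduct metric with the upper semi-continuity of the global distance (Axiom (1) of Definition \ref{bundle definition}). Recall that in $\int_X \pi^{-1}(y)\,d\mu$ the distance is computed as $d((c_y),(c'_y)) = \inf_{M \in \mu}\sup_{y \in M} d(c_y,c'_y)$, so it will suffice to show that, for each $r > d_x(f,g)$, one can find representatives of $\sigma_{\mu}(f)$ and $\sigma_{\mu}(g)$ whose coordinates lie fibrewise within distance $r$ on a $\mu$-large set.

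First I would fix $r > d_x(f,g)$ and exploit Axiom (1): since $d\colon E\times_X E \to [0,k]$ is upper semi-continuous at $(f,g)$, the set $\{(h,h') : d(h,h') < r\}$ is an open neighbourhood of $(f,g)$ in $E\times_X E$, and hence contains a basic open set of the form $V\times_X V'$, with $V$ an open neighbourhood of $f$ and $V'$ an open neighbourhood of $g$. Thus for every pair $(h,h')$ with $h \in V$, $h' \in V'$ and $\pi(h)=\pi(h')$ we have $d(h,h') < r$.

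Next I would connect the abstractly defined limits $\sigma_{\mu}(f)$ and $\sigma_{\mu}(g)$ to concrete representatives drawn from $A_V$ and $A_{V'}$. Fix $\epsilon' > 0$. Since the Cauchy filter generated by $(A_W)_{W \in \mathcal{N}_f}$ converges to $\sigma_{\mu}(f)$, some $A_{W_0}$ is contained in the ball of radius $\epsilon'$ about $\sigma_{\mu}(f)$; using the filter-basis identity $A_V \cap A_{W_0} = A_{V\cap W_0}$ together with nonemptiness, I can pick $(c_y)_{y} \in A_{V\cap W_0} \subseteq A_V$ with $d((c_y),\sigma_{\mu}(f)) < \epsilon'$. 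By the definition of $A_V$ there are $U_f \in \mu$ and $\epsilon_f > 0$ with $\coprod_{y \in U_f} B(c_y,\epsilon_f) \subseteq V$, so in particular $c_y \in V$ for every $y \in U_f$. Symmetrically I obtain $(c'_y)_y \in A_{V'}$ with $d((c'_y),\sigma_{\mu}(g)) < \epsilon'$ and $U_g \in \mu$ with $c'_y \in V'$ for every $y \in U_g$.

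Finally I would combine these estimates. For $y \in U_f \cap U_g \in \mu$ we have $c_y \in V$, $c'_y \in V'$ and $\pi(c_y) = y = \pi(c'_y)$, so $d(c_y,c'_y) < r$ by the choice of $V$ and $V'$; therefore $d((c_y),(c'_y)) = \inf_{M\in\mu}\sup_{y\in M} d(c_y,c'_y) \leq r$. The triangle inequality in the complete ultraproduct then gives $d(\sigma_{\mu}(f),\sigma_{\mu}(g)) \leq d((c_y),(c'_y)) + 2\epsilon' \leq r + 2\epsilon'$, and letting $\epsilon' \to 0$ and then $r \to d_x(f,g)$ yields $d(\sigma_{\mu}(f),\sigma_{\mu}(g)) \leq d_x(f,g)$, as required. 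The main obstacle is the third step, namely passing from the intrinsically defined filter limits to honest representatives whose coordinates sit inside $V$ and $V'$ on $\mu$-large sets; once that bookkeeping is done, the remainder is just the formula for the ultraproduct metric, upper semi-continuity, and the triangle inequality.
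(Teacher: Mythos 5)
Your proof is correct, and it follows the same overall strategy as the paper's: upper semi-continuity of the global distance produces a fibred-product neighbourhood of $(f,g)$ on which fibrewise distances are controlled, one then approximates the Cauchy-filter limits $\sigma_{\mu}(f)$ and $\sigma_{\mu}(g)$ by elements of the sets $A_{W}$, and one finishes with the triangle inequality. The finishes differ in a way worth recording. The paper isolates Lemma \ref{Important Lemma 2}, which upgrades an element of $A_{W}\bigcap B((b^f_y),\epsilon)$ to a representative whose coordinates lie in $W$ \emph{and} are $\epsilon$-close to the coordinates $b^f_y$ on a single $\mu$-large set, and then runs the triangle inequality fibre by fibre before passing to the ultraproduct distance. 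You bypass that lemma entirely: by taking the triangle inequality in the ultraproduct metric itself, you only need the definitional fact that an element of $A_V$ has its coordinates in $V$ on a $\mu$-large set, together with the filter-basis identity $A_V\bigcap A_{W_0}=A_{V\bigcap W_0}$ and non-emptiness, both already established in the paper. This makes your argument marginally leaner; the paper's extra lemma is not wasted, however, since it is reused later (in the verification of Axiom 2 of the left ultrastructure and again in Section \ref{5}). One further point in your favour: your quantification of semi-continuity is the strictly correct one. The paper asserts neighbourhoods $W_1\times_X W_2$ on which $d(g,g')<\epsilon$ with $\epsilon=d(f,f')$ exactly, which upper semi-continuity does not provide (it provides this only for each threshold strictly above $d(f,f')$); your formulation with $r>d_x(f,g)$, followed by letting $r\to d_x(f,g)$ at the end, silently repairs this harmless slip, which in the paper is anyway absorbed by the arbitrary $\epsilon'$.
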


\begin{proof}
Suppose that $f$ and $f^{\prime} \in \pi ^{-1}(x)$ are such that
$d(f,f^{\prime})=\epsilon $. Take $\epsilon' >0$, since the distance map is upper semi-continuous, there exists a neighbourhood $W_{1}$ of $f$ and a neighbourhood
$W_{2}$ of $f^{\prime}$ such that $d(g,g^{\prime}) < \epsilon +\epsilon'/2 $ for every
$(g,g^{\prime}) \in W_{1} \times _{X} W_{2}$. Now, $B((b^{f}_{y}),\epsilon ^{\prime}/4)$ intersects any element of the Cauchy filter in particular $A_{W_{1}}$. Same thing
$B((b^{f^{\prime}}_{y}),\epsilon ^{\prime}/4)$ intersects any element of the second
Cauchy filter converging to $(b^{f^{\prime}}_{y})_{y \in X}$ in particular
$A_{W_{2}}$.

\begin{lemma}
\label{Important_Lemma_2_VTEX1}
Let $f \in E$ and let $\mu $ be an ultrafilter on $X$ converging to
$\pi (f)$, suppose $\sigma _{\mu}(f)=(b_{y}^{f})_{y \in X}$ then for any
$\epsilon >0$ and any $W$ neighbourhood of $f$ if
$(g_{y})_{y \in X} \in A_{W } \bigcap B((b^{f}_{y}),\epsilon )$, there
exists some $U \in \mu $ such that $U \subseteq \pi ( W)$ and such that
$\forall y \in U$ $g_{y} \in W$ and $d(g_{y},b^{f}_{y})<\epsilon $.
\end{lemma}

\begin{proof}
Take $(g_{y})_{y \in X} \in A_{ W}$, then there exists $U_{1}$ such that
$U_{1} \subseteq \pi (W)$ and $U_{1} \in \mu $ and $\exists \  r>0$ such
that $\coprod _{y \in U_{1}}B(g_{y},r)\subseteq W $. Now since
$(g_{y})_{y \in X} \in B((b^{f}_{y}),\epsilon )$ then there exists
$U_{2} \in \mu $ such that $\forall y \in U_2, d(b^{f}_{y},g_{y})<\epsilon $ thus
$U=U_{1} \bigcap U_{2} $ will satisfy the requirements above.
\end{proof}

Using the Lemma~\ref{Important_Lemma_2_VTEX1}, if
$(g_{y})_{y \in X} \in A_{W_{1} } \bigcap B((b^{f}_{y}),\epsilon ^{\prime}/4)$
then there exists some $U \in \mu $ and $U \subseteq \pi ( W_{1})$ such
that $\forall y \in U$ $g_{y} \in W_{1}$ and
$d(g_{y},b^{f}_{y})<\epsilon ^{\prime}/4$. Same thing take
$(h_{y})_{y \in X} \in A_{W_{2}}\bigcap B((b^{f^{\prime}}_{y}),\epsilon ^{\prime}/4)
$ such that $\exists V \in \mu $ such that $V \subseteq \pi (W_{2})$ such
that $\forall y \in V$ $h_{y} \in W_{2} $ and
$d(h_{y},b^{f^{\prime}}_{y})<\epsilon ^{\prime}/4$.

Now this would mean that for any $y \in V\bigcap U \in \mu $,
$d(b^{f^{\prime}}_{y},b^{f}_{y}) <\epsilon+\epsilon ^{\prime}$ and since $\epsilon ^{\prime}$ was arbitrary then when passing to the ultraproduct
$d((b^{f^{\prime}}_{y}),(b^{f}_{y})) \leq \epsilon $ so $\sigma _{\mu}$ is
a contraction (and we get for free that it is also continuous).
\end{proof}

\begin{note*}
\label{note_regarding_left_ultrafunctors_VTEX1}
\normalfont We gave the definition for the maps $\sigma _{\mu}$ for ultrafilters
on $X$. Now this can be easily extended to an arbitrary set $S$ as follows:
if we have an ultrafilter $\mu $ on a set $S$ and a map $M$ of sets from
$S$ to $X$, then if $M \mu $ converges to $x$ and $f$ in
$\pi ^{-1}(x)$, then if $\sigma _{M\mu}(f)=(b^{f}_{y})_{y \in X} $, we define
$\sigma _{\mu}(f)=(b^{f}_{M(s)})_{s \in S}$. Notice that this is the only
valid way to define $\sigma _{\mu}$ for arbitrary $S$ to ensure that the
following diagram commutes:
\begin{equation*}
\begin{tikzcd}
{\mathcal{R}(E)(x)} &&& {\int _{S} \mathcal{R}(E)(M_{s}) d \mu}
\\
\\
&&& {\int _{X} \mathcal{R}(E)(y) d M\mu}
\arrow["{\sigma _{\mu}}", from=1-1, to=1-4]
\arrow["{\sigma _{M\mu}}"', from=1-1, to=3-4]
\arrow["{\Delta _{\mu ,M}}", from=3-4, to=1-4]
\end{tikzcd}
\end{equation*}

But this is exactly the Lemma~\ref{ultresult} that we showed
earlier.
\end{note*}

Now let us prove that this gives a Left ultrastructure on the functor
$x \mapsto \pi ^{-1}(x)$ (which means that we need to check that our definition
satisfies axioms $(0\text{-}1\text{-}2)$ of
\cite[definition 1]{lurie2018ultracategories}. Axiom $0$ is automatic since
the ultraset does not have any non-identity morphism so it remains to check
axioms $(1\text{-}2)$.

\medskip\noindent\textbf{Axiom 1}
Suppose that we have a principal ultrafilter $\delta _{x}$ for some
$x \in X$, let $f$ in $\pi ^{-1}(x)$, let us prove that the limit of the
Cauchy filter $\{A_{W}\}$ is converging to an element that belongs to the
equivalence class of $f$ which we are going to denote by $(f)$, to do so
take any $\epsilon >0$ and take the open ball $B((f),\epsilon )$. Now take
any $\epsilon $-thin neighbourhood $L_{\epsilon}$, we have that
$A_{L_{\epsilon}} \subseteq B((f),\epsilon ) $, thus the Cauchy filter
is converging to $(f)$, and this also provides a proof in
the case when we have an ultrafilter on a set $S$, and a map of sets
$M$ from $S$ to  $X$ since
$M\delta _{s}=\delta _{M(s)}$.

\medskip\noindent\textbf{Axiom 2}
First, let us do the case where we have a collection of ultrafilters on
$X$:
\newline \noindent
Let $(\alpha _{s})_{s \in S}$ be a collection of ultrafilters on $X$ each converging
to $x_{s}$ (that means that they define a map $x : s \mapsto x_{s}$, so
we will be writing $x(s)$ instead of $x_{s}$) and let $\mu $ be an ultrafilter
on $S$. We need to check that the following diagram commutes:
\begin{equation*}
\begin{tikzcd}
F(z) \arrow[rrrrr, "\sigma _{\int _{S}\alpha _{s} d \mu}"]
\arrow[d, "\sigma _{\mu}"] & & & & & \int _{X}F(y)d(\int _{S}\alpha _{s}d
\mu ) \arrow[d, "{\Delta _{\mu ,\alpha \bullet}}"]
\\
\int _{S}F(x_{s}) d\mu
\arrow[rrrrr, "\int _{S} \sigma _{\alpha _{s}}d\mu "] & & & & & \int _{S}(
\int _{X} F(y)d\alpha _{s})d\mu
\end{tikzcd}
\end{equation*}
Here $z$ denotes the limit of the ultrafilter
$\int _{S}\alpha _{s} d\mu $. Now suppose that $f \in F(z)$ and let
$\sigma _{\int _{S}\alpha _{s} d \mu}(f)=(a^{f}_{y})_{y\in X}$. By our
description of the categorical Fubini transform, we have
$\Delta _{\mu ,\alpha \bullet}(\sigma _{\int _{S}\alpha _{s} d \mu}(f))=((a^{f}_{y})_{y\in X})_{s \in S}$.

Let $\sigma _{x\mu}(f)=(b^{f}_{y})_{y \in X}$ (which implies that
$\sigma _{\mu}(f)=(b^{f}_{x(s)})_{s \in S}$ using
\hyperref[note_regarding_left_ultrafunctors_VTEX1]{Note in subsection 4.1}).
Let
$\sigma _{\alpha _{s}}(b^{f}_{x(s)})=(c^{b^{f}_{x(s)}}_{y})_{y \in X}$.
Our goal is to show that
$\Delta _{\mu ,\alpha \bullet}(\sigma _{\int _{S}\alpha _{s} d \mu}(f))=
\int _{S} \sigma _{\alpha _{s}}(\sigma _{\mu}(f))$ which translates to
saying that
$((a_{y}^{f})_{y \in X})_{s \in S}=((c^{b^{f}_{x(s)}}_{y})_{y \in X})_{s \in S}$.

Let $\epsilon >0$, take an $\epsilon /4$-thin open neighbourhood
$W_{1}$ of $f$. Now $A^{x\mu}_{W_{1}} $ must intersect
$B((b^{f}_{y})_{y \in X},\epsilon /4)$ since the Cauchy filter converges
to $(b^{f}_{y})_{y \in X}$ which implies that
$(b^{f}_{y})_{y \in X}$ is in the closure of every element in the filter.
Thus using Lemma~\ref{Important_Lemma_2_VTEX1} we can find an element
$(h_{y})_{y \in X}$ and a set $U_{2} \in x\mu $ such that
$U_{2} \subseteq \pi (W_{1})$ and $d(h_{y},b^{f}_{y})<\epsilon /4$ for
all $y \in U_{2}$ and such that $h_{y} \in W_{1}$
$\forall y \in U_{2}$. And also
$A^{\int _{S} \alpha _{s} d \mu}_{W_{1}}$ must intersect
$ B((a^{f}_{y})_{y \in X},\epsilon /4)$ for the same reason. That means
we can find an element $(g_{y})_{y \in X}$ and a set
$U_{1} \in \int _{S}\alpha _{s} d\mu $ such that
$U_{1} \subseteq \pi (W_{1})$ and $d(g_{y},a^{f}_{y})<\epsilon /4$ for
all $y \in U_{1}$ and such that $g_{y} \in W_{1}$. Now since
$U_{1} \in \int _{S}\alpha _{s} d\mu $ then the set
$H =\{s \in S \ : \ U_{1} \in \alpha _{s}\} \in \mu $, and since
$U_{2} \in x \mu $ then the set $J=x^{-1}U_{2} \in \mu $. Now take
$s \in J \bigcap H $. Since $s \in J$ then $x_{s} \in U_{2}$ then
$d(h_{x(s)},b^{f}_{x(s)})<\epsilon /4$.

Suppose that $\sigma _{\alpha _{s}}(h_{x(s)})=(k_{y})_{y \in X}$. Now,
since each $\sigma _{\alpha _{s}}$ is a contraction then we can deduce
that
$d((c^{b^{f}_{x(s)}}_{y})_{y \in X},(k_{y})_{y \in X})< \epsilon /4$, so
there exists some set $U_{3} \in \alpha _{s}$ such that
$d(c^{b^{f}_{x(s)}}_{y},k_{y})<\epsilon /4$ for every $y \in U_{3} $. We
know that, since $W_{1}$ is a neighbourhood of $h_{x(s)}$, the set
$A^{\alpha _{s}}_{W_{1}}$ must intersect any neighbourhood of
$(k_{y})_{y \in X}$, thus we deduce that there exists some
$U_{4} \in \alpha _{s}$ and an element $(l_{y})_{y \in X}$ such that
$U_{4} \subseteq \pi (W_{1})$ and $d(l_{y},k_{y})<\epsilon /4$ for all
$y \in U_{4}$ and such that $l_{y} \in W_{1}$ $\forall y \in U_{4}$.%

Now since $s \in H$, then $U_{1} \in \alpha _{s}$. Take
$y \in U_{1} \bigcap U_{3} \bigcap U_{4} \in \alpha _{s}$. We have
\begin{equation*}
d(a^{f}_{y},c^{b^{f}_{x(s)}}_{y}) \leq
\underbrace{d(a^{f}_{y},g_{y})}_{y \in U_{1}}+
\underbrace{d(g_{y},l_{y})}_{ W_{1} \ \text{is} \ \epsilon /4 \  \text{-thin}}
+\underbrace{d(l_{y},k_{y})}_{y \in U_{4}} +
\underbrace{d(k_{y},c^{b^{f}_{x(s)}}_{y})}_{y \in U_{3}}<\epsilon ,
\end{equation*}
thus
\begin{equation*}
J \bigcap H \subseteq \{s \in S \mid \ d_{\alpha _{s}}((c^{b^{f}_{x(s)}})_{y \in X},(a^{f}_{y})_{y \in X})<
\epsilon \},
\end{equation*}
and since $J\bigcap H \in \mu $
\begin{equation*}
\{s \in S \mid \ d_{\alpha _{s}}((c^{b^{f}_{x(s)}})_{y \in X}, (a^{f}_{y})_{y \in X})<
\epsilon \} \in \mu .
\end{equation*}
This implies that
$((a_{y}^{f})_{y \in X})_{s \in S}=((c^{b^{f}_{x(s)}}_{y})_{y \in X})_{s \in S}$,
and hence
$\Delta _{\mu ,\alpha \bullet}(\sigma _{\int _{S}\alpha _{s} d \mu}(f))=
(\int _{S} \sigma _{\alpha _{s}} d\mu )(\sigma _{\mu}(f))$ so the diagram
commutes.

Now  consider the more general case when we have a family
of ultrafilters $(\alpha _{s})_{s \in S}$ on some set $T$ and a function
$t \mapsto M_{t}$ from $T$ to $X$. We need to prove that the following
diagram commutes:
\begin{equation*}
\begin{tikzcd}
F(z) \arrow[rrrrr, "\sigma _{\int _{S}\alpha _{s} d \mu}"]
\arrow[d, "\sigma _{\mu}"] & & & & & \int _{T}F(M(t))d(\int _{S}
\alpha _{s}d\mu ) \arrow[d, "{\Delta _{\mu ,\alpha \bullet}}"]
\\
\int _{S}F(x_{s}) d\mu
\arrow[rrrrr, "\int _{S} \sigma _{\alpha _{s}}d\mu "] & & & & & \int _{S}(
\int _{T} F(M_{t})d\alpha _{s})d\mu
\end{tikzcd}
\end{equation*}

To do so let $f \in F(z)$ and  suppose that
$\sigma _{M \int _{S} \alpha _{s} d\mu}(f)=(a^{f}_{y})_{y \in X}$,  then
$\sigma _{\int _{S} \alpha _{s} d\mu}(f)=(a_{M(t)}^{f})_{t \in T}$. On the
other hand, suppose that $\sigma _{x\mu}(f)=(b^{f}_{y})_{y \in X}$, then by
Lemma~\ref{ultresult} applied to $x: S \to X$ ,
$\sigma_{\mu}(f)=(b^{f}_{x(s)})_{s \in S}$. Now
for each $x_{s}=\int _{T}M_t d \alpha _{s}=\int _{X} y d M \alpha _{s}$, suppose
$\sigma _{M\alpha _{s}}(b^{f}_{x(s)})=(c_{y}^{b^{f}_{x(s)}})_{y \in X}$, then
$\sigma _{\alpha _{s}}(b^{f}_{x(s)})=(c_{M_{t}}^{b^{f}_{x(s)}})_{t \in T}$
then
$\int _{S}\sigma _{\alpha _{s}}d\mu((b_{x(s)}^{f})_{s \in S})=((c_{M_{t}}^{b^{f}_{x(s)}})_{t \in T})_{s \in S}$.
We already proved that
$((a^{f}_{y})_{y \in X})_{s \in S}=((c_{y}^{b^{f}_{x(s)}})_{y \in X})_{s \in S}$.
We want to prove that
$((c_{M_{t}}^{b^{f}_{x(s)}})_{t \in T})_{s \in S}=((a_{M_{t}}^{f})_{t \in T})_{s \in S}$.

Let $\epsilon >0$, then the first equality means that
$\{s \in S \mid d((a^{f}_{y})_{y \in X},(c_{y}^{b^{f}_{x(s)}})_{y \in X})<
\epsilon \} \in \mu $. Now take any $s$ in the set above, since
$d((a^{f}_{y})_{y \in X},(c_{y}^{b^{f}_{x(s)}})_{y \in X})<\epsilon $,
then the set
\newline $\{ y \mid d(a^{f}_{y},c_{y}^{b^{f}_{x(s)}})<\epsilon \} \in M \alpha_s $ which allows us to conclude that
$\{ t \in T \mid d(a^{f}_{M_{t}},c_{M_{t}}^{b^{f}_{x(s)}})<\epsilon
\} \in \alpha_s $, thus
$d((a^{f}_{M_{t}}),(c_{M_{t}}^{b^{f}_{x(s)}})) \leq \epsilon $ so we can deduce
that $d(((c_{M_{t}}^{b^{f}_{x(s)}})_{t \in T})_{s \in S},((a_{M_{t}}^{f})_{t \in T})_{s \in S}) \leq \epsilon < 3\epsilon/2$, hence \newline
$\{s \in S \mid d((a^{f}_{M_{t}})_{t \in T},(c_{M_{t}}^{b^{f}_{x(s)}}))<
3\epsilon/2 \} \in \mu $ which shows that
$((c_{M_{t}}^{b^{f}_{x(s)}})_{t \in T})_{s \in S}=((a_{M_{t}}^{f})_{t \in T})_{s \in S}$.

\subsection{Adjunction}
\label{sec4.2}

We state a basic category theory fact:
%
\begin{lemma}
\label{lem4.2}
Let $\mathcal{L}$ from $C^{\prime}$ to $C$ be a functor and let
$\mathcal{R}$ be an assignment on objects from $C$ to $C^{\prime}$ such that
$\mathrm{Hom}(\mathcal{L}(X),Y) \simeq \mathrm{Hom}(X,\mathcal{R}(Y))$
for every object $X \in C^{\prime}$ and $Y \in C$ and this bijection
is natural in $X$. Then $\mathcal{R}$ has a functor structure defined as
follows:

The naturality in $X$ allows us to define a natural transformation
$\epsilon $ from  $\mathcal{L}\mathcal{R}$ to $\mathrm{Id}_{C}$ (which would
be the counit of adjunction), then if
$\sigma \in \mathrm{Hom}(X,X^{\prime})$, we define $\mathcal{R}(\sigma )$ to
be the unique map that corresponds to $\epsilon _{X} \circ \sigma $ by
this bijection.
\end{lemma}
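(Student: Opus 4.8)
The plan is to recognise this as the standard ``pointwise right adjoint'' construction and to read the functoriality of $\mathcal{R}$ off the naturality of the bijection. Write $\Phi_{A,Y}\colon \mathrm{Hom}_C(\mathcal{L}A,Y)\xrightarrow{\sim}\mathrm{Hom}_{C'}(A,\mathcal{R}Y)$ for the given bijection, natural in the argument $A\in C'$; spelling out this naturality, for a morphism $g\colon A_1\to A_2$ of $C'$ one has $\Phi_{A_1,Y}(h\circ \mathcal{L}g)=\Phi_{A_2,Y}(h)\circ g$ for every $h\colon \mathcal{L}A_2\to Y$. First I would fix, for each object $Y\in C$, the counit component $\varepsilon_Y:=\Phi^{-1}_{\mathcal{R}Y,Y}(\mathrm{id}_{\mathcal{R}Y})\colon \mathcal{L}\mathcal{R}Y\to Y$ (this is the $\epsilon$ of the statement; it points $\mathcal{L}\mathcal{R}Y\to Y$), and then, for a morphism $\sigma\colon Y\to Y'$ of $C$, set $\mathcal{R}(\sigma):=\Phi_{\mathcal{R}Y,Y'}(\sigma\circ\varepsilon_Y)\colon \mathcal{R}Y\to\mathcal{R}Y'$, i.e. the transpose of $\sigma\circ\varepsilon_Y$.

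The one genuine computation, and the step I would regard as the heart of the proof, is to deduce naturality of $\Phi$ in the second variable $Y$ from naturality in $A$. The key auxiliary fact is a triangle identity: for any $h\colon \mathcal{L}A\to Y$ its transpose $k:=\Phi_{A,Y}(h)$ satisfies $h=\varepsilon_Y\circ \mathcal{L}k$. This follows by applying the naturality square to $g=k\colon A\to\mathcal{R}Y$ and the element $\varepsilon_Y$, which gives $\Phi_{A,Y}(\varepsilon_Y\circ\mathcal{L}k)=\Phi_{\mathcal{R}Y,Y}(\varepsilon_Y)\circ k=\mathrm{id}_{\mathcal{R}Y}\circ k=k=\Phi_{A,Y}(h)$, so injectivity of $\Phi_{A,Y}$ yields the claim. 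Using this, I would invoke the naturality square once more, now with $g=k$ and the element $\sigma\circ\varepsilon_Y\colon \mathcal{L}\mathcal{R}Y\to Y'$, to obtain
\[
\mathcal{R}(\sigma)\circ\Phi_{A,Y}(h)=\Phi_{\mathcal{R}Y,Y'}(\sigma\circ\varepsilon_Y)\circ k=\Phi_{A,Y'}\big((\sigma\circ\varepsilon_Y)\circ \mathcal{L}k\big)=\Phi_{A,Y'}(\sigma\circ h),
\]
the last equality using $\varepsilon_Y\circ\mathcal{L}k=h$. This identity $\mathcal{R}(\sigma)\circ\Phi_{A,Y}(h)=\Phi_{A,Y'}(\sigma\circ h)$ is precisely naturality of $\Phi$ in $Y$.

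With this formula, functoriality drops out. Setting $A=\mathcal{R}Y$ and $h=\varepsilon_Y$, so that $\Phi_{\mathcal{R}Y,Y}(\varepsilon_Y)=\mathrm{id}_{\mathcal{R}Y}$, gives $\mathcal{R}(\mathrm{id}_Y)=\Phi_{\mathcal{R}Y,Y}(\mathrm{id}_Y\circ\varepsilon_Y)=\mathrm{id}_{\mathcal{R}Y}$; and for composable $\sigma\colon Y\to Y'$, $\tau\colon Y'\to Y''$ the formula applied with $h=\sigma\circ\varepsilon_Y$ gives $\mathcal{R}(\tau)\circ\mathcal{R}(\sigma)=\mathcal{R}(\tau)\circ\Phi_{\mathcal{R}Y,Y'}(\sigma\circ\varepsilon_Y)=\Phi_{\mathcal{R}Y,Y''}(\tau\circ\sigma\circ\varepsilon_Y)=\mathcal{R}(\tau\circ\sigma)$. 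I do not anticipate any serious obstacle: the argument is a bookkeeping diagram chase, and the only point demanding care is keeping track of which variable of $\Phi$ is held fixed when naturality is invoked. It is worth remarking that the morphism part of $\mathcal{R}$ is \emph{forced} once the counit is chosen, so $\mathcal{R}$ is the unique functor structure making $\Phi$ natural in both variables, and the construction simultaneously exhibits $\mathcal{L}\dashv\mathcal{R}$.
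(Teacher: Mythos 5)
Your proof is correct, but there is nothing in the paper to compare it against: the paper states this lemma as ``a basic category theory fact'' and gives no proof whatsoever, proceeding directly to apply it to $\mathcal{L}$ and $\mathcal{R}$. So your argument supplies exactly the standard argument the paper omits, and it is the right one: define $\varepsilon_Y$ as the $\Phi$-preimage of $\mathrm{id}_{\mathcal{R}Y}$; extract the triangle identity $h=\varepsilon_Y\circ\mathcal{L}(\Phi_{A,Y}(h))$ from naturality in the first variable together with injectivity of $\Phi_{A,Y}$; deduce from it naturality in the second variable, $\mathcal{R}(\sigma)\circ\Phi_{A,Y}(h)=\Phi_{A,Y'}(\sigma\circ h)$; and then read off $\mathcal{R}(\mathrm{id}_Y)=\mathrm{id}_{\mathcal{R}Y}$ and $\mathcal{R}(\tau\circ\sigma)=\mathcal{R}(\tau)\circ\mathcal{R}(\sigma)$ by specialising that formula. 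Two points in the paper's statement that you corrected silently are worth flagging: the paper writes that $\epsilon$ goes ``from $\mathrm{Id}_C$ to $\mathcal{L}\mathcal{R}$'', which is backwards for a counit, and your $\varepsilon_Y\colon\mathcal{L}\mathcal{R}Y\to Y$ is the intended direction; likewise the paper's formula ``$\epsilon_X\circ\sigma$'' does not typecheck with that direction, and your $\sigma\circ\varepsilon_Y$ is the composite actually meant (and the one the paper later uses, e.g.\ in $R(\nu)=\widehat{(\epsilon_{E}\circ\nu)}$, where the same order-of-composition slip recurs). Your closing observation that the morphism part of $\mathcal{R}$ is forced and that the construction exhibits $\mathcal{L}\dashv\mathcal{R}$ is also accurate and is precisely how the paper exploits the lemma afterwards.
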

We are going to apply this lemma in our case where $\mathcal{L}$ denotes
the functor from the category of left ultrafunctors between an ultraset
and the ultracategory $\mathsf{k}\text{-}\mathrm{CompMet}$ (with natural
transformations of left ultrafunctors as morphisms as defined in
\cite{lurie2018ultracategories}) to bundles over $X$ as we already defined
it, and $\mathcal{R}$ is the reverse assignment defined above.

\begin{theorem}%
\label{thm4.3}
Let $\mathcal{F}$ be a left ultrafunctor and let $E$ be a bundle, then
$\mathrm{Hom}(\mathcal{L}(\mathcal{F}),E) \simeq \mathrm{Hom}(
\mathcal{F},\mathcal{R}(E)) $, and this bijection is natural in
$\mathcal{F}$.%
\end{theorem}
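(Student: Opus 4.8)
The plan is to exhibit the bijection at the level of underlying data and then show that the two compatibility conditions — continuity of a bundle map on one side, and the left-ultrafunctor naturality square on the other — are equivalent. Both a morphism of bundles $\mathcal{L}(\mathcal{F})\to E$ and a natural transformation of left ultrafunctors $\mathcal{F}\to\mathcal{R}(E)$ are, underneath, a family of fibrewise contractions $\phi_x\colon\mathcal{F}(x)\to\pi^{-1}(x)$ indexed by $x\in X$. So I would define $\Phi(\psi)=(\psi|_{\mathcal{F}(x)})_{x\in X}$ and $\Psi(\phi)=\coprod_{x\in X}\phi_x$, which are mutually inverse on the level of these families by construction; the entire content is that $\Phi$ and $\Psi$ land in the correct classes of maps. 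Naturality in $\mathcal{F}$ is then formal: for a natural transformation $\theta\colon\mathcal{F}\to\mathcal{F}'$ the functor $\mathcal{L}$ sends it to the fibrewise map $\theta_x$, precomposition on both sides is fibrewise composition and $\Phi$ is fibrewise restriction, so the relevant square commutes. Invoking the basic categorical lemma stated above then upgrades this hom-bijection into the claimed adjunction and simultaneously endows $\mathcal{R}$ with its functor structure.

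First I would treat the forward direction: if $\psi$ is a morphism of bundles then $(\psi_x)_x$ satisfies the naturality square, i.e. for $\mu$ an ultrafilter on $X$ converging to $x$ and $g\in\mathcal{F}(x)$ with $\sigma^{\mathcal{F}}_\mu(g)=(b_y)_{y\in X}$ one has $\sigma^{\mathcal{R}(E)}_\mu(\psi_x(g))=(\psi_y(b_y))_{y\in X}$. Choosing representatives $b_y\in\mathcal{F}(y)$, the section $b\colon y\mapsto b_y$ pushes $\mu$ forward to an ultrafilter $b_{*}\mu$ on $\mathcal{L}(\mathcal{F})$, and the convergence description of the topology $\tau$ (Theorem \ref{topology}) gives $b_{*}\mu\to g$, since $\pi b_{*}\mu=\mu\to x$ and $b^{-1}(\coprod_y B(b_y,\epsilon))=X\in\mu$. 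Continuity of $\psi$ then yields $(\psi b)_{*}\mu\to\psi_x(g)$ in $E$, where $(\psi b)(y)=\psi_y(b_y)$. It remains to prove the clean sub-lemma that for any section $c$ of a bundle $E$ with $c_{*}\mu\to g_0$ one has $\sigma^{\mathcal{R}(E)}_\mu(g_0)=(c_y)_{y}$: this holds because, using Axiom $3$ and Lemma \ref{thin lemma}, $(c_y)$ lies in every set $A_W$ of the Cauchy filter defining $\sigma^{\mathcal{R}(E)}_\mu(g_0)$, so it is that filter's limit. Applied to $c=\psi b$ this gives the desired equality, and the general index set $S$ in the definition of a natural transformation reduces to this case on $X$ through the diagonal compatibility recorded in Note \ref{note regarding left ultrafunctors}.

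Conversely I would show that if $\phi=(\phi_x)_x$ is a natural transformation then $\psi=\coprod_x\phi_x$ is continuous. Take $\eta$ on $\mathcal{L}(\mathcal{F})$ with $\eta\to f$, put $\mu=\pi\eta\to x=\pi f$ and $\sigma^{\mathcal{F}}_\mu(f)=(b_y)_y$, so $\coprod_y B(b_y,\epsilon)\in\eta$ for every $\epsilon$. Writing $c_y=\phi_y(b_y)$, naturality gives $\sigma^{\mathcal{R}(E)}_\mu(\psi(f))=(c_y)_y$. Since each $\phi_y$ is a contraction, $\psi(\coprod_y B(b_y,\epsilon))\subseteq\coprod_y B(c_y,\epsilon)$, whence $\coprod_y B(c_y,\epsilon)\in\psi\eta$ for all $\epsilon$. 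Given any open $W\ni\psi(f)$ in $E$, Axiom $3$ supplies a neighbourhood $V$ of $\psi(f)$ and $\delta>0$ with $V\subseteq V_\delta\subseteq W$ (notation of Definition \ref{V}); as $(c_y)$ is the limit of the Cauchy filter generated by the $A^{E}_W$, it lies in the closure of $A^E_V$, so Lemma \ref{Important Lemma 2} produces $U\in\mu$ and $(h_y)\in A^E_V$ with $h_y\in V$ and $d(h_y,c_y)<\delta/3$ on $U$. A triangle inequality then gives $\coprod_{y\in U}B(c_y,\delta/3)\subseteq V_\delta\subseteq W$, and intersecting with $\pi^{-1}(U)\in\psi\eta$ shows $W\in\psi\eta$. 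Hence $\psi\eta\to\psi(f)$ and $\psi$ is continuous.

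The main obstacle is precisely this equivalence between continuity and the $\sigma$-naturality square: one must re-run, now for an arbitrary target bundle $E$, the kind of ultrafilter bookkeeping carried out in Section \ref{charecterisation} and Section \ref{axiom 3}, keeping careful track of the fact that $(b_y)$ and $(c_y)$ are defined only up to the ultraproduct equivalence. The two facts doing the real work are the section sub-lemma in the forward direction and the Axiom $3$ together with Lemma \ref{Important Lemma 2} argument in the converse; once these are established, the bijection, its inverse, and naturality in $\mathcal{F}$ are all formal.
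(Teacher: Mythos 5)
Your proposal is correct, and although it shares the paper's overall architecture --- the bijection is fibrewise restriction one way and fibrewise gluing the other, naturality in $\mathcal{F}$ is formal, and the case of a general index set $S$ is reduced to ultrafilters on $X$ itself via Note \ref{note regarding left ultrafunctors} --- the two hard directions are argued by genuinely different mechanisms. In the forward direction the paper fixes an $\epsilon/2$-thin neighbourhood $W$ of $\nu_x(f)$ in $E$, feeds $\nu^{-1}(W)$ into the open-set characterisation of Theorem \ref{charecterisation theorem} (this is where continuity of the bundle map enters), and closes with a triangle inequality combining Lemma \ref{Important Lemma 2} with thinness of $W$; you instead use continuity at the level of ultrafilter convergence: the set-theoretic section $b$ satisfies $b_{*}\mu\to g$ by the convergence description of Theorem \ref{topology}, hence $(\psi b)_{*}\mu\to\psi_x(g)$, and your sub-lemma that any section whose pushforward converges computes $\sigma^{\mathcal{R}(E)}_\mu$ finishes the argument. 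That sub-lemma is sound (for open $W\ni g_0$, Axiom $3$ gives $V\subseteq_{\delta} W$ with $c^{-1}(V)\in\mu$, so $(c_y)\in A_W$; being in every element of a Cauchy filter, it is its limit), and it is in effect a sharpening of the paper's later observation that continuous local sections compute the left ultrastructure of $\mathcal{R}(E)$, requiring only convergence of $c_{*}\mu$ rather than continuity of $c$; your route thereby avoids thin neighbourhoods in this direction and reuses the section-pushforward trick the paper itself employs in \ref{axiom 3}. In the converse direction you coincide with the paper up to the tube membership $\coprod_y B(\nu_y(b_y),\epsilon)\in\psi\eta$, but you then supply the step the paper leaves implicit: that tube membership plus convergence of the projection forces convergence in the topology of the \emph{arbitrary} target bundle $E$, proved from Axiom $3$ together with Lemma \ref{Important Lemma 2}. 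This step genuinely requires an argument --- in the paper it is only recoverable afterwards from the counit being a bundle map, which is itself deduced from this adjunction --- so your treatment is the more self-contained of the two, at the cost of a slightly longer converse.
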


\begin{proof}
Let $\nu $ be a morphism of bundles from $\mathcal{L}(\mathcal{F})$ to
$E$ then define a natural transformation from $\mathcal{F}$ to
$\mathcal{R}(E)$ by $\nu _{x}=\nu |_{\pi ^{-1}(x)}$. Naturality is immediate
since the category $X$ has no morphisms but identities. Now to check that
it is really a natural transformation of left ultrafunctors: Let
$\mu $ be an ultrafilter on a set $S$, and $M$ a map of sets from
$S$ to $X$ (alternatively a family of points of $X$ indexed by $S$,
$(M_{s})_{s \in S}$) such that $M\mu $ converges to $x$. We
need to check that the following diagram commutes:
\begin{equation*}
\begin{tikzcd}
{\mathcal{F}(x)} && {\int _{S} \mathcal{F}(M_{s}) d\mu}
\\
{}
\\
{\mathcal{R}{E}(x)} && {\int _{S}\mathcal{R}(E)(M_{s}) d\mu}
\arrow["{\int _{S}\nu _{s}d\mu}", from=1-3, to=3-3]
\arrow["{\sigma _{\mu}}", from=1-1, to=1-3]
\arrow["{\sigma ^{\prime}_{\mu}}"', from=3-1, to=3-3]
\arrow["{\nu _{x}}"', from=1-1, to=3-1]
\end{tikzcd}
\end{equation*}
First, we observe that it is enough to check this diagram in the case where
$S=X$, $M=\mathrm{id}$. Indeed, consider the diagram:
\begin{equation*}
\begin{tikzcd}
&&&&& {\int _{S} \mathcal{F}(M_{s}) d\mu}
\\
{\mathcal{F}(x)} &&&& {\int _{X} \mathcal{F}(y) dM \mu}
\\
&&&&& {\int _{S}\mathcal{R}(E)(M_{s}) d\mu}
\\
{\mathcal{R}{E}(x)} &&&& {\int _{X}\mathcal{R}(E)(y) dM\mu}
\arrow["{\nu _{x}}", from=2-1, to=4-1]
\arrow["{\int _{X}\nu _{y}dM\mu}"', from=2-5, to=4-5]
\arrow["{\sigma _{M \mu}}"{description}, from=2-1, to=2-5]
\arrow["{\sigma ^{\prime}_{M\mu}}"{description}, from=4-1, to=4-5]
\arrow["{\sigma _{\mu}}"{description}, from=2-1, to=1-6]
\arrow["{\sigma ^{\prime}_{\mu}}"{description}, from=4-1, to=3-6]
\arrow["{\Delta _{\mu ,M}}"{description}, from=2-5, to=1-6]
\arrow["{\Delta _{\mu ,M}}"{description}, from=4-5, to=3-6]
\arrow["{\int _{S}\nu _{s}d\mu}"{description}, from=1-6, to=3-6]
\end{tikzcd}
\end{equation*}

Our goal is to show that the back square diagram commutes assuming the
front square diagram does (here $x$ is the limit of the ultrafilter
$M\mu )$, notice that the two triangles commute by
\hyperref[note_regarding_left_ultrafunctors_VTEX1]{Note in subsection 4.1}).
The side square commutes by naturality of the ultraproduct diagonal map
(it is easy to check that the naturality condition for these maps follows
from their definition (composition of the (natural) categorical Fubini
transform and the natural isomorphisms $\epsilon $)).

So we will be restricting our attention to ultrafilters on $X$, and we
will be checking the commutativity of the following diagram (again here
$x$ is the limit of the ultrafilter $\mu $):
\begin{equation*}\begin{tikzcd}
{\mathcal{F}(x)} && {\int _{X} \mathcal{F}(y) d\mu}
\\
{}
\\
{\mathcal{R}(E)(x)} && {\int _{X}\mathcal{R}(E)(y) d\mu}
\arrow["{{\int _{X}\nu _{y}d\mu}}", from=1-3, to=3-3]
\arrow["{{\sigma _{\mu}}}", from=1-1, to=1-3]
\arrow["{{\sigma ^{\prime}_{\mu}}}"', from=3-1, to=3-3]
\arrow["{{\nu _{x}}}"', from=1-1, to=3-1]
\end{tikzcd}
\end{equation*}

Now take $f \in \mathcal{F}(x) $, and suppose $\nu _{x}(f)=g$ and
$\sigma _{\mu}(f)=(b_{y})_{y \in X}$. Our goal is to show that
$\sigma ^{\prime}_{\mu}(g)=(\nu _{y}(b_{y}))_{y \in X}$. Suppose that
$\sigma ^{\prime}_{\mu}(g)=(c^{g}_{y})_{y \in X}$, let $W$ be an
$\epsilon /2$-thin neighbourhood of $g$ then by definition of
$(c^{g}_{y})_{y \in X}$, $A_{W}$ must intersect any neighbourhood of
$(c^{g}_{y})_{y \in X}$, in particular
$B((c^{g}_{y})_{y \in X},\epsilon /2)$, thus there exists $L
\in \mu $ and $(f_{y})_{y \in X}$ such that $\forall y \in L$, each
$f_{y} \in W$ and $d(f_{y},c^{g}_{y})<\epsilon /2$.

Now since $\nu ^{-1}(W)$ is a neighbourhood of $f$, then there exists $H
\in \mu $  and $ \epsilon'>0$ such that
$\coprod _{y \in H}B(b_{y},\epsilon ^{\prime}) \subseteq \nu ^{-1}(W)$. Thus,
for any $y \in H$ we get that $\nu _{y}(b_{y}) \in W$.

Now take $y \in H \bigcap L$ (remember that $H \in {\mu}$) then we have
$d(\nu _{y}(b_{y}),c_{y}^{g}) \leq d(\nu _{y}(b_{y}),f_{y}) +d(f_{y},c^{g}_{y})<
\epsilon /2+\epsilon /2=\epsilon $. Thus we get that
$(\nu _{y}(b_{y}))_{y \in X}=(c_{y}^{g})_{y \in X}$, and this terminates
the proof showing the commutativity of the diagram above.

Conversely, suppose that we have a natural transformation $\nu $ of left
ultrafunctors from $\mathcal{F}$ to $\mathcal{R}(E)$. We need to show that
the map $\nu $ defined by $\nu |_{\pi ^{-1}(x)}=\nu _{x}$ is a continuous
map from $\mathcal{L}(\mathcal{F})$ to $E$ (since the other requirements
for being a map of bundles are automatically satisfied). To do so, suppose
that $\mu $ is an ultrafilter on $\mathcal{L}(\mathcal{F})$ that converges
to $f$. Now to prove that $\sigma \mu $ converges to
$\nu (f)=\nu _{\pi (f)}(f)$ in $E$: We know that since $\nu $ is a natural
transformation of left ultrafunctors, if
$\sigma _{\mu}(f)=(b_{y})_{y \in X} $ then
$\sigma ^{\prime}_{\mu}(\nu (f))=(\nu _{y}(b_{y}))_{y \in X}$. Also, we know
that $\coprod _{y \in X}B(b_{y},\epsilon ) \in \mu $ (by definition of
the topology of $\mathcal{L}(\mathcal{F})$). Now, since each
$\nu _{y}$ is a contraction, then
\begin{equation*}
\coprod _{y \in X}B(b_{y},\epsilon ) \subseteq \nu ^{-1}
\coprod _{y \in X}B(\nu _{y}(b_{y}),\epsilon ),
\end{equation*}
thus
\begin{equation*}
\coprod _{y \in X}B(\nu _{y}(b_{y}),\epsilon ) \in \nu
\mu .
\end{equation*}
Thus we have a map of bundles from $\mathcal{L}(\mathcal{F})$ to $E$. Also,
it is clear that these two processes between
$\mathrm{Hom}(\mathcal{L}(\mathcal{F}),E)$ and
$\mathrm{Hom}(\mathcal{F},\mathcal{R}(E)) $ are inverses of each other.
Now, it remains to show naturality in $\mathcal{F}$.
%
\begin{NotationConvention*}
\label{notConvention}
\normalfont If we have a map $\psi $ in
$\mathrm{Hom}(\mathcal{L}(\mathcal{F}),E)$, we will denote
$\widehat{\psi}$ the corresponding map in
$\mathrm{Hom}(\mathcal{F},\mathcal{R}(E))$, conversely, if we have a map
$\kappa $ in $\mathrm{Hom}(\mathcal{F},\mathcal{R}(E))$ then we are going
to denote by $\bar{\kappa}$ the corresponding map in
$\mathrm{Hom}(\mathcal{L}(\mathcal{F}),E)$.
\end{NotationConvention*}

Now to show naturality, let $\nu $ be a natural transformation
of left ultrafunctors from $\mathcal{F^{\prime}}$ to $\mathcal{F}$, we need
to show that the following diagram commutes:
\begin{equation*}
\begin{tikzcd}
{\mathrm{Hom}(\mathcal{L}(\mathcal{F}),E)} &&& {\mathrm{Hom}(
\mathcal{F},\mathcal{R}(E))}
\\
\\
{\mathrm{Hom}(\mathcal{L}(\mathcal{F^{\prime}}),E)} &&& {\mathrm{Hom}(
\mathcal{F^{\prime}},\mathcal{R}(E))} \arrow[from=1-1, to=1-4]
\arrow["{-\circ \mathcal{L}(\nu )}"', from=1-1, to=3-1]
\arrow["{-\circ \nu}", from=1-4, to=3-4] \arrow[from=3-1, to=3-4]
\end{tikzcd}
\end{equation*}
 To do so consider a map $\psi $ of bundles from
$\mathcal{L}(\mathcal{F})$ to $E$. We need to show
$\widehat{\psi \circ \mathcal{L}(\nu)}=\widehat{\psi}\circ \nu $. To do so
let $x \in X$ and let $f \in \mathcal{F^{\prime}}(x) $ then
\begin{equation*}
\widehat{(\psi \circ \mathcal{L}(\nu))}_{x}(f)=(\psi \circ \mathcal{L}(
\nu ))(f)=\psi (\nu _{x}(f)),
\end{equation*}
on the other hand
\begin{equation*}
(\widehat{\psi}\circ \nu )_{x}(f)=(\widehat{\psi}_{x} \circ \nu _{x})(f)=
\widehat{\psi}_{x}((\nu _{x})(f))=\psi (\nu _{x}(f)),
\end{equation*}
so for each $x$
\begin{equation*}
\widehat{(\psi \circ \mathcal{L}(\nu)})_{x}=(\widehat{\psi}\circ \nu )_{x},
\end{equation*}
so
\begin{equation*}
\widehat{\psi \circ \mathcal{L}(\nu)}=\widehat{\psi}\circ \nu ,
\end{equation*}
so the diagram commutes.
\end{proof}

A last thing that we should enlighten is that the functor structure of
$\mathcal{R}$ comes from the adjunction. Let us explain further;
suppose that we have a map of bundles
$ \nu : \  E \rightarrow E^{\prime}$, we defined
$R(\nu )=\widehat{(\epsilon _{E} \circ \nu )}$, where
$\epsilon _{E}$ is the counit of adjunction. We are going to give a better
description of this map once we prove that the counit is an isomorphism.

Now we turn to showing our main theorem. In what follows
$\mathrm{Bun}(\mathsf{k\text{-}CompMet},X)$ denotes the category of bundles
with base space $X$. We remind the reader that
$\mathcal{L}$ is the functor that assigns to every left ultrafunctor a
bundle and that $\mathcal{R}$ is the functor that assigns to every bundle
a left ultrafunctor.

\begin{theorem}
\label{first_theorem_VTEX1}
Let $X$ be a compact Hausdorff space, then the pair of functors
$\mathcal{R}$ and $\mathcal{L}$ constitute an equivalence of categories between
$\mathrm{Lult}(X,\mathsf{k\text{-}CompMet})$ and
$\mathrm{Bun}(\mathsf{k\text{-}CompMet},X)$
\end{theorem}

\subsubsection*{The counit {of adjunction} is an isomorphism}

Let
\begin{equation*}
\epsilon : \  \mathcal{L}\mathcal{R} \rightarrow \mathrm{Id}_{\mathrm{Bun}(\mathsf{k\text{-}CompMet},X)}
\end{equation*}
be the counit of adjunction.

\begin{theorem}
\label{thm4.5}
For every bundle $E$, $\epsilon _{E}$ is a homeomorphism.
\end{theorem}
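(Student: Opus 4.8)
The plan is first to observe that the counit is, on underlying sets, simply the identity map on $E$, and then to show that the two topologies it compares actually coincide. By construction $\mathcal{R}(E)$ is the left ultrafunctor $x \mapsto \pi^{-1}(x)$, so the underlying set of $\mathcal{L}\mathcal{R}(E)=\coprod_{x \in X}\pi^{-1}(x)$ is exactly $E$. Under the adjunction bijection $\mathrm{Hom}(\mathcal{L}\mathcal{R}(E),E)\simeq\mathrm{Hom}(\mathcal{R}(E),\mathcal{R}(E))$ the counit $\epsilon_E$ corresponds to $\mathrm{id}_{\mathcal{R}(E)}$, and since a natural transformation $\nu$ is sent to the bundle map acting fibrewise as $\nu_x$, we get $\epsilon_E|_{\pi^{-1}(x)}=\mathrm{id}_{\pi^{-1}(x)}$. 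Hence $\epsilon_E$ is the identity on the set $E$, in particular a bijection, and being a map of bundles it is continuous. So it suffices to prove that $\epsilon_E^{-1}$ is continuous, i.e. that the original bundle topology on $E$ is finer than the topology $\tau$ coming from $\mathcal{R}(E)$ via Theorem \ref{topology}. Since the topology of any space is determined by which ultrafilters converge to which points, I only need to check that every ultrafilter $\eta$ converging to $f$ in the original bundle topology also satisfies the convergence conditions $C_1$ and $C_2$ of Theorem \ref{topology}, the converse implication being exactly the continuity of $\epsilon_E$.

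So let $\eta \to f$ in $E$. Condition $C_1$ is immediate: $\pi$ is continuous (Axiom (2)), so $\pi\eta \to \pi f =: x$; write $\mu := \pi\eta$ and $\sigma_\mu(f)=(b^f_y)_{y\in X}$, the limit of the Cauchy filter $\{A_W\}_{W \in \mathcal{N}_f}$ defining the left ultrastructure of $\mathcal{R}(E)$. For $C_2$, fix $\epsilon>0$. Using that the fibrewise distance is upper semi-continuous (Axiom (1)), Lemma \ref{thin lemma} provides an $\epsilon/2$-thin open neighbourhood $W$ of $f$. Since $(b^f_y)$ lies in the closure of $A_W$, the set $A_W$ meets $B((b^f_y),\epsilon/2)$; choosing $(g_y)\in A_W \cap B((b^f_y),\epsilon/2)$ and applying Lemma \ref{Important Lemma 2} yields $U\in\mu$ with $U\subseteq\pi(W)$ such that $g_y\in W$ and $d(g_y,b^f_y)<\epsilon/2$ for all $y\in U$. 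Then for any $h\in W$ with $\pi(h)=y\in U$, both $h$ and $g_y$ lie in the $\epsilon/2$-thin set $W$ over the same point $y$, so $d(h,g_y)<\epsilon/2$ and hence $d(h,b^f_y)<\epsilon$; this shows $W\cap\pi^{-1}(U)\subseteq\coprod_{y}B(b^f_y,\epsilon)$. As $W\in\eta$ (because $\eta\to f$) and $\pi^{-1}(U)\in\eta$ (because $U\in\pi\eta$), their intersection lies in $\eta$, whence $\coprod_y B(b^f_y,\epsilon)\in\eta$. Since $\epsilon$ was arbitrary, $C_2$ holds.

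Thus the original convergence relation on $E$ coincides with the $\tau$-convergence relation, so the two topologies agree and $\epsilon_E$ is a homeomorphism. I expect the only genuinely non-formal step to be the verification of $C_2$: everything hinges on extracting, from mere convergence $\eta\to f$ in the bundle, honest $\epsilon$-control of the fibres by the ``section values'' $(b^f_y)$, and the crux is the interplay between the thin-neighbourhood Lemma \ref{thin lemma} and Lemma \ref{Important Lemma 2}, which together convert the Cauchy-filter description of $\sigma_\mu(f)$ into the explicit ball condition $C_2$. The remaining steps — identifying $\epsilon_E$ with the identity on $E$ and invoking that ultrafilter convergence determines a topology — are purely formal.
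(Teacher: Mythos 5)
Your proposal is correct and follows essentially the same route as the paper: continuity of $\epsilon_E$ in one direction from it being a bundle map, and for the other direction taking an ultrafilter converging in the original topology of $E$, choosing an $\epsilon/2$-thin neighbourhood $W$ of $f$ via Lemma \ref{thin lemma}, intersecting $A_W$ with the ball $B((b^f_y),\epsilon/2)$, applying Lemma \ref{Important Lemma 2}, and concluding via the triangle inequality that $W \cap \pi^{-1}(U) \subseteq \coprod_y B(b^f_y,\epsilon)$ lies in the ultrafilter. The only cosmetic difference is that you spell out, via the adjunction bijection, why $\epsilon_E$ is the identity on underlying sets, which the paper dismisses as clear.
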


\begin{proof}
It is clear that $\epsilon _{E}$ is a bijection of sets, so it remains
to show that $E$ and $\mathcal{L}\mathcal{R}(E)$ have the same topology.

\medskip\noindent
\textbf{The topology of $E$ is coarser than $\mathcal{L}\mathcal{R}(E)$}
First, we already get that $\epsilon _{E} $ is continuous from
$\mathcal{L}\mathcal{R}(E)$ to $E$, by the fact that $\epsilon _{E}$ is
a counit which implies it's a map of bundles.

\medskip\noindent
\textbf{The topology of $E$ is finer than $\mathcal{L}\mathcal{R}(E)$}
For the other direction, suppose that $\mu $ is an ultrafilter on
$E$ that converges to $f$, we need to prove that $\mu $ also
converges to $f$ in the topology of $\mathcal{L}\mathcal{R}(E)$. Suppose
that $\sigma _{\pi \mu}(f)=(b_{y}^{f})_{y \in X}$, we need to show
that for any $\epsilon >0$ the set
$\coprod _{y \in X}B(b_{y}^{f},\epsilon ) \in \mu $, to do this take an
$\epsilon /2$-thin neighbourhood $W$ (in the topology of $E$ of course)
of $f$. Now we know that $A_{W} $ must intersect any neighbourhood of
$(b_{y})_{y \in X}$, in particular
$B((b_{y})_{y \in X},\epsilon  /2)$, thus there exists
$L \in \pi \mu $ and $(c_{y})_{y \in X}$, such that for each
$y \in L$ $c_{y} \in W$ and $d(c_{y},b_{y}^{f}) <\epsilon /2$, so
$\pi ^{-1}(L) \in \mu $. On the other hand, $W \in \mu $ since $\mu $ converges
to $f $ in the first topology (topology of $E$).

Now let us prove that
$W \bigcap \pi ^{-1}(L) \subseteq \coprod _{y \in X}B(b^{f}_{y},
\epsilon )$, to do this we take
$g \in W \bigcap \pi ^{-1}(L) $ then
$d(g,b_{\pi (g)})<d(g,c_{\pi (g)})+d(c_{\pi (g)},b_{\pi (g)}) <
\epsilon $. Thus $\coprod _{y \in X}B(b^{f}_{y},\epsilon ) \in \mu $. So,
by the definition of the topology of $\mathcal{L}\mathcal{R}(E)$,
$\mu $ converges to $f$.

All of this allows us to deduce that the two topologies
coincide and $E$ is isomorphic to $\mathcal{L}\mathcal{R}(E)$ as bundles.
\end{proof}

This allows us to describe better how $\mathcal{R}$ acts on morphisms,
suppose that we have a map of bundles
$ \nu : \  E \rightarrow E^{\prime}$. Then
$\mathcal{R}(\nu )=\widehat{(\epsilon _{E} \circ \nu )}$. More precisely, from the
fact that $\epsilon _{E}$ is an isomorphism we get that
$\mathcal{R}(\nu )_{x}(f)=\nu (f)$ for $f \in \mathcal{R}(E)(x)$ (which is exactly
what we expected it to be).

\subsubsection*{The unit {of adjunction} is an isomorphism}

To prove that the unit is an isomorphism consider:
\begin{equation*}
\eta _{F} : \ \mathcal{F} \mapsto \mathcal{R}\mathcal{L}(\mathcal{F}).
\end{equation*}
The two left ultrafunctors from $X$ to
$\mathsf{k}\text{-}\mathrm{CompMet}$ are the same thing as functors. It
remains to show that they have the same left ultrastructure. But this immediately
follows from $\eta _{F}$ being a natural transformation of left ultrafunctors
which is an isomorphism for every $x \in X$.

\subsubsection*{A nice property of bundles}

\begin{theorem}
\label{thm4.6}
Let $E$ be a bundle of complete bounded metric spaces, then the subspace
topology and the complete metric space topology agree on every fibre.
\end{theorem}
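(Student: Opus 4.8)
The plan is to prove that the two topologies agree on each fibre $\pi^{-1}(x)$ by establishing the two inclusions separately: first that every set open in the subspace topology inherited from $\tau$ is open in the metric topology of $d_x$, and then the converse. Throughout I would fix a point $x \in X$ and treat the fibre $\pi^{-1}(x)$ as a $k$-bounded complete metric space with its distance $d_x$.

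For the inclusion \emph{subspace $\subseteq$ metric}, I would invoke Axiom (3) of Definition \ref{bundle definition}. Let $U$ be open in $E$ and let $f \in U \cap \pi^{-1}(x)$. Axiom (3) produces an open neighbourhood $V$ of $f$ and an $\epsilon > 0$ with $V \subseteq_{\epsilon} U$, that is $V \subseteq V_{\epsilon} \subseteq U$. The key observation is that the metric ball $B_x(f,\epsilon)$ already sits inside $V_{\epsilon}$: for any $h$ with $\pi(h) = x$ and $d_x(h,f) < \epsilon$, the point $g = f \in V$ witnesses membership of $h$ in $V_{\epsilon}$ according to Definition \ref{V}. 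Hence $B_x(f,\epsilon) \subseteq V_{\epsilon} \subseteq U$, so $U \cap \pi^{-1}(x)$ contains a $d_x$-ball about each of its points and is therefore open in the metric topology.

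For the reverse inclusion \emph{metric $\subseteq$ subspace}, I would use the existence of thin neighbourhoods. Since $E$ is a bundle, its global distance is upper semi-continuous by Axiom (1), so Lemma \ref{thin lemma} supplies, for every $\delta > 0$, a basis of $\delta$-thin open sets. It suffices to show every metric ball is open in the subspace topology, so fix $a \in \pi^{-1}(x)$ and $\epsilon > 0$, take any $b \in B_x(a,\epsilon)$, put $\delta = \epsilon - d_x(a,b) > 0$, and choose a $\delta$-thin open neighbourhood $W$ of $b$. Thinness forces $W \cap \pi^{-1}(x) \subseteq B_x(b,\delta)$, which by the triangle inequality lies inside $B_x(a,\epsilon)$. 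Thus each point of the metric ball has a $\tau$-neighbourhood meeting the fibre inside the ball, so every $d_x$-ball, and hence every metric-open set, is open in the subspace topology.

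Neither inclusion is genuinely difficult; the one point demanding care is the first, where one must read off that $B_x(f,\epsilon) \subseteq V_{\epsilon}$ by choosing the witness $g = f$ in Definition \ref{V}, and recognise that Axiom (3) is precisely the tool converting ambient openness into a fibrewise metric radius. The second inclusion is essentially a repackaging of the thin-neighbourhood lemma together with the triangle inequality, so I expect the first step to be the only place where the argument could go subtly wrong.
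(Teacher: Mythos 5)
Your proof is correct. Its first half is exactly the paper's: both apply Axiom (3) of Definition \ref{bundle definition} to obtain $f \in V \subseteq V_{\epsilon} \subseteq U$ and then read off, straight from Definition \ref{V} with witness $g=f$, that $B_x(f,\epsilon)\subseteq V_{\epsilon}$, so subspace-open sets are metric-open. The second half is where you diverge. The paper proves the converse inclusion by a net argument (which it attributes to Fell): for a net $(b_i)$ in the fibre converging to $b$ in the topology of $E$, the net $(b_i,b)$ converges to $(b,b)$ in $E\times_X E$, where $d(b,b)=0$, so upper semi-continuity of the distance forces $d_x(b_i,b)\leq\epsilon$ eventually; that is, it proves continuity of the identity map from the subspace topology to the metric topology. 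You instead reuse Lemma \ref{thin lemma}: around any point $b$ of a metric ball $B_x(a,\epsilon)$ you place a $\delta$-thin open set $W$ with $\delta=\epsilon-d_x(a,b)$, and thinness plus the triangle inequality give $W\cap\pi^{-1}(x)\subseteq B_x(b,\delta)\subseteq B_x(a,\epsilon)$, exhibiting every metric ball as subspace-open. The two arguments run on the same fuel --- upper semi-continuity of $d$ near the diagonal, which is also what proves Lemma \ref{thin lemma} --- but your packaging is arguably cleaner: it stays entirely with open sets rather than nets, and it recycles a lemma the paper has already established instead of re-deriving its content pointwise.
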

\begin{proof}
Let $W_{x}$ be an open set in the subspace topology of the fibre
$E_{x}$ for some $x \in X$, and let $f \in E_{x}$, there exists an open
set $W$ of $E$ such that $W_{x}= W \bigcap E_{x}$. Now we know that there
exists a set $V$ such that $f \in V \subseteq _{\epsilon} W$. In other
words, $f \in V \subseteq V_{\epsilon} \subseteq W$. Now, by definition
of $V_{\epsilon}$
$B(f,\epsilon ) \subseteq V_{\epsilon} \bigcap E_{x}$, hence $W_{x}$ is
open in metric topology.

On the other hand, let $(b_{i})$ be a net of elements in $E_{x}$ that converges
to $b$ in the topology of $E$, we need to show that $(b_{i})$ converges
to $b$ in metric topology. To do so, consider the net
$(b_{i},b) \in E \times _{X} E$, this net converges to $(b,b)$ which satisfy
$d(b,b)=0$. By upper semi-continuity of the distance, for every
$\epsilon > 0$, there exists a neighbourhood $W$ in
$E \times _{X} E$ and some $i_{0}$ such that every two points in the same
fibre in $W$ have distance $\leq \epsilon $, and such that for any
$i \succ i_{0}$, $(b_{i},b) \in W$, thus $(b_{i})$ converges to $b$ in
the metric topology. So both topologies on $E_{x}$ agree. This proof is
inspired by a similar one in
\cite[proposition 1.3]{fell1969extension} or
\cite[proposition 13.11]{fell1988representations}.
\end{proof}

\subsubsection*{Another construction of the left-ultrastructure of $\mathcal{R}(E)$}

We give another construction of the left ultrastructure of
$\mathcal{R}(E)$ for a bundle $E$, that works only when the bundle
$E$ has enough cross-sections.

\medskip
\noindent
\textbf{Note.} By a bundle having enough (local) cross sections, we mean that
for every $f \in E$, there exists an open neighbourhood $U$ of
$\pi (f)$, and continuous function $a:U \rightarrow E$, such that
$\pi \circ a=\mathrm{id}_{U}$.
\begin{theorem*}
Let $\mathcal{M}=\coprod _{x \in X}\mathcal{M}_{x}$ be a bundle of complete
metric spaces bounded by a certain $k$, and let $\mathcal{F}$ be the left
ultrafunctor $x \mapsto \mathcal{M}_{x}$, then for any $x \in X$, if
$\mu $ is an ultrafilter on $X$ converging to $x$, and if
$a: U \rightarrow \mathcal{M}$ is a local continuous section to the projection
map $\pi $ (here $U$ open in $X$), such that $a(x)=f$, then we claim that
$\sigma _{\mu}(f)=(a(y))_{y \in U}$.

\begin{note*}
\normalfont It is enough to define a member of the ultraproduct on some
$U \in \mu $.
\end{note*}
\end{theorem*}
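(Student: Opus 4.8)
The plan is to unwind the definition of $\sigma_\mu$ from Section \ref{sigma} and evaluate it on a point lying on a continuous section. Write $f = a(x)$ (so I read the statement as assuming $x \in U$; then $U \in \mu$ automatically, since $\mu$ converges to $x$ and $U$ is an open neighbourhood of $x$), and let $c \in \int_X \mathcal{M}_y\, d\mu$ denote the class of the family $(a(y))_{y \in U}$, which is a well-defined member of the ultraproduct precisely because $U \in \mu$. Recall that $\sigma_\mu(f)$ was defined as the limit of the Cauchy filter generated by the basis $\{A_W\}_{W \in \mathcal{N}_f}$, where $A_W = \{(b_y)_{y} \mid \exists\, U' \in \mu,\ \epsilon > 0,\ \coprod_{y \in U'} B(b_y,\epsilon) \subseteq W\}$. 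Since the ultraproduct is a complete (hence Hausdorff) metric space, this limit is unique, so I will show directly that it equals $c$.

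The key step is to prove that $c \in A_W$ for \emph{every} open neighbourhood $W$ of $f$. Fix such a $W$. By Axiom $(3)$, in the refined form established in Section \ref{axiom 3}, there are an open neighbourhood $V$ of $f$ and an $\epsilon > 0$ with $V \subseteq V_\epsilon \subseteq W$. Since $a$ is continuous and $a(x) = f \in V$, the set $U' := a^{-1}(V)$ is an open neighbourhood of $x$, so $U' \in \mu$ and $a(y) \in V$ for all $y \in U'$. I then claim $\coprod_{y \in U'} B(a(y),\epsilon) \subseteq V_\epsilon$: indeed, for $y \in U'$ and $h \in B(a(y),\epsilon)$ one has $\pi(h) = y = \pi(a(y))$, $a(y) \in V$, and $d_y(h,a(y)) < \epsilon$, which is exactly the membership condition for $V_\epsilon$ in Definition \ref{V}. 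Hence $\coprod_{y \in U'} B(a(y),\epsilon) \subseteq V_\epsilon \subseteq W$ with $U' \in \mu$, so the representative $(a(y))_{y \in U'}$ of $c$ witnesses that $c \in A_W$.

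To finish, I argue by a diameter comparison. For each $\epsilon > 0$ choose, via Lemma \ref{thin lemma}, an $\epsilon$-thin neighbourhood $L_\epsilon$ of $f$; as in the theorem showing $\{B \mid B \supseteq A_W\}$ is Cauchy, the basic set $A_{L_\epsilon}$ has diameter at most $\epsilon$. By the previous paragraph $c \in A_{L_\epsilon}$, while the limit $\sigma_\mu(f)$ lies in the closure $\overline{A_{L_\epsilon}}$, which still has diameter at most $\epsilon$. Therefore $d(c,\sigma_\mu(f)) \le \epsilon$ for every $\epsilon > 0$, giving $c = \sigma_\mu(f)$, that is, $\sigma_\mu(a(x)) = (a(y))_{y \in U}$, as claimed.

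The main obstacle is the middle paragraph: one must upgrade the mere pointwise continuity of the section $a$ to a uniform, fibrewise-controlled ``tube'' $\coprod_{y \in U'} B(a(y),\epsilon)$ trapped inside the prescribed open set $W$, and the only device that performs this upgrade is Axiom $(3)$ together with the explicit description of the enlargement $V_\epsilon$. Once that tube is produced, everything else is formal, resting only on the existence of arbitrarily thin neighbourhoods and the uniqueness of Cauchy-filter limits in the complete ultraproduct.
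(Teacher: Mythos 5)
Your proof is correct and follows essentially the same route as the paper's: you show that the class of $(a(y))_{y\in U}$ lies in every basic set $A_W$ of the Cauchy filter defining $\sigma_\mu(f)$, and then identify it with the limit via the $\epsilon$-thin-neighbourhood diameter bound. In fact your middle paragraph (producing the tube $\coprod_{y\in U'}B(a(y),\epsilon)\subseteq V_\epsilon\subseteq W$ from Axiom (3) and continuity of $a$) supplies precisely the detail that the paper's one-line argument glosses over.
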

\begin{proof}
Let $W$ be an open neighbourhood of $f$, by continuity of $a$ the ultrafilter
$a \mu $ converges to $f$, that means that for any $\epsilon > 0$
$\coprod _{y \in U}B(a(y),\epsilon ) \in \mu $, hence
$(a(y))_{y \in U} \in A_{W}$, and the Cauchy filter associated to the construction
of $\mathcal{R}(E)$ converges to $(a(y))_{y \in U}$.
\end{proof}
%

\section{Generalising to any structure}
\label{5}

In this section, we give the construction of bundles of structures of continuous
model theory.
\newline
{}By a structure of continuous model theory, we
mean an interpretation of sorts, relation, and function symbols, which
is not required to satisfy any axiom. This is a necessary intermediate
step before defining bundles of models of continuous model theory. But
first, we give a necessary introduction to continuous model theory.

\subsection{The ultracategory of models}
\label{sec5.1}

We first recall a few concepts from continuous model theory; this exposition
follows mostly \cite{farah2021model} and \cite{hart2023introduction}.

\subsubsection*{Signature}

The signature of continuous model theory consists of the following triplet
$\langle \mathfrak{S},\mathfrak{F},\mathfrak{R}\rangle $, where
\begin{enumerate}
\item $\mathfrak{S}$ is the set of \emph{sorts symbols}, such that each
symbol comes equipped with a symbol $d_{S}$ (should be interpreted as the
distance function), and a constant $k_{S}$ (actual constant not just a
symbol) (which should be interpreted as an upper bound for the distance
function).
\item $\mathfrak{F}$ is the set of \emph{function symbols}, and for each
symbol $f$ we specify a formal domain
$\mathrm{dom}(f)=(S_{1},\dots ,S_{n})$, a formal range
$\mathrm{rng}(f)=S^{\prime}$, and a function $\delta _{f}$, which should be
interpreted as the uniform continuity modulus of $f$.
\item $\mathfrak{R}$ is the set of \emph{relation symbols}, each equipped
with a compact interval of $\mathbb{R}$ (which should be interpreted as
the range of these relations), as well as a uniform continuity modulus
$\delta _{\phi}$ for every $\phi \in \mathfrak{R}$.
\end{enumerate}

\begin{note*}
\normalfont We can, and we are going to treat the distance symbol as a
relation symbol.
\end{note*}

Now we are in a position to define terms and formulae in continuous model
theory:%
\newline As usual, the definition is inductive:
\begin{itemize}
\item We start first by considering infinitely many variable symbols for
each sort $x_{i}^{S}$ as terms.
\item If $t_{1},\dots ,t_{n}$ are terms of sorts
$S_{1},\dots ,S_{n}$ and $f$ is a function symbol with range $S^{\prime}$ then
$f(t_{1},\dots ,t_{n})$ is a term of sort $S^{\prime}$.
\end{itemize}
All terms get uniform continuity moduli inductively. An example of a term
is $x^{*}x$ in the language of $\mathrm{C}^{*}$-algebras (to be more precise,
we need to specify the sort in that language, but we will make this more
clear in the examples section).%
\newline Now for formulae:
\begin{itemize}
\item First, we consider atomic formulae: these are defined using relation
symbols, i.e. if $t_{1},\dots ,t_{n}$ are terms of sorts
$S_{1},\dots ,S_{n}$ and $\phi $ is a relation symbol then
$\phi (t_{1},\dots ,t_{n})$ is a formula.
\item Connectives are just continuous functions from
$\mathbb{R}^{n}$ to $\mathbb{R}$, so if $f$ is such a function and
$t_{1},\dots ,t_{n}$ are terms, then $f(t_{1},\dots ,t_{n})$ is a formula.
\item Finally, we consider quantifiers: if $\phi $ is a formula and
$x^{S}_{i} \in \mathrm{FV}(\phi )$, then
$\mathrm{Sup}_{x^{S}_{i} \in S} \phi $ and
$\mathrm{Inf}_{x^{S}_{i} \in S} \phi $ are both formulae.
\end{itemize}

A formula with no free variable is called a sentence. Again, formulae inherit
uniform continuity moduli by their inductive construction. An example of
such formulae in the language of $\mathrm{C}^{*}$-algebras would be:
$x^{*}x +2$, $x^{*}yx$, $\mathrm{Sup}_{x} x^{*}x +y^{*}y$ \dots

\begin{note*}
\normalfont Free variables of a formula are defined the same way as in
the case of regular model theory.
\end{note*}

\subsubsection*{Structures and models}

An $\mathfrak{L}$-structure is a triplet
$M=\langle \mathcal{S},\mathcal{F},\mathcal{R} \rangle $, such that
\begin{itemize}
\item For each sort symbol in $S \in \mathfrak{S}$, there
is a complete metric space $M^S$ in the set $\mathcal{S}$ bounded by $k_{S}$.
\item For each element $f \in \mathfrak{F}$,there is a function
$f^{M}$ in $\mathcal{F}$, such that if the formal domain of $f$ is
$(S_{1},\dots ,S_{n})$, and the formal range is $S^{\prime}$, then its interpretation
$f^{M}$ has domain $M^{S_{1}}\times \dots \times M^{S_{n}}$ and range
$M^{S^{\prime}}$, also $f^{M}$ is uniformly continuous with uniform continuity
modulus $\delta _{f}$.
\item  For each element $\phi \in \mathfrak{R}$, we have
a relation $\phi ^{M} \in \mathcal{R}$ such, if the formal domain of
$\phi $ is $(S_{1},\dots , S_{n})$ and the formal range is $B$ a compact
interval of $\mathbb{R}$, then the interpretation $\phi ^{M}$ is a function
with domain $M^{S_{1}}\times \dots \times M^{S_{n}}$ and with range
$B$, which is uniformly continuous with uniform continuity modulus
$\delta _{\phi}$. In the same manner, we interpret terms and formulae.
\end{itemize}

Now let $M$ be an $\mathfrak{L}$-structure, and let $\mathbb{T}$ be a set
of sentences in the language $\mathfrak{L}$, then we say that $M$ is a
model of $\mathbb{T}$ if for every $\psi \in \mathbb{T}$,
$\psi ^{M}=0$, and in this case we write $M \models \mathbb{T}$. We say
that $\mathbb{T}$ is consistent if it has a model. Notice that if we take
$\mathbb{T}=\varnothing $, then its models in this case are exactly
$\mathfrak{L}$-structures.

Let
$\mathfrak{L}=\langle \mathfrak{S},\mathfrak{F},\mathfrak{R} \rangle $
be a Language (or signature, or similarity type), and let
$\mathbb{T}$ be a family of sentences in the language $\mathfrak{L}$, we
are going to denote by $\mathrm{CompMet}_{\mathfrak{L}}$ the category of
structures of $\mathfrak{L}$ and by
$\mathrm{CompMet}_{\mathfrak{L},\mathbb{T}}$ the full subcategory of models
of $\mathbb{T}$. To be more precise, we should specify what is
a morphism in this category: let $M$ and $N$ be two models, 
then a morphism of models $g$ is a family of morphisms $g^{S}$ for each
sort (we will omit the superscript if the context is clear) such
that for every function symbol $f$ with domain
$(S_{1},\dots ,S_{n})$ and with range $S^{\prime}$, we have for every
$(a_{1},\dots ,a_{n})\in M^{S_{1}}\times \dots \times M^{S_{n}}$,
$f^{N}(g^{S_{1}}(a_{1}), \dots ,g^{S_{n}}(a_{n})) =g^{S^{\prime}}(f^{M}(a_{1},
\dots ,a_{n}))$. And for every relation symbol, with domain
$(S_{1},\dots ,S_{n})$, we have
\begin{equation*}
\phi ^{N}(g^{S_{1}}(a_{1}),\dots ,g^{S_{n}}(a_{n})) \leq \phi ^{M}(a_{1},
\dots ,a_{n}).
\end{equation*}

One important particular case of this is when we have only one sort
$S$ and only one relation (the distance relation on this sort); in this
case, we get a category equivalent to the category of complete metric spaces
bounded by a certain $k$ with contractions as morphisms, which we denoted
by $\mathsf{k}\text{-}\mathrm{CompMet}$.

\subsubsection*{Ultraproducts and models}

In all the previous cases, the ultraproduct construction given explicitly
in \cite{farah2021model} and \cite{hart2023introduction}, makes these categories
ultracategories. We think it's important to highlight this construction,
which is similar to the ultraproduct construction in usual model theory
(after all these are just directed colimits of products). Of course, we
assume that the reader is at this point familiar with the ultraproduct
of metric spaces bounded by a certain constant.

Suppose we have a similarity type
$\mathfrak{L}=\langle \mathfrak{S},\mathfrak{F},\mathfrak{R} \rangle $,
and a family of structures $(V_{i})_{i \in I}$ of that similarity type,
we define their ultraproduct as follows:
\begin{itemize}
\item For each sort $S \in \mathfrak{S}$ we define
$(\int _{I} V_{i} d\mu )^{S} $ by
$(\int _{I} V_{i} d\mu )^{S}= \int _{I} V_{i}^{S} d
\mu $.
\item For a relation symbol $\phi$ with domain $ S_1 \times \dots \times S_n$ , we define
$ \phi^{{\int _{I} V_{i} d\mu}}((a_{i}^{m})_{i \in I})_{1\leq m \leq n} =
\lim _{\mu}(\phi^{V_{i}}(a_{i}^{m}))$. Here $\lim _{\mu}$ is the
ultralimit in $[-k,k]$ (remember that the family
$(\phi^{V_{i}}(a_{i}^{m}))$ is bounded so we can replace
$(-\infty,\infty )$ by $[-k,k]$ and define this as the limit of the push forward
of the ultrafilter $\mu $ by the map
$(a_{i}^{m})_{i \in I} \mapsto \phi^{V_{i}}(a_{i}^{m}) _{1 \leq m \leq n}$).
\item For a function symbol, things are the same as in usual model theory.
That means that for $f \in \mathfrak{F}$ with domain $ S_1 \times \dots \times S_n$, we define
$f^{{\int _{I}(V_{i})d\mu}}(((a^{m} _{i})_{i \in I}) _{1 \leq m \leq n})=(f^{V_i}(a^{m} _{i})_{1 \leq m \leq n})_{i \in I}$.
\end{itemize}

The fact that models are closed under taking this construction above follows
from {\L}os theorem; {\L}os theorem is an
important result in classical model theory which has a version in continuous
model theory:

\begin{theorem}[{\L}os theorem in continuous model theory]
\label{thm5.1}
For any family of structures $\{M_{x}\}_{x \in X}$ if $\mu $ is an ultrafilter on $X$, if we call
$M=\int _{X}M_{x} d\mu $ then we have the following: for any formula
$\phi $ and any $\bar{m} =(m_{x})_{x \in X}$ we have
$\phi ^{M}(\bar{m})=\int _{X}\phi ^{M_{x}}(m_{x})d\mu $.
\end{theorem}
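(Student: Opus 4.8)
The plan is to prove the statement by induction on the structure of the formula $\phi$, mirroring the classical proof of Łoś's theorem but with truth values replaced by real numbers and ordinary limits replaced by the ultralimit $\lim_\mu$. Before treating formulae I would first establish the analogous statement for terms: for every term $t$ and every tuple $\bar m = (m_x)_{x \in X}$ of elements of the appropriate sorts one has $t^M(\bar m) = (t^{M_x}(m_x))_{x \in X}$ as an element of the relevant sort of $M = \int_X M_x d\mu$. This is proved by induction on the construction of $t$: variables are immediate, and the inductive step is exactly the defining clause $f_{\int_I V^i d\mu}((a^i),\dots,(z^i)) = (f(a^i,\dots,z^i))$ for the interpretation of a function symbol in the ultraproduct. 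Throughout, the uniform continuity moduli attached to the symbols guarantee that all these interpretations are well defined, i.e.\ independent of the chosen representatives.

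With the term lemma in hand, the base case of the induction --- atomic formulae $\phi = R(t_1,\dots,t_n)$ --- follows by combining it with the defining clause $R_{\int_I V^i d\mu}((a_i^m)) = \lim_\mu R_{V^i}(a_i^m)$ for relation symbols, which gives $\phi^M(\bar m) = \lim_\mu \phi^{M_x}(m_x)$ directly. For the connective case $\phi = u(\psi_1,\dots,\psi_n)$, where $u \colon \mathbb{R}^n \to \mathbb{R}$ is continuous, I would invoke the induction hypothesis together with the fact that the ultralimit commutes with continuous functions: since $u$ is continuous and each $\psi_j^M(\bar m) = \lim_\mu \psi_j^{M_x}(m_x)$, we obtain $u(\psi_1^M(\bar m),\dots) = \lim_\mu u(\psi_1^{M_x}(m_x),\dots) = \phi^M(\bar m)$, using the characterization of the ultralimit as the limit of the pushforward filter.

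The genuinely non-formal step, and the main obstacle, is the quantifier case, say $\phi = \la{Sup}_{y \in S}\, \psi(y,\bar m)$ (the $\la{Inf}$ case being dual). Here $\phi^M(\bar m) = \sup_{b \in M^S} \psi^M(b,\bar m)$, and we must show this equals $\lim_\mu \big(\sup_{b_x \in M_x^S} \psi^{M_x}(b_x,m_x)\big)$. For the inequality $\le$, every $b = (b_x)_x \in M^S$ satisfies, by the induction hypothesis, $\psi^M(b,\bar m) = \lim_\mu \psi^{M_x}(b_x,m_x) \le \lim_\mu \sup_{c_x} \psi^{M_x}(c_x,m_x)$ by monotonicity of the ultralimit, so taking the supremum over $b$ preserves the bound. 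The reverse inequality is where the real work lies: given $\epsilon > 0$, for each $x$ I would choose a witness $b_x \in M_x^S$ with $\psi^{M_x}(b_x,m_x) > \sup_{c_x}\psi^{M_x}(c_x,m_x) - \epsilon$, assemble the $b_x$ into an element $b = (b_x)_x \in M^S$, and apply the induction hypothesis to conclude $\psi^M(b,\bar m) = \lim_\mu \psi^{M_x}(b_x,m_x) \ge \lim_\mu \sup_{c_x}\psi^{M_x}(c_x,m_x) - \epsilon$; letting $\epsilon \to 0$ yields the claim. The subtle points to verify are that the fibrewise choice of approximate witnesses (needed since the supremum need not be attained) genuinely defines an element of the ultraproduct sort $M^S$, and that the two ultralimit manipulations --- monotonicity and the shift by $\epsilon$ --- are justified by the $\la{Inf}$--$\la{Sup}$ description of $\lim_\mu$ established earlier in the paper.
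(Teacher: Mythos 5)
The paper does not actually prove this theorem: it states it as a known background result of continuous model theory, imported from the references it relies on for that subject (the Farah et al.\ notes and Hart's introduction), and immediately moves on to its corollary that categories of models are closed under ultraproducts. So there is no proof in the paper to compare yours against line by line; what you have written is the standard proof of the continuous \L o\'s theorem, and it is correct. The term lemma, the atomic case via the defining clause for relation symbols in the ultraproduct, the connective case via continuity commuting with ultralimits, and the two-sided estimate in the quantifier case are all sound; in particular the approximate-witness argument is exactly what is needed, since the supremum need not be attained in a fibre. The two points you flagged as subtle deserve that emphasis, and both are fine: assembling the fibrewise witnesses $(b_x)_{x \in X}$ into an element of $M^S$ is unproblematic precisely because sorts are bounded metric spaces, so any choice function defines a point of the ultraproduct sort (no boundedness condition can fail); and the monotonicity and constant-shift manipulations of $\lim_\mu$ follow from the $\la{Inf}$--$\la{Sup}$ description of the ultralimit, which the paper does prove in its preliminary section on ultrasets valued in compact subsets of $\bb{R}$. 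One small addition would make the induction airtight: independence of the chosen representatives is needed not only for terms but for every formula (otherwise the expression $\lim_\mu \psi^{M_x}(b_x,m_x)$ is not obviously well defined on equivalence classes), and this is guaranteed by the fact that each formula inherits, syntactically, a uniform continuity modulus valid in all structures simultaneously; your induction hypothesis should be stated so as to carry this along.
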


We see clearly that a similar version of the classical {\L}os theorem is
a consequence of the theorem above if $\phi $ is a sentence and if for
every $x \in X$ $M_{x} \models \phi $ (which is the same thing as saying
that $\phi ^{M_{x}}=0$), then $\int _{X} M_{x} d\mu \models \phi $.

\subsection{The ultracategory of Banach spaces is not a category of models of geometric logic}
\label{not_coherent_VTEX1}

Most of the results in this subsection and their proofs are due to Simon
Henry.

Let us introduce the following notations:

Let $\mathcal{M}$ be an ultracategory of models of continuous logic, and let
$A \in \mathcal{M}$, then by an element $a \in A$, we mean a tuple
$(a_{S})_{S \in \mathfrak{S}}$. We remind that a morphism $f$ from
$A$ to $B$, is given by a family of contractions
$(f^{S})_{S \in \mathfrak{S}}$; this allows us to equip
$\mathrm{Hom}(A,B)$ with the topology of pointwise convergence, where
$(f_{i})$ converges to $f$ iff for every $a \in A$ and every
$S \in \mathfrak{S}$, $(f^{S}_{i}(a_{S})) \to f^{S}(a_{S})$.

Suppose that we have an ultracategory $\mathcal{M}$, and let
$A \in \mathcal{M}$, and let $\mu $ be an ultrafilter on some set
$I$, then we can define the diagonal map $\Delta _{A,\mu}$ from $A$ to
$\int _{I}Ad\mu $ using the data of the ultracategory as follows (denoting
by $*$ simultaneously the one point set and the unique ultrafilter on it):
\begin{equation*}
\begin{tikzcd}
A &&& {\int _{*}Ad*=\int _{*}A d\int _{I}*d\mu} &&& {\int _{I}\int _{*}Ad*d
\mu} &&& {\int _{I} A d\mu}
\arrow["{\epsilon _{*,*}}", from=1-1, to=1-4]
\arrow["{\Delta _{\mu ,*_{\bullet}}}", from=1-4, to=1-7]
\arrow["{\int _{I} \epsilon ^{-1}_{*,*}d\mu}", from=1-7, to=1-10]
\end{tikzcd}
\end{equation*}

Now we state an important theorem regarding categories of models of continuous
logic:

\begin{theorem}
\label{first_lemma_VTEX1}
Let $\mathcal{M}$ be a category of models of a continuous first order theory,
and let $F: \mathcal{M} \to \mathsf{Set}$ be a left ultrafunctor. Let $A$ and $B$ be two objects
of $\mathcal{M}$ and $(f_{i})_{i \in I}$ a collection of morphisms from
$A$ to $B$. Let $\mu $ be an ultrafilter on $I$, such that
$f_{i}(x) \xrightarrow \mu f(x)$ for all $x \in A$ (in a more simple language,
this means that for every $S$ the ultrafilter
$ f^{S}_{\bullet}(x) \mu $ converges to $f(x)$), then
$\forall x \in F(A)$, $\exists I' \in \mu $ such that
$\forall i \in I'$, $F(f_{i})(x)=F(f)(x)$.
\end{theorem}

\begin{proof}
In $\mathcal{M}$, the composites:
\begin{equation*}
\begin{tikzcd}
A &&& {\int _{I} A d\mu} &&& {\int _{I} B d\mu}
\arrow["{\Delta _{A,\mu}}"', from=1-1, to=1-4]
\arrow["{\int _{I}f_{i} d\mu}", shift left, curve={height=-18pt}, from=1-4, to=1-7]
\arrow["{\int _{I}f d\mu}", shift right, curve={height=18pt}, from=1-4, to=1-7]
\end{tikzcd}
\end{equation*}
coincide by the assumption that $f_{i}(x) \xrightarrow{\mu} f(x)$.

So we get the following commutative diagrams in $\mathsf{Set}$:
\begin{equation*}
\begin{tikzcd}
{F(A)} & {F\left (\int A d\mu \right )} & {F\left (\int B d \mu
\right )}
\\
{\int F(A) d\mu} & {\int F(A) d \mu} & {\int F(B)d \mu}
\arrow["{F(\Delta _{A,\mu})}", from=1-1, to=1-2]
\arrow["{\Delta _{F(A),\mu}}", from=1-1, to=2-1]
\arrow["{F(f_{i})}", shift left, from=1-2, to=1-3]
\arrow["{F(f)}"', shift right, from=1-2, to=1-3]
\arrow["{\sigma _{\mu}}", from=1-2, to=2-2]
\arrow["{\sigma _{\mu}}", from=1-3, to=2-3]
\arrow[equals, from=2-1, to=2-2]
\arrow["{\int F(f_{i})}", shift left, from=2-2, to=2-3]
\arrow["{\int F(f)}"', shift right, from=2-2, to=2-3]
\end{tikzcd}
\end{equation*}

The commutativity of this diagram in $\mathsf{Set}$ means exactly that
$\forall x \in F(A)$, the $F(f_{i})$ are $\mu $-almost everywhere equal
to $F(f)$.

Of course we should justify why the following diagram is commutative:
\begin{equation*}
\begin{tikzcd}
{F(\int _{I} A d\mu )} && {F(A)}
\\
\\
{\int _{I} F(A) d\mu} \arrow["{\sigma _{\mu}}", from=1-1, to=3-1]
\arrow["{F(\Delta )}"', from=1-3, to=1-1]
\arrow["\Delta "{description}, from=1-3, to=3-1]
\end{tikzcd}
\end{equation*}

This looks like a natural property of left ultrafunctors, but it's not
very evident from the axioms; in order to show it we use a combination
of the axioms of the definition of left ultrafunctors, and of course
we should rewrite the diagonal map in terms of the data of ultracategories;
let us look at the following diagram:
\begin{equation*}
\begin{tikzcd}
{\int _{I}F(A)d\mu } & {\int _{I}\int _{*}F(A)d*d\mu} & {\int _{*}F(A)d*}
\\
&& {F(A)}
\\
& {\int _{I} F(\int _{*}Ad*)d\mu}
\\
{F(\int _{I}Ad\mu )} & {F(\int _{I}\int _{*}Ad*d\mu )} & {F(\int _{*}Ad*)}
\arrow["1"{pos=0.2}, draw=none, from=1-1, to=4-2]
\arrow["{{\int _{I} \epsilon d\mu}}"', from=1-2, to=1-1]
\arrow["2"{description, pos=0.2}, draw=none, from=1-2, to=4-3]
\arrow["{{\Delta _{I,*}}}"', from=1-3, to=1-2]
\arrow["{{\epsilon ^{-1}}}"', from=2-3, to=1-3]
\arrow[""{name=0, anchor=center, inner sep=0}, "{{F(\epsilon ^{-1})}}", from=2-3, to=4-3]
\arrow["{{\int_I  \sigma _{*} d\mu}}"{description}, from=3-2, to=1-2]
\arrow["{{\sigma _{\mu}}}", from=4-1, to=1-1]
\arrow["{{\sigma_{\mu}}}"{description}, from=4-2, to=3-2]
\arrow["{{F(\int_I \epsilon d \mu )}}", from=4-2, to=4-1]
\arrow[""{name=1, anchor=center, inner sep=0}, "{{\sigma _{*}}}"{description}, curve={height=-24pt}, from=4-3, to=1-3]
\arrow["{{F(\Delta _{I,*})}}", from=4-3, to=4-2]
\arrow["3"{description}, draw=none, from=1, to=0]
\end{tikzcd}
\end{equation*}

Squares $1$ and $2$ commute by axiom $2$ of definition of left ultrafunctors \cite[Definition 1.4.1]{lurie2018ultracategories} (note that for square $1$ one would also need to use \cite[Corollary $1.3.6$]{lurie2018ultracategories}),
while triangle $3$ commutes by axiom $1$ of the same definition.
\end{proof}

A consequence of this theorem is the following lemma:

\begin{lemma}
\label{lem5.1}
Let $\mathcal{M}$ be a category of models of continuous logic and let
$A,B \in \mathcal{M}$. If $x \in F(A)$ and $y \in F(B)$, the set of
$g: A \rightarrow B$ such that $F(g)(x)=y$ is open in
$\mathrm{Hom}(A, B)$ for the topology of pointwise convergence.
\end{lemma}

\begin{proof}
Let $U_{x,y}$ be this set. Theorem~\ref{first_lemma_VTEX1} shows that if
$f_{i} \in \operatorname{Hom}(A,B)$ converge along $\mu $ to
$f \in U_{x,y}$, then $f_{i} \in U_{x,y}$, $\mu $-almost everywhere. This
means $U_{x,y}$ is open by ultrafilter characterisation of open sets.
\end{proof}

We can use this lemma to show the following theorem:

\begin{theorem}
\label{not_coherent_theorem_VTEX1}
Let $\mathcal{M}$ be an ultracategory of models of a continuous first order
theory, that satisfies the following two conditions:
\begin{enumerate}
\item For any $A ,B \in \mathcal{M}$, $\mathrm{Hom}(A,B)$ is connected
with the topology of pointwise convergence.
\item The category $\mathcal{M}$ has a zero object.
\end{enumerate}
then the only left ultrafunctors $\mathcal{M} \to \mathsf{Set}$ are constants.
\end{theorem}

\begin{proof}
In what follows we are going to denote by $0$ the unique morphism in and
out of the zero object of $\mathcal{M}$. The proof requires the following
lemma:

\begin{lemma}
\label{lem5.2}
$\forall f, g: A \rightarrow B$ in $\mathcal{M}$, $F(f)=F(g)$.
\end{lemma}
\begin{proof}
Fix $x \in F(A)$. The sets
$U_{x,y}=\{g: A \rightarrow B \mid F(g)(x)=y\}$ for $y \in F(B)$ form a
partition of $\operatorname{Hom}(A,B)$ into open sets. As
$\mathrm{Hom}(A, B)$ is connected by assumption, it means only one of them
is non-empty, so $\exists ! y$ such that
$\forall g: A \rightarrow B$, $F(g)(x)=y$. This proves that all
$f \in \operatorname{Hom}(A,B)$ have the same image by $F$.
\end{proof}

Let $B \in \mathcal{M}$. The maps $0: 0 \rightarrow B$ and
$0: B \rightarrow 0$ induce functions $F(0) \longrightarrow F(B)$ and
$F(B) \longrightarrow F(0)$. The composite
$F(0) \rightarrow F(B) \rightarrow F(0)$ is the identity of $F(0)$; if
we call $0_{B}$ the composition $B \rightarrow 0 \rightarrow B$, then
$F(B) \rightarrow F(0) \rightarrow F(B)$ is $F(0_{B})$. But by the previous
lemma, $F(0_{B})=F(\mathrm{id}_{B})=\mathrm{id}_{F(B)}$. Now let us look at the following diagram:
\begin{equation*}
\begin{tikzcd}
{F(0)} && {F(0)} \\
& {F(B)} && {F(B)}
  \arrow["{{\mathrm{id}}_{F(0)}}", from=1-1, to=1-3]
  \arrow["{F(0)}", from=1-1, to=2-2]
  \arrow["{F(0)}", from=1-3, to=2-4]
  \arrow["{F(0)}"{description}, from=2-2, to=1-3]
  \arrow["{{\mathrm{id}}_{F(B)}}"{description}, from=2-2, to=2-4]
\end{tikzcd}
\end{equation*}

Using this diagram $F(0):F(B) \rightarrow F(0)$ is  an isomorphism, so we get an isomorphism $F(B) \simeq F(0)$.

Now on morphisms, for any map $f: B \rightarrow A$ in $\mathcal{M}$, we have:
\begin{equation*}
\begin{tikzcd}
F(B) \arrow[r, "F(f)"] \arrow[d, "F(0)"'] & F(A) \arrow[d, "F(0)"]
\\
F(0) \arrow[r, "\mathrm{id}_{F(0)}"'] & F(0)
\end{tikzcd}
\end{equation*}

This makes $F(B) \longrightarrow F(A)$ an isomorphism, and this makes
$F$ isomorphic to a constant functor.

Finally, we need to show that the ultrastructure of $F$ is that of a constant
functor, which is equivalent to saying that up to the respective isomorphisms,
the maps $\sigma _{\mu}$ are diagonal maps. Our goal is to show that the
following diagram is commutative:
\begin{equation*}
\begin{tikzcd}
{F(\int _{I}A_{i}d\mu )} &&& {F(0)}
\\
\\
\\
{\int _{I} F(A_{i})d\mu} &&& {\int _{I}F(0)d\mu}
\arrow["{F(0)}"', from=1-1, to=1-4]
\arrow["{\sigma _{\mu}}", from=1-1, to=4-1]
\arrow["\Delta _{F(0),\mu}"', from=1-4, to=4-4]
\arrow["{\int _{I} F(0) d\mu}", from=4-1, to=4-4]
\end{tikzcd}
\end{equation*}

But we already have the commutativity of the following diagram:
\begin{equation*}
\begin{tikzcd}
{F(\int _{I}A_{i}d\mu )} && {F(\int _{I} 0 d\mu )}
\\
\\
{\int _{I}F(A_{i})d\mu} && {\int _{I} F(0) d\mu}
\arrow["{F(\int _{I} 0 d\mu )}", from=1-1, to=1-3]
\arrow["{\sigma _{\mu}}"', from=1-1, to=3-1]
\arrow["{\sigma _{\mu}}", from=1-3, to=3-3]
\arrow["{\int _{I} F(0) d\mu}"', from=3-1, to=3-3]
\end{tikzcd}
\end{equation*}

So we can restrict our attention to
$F(\int 0d\mu ) \xrightarrow {\sigma _{\mu}} \int _{I} F(0) d\mu $, and
to attempt to show that the following diagram commutes:
\begin{equation*}
\begin{tikzcd}
{F(0)} &&& {F(\int _{I}0 d\mu )}
\\
\\
\\
{\int _{I}F(0) d\mu} \arrow["{F(0)=F(\Delta )}"', from=1-1, to=1-4]
\arrow["\Delta ", from=1-1, to=4-1]
\arrow["{\sigma _{\mu}}"', from=1-4, to=4-1]
\end{tikzcd}
\end{equation*}

But we have already shown this in the proof of \ref{first_lemma_VTEX1}.
\end{proof}

This in particular shows that Banach spaces, Hilbert spaces with their
usual ultraproducts are not axiomatisable in coherent logic, using conceptual
completeness (in fact they are not even axiomatisable in geometric logic,
but this requires generalised ultracategories
\cite{hamad2025generalisedultracategoriesconceptualcompleteness}).

\subsection{Bundles of structures}
\label{sec5.3}

We define what it means to be a bundle of structures:

\begin{definition}
\label{sorted_bundle_definition_VTEX1}
Let
$\mathfrak{L}=\langle \mathfrak{S},\mathfrak{F},\mathfrak{R}\rangle $ be
a language, we define a bundle of structures $E$ of that language with
base space $X$, to be a family of bundles of complete bounded metric spaces
$((E_{S},\pi _{S}))_{S \in \mathfrak{S}}$, such that for any $x \in X$,
$(\pi _{S}^{-1}(x))_{S \in \mathfrak{S}}$ is a structure of the language
$\mathfrak{L}$ (so in particular it comes with the function and relation
symbols data), such that the following axioms are satisfied:

For any function symbol $f$ with formal domain $\mathrm{dom}(f)$ and formal
range $\mathrm{rng}(f)$ and for any relation symbol $\phi $ with formal
domain $\mathrm{dom}(\phi )$ we are going to denote by $f^{E}$ and
$\phi ^{E}$ the global function and relations respectively (so for any
$x \in X$ $f^{E}$ restricts to the interpretation of the function symbol
$f$ of $(\pi _{S}^{-1}(x))_{S \in \mathfrak{S}}$, same thing for relation
symbols).
\begin{itemize}
\item Axiom(1): For every function symbol $f$, the map $f^{E}$ is continuous.
\item Axiom(2): For every relation symbol $\phi $, the map
$\phi ^{E}$ is upper semi-continuous.
\end{itemize}
\end{definition}
\begin{note*}
\normalfont
We are going to denote by $E^{S}$ the bundle of structures that corresponds
to a sort $S$ and by $E_{x}$ the fibre over $x$ which is a structure, so
following this convention $E^{S}_{x}$ is the $x$-th fibre of the bundle
of structure corresponding to the sort $S$.
\end{note*}
%

\subsection{Maps of bundles}
\label{sec5.4}

Let $E$ and $E^{\prime}$ be two bundles, a morphism $\psi $ in the category
of bundles consists of the following:

For each sort $S$, a map of bundles of bounded metric spaces
$\psi ^{S}$ between the bundles $E^{S}$ and ${E^{\prime}}^{S}$ such that the
following diagram commutes (in $\mathsf{Top}$):
\begin{equation*}
\begin{tikzcd}
E^{S} \arrow[rr, "\psi ^{S}"] \arrow[rdd, "\pi _{S}"'] & & {E^{\prime}}^{S}
\arrow[ldd, "\pi _{S}"]
\\
& &
\\
& X &
\end{tikzcd}
\end{equation*}
and such that for any $x$, $\psi _{x}$ is a map of structures of the language
$\mathfrak{L}$ from $E_{x}$ to $E^{\prime}_{x}$.

Now, we want to extend the equivalence obtained in section~\ref{3}(Theorem~\ref{first_theorem_VTEX1}) to structures of continuous model theory:%
\newline In other words, we want to show the following:
%
\begin{theorem}
\label{second_theorem_VTEX1}
Let $X$ be a compact Hausdorff space, then there is an equivalence of categories
between
\newline $\mathrm{Lult}(X,\mathrm{CompMet}_{\mathfrak{L}})$ and the category
$\mathrm{Bun}(\mathrm{CompMet}_{\mathfrak{L}},X)$.
\end{theorem}

The rest of section~\ref{5} is devoted to showing the theorem above \ref{second_theorem_VTEX1}.

In order to define this equivalence of categories, we are going to expand
the definitions of the functors $\mathcal{L}$ and $\mathcal{R}$ already
defined to the categories above.

\subsection{The functor $\mathcal{L}$ on $\mathrm{Lult}(X,\mathrm{CompMet}_{\mathfrak{L}})$}
\label{sec5.5}

Let $X$ be a compact Hausdorff space, and suppose we have
$\mathcal{F}$, a left ultrafunctor from $X$ to the ultracategory of structures
of some language $\mathfrak{L}$. We know that the functor
$\mathcal{F}$ will give rise to a family of functors
$\mathcal{F}^{S}$ for each sort $S$. If we define each mono-sorted bundle
$E_{S}$ to be $\coprod _{x \in X}\mathcal{F}^{S}(x)$ with its bundle topology
given in \ref{topology}, then we have already seen that the first three
axioms are satisfied for this multi-sorted bundle.
\newline
It remains to check axioms $1$ and $2$:

\subsubsection*{Axiom {$1$} (functions)}

To prove that axiom $1$ is satisfied by our definition of multi-sorted
bundle suppose that $f$ is a function symbol, and suppose that
$\mathrm{dom}(f)=S_{1} \times \dots \times S_{n}$ and
$\mathrm{rng}(f)=S^{\prime}_{1}$. Suppose that $\mu $ is an ultrafilter on
$E^{S_{1}} \times _{X}\dots \times_{X} E^{S_{n}}$ (in the case where we have a constant
symbol this space is $X$ the $0$-th product in $\mathsf{{Top}}/\mathsf{X}$) that converges
to
$(a^{1},\dots ,a^{n}) \in M_{y}^{S_{1}} \times \dots \times M_{y}^{S_{n}}
\subseteq E^{S_{1}} \times _{X}\dots \times _{X}E^{S_{n}}$, and suppose
that $f^{M_{y}}(a^{1},\dots ,a^{n})=a^{\prime}$. Now suppose that for each
$S_{i}$
$\sigma _{\pi _{S_{i}}\mu}^{S_{i}}(a^{i}) = (b_{x}^{i})_{x \in X}$. Since
$\sigma _{\pi _{S_{i}} \mu}$ is a map of $\mathfrak{L}$ structures we get
that
$\sigma _{\pi _{S_{i}}\mu}^{S^{\prime}_{1}}(a^{\prime})=f^{M}((b_{x}^{1}),
\dots ,(b_{x}^{n}))$ so we may use
$(f^{M_{x}}(b_{x}^{1}, \dots , b_{x}^{n}))_{x \in X}$ as representative
of the class of $\sigma _{\pi _{S'_{1}} \mu}^{S_{i}}(a^{\prime})$ (using the
definition of the structure of the ultraproduct) (in the case of constant
symbol $c$ of sort $S^{\prime}_{1}$ we use $(c_{x})_{x \in X}$ as representative
of its class).

Now let $\epsilon >0$. We know that for any $x$, $f^{M_{x}}$ is uniformly
continuous with uniform continuity modulus independent of $x$, thus we
can deduce that there exists some $\delta $, such that if
$d(m_{x}^{i},b_{x}^{i}) < \delta $, we get that
$d(f^{M_x}(m_{x}^{1},\dots ,m_{x}^{n}),f^{M_x}(b_{x}^{1},\dots ,b_{x}^{n}))<
\epsilon $. We want to show that $f^{M_{y}} \mu $ converges to
$a^{\prime}$: We have that
$\coprod _{x \in X}B(b_{x}^{i},\delta ) \in \pi _{S_{i}}\mu $. Now take
the following set
$\bigcap _{i=1}^{n}\pi _{S_{i}}^{-1}(\coprod _{x \in X}B(b_{x}^{i},
\delta )) \in \mu $. If we take
$(l_{1},\dots ,l_{n}) \in \bigcap _{i=1}^{n}\pi _{S_{i}}^{-1}(
\coprod _{x \in X}B(b_{x}^{i},\delta ))$, and suppose that
$\pi _{S_{i}}(l_{i})=z$ we have the following:
\begin{equation*}
d(f^{E}(l_{1},\dots ,l_{n}),f^{E}(b_{z}^{1},\dots ,b_{z}^{n}))=d(f^{M_{z}}(l_{1},
\dots ,l_{n}), f^{M_{z}}(b_{z}^{1},\dots ,b_{z}^{n}))<\epsilon ,
\end{equation*}
then this set satisfies
\begin{equation*}
\bigcap _{i=1}^{n}\pi _{S_{i}}^{-1}(\coprod _{x \in X}B(b_{x}^{i},
\delta )) \subseteq (f^{E})^{-1}(\coprod _{x \in X}B(f^{M_x}(b_{x}^{1},
\dots ,b_{x}^{n}),\epsilon )),
\end{equation*}
thus we get that
\begin{equation*}
\coprod _{x \in X}B(f^{M_x}(b_{x}^{1},\dots ,b_{x}^{n}),\epsilon ) \in f^{E}
\mu ,
\end{equation*}
thus $f^{E}$ is continuous (In the case we have a constant symbol we have
that $\coprod _{x \in X}B(c_{x},\epsilon ) \in c^{E} \mu $ trivially since
$(c^{E})^{-1}\coprod _{x \in X}B(c_{x},\epsilon ) =X \in \mu $).

\subsubsection*{Axiom {$2$} (relations)}

We are going to denote $\mathcal{F}(x)$ by $M_{x}$. We want to prove that
the family of $S$-bundles for $S \in \mathfrak{S}$ satisfies the upper
semi-continuity for each global relation. To do so suppose that
$\phi $ is a relation symbol, and suppose that $\mu $ is an ultrafilter
on $X$ that converges to $y$ and that
$\mathrm{dom}(\phi )=S_{1} \times \dots \times S_{n}$. From this point
forward let us denote by $M$ the ultraproduct
$\int _{X} M_{x} d\mu $.

Let us prove that $\phi ^{E}$ is upper semi-continuous: Let $\mu $ be an
ultrafilter on $E^{S_{1}}\times _{X}\dots \times _{X}E^{S_{n}}$ such that
$\mu $ converges to
$(a^{1},\dots ,a^{n}) \in M_{y}^{S_{1}}\times \dots \times M_{y}^{S_{n}}
\subseteq E^{S_{1}}\times _{X} \dots \times _{X}E^{S_{n}}$ (for some
$y \in X$) and take $r>0$ such that
$\phi ^{M_{y}}(a^{1},\dots ,a^{n})<r$. Let us call the quantity
$r- \phi ^{M_{y}}(a^{1},\dots ,a^{n})=\epsilon $. Notice that for any
$i,j$ we get that $\pi _{S_{i}} \mu =\pi _{S_{j}} \mu $ is the same ultrafilter
on $X$, so we'll call this ultrafilter $\pi _{S_{i}}\mu $ regardless of
which $i$ this ultrafilter comes from. Since
$\sigma _{ \pi _{S_{i}} \mu}$ is a morphism of $\mathfrak{L}$-structures,
then calling
$\sigma _{\pi _{S_{i}} \mu}^{S_{i}}(a^{i}) = (b_{x}^{i})_{x \in X}$, we
get that
$\phi ^{M}((b_{x}^{1}),\dots ,(b_{x}^{n})) \leq \phi ^{M_{y}}(a^{1},
\dots ,a^{n})$, thus for any $\epsilon ^{\prime}>0$ there exists
$L \in \mu $ such that for every $x \in L$ we have
$\phi ^{M_{x}}(b_{x}^{1},\dots ,b_{x}^{n}) \leq \phi ^{M_{y}}(a^{1},
\dots ,a^{n}) +\epsilon ^{\prime}$. So let us pick the $L$ corresponding to
$\epsilon ^{\prime}=\epsilon /2$.

We know that for each $x$, the functions $\phi ^{M_{x}}$ are uniformly
continuous with the same uniform continuity modulus (independent of
$x$), which implies that there exists some $\delta $ such that for any
$m^{i}_{x} \in M_{x}^{S_{i}}$, if $d(m^{i}_{x},b_{x}^{i})<\delta $, we
have
$|\phi ^{M_{x}}(b_{x}^{1},\dots ,b_{x}^{n})-\phi ^{M_{x}}(m^{1}_{x},
\dots ,m^{n}_{x})|< \epsilon /2$. Let us take the set
$\bigcap _{i=1}^{n}\pi _{S_{i}}^{-1}(\coprod _{x \in L}B(b_{x}^{i},
\delta ))$. First, we know that each
$\coprod _{x \in L}B(b_{x}^{i},\delta ) \in \pi _{S_{i}}\mu $, which allows
us to deduce that
$\bigcap _{i=1}^{n}(\pi _{S_{i}}^{-1}\coprod _{x \in L}B(b_{x}^{i},
\delta )) \in \mu $. Suppose that
$(l^{1},\dots ,l^{n}) \in \bigcap _{i=1}^{n}\pi _{S_{i}}^{-1}(
\coprod _{x \in L}B(b_{x}^{i},\delta ))$, let us call
$z=\pi _{S_{i}}(l^{i})$ then we have that:
\begin{equation*}
\phi ^{M_{z}}(l^{1},\dots ,l^{n}) <\phi ^{M_{z}}(b_{z}^{1},\dots ,b_{z}^{n})
+ \epsilon /2 \leq \phi ^{M_{y}}(a^{1},\dots ,a^{n}) + \epsilon /2+
\epsilon /2 =r,
\end{equation*}
this implies that
\begin{equation*}
\bigcap _{i=1}^{n}\pi _{S_{i}}^{-1}(\coprod _{x \in L}B(b_{x}^{i},
\delta )) \subseteq (\phi ^{E})^{-1}([0,r)).
\end{equation*}
Thus $(\phi ^{E})^{-1}([0,r)) \in \mu $. Thus, we may deduce that
$\phi ^{E} \mu $ converges to $\phi^{M_y} (a^{1},\dots ,a^{n})$ (if we equip
$[0,\infty ]$ with the left order topology), thus $\phi ^{E}$ is upper
semi-continuous.

\subsubsection*{Functoriality of $\mathcal{L}$}

Since each $\mathcal{L^{S}}$ is a functor by the previous construction,
we may deduce that $\mathcal{L}$ defined this way is a functor.

\subsection{The inverse functor { $\mathcal{R}$: extending the definition}}
\label{sec5.6}

Suppose we have a bundle $E$ of structures, we define the inverse functor
by sending a bundle $E$ to the left ultrafunctor $\mathcal{F}(E)$ defined
as follows: for every $x \in X$ we define $\mathcal{F}(x)= E_{x}$ (the
fibre at $x$). Now the left ultrastructure of the functor
$\mathcal{R}(E)$ is constructed from the left-ultrastructure of the restriction
of the functor to each sort as described in \ref{sigma}.

Now it remains to check compatibility for both function and relation symbols,
which means that we are going to show that the $\sigma _{\mu}$ constructed
sort-wise is really a morphism in the category of structures.

\subsubsection*{Compatibility of function symbols}

The proof in section~\ref{sigma} shows that for each sort $S$, the maps
$\sigma ^{S}_{\mu}$ are contractions and thus continuous. Suppose that
$\{S_{i}\}_{i=1}^{n}$ is a finite family of sorts. We are going to denote
by $\sigma _{\mu}^{S_{1} \times \dots \times S_{n}}$ the map such that
$\pi _{S_{i}} \circ \sigma _{\mu}^{S_{1} \times \dots \times S_{n}}=
\sigma _{\mu}^{S_{i}}$. Let $\mu $ be an ultrafilter on $X$ that converges
to $y$. As stated before, our goal is to show the compatibility of the
morphism $\sigma _{\mu}$. To do so, suppose that $f$ is a function symbol,
and suppose that $\mathrm{dom}(f)=S_{1} \times \dots \times S_{n}$ and
$\mathrm{rng}(f)=S^{\prime}_{1}$.

Suppose that
$(a_{1},\dots ,a_{n}) \in M_{y}^{S_{1}} \times \dots \times M_{y}^{S_{n}}
$ (in case we have a constant symbol, this space is $X$) and suppose that
for each $i$, the already constructed Cauchy filter converges in
$ \int _{X}M_{x}^{S_{i}} d\mu $ to $(b^{i}_{x})_{x \in X}$ (this means
that $\sigma _{\mu}^{S_{i}}(a_{i})=(b_{x}^{i})_{x \in X}$).

For simplicity, we are going to call the space
$\int _{X}M_{x}^{S_{i}} d\mu =M$. We know that
$f^{M}((b^{1}_{x}),\dots ,(b^{n}_{x}))=(f^{M_{x}}(b^{1}_{x},\dots ,b^{n}_{x}))_{x \in X}$
(by definition), and let us call
$f^{M_{y}}(a_{1},\dots ,a_{n})=a^{\prime}$ and furthermore, we call the limit
of the Cauchy filter corresponding to $a^{\prime}$,
$(a_{x})_{x \in X}$ (this means that
$\sigma _{\mu}^{S^{\prime}_{1}}(a^{\prime})=(a_{x})_{x \in X}$).

Our goal is to show that
$(f^{M_{x}}(b^{1}_{x},\dots ,b^{n}_{x}))_{x \in X}=(a_{x})_{x \in X}$ (as
equivalence classes of the ultraproduct); to do so, let
$\epsilon >0$. Since $f^{M}$ is uniformly continuous with uniform continuity
modulus independent of $x$, then there exists some $\delta $ such that
if for all $i$ $d((b^{i}_{x}),(h^{i}_{x}))<\delta $, we get that
$d(f^{M_x}(b^{1}_{x},\dots,b^{n}_x),f^{M_x}(h^{1}_{x},\dots,h^{n}_{x}))<\epsilon /3$. Now take a neighbourhood
$W$ of $a^{\prime}$ that is $\epsilon /3$-thin. Using Lemma~\ref{Important_Lemma_2_VTEX1}, we know that we can find
$(g_{x})_{x \in X}$, such that there exists some $U \in \mu $ such that
$U \subseteq \pi (W)$, and such that $\forall x\in U$ $g_{x} \in W$ and
$d(g_{x},a_{x})<\epsilon /3$. Now, since $E$ is a bundle, we may deduce
that there exist neighbourhoods $W_{i}$ of each $a_{i}$, such that
$f^{E}(W_{1} \times _{X}\dots \times _{X}W_{n})\subseteq W$ (using the continuity of
$f^{E}$) (in the case of a constant symbol $c$, we deduce the existence
of $W^{\prime}$ neighbourhood of $y$, such that for any $x \in W^{\prime}$,
$c_{x} \in W$). Again using the Lemma~\ref{Important_Lemma_2_VTEX1}, we know
there exist $(V_{i})_{i=1}^{n}$ such that each $V_{i} \in \mu $, and such
that $V_{i} \subseteq \pi _{i}(W_{i})$, and $(e^{i}_{x})$ such that for
any $x \in V_{i}$, we have $d(e^{i}_{x},b^{i}_{x})<\delta $ and
$e^{i}_{x} \in W_{i} $. Now take the set
$\bigcap _{i=1}^{n}V_{i} \bigcap U \in \mu $; for any $x $ in this set
we have
\begin{equation*}
d(f^{M_{x}}(e^{1}_{x},\dots ,e^{n}_{x}),f^{M_x}(b^{1}_{x},\dots ,b^{n}_{x}))<
\epsilon /3.
\end{equation*}
On the other hand, we have
$d(g_{x},f^{M_{x}}(e^{1}_{x},\dots ,e^{n}_{x}))<\epsilon /3$; this follows
from the fact that
$f^{E}(W_{1} \times _{X}\dots \times _{X}W_{n}) \subseteq W$, and that
$W$ is $\epsilon /3$-thin. This implies that for any
$x \in \bigcap _{i=1}^{n}V_{i} \bigcap U $, we have
\begin{equation*}
d(f^{M_{x}}(b^{1}_{x},\dots ,b^{n}_{x}),a_{x}) < \epsilon ,
\end{equation*}
thus
$(f^{M_{x}}(b^{1}_{x},\dots ,b^{n}_{x}))_{x \in X}=(a_{x})_{x \in X}$ as
equivalence classes, thus we get compatibility for each function symbol.
In the case of a constant symbol, it suffices to take
$W^{\prime} \bigcap U$ in the previous argument.

\subsubsection*{Compatibility of relation symbols}

Let $\mu $ be an ultrafilter on $X$ that converges to $y$, and let
$\phi $ be a relation symbol such that
$\mathrm{dom}(\phi )=S_{1}\times \dots \times S_{n}$. Suppose that
$(a_{1},\dots ,a_{n}) \in M_{y}^{S_{1}} \times \dots \times M_{y}^{S_{n}}
$, and suppose that for each $i$ we have the already constructed Cauchy
filter that converges in $\int _{X}M_{x}^{S_{i}} d \mu $ to some
$(b^{i}_{x})_{x \in X}$ (that means that
$\sigma ^{S_{i}}_{\mu}(a_{i})=(b_{x}^{i})_{x \in X}$).

Now our objective is to show that
$\phi ^{M}((b_{x}^{1}),\dots ,(b_{x}^{n})) \leq \phi ^{M_{y}}(a_{1},
\dots ,a_{n})$. To do so we do an argument by contradiction:

Suppose it's not the case, then we have
$\phi ^{M_{y}}(a_{1},\dots , a_{n})< \phi ^{M}((b_{x}^{1}),\dots ,(b_{x}^{n}))$.
Let us call their difference $\epsilon $, and let $\epsilon ^{\prime}= \epsilon/3$. By upper semi-continuity of
$\phi ^{E}$, there exist neighbourhoods $W_{i}$ of $a_{i}$ such that
$\phi ^{E}(W_{1} \times _{X}\dots \times _{X} W_{n}) \subseteq [0,
\phi ^{M_{y}}(a_{1},\dots ,a_{n})+\epsilon')$. Now using the fact that each
$\phi ^{M_{x}}$ is continuous with the same uniform continuity
modulus, we get that there exists $\delta $ such that if for every
$i$, if $d_{M_x^{S_i}}(e^{i}_{x},b^{i}_{x}) <\delta $, we have
$|\phi ^{M_{x}}(e^{1}_{x},\dots,e^{n}_{x})-\phi ^{M_{x}}(b^{1}_{x},\dots,b^{n}_{x})|\ < \epsilon' $. Now using Lemma~\ref{Important_Lemma_2_VTEX1}, we know there exist a family of sets
$\{V_{i}\}_{i=1}^{n}$ such that each $V_{i} \in \mu $, and such that
$V_{i} \subseteq \pi _{i}(W_{i})$, and a family
$(e^{i}_{x})$ such that for any $x \in V_{i}$, we have
$d(e^{i}_{x},b^{i}_{x})<\delta $ and $e^{i}_{x} \in W_{i}$.

Now, we know that there exists $U \in \mu $ such
that for any $x \in U$,\newline
$\phi ^{M}((b^{1}_{x}),\allowbreak \dots ,\allowbreak (b^{n}_{x})) <\allowbreak  \phi ^{M_{x}}(b^{1}_{x},\allowbreak
\dots ,\allowbreak b^{n}_{x}) +\allowbreak  \epsilon ^{\prime} $ (this follows from the fact that \newline
$\phi ^{M}((b^{1}_{x}),\dots ,(b^{n}_{x}))=
\allowbreak
\int _{X} \phi ^{M_{x}}(b_{x}^{1},\allowbreak \dots ,\allowbreak b_{x}^{n})d\mu $ and then we apply
the fact that
$\int _{X} \phi ^{M_{x}}(b_{x}^{1},\dots ,b_{x}^{n})d\mu =$
\newline
$\mathrm{Sup}_{U \in \mu} \mathrm{Inf}_{x \in U}\phi ^{M_{x}}(b_{x}^{1},\allowbreak
\dots ,\allowbreak b_{x}^{n})$). Now take the set
$(\bigcap _{i=1}^{n}V_{i} ) \bigcap U$, for any $x$ in this set, we have\newline
 $|\phi ^{M_x} (b^{1}_{x},\allowbreak \dots ,\allowbreak b^{n}_{x})-\phi ^{M_x} (e^{1}_{x},\allowbreak \dots ,\allowbreak e^{n}_{x})|\allowbreak <\allowbreak
\epsilon' $, but this implies, substituting $\epsilon' $ by its value, that \newline
$\phi^{M_x} (e^{1}_{x},\allowbreak \dots ,\allowbreak e^{n}_{x}) > \phi^{M_y} (a_{1},\allowbreak \dots ,\allowbreak a_{n})+\epsilon'$,  contradiction with $\phi ^{E}(W_{1} \times _{X}\dots \times _{X} W_{n}) \subseteq [0,
\phi ^{M_{y}}(a_{1},\allowbreak \dots ,\allowbreak a_{n})+\epsilon')$.

\subsection{Adjunction}
\label{sec5.7}

We have already established that for each sort
$\mathrm{Hom}(\mathcal{L}^{S}(\mathcal{F}^{S}),E^{S})\simeq
\mathrm{Hom}(\mathcal{F}^{S},\mathcal{R}^{S}(E^{S}))$; so the only thing
left is to prove that the $\mathrm{Hom}$ functor is compatible with the
structure, in the following sense:

Let us make clear what we exactly mean by the compatibility of the
$\mathrm{Hom}$ functor: We have already established the fact that if we
have a left ultrafunctor $\mathcal{F}$, and a map of bundles $\psi$ from
$\mathcal{L}(\mathcal{F})$ to $E$, then we get a natural transformation
of left ultrafunctors $\widehat{\psi ^{S}}$ for each sort. Also, we have
established that if we have a natural transformation of left ultrafunctors,
we have already seen that for every sort we get a map of bundles
$\bar{\kappa ^{S}}$ from $\mathcal{L}^{S}(\mathcal{F}^{S})$ to $E^{S}$. And we know
that these two processes are inverses of each other at the level of each
sort. So the question is if we can extend this equivalence to the level
of the whole structure.

Suppose we have a left ultrafunctor $\mathcal{F}$ and a map of bundles
$\psi $ from $\mathcal{L}(\mathcal{F})$ to some bundle $E$, then we get
a natural transformation of left ultrafunctors $\widehat{\psi ^{S}}$ for
each sort. So, we define for each $x$ the map $\widehat{\psi}_{x}$ by
$\widehat{\psi}_{x} =\psi _{x}$ from
$\mathcal{L}(\mathcal{F})_{x}=\mathcal{F}(x)$ to
$E_{x}=\mathcal{R}(E)(x)$. Now, the fact that we have a natural transformation
follows from the fact that for every $x$, the map
$\psi _{x}=\widehat{\psi}_{x}$ (by definition) is a map of sorts, and the
fact that it's a natural transformation of left ultrafunctors follows from
the commutativity of this for each ultrafilter on the base space $X$ converging
to arbitrary $y$:
\begin{equation*}
\begin{tikzcd}
\mathcal{F}(y) \arrow[dd, "\psi _{y}"] \arrow[rr, "\sigma _{\mu}"] & &
\int _{X}\mathcal{F}(x)d \mu \arrow[dd, "\int _{X}\psi _{x}d\mu "]
\\
& &
\\
\mathcal{R}(E)(y)=E_{y} \arrow[rr, "\sigma _{\mu}"] & & \int _{X}E_{x}
d\mu
\end{tikzcd}
\end{equation*}
which means exactly that for every sort the following diagram commutes:
\begin{equation*}
\begin{tikzcd}
\mathcal{F}(y)^{S} \arrow[dd, "\psi _{y}^{S}"]
\arrow[rr, "\sigma _{\mu}^{S}"] & & \int _{X}\mathcal{F}(x)^{S} d
\mu \arrow[dd, "\int _{X}\psi _{x}^{S} d\mu "]
\\
& &
\\
\mathcal{R}(E)(y)^{S}=E_{y}^{S} \arrow[rr, "\sigma _{\mu}^{S}"] & &
\int _{X}E_{x}^{S} d\mu
\end{tikzcd}
\end{equation*}
\noindent
which we already showed. So we get that $\widehat{\psi}$ is well-defined.

For the other direction, suppose that we have a natural transformation
of left ultrafunctors $\kappa $ from $\mathcal{F}$ to
$\mathcal{R}(E)$. Define $\bar{\kappa}$ a morphism of bundles
by $(\bar{\kappa}^{S})(a) =(\kappa _{\pi (a)})^{S}(a)$ (reminder that defining
$\bar{\kappa}$ amounts to defining for every sort $S$ a map
$\bar{\kappa}^{S}$ of sorted bundles, such that for every $x$,
$\bar{\kappa}_{x}$ (whose data consists of restricting the various maps
$(\bar{\kappa}^{S})$ to the fibre of $x$) is a map of structures).

From the fact that $\kappa $ is a natural transformation of left ultrafunctors,
we get that for each $x$, $\bar{\kappa}_{x} =\kappa _{x}$ is a map of structures.
The only thing remaining to check is that for each sort,
$\bar{\kappa}^{S}$ is a map of bundles of the corresponding sort, but this
follows immediately from our work for bounded complete metric spaces. Finally,
we know that these two processes are inverses of each other on the level
of each sort, thus they are inverses of each other, and the two functors
$\mathcal{L}$ and $\mathcal{R}$ are adjoints.

Now, the unit and the counit of adjunctions are isomorphisms at the level
of each sort, and hence we get an equivalence of categories between
$\mathrm{Lult}(X,\mathrm{CompMet}_{\mathfrak{L}})$ and the category
$\mathrm{Bun}(\mathrm{CompMet}_{\mathfrak{L}},X)$.

\section{Generalising to models}
\label{6}

Let $\mathfrak{L}$ be a language and let $\mathbb{T}$ be a theory (set
of sentences) in this language; we define the category of models of
$\mathbb{T}$ to be the category whose objects are $\mathfrak{L}$- structures
that are models of $\mathbb{T}$ (that means for any object $M$ in this
category we have that for any sentence $\phi \in \mathbb{T}$
$\phi ^{M}=0$), and having as morphisms just morphisms of structures.

We can see that the category of models defined this particular way is a
full subcategory of the category of structures, and thus it inherits the
ultrastructure, since it's closed under the ultraproduct functor by {\L}os
theorem. Let us denote by $\mathrm{CompMet}_{\mathfrak{L}}$ the category of structures
of the language $\mathfrak{L}$, and by
$\mathrm{CompMet}_{\mathfrak{L},\mathbb{T}}$ the full subcategory of models of
$\mathbb{T}$. Now we turn to the next important but easy-to-show lemma:
%
\begin{lemma}
\label{lem6.1}
Let $X$ be a compact Hausdorff space (an ultraset), take the category of
left ultrafunctors from $X$ to $\mathrm{CompMet}_{\mathfrak{L}}$, then left ultrafunctors
from $X$ to $\mathrm{CompMet}_{\mathfrak{L},\mathbb{T}}$ form a full subcategory
of the previous category.
\end{lemma}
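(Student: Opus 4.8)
The plan is to exploit that $\sf{Met}_{\go{L},\bb{T}}$ sits inside $\sf{Met}_{\go{L}}$ as a full subcategory closed under the ultraproduct functor (by the continuous \L os theorem), so that the inclusion $\iota\colon \sf{Met}_{\go{L},\bb{T}}\hookrightarrow \sf{Met}_{\go{L}}$ is not merely full and faithful but is an \emph{ultrafunctor}. Indeed, since the ultraproduct in the subcategory is computed exactly as in the ambient category (Lemma \ref{categorical ultraproduct lemma}), each comparison map $\iota(\int_X M_x\,d\mu)\to\int_X\iota(M_x)\,d\mu$ is an identity. The whole statement then follows from the general principle that post-composition with a full, faithful ultrafunctor induces a full, faithful functor on left-ultrafunctor categories whose image is controlled by the objects.

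Concretely, I would first record that for any left ultrafunctor $\ca{F}\colon X\to\sf{Met}_{\go{L},\bb{T}}$ the composite $\iota\ca{F}$ is again a left ultrafunctor, its comparison maps being $\iota(\sigma_\mu^{\ca{F}})$ (using that $\iota$ strictly preserves ultraproducts, so the target $\int_X\iota\ca{F}(x)\,d\mu$ coincides with $\iota(\int_X\ca{F}(x)\,d\mu)$); the three axioms for $\iota\ca{F}$ are obtained by applying $\iota$ to the corresponding axioms for $\ca{F}$. Likewise, if $\phi\colon\ca{F}\Rightarrow\ca{G}$ is a natural transformation of left ultrafunctors, then $\iota\phi$, with components $\iota(\phi_x)$, is one as well. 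This defines the comparison functor $\iota_*$. It is injective on objects (if $\iota\ca{F}=\iota\ca{G}$ then, $\iota$ being injective on objects and faithful, already $\ca{F}=\ca{G}$ on objects and $\sigma_\mu^{\ca{F}}=\sigma_\mu^{\ca{G}}$) and faithful (a natural transformation is its family of components, and $\iota$ is faithful), so the image is a genuine subcategory.

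The only point with content is \emph{fullness}. Given $\ca{F},\ca{G}\colon X\to\sf{Met}_{\go{L},\bb{T}}$ and a natural transformation of left ultrafunctors $\theta\colon\iota\ca{F}\Rightarrow\iota\ca{G}$ in $\sf{Met}_{\go{L}}$, each component $\theta_x\colon\ca{F}(x)\to\ca{G}(x)$ is a morphism between two \emph{models}; since $\iota$ is full, $\theta_x=\iota(\phi_x)$ for a unique morphism $\phi_x$ of $\sf{Met}_{\go{L},\bb{T}}$. Set $\phi=(\phi_x)_{x\in X}$; ordinary naturality is vacuous because $X$ has no non-identity arrows, and the left-ultrafunctor compatibility square for $\phi$ is a diagram in $\sf{Met}_{\go{L},\bb{T}}$ whose image under $\iota$ is exactly the compatibility square for $\theta$, which commutes. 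As $\iota$ is faithful it reflects commutativity, so $\phi$ is a natural transformation of left ultrafunctors with $\iota_*\phi=\theta$.

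Finally I would identify the image: $\iota_*$ lands in, and surjects onto, those left ultrafunctors $\ca{H}\colon X\to\sf{Met}_{\go{L}}$ with $\ca{H}(x)\models\bb{T}$ for every $x\in X$. Indeed, for such an $\ca{H}$ all fibres, and (by closure under ultraproducts) all the targets $\int_X\ca{H}(x)\,d\mu$ of its comparison maps, lie in $\sf{Met}_{\go{L},\bb{T}}$, so $\ca{H}$ factors through $\iota$ as $\iota\ca{F}$ for a left ultrafunctor $\ca{F}\colon X\to\sf{Met}_{\go{L},\bb{T}}$. The main (and essentially only) obstacle is the reflection step in the fullness argument --- verifying that the compatibility axiom, a priori known only for $\theta$ in the large category, descends to $\phi$ --- and this is handled purely by faithfulness of the full inclusion.
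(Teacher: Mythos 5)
Your proposal is correct and is essentially the paper's argument, spelled out in full: the paper's proof is the one-sentence observation that, because $\sf{Met}_{\go{L},\bb{T}}$ is a full subcategory of $\sf{Met}_{\go{L}}$ closed under ultraproducts (with the restricted ultrastructure), the condition of being a natural transformation of left ultrafunctors is the same whether the functors are viewed as landing in the subcategory or in the ambient category. Your detailed verification (strict preservation of ultraproducts by the inclusion, fullness via lifting components and reflecting the commutativity of the compatibility square, and the identification of the image) is exactly what that sentence compresses.
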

\begin{proof}
This follows immediately from the fact that the condition of being a natural
transformation of left ultrafunctors, doesn't depend on whether a functor
$F$ is taking values in $\mathrm{CompMet}_{\mathfrak{L},\mathbb{T}}$ or not.
\end{proof}
The important thing regarding this discussion is that we already know that
the concept of full subcategory is carried over by equivalence of categories,
thus we get an equivalence between left ultrafunctors from $X$ to
the category of models of $\mathbb{T}$, and bundles of structures
whose every fibre is a model of the theory $\mathbb{T}$. This inspires
our next definition:
%
\begin{definition}
\label{bundle_of_models_VTEX1}
\noindent
We define a bundle of models of a theory $\mathbb{T}$ in a Language
$\mathfrak{L}$ to be a bundle of structures such that every fibre of the
bundle is a model. As for morphisms of bundles of models, we define them
to be just morphisms of the bundle of structures.
\end{definition}

In other words, we can see that the category
$\mathrm{Bun}(\mathrm{CompMet}_{\mathfrak{L},\mathbb{T}},X)$ is a full subcategory
of the category of bundles of structures
$\mathrm{Bun}(\mathrm{CompMet}_{\mathfrak{L}}, X)$. This definition allows
us to deduce the following theorem:
%
\begin{theorem}
\label{third_theorem_VTEX1}
Let $X$ be a compact Hausdorff space, then the functor $\mathcal{L}$ restricts
to an equivalence of categories between
$\mathrm{Lult}(X,\mathrm{CompMet}_{\mathfrak{L},\mathbb{T}})$ and the category
$\mathrm{Bun}(\mathrm{CompMet}_{\mathfrak{L},\mathbb{T}}, X)$.
\end{theorem}

\section{Functoriality in $\mathsf{CompHaus}$}
\label{7}

Let $\mathsf{CompHaus}$ denote the category of compact Hausdorff spaces.
Let $\mathcal{M}$ be a category of models of continuous model theory. The
category $\mathsf{CompHaus}^{\mathsf{o}}_{\mathcal{M}}$ is defined to have
as objects: left ultrafunctors from some compact Hausdorff space $X$ to
$\mathcal{M}$ and a morphism between
$ \mathcal{F} :X \rightarrow \mathcal{M}$ and
$ \mathcal{G}: Y \rightarrow \mathcal{M}$ consists of a pair
$(f,\alpha )$, where $f $ is a continuous map from $X$ to $Y$ and
$\alpha $ is a natural transformation of left ultrafunctors from
$\mathcal{F}$ to $\mathbf{\mathcal{G} \circ f}$.

This construction resembles the construction
$\mathrm{Comp}_{{\mathcal{M}}}$ in \cite{lurie2018ultracategories}.
$\mathrm{Comp}_{{\mathcal{M}}}$ is defined to have as objects left ultrafunctors
from a compact Hausdorff space $X$ to some ultracategory, and a morphism
from $(X,\mathcal{F})$ to $(Y,\mathcal{G})$ consists of a continuous function
$f$ from $X$ to $Y$, together with a natural transformation of left ultrafunctors
$\alpha $ from $\mathbf{\mathcal{G}} \circ f$ to
$\mathbf{\mathcal{F}}$. In his paper
\cite[Proposition 4.1.5]{lurie2018ultracategories}, Lurie showed this construction
to be a stack over $\mathsf{CompHaus}$ with the latter equipped with the
coherent topology. This indicates that
$\mathsf{CompHaus}^{\mathsf{o}}_{{\mathcal{M}}}$ is also a topological
stack in categories (this is simply the opposite stack).

Now we claim the following result:

\begin{theorem}
\label{functoriality_in_the_compact_space_VTEX1}%
The construction $X \mapsto \mathrm{Bun}({\mathcal{M}},X)$ depends contravariantly
on $X$ (which means it defines a Grothendieck fibration over
$\mathsf{CompHaus}$).
\end{theorem}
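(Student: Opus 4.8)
The plan is to build the reindexing operation on bundles directly as the topological pullback, to verify that it lands in the category of bundles and is pseudofunctorial, and then to identify it with precomposition of left ultrafunctors through the equivalence of Theorem~\ref{third theorem}; the Grothendieck fibration is then obtained as the Grothendieck construction of the resulting pseudofunctor $\cat{CompHaus}^{\mathrm{op}}\to\cat{Cat}$. Concretely, for a continuous map $f\colon Y\to X$ and a bundle $(E,X,\pi)$ I would set $f^{*}E = Y\times_{X}E$ with first projection $p\colon f^{*}E\to Y$, performing the construction sort by sort in the multi-sorted case and pulling back each interpretation of a function or relation symbol along the continuous map $f^{*}E\to E$, $(y,e)\mapsto e$. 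The fibre $p^{-1}(y)$ is canonically isometric, and isomorphic as an $\go{L}$-structure, to $E_{f(y)}$, so every fibre is again a model of $\bb{T}$.

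First I would check that $f^{*}E$ is a bundle in the sense of Definition~\ref{bundle definition} and its multi-sorted analogue. Upper semicontinuity of the global distance is inherited: the distance on $f^{*}E\times_{Y}f^{*}E \cong Y\times_{X}(E\times_{X}E)$ factors through the upper semicontinuous distance on $E\times_{X}E$ composed with a continuous map, and upper semicontinuity (equivalently, continuity into the left order topology) is preserved by pullback along continuous maps; the same argument handles each relation symbol, while each function symbol remains continuous. Continuity of $p$ is immediate, and openness of $p$ holds because open maps are stable under pullback in $\cat{Top}$ and $\pi$ is open by Axiom~(2). Axiom~(3) is transported through the fibrewise isometry $f^{*}E\to E$ over $f$: given $f\in W$ open in $f^{*}E$, pushing forward and invoking the $\varepsilon$-thin neighbourhood basis of Lemma~\ref{thin lemma} on $E$ produces $V\subseteq_{\varepsilon}W$. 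Pseudofunctoriality is standard for fibre products: $(\mathrm{id}_{X})^{*}E=E$ and, for $Z\xrightarrow{g}Y\xrightarrow{f}X$, $(f\circ g)^{*}E\cong g^{*}f^{*}E$ via the canonical coherence isomorphisms in $\cat{Top}$, and a bundle map $\psi\colon E\to E'$ induces $f^{*}\psi=\mathrm{id}_{Y}\times\psi$, which is continuous and fibrewise a morphism of structures.

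The heart of the argument is the identification $\ca{L}(\ca{F}\circ f)\cong f^{*}\ca{L}(\ca{F})$, natural in $\ca{F}$. Both spaces have underlying set $\coprod_{y\in Y}\ca{F}(f(y))$, identified with $\{(y,e)\in Y\times\ca{L}(\ca{F}) : f(y)=\pi(e)\}$, so only the agreement of topologies must be shown, and for this I would use the convergence characterisation of Theorem~\ref{topology}. Since $f$ is a continuous map of compact Hausdorff spaces it is a left ultrafunctor of ultrasets, and the left ultrastructure of the composite is obtained by whiskering; the decisive point is that for an ultrafilter $\mu$ on $Y$ with $\sigma^{\ca{F}\circ f}_{\mu}(g)=(b_y)_{y\in Y}$ one has $\sigma^{\ca{F}}_{f\mu}(g)=(b'_x)_{x\in X}$ with $(b_y)_{y\in Y}=(b'_{f(y)})_{y\in Y}$, which is exactly the diagonal relation recorded in Note~\ref{note regarding left ultrafunctors}. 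Feeding this into conditions $C_1$ and $C_2$ shows that an ultrafilter converges in $\ca{L}(\ca{F}\circ f)$ precisely when its image in $Y$ converges and its image in $\ca{L}(\ca{F})$ converges compatibly, which is exactly convergence in the subspace topology of $Y\times_{X}\ca{L}(\ca{F})$. Naturality in $\ca{F}$ is immediate, since the induced maps on both sides are given fibrewise by the same components $\nu_{f(y)}$.

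Finally I would assemble the pieces. The previous identification says that the equivalences $\ca{L}$ and $\ca{R}$ intertwine the two reindexing operations, so $X\mapsto\mathrm{Bun}(\ca{M},X)$ with $f\mapsto f^{*}$ is a pseudofunctor $\cat{CompHaus}^{\mathrm{op}}\to\cat{Cat}$ pseudonaturally equivalent to $X\mapsto\la{Left \ ultrafunctors}(X,\ca{M})$, whose total category is Lurie's fibred category $\cat{CompHaus}_{\ca{M}}$. Applying the Grothendieck construction yields a fibration over $\cat{CompHaus}$ with fibre $\mathrm{Bun}(\ca{M},X)$ over $X$, whose cartesian morphisms are the pullback squares, namely the pairs $(f,c_{f})$ with $c_{f}\colon f^{*}E\to E$ the canonical projection. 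I expect the identification in the third paragraph—matching the convergence-defined topology on $\ca{L}(\ca{F}\circ f)$ with the fibre-product topology—to be the genuine obstacle; well-definedness and pseudofunctoriality of the pullback are routine once stability of open maps and of upper semicontinuity under pullback are invoked.
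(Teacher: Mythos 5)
Your proposal is correct and, for the statement actually being proved, follows essentially the same route as the paper: define $f^{*}E$ as the pullback in $\cat{Top}$ (sortwise in the multi-sorted case), get upper semicontinuity of the distance and of each relation symbol by composing the continuous map $f^{*}E\times_{Y}f^{*}E \to E\times_{X}E$ with the given upper semicontinuous function, get continuity of function symbols from the universal property of the pullback, get openness of the projection from stability of open maps under pullback, and get axiom (3) by transporting through the fibrewise isometry $\underline{f}\colon f^{*}E\to E$. Two remarks. First, a small slip: in the axiom (3) step you invoke the $\epsilon$-thin basis of Lemma \ref{thin lemma}, but that lemma alone does not produce a $V$ with $V\subseteq_{\epsilon}W$; what is needed is axiom (3) for $E$ itself, applied after restricting to a basic open $\pi_2^{-1}(K)\cap\underline{f}^{-1}(\omega)$ of the pullback topology (this restriction is also needed to make sense of ``pushing forward'' $W$) --- this is exactly how the paper argues, and your transport idea is right once the cited ingredient is corrected. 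Second, and structurally more important: what you call the heart of the argument, the identification $\ca{L}(\ca{F}\circ f)\cong f^{*}\ca{L}(\ca{F})$, is not needed for this theorem at all --- once the pullback is shown to be a bundle and pseudofunctorial, the Grothendieck construction already yields the fibration. The paper keeps that identification out of this proof and establishes it separately as the next theorem (the Cartesian equivalence between $\la{Bun}$ and $\cat{CompHaus}_{\ca{M}}$), using precisely the convergence criterion of Theorem \ref{topology} together with the diagonal relation of Note \ref{note regarding left ultrafunctors}, as you sketch. So your proposal proves the stated theorem the paper's way and, in addition, subsumes the paper's subsequent theorem; the only caveat is the lemma citation noted above.
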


\begin{proof}
First, we start with the case where
${\mathcal{M}}=\mathsf{k}\text{-}\mathrm{CompMet}$. We did not define what
$X \mapsto \mathrm{Bun}({\mathcal{M}},X)$ should do on morphisms, so we
do that: we define a functor from $\mathrm{Bun}/X$ (which is another way
of calling $\mathrm{Bun}({\mathcal{M}},X)$, here ${\mathcal{M}}$ is fixed
to be $\mathsf{k}\text{-}\mathrm{CompMet}$), to $\mathrm{Bun}/Y$ as follows:
suppose that we have a continuous map $Y \rightarrow X$ and some bundle
$E$ over $X$, then we define a bundle $E^{\prime}$ over $Y$ as the pullback
in $\mathsf{Top}$:
\begin{equation*}
\begin{tikzcd}
E^{\prime} \arrow[d, "\pi _{2}"] \arrow[r, "\underline{f}"] & E
\arrow[d, "\pi _{1}"]
\\
Y \arrow[r, "f"] & X
\end{tikzcd}
\end{equation*}

We need, of course, to verify this is a bundle. Notice that $E^{\prime}$ as
a set is equal to $\coprod _{y \in Y}E_{f(y)}$. The fact that the distance
function is upper semi-continuous on $E^{\prime} \times _{Y} E^{\prime}$ follows
from the following diagram:
\begin{equation*}
\begin{tikzcd}
&& {}
\\
{E^{\prime} \times _{Y} E^{\prime}} &&&&& {E \times _{X} E}
\\
{} & {E^{\prime}} &&& {E}
\\
&& {}
\\
& Y &&& X & {[0,k]_{\text{left
order topology}}} \arrow["{\pi _{1}}", from=3-5, to=5-5]
\arrow["{\pi _{3}}"', from=2-6, to=3-5]
\arrow["f"{description}, from=5-2, to=5-5]
\arrow["{\pi _{2}}", from=3-2, to=5-2]
\arrow["{\pi _{6}}", from=2-1, to=3-2]
\arrow["{\underline{f}}"{description}, from=3-2, to=3-5]
\arrow["d"', from=2-6, to=5-6]
\arrow["{\langle \underline{f} \circ \pi _{5},\underline{f} \circ \pi _{6} \rangle }"', from=2-1, to=2-6]
\arrow["{\pi _{4}}"{pos=0.4}, shift left=2, from=2-6, to=3-5]
\arrow["{\pi _{5}}"', shift right=2,
from=2-1, to=3-2]
\end{tikzcd}
\end{equation*}

The distance function on $E^{\prime}$ is equal to the composition
$d \circ \langle \underline{f} \circ \pi _{5},\underline{f} \circ
\pi _{6} \rangle $, thus it's upper semi-continuous (here $[0,k]$ was equipped
with the left order topology,  $\pi _{1}$ is the projection
of the bundle $E$, $\pi _{2}$ is the projection of the bundle
$E^{\prime}$ and $\pi _{3},\dots \pi _{6}$ denote the pullback maps).

Next, we need to show axiom $(2)$ of the definition of bundles is satisfied,
which means that we need to show that $\pi _{2}$ is continuous and open
but this is straightforward: $\pi _{2}$ is continuous by definition, and
open since the pullback along an open map is an open map.

Finally, we need to show axiom $(3)$ of the definition of bundle, suppose
that we have an element $g \in E^{\prime}$ contained in some open set $W$ and
suppose without loss of generality that $W$ is basic which means that
$W = \pi _{2}^{-1}(K) \bigcap \underline{f}^{-1}(\omega )$ where $K$ is
an open set in $Y$ and $\omega $ is an open set in $E$. Now, since
$E$ is a bundle there exists $\epsilon > 0$, and a neighbourhood
$V$ of $\underline{f}(g)$, such that $V \subseteq _{\epsilon }\omega $. Now using
the fact that by definition $\underline{f}$ is isometric on each fibre,
we have
$g \in \pi _{2}^{-1}(K) \bigcap \underline{f}^{-1}(V) \subseteq _{\epsilon}
W$. So the pullback of a bundle in $\mathsf{Top}$ is again a bundle.

Before continuing, let us describe the morphisms in the fibred category
in the case ${\mathcal{M}}=\mathsf{k}\text{-}\mathrm{CompMet}$: suppose
that $E \xrightarrow[]{\pi _{1}}X$ and
$E^{\prime} \xrightarrow[]{\pi _{2}}Y$ are two bundles a morphism from
$E$ to $E^{\prime}$ is a pair $(f,h)$ where $f$ is a continuous map from $X$ to
$Y$ and $h$ is a map in $\mathrm{Bun}(\mathcal{M}, X)$; equivalently such
morphism can be defined to be a pair $(f,h^{\prime})$, where $h^{\prime}$ is a continuous
map from $E$ to $E^{\prime}$, such that the following diagram commutes:
\begin{equation*}
\begin{tikzcd}
E \arrow[dd, "\pi _{1}"] \arrow[rr, "h^{\prime}"] & & E^{\prime}
\arrow[dd, "\pi _{2}"]
\\
& &
\\
X \arrow[rr] \arrow[rr," f"] & & Y
\end{tikzcd}
\end{equation*}
And such that for every $x$, $h^{\prime}|_{\pi _{1}^{-1}(x)}$ is a contraction.

Now we want to extend the same construction when
$\mathcal{M} = \mathrm{CompMet}_{\mathfrak{L}}$, and also the case
$\mathcal{M} = \mathrm{CompMet}_{\mathfrak{L},\mathbb{T}}$. Suppose we are
given a continuous first-order signature
$\mathfrak{L}$, and a theory (family of axioms)
$\mathbb{T}$. For each sort $S$, we already know that the pullback of
$E_{S}$ is going to be a bundle of complete metric spaces bounded by
$k_{S}$. It remains to show that for every function symbol
$g^{E^{\prime}}$, the global function defined from
$E^{\prime}_{S_{1}} \times _{Y} \dots \times _{Y} E^{\prime}_{S_{n}}$ to
$E^{\prime}_{S^{\prime}}$ is continuous, and for every relation symbol $\phi $, the
global relation defined on
$E^{\prime}_{S_{1}} \times _{Y} \dots \times _{Y} E^{\prime}_{S_{n}}$ is upper semi-continuous.
The proof of both those facts follow exactly the proof that the global
distance function is upper semi-continuous. First starting with a function
symbol $g$:
\begin{equation*}
\begin{tikzcd}
&& {} & {}
\\
{E^{\prime}_{S_{1}} \times _{Y} \dots \times _{Y} E^{\prime}_{S_{n}}} &&&& {} & {E_{S_{1}}
\times _{X} \dots E_{S_{n}}}
\\
\\
\\
&&&&& {}
\\
{E^{\prime}_{S^{\prime}}} &&&&& {E_{S^{\prime}}}
\arrow["{\langle \underline{f}_{S_{1}} \circ \pi _{2,S_{1}},\dots ,\underline{f}_{S_{n}} \circ \pi _{2,S_{n}}\rangle}"', from=2-1, to=2-6]
\arrow["{g^{E^{\prime}}}"', from=2-1, to=6-1]
\arrow["{g^{E}}"', from=2-6, to=6-6]
\arrow["{\underline{f}_{S^{\prime}}}"{description}, from=6-1, to=6-6]
\end{tikzcd}
\end{equation*}

The map $g^{E^{\prime}}$ is the unique map that exists because of the universal
property of $E^{\prime}_{S^{\prime}}$ being a pullback in $\mathsf{Top}$, hence it's
continuous (here $\pi _{2,S_{i}}$ is the projection map of
$E^{\prime}_{S_{1}} \times _{Y} \dots \times _{Y} E^{\prime}_{S_{n}}$ onto
$E^{\prime}_{S_{i}}$). Now for relation symbols, suppose that we have a relation
symbol $\phi $ then the global relation function $\phi ^{E^{\prime}}$ for the
bundle $E^{\prime}$ is the composition
$\phi ^{E} \circ \langle \underline{f}_{S_{1}} \circ \pi _{2,S_{1}},
\dots ,\underline{f}_{S_{n}} \circ \pi _{2,S_{n}} \rangle $:
\begin{equation*}
\begin{tikzcd}
{E^{\prime}_{S_{1}} \times _{Y} \dots \times _{Y} E^{\prime}_{S_{n}}} &&&&& {E_{S_{1}}
\times _{X} \dots E_{S_{n}}} &&& V
\arrow["{\langle \underline{f}_{S_{1}} \circ \pi _{2,S_{1}},\dots ,\underline{f}_{S_{n}} \circ \pi _{2,S_{n}}\rangle}"', from=1-1, to=1-6]
\arrow["{\phi ^{E}}"', from=1-6, to=1-9]
\end{tikzcd}
\end{equation*}
Here $V$ is a compact interval of $\mathbb{R}$ (i.e. of the form
$[a,b]$ where $a,b$ are reals) equipped with the left order topology. Thus
we get that $\phi ^{E^{\prime}}$ is upper semi-continuous.

Now the fact that the construction $E \mapsto E^{\prime}$ where $E^{\prime}$ is the
pullback along $f: Y \rightarrow X$, is a contravariant pseudo-functor
comes from the fact that the pullback along $f \circ f^{\prime}$ is the pullback
along $f^{\prime}$ of the pullback along $f$, up to natural isomorphism.
\end{proof}

We define $\mathrm{Bun}_{\mathfrak{L},\mathbb{T}}$ to be the fibred category
for this pseudofunctor from $\mathsf{CompHaus}$ to $\mathsf{Cat}$.
Explicitly, this is the category that has as objects a bundle
over a topological space $(E,X,(\pi _{S})_{S \in \mathfrak{S}})$ (here
$\mathfrak{S}$ is the set of sorts of the continuous theory
$(\mathfrak{L},\mathbb{T})$) and as morphisms between
$(E,Y,(\pi _{S})_{S \in \mathfrak{S}})$ and
$(E',X,(\pi '_{S})_{S \in \mathfrak{S}})$ a continuous function $f$ from
$Y$ to $X$ together with a map of bundles $g$ between the bundle $E$ and
the bundle $f^{*}(E')$, where the bundle $f*(E')$ is the bundle constructed
by pulling back the bundle $E'$ along $f$ sortwise.

We are going to replace the category
$\mathrm{Bun}_{\mathfrak{L},\mathbb{T}}$ with the equivalent category
$\mathrm{Bun^{\prime}}_{\mathfrak{L},\mathbb{T}}$, in which for every sort
$S$ the bundle $E^{S}$ as a set is equal (not just isomorphic) to
$\coprod _{x \in X}E^{S}_{x}$, and in which the projection is defined by
sending $(x,g) \in E$ to $x$. The reason we did this is because this is
going to force the pullback along the identity to be just the same bundle,
also it forces the pullback along $f \circ f^{\prime}$ to be the pullback along
$f^{\prime}$ of the pullback along $f$ (not just up to isomorphism), hence this
forces the assignment $X \mapsto \mathrm{Bun}^{\prime}/X$ to be a functor and
not just a pseudofunctor. And we are going to rename
$\mathrm{Bun'}_{\mathfrak{L},\mathbb{T}}$ to
$\mathrm{Bun}_{\mathfrak{L},\mathbb{T}}$ (since they are essentially the
same). Now we claim the following result:
%
\begin{theorem}
\label{thm7.2}
The functors defined by $\mathcal{L}$ on each fibre, extend to an equivalence
of fibred categories between the fibred category (which
we denoted by $\mathrm{Bun}_{\mathfrak{L},\mathbb{T}}$) and the category
$\mathsf{CompHaus}^{\mathsf{o}}_{\mathcal{M}}$, where
$\mathcal{M}$ is the category of models of $\mathbb{T}$.
\end{theorem}
\begin{proof}
For the category of bundles over $X$
$\mathrm{Bun}_{\mathfrak{L},\mathbb{T}}/X$, let us denote by
$\mathcal{L}_{X}$ the equivalence of categories between
$\mathrm{Bun}_{\mathfrak{L},\mathbb{T}}/X$ and
$\mathrm{Lult}(X,\mathcal{M})$ and suppose that we have a continuous function
$f$ from $Y$ to $X$ we want to show that the following diagram commutes:
\begin{equation*}
\begin{tikzcd}
{\mathrm{Bun}_{\mathfrak{L},\mathbb{T}}/Y} &&& {\mathrm{Lult}(Y,
\mathcal{M})}
\\
\\
{\mathrm{Bun}_{\mathfrak{L},\mathbb{T}}/X} &&& {\mathrm{Lult}(X,
\mathcal{M})} \arrow["{\mathcal{L}_{Y}}", from=1-4, to=1-1]
\arrow["{f^{*}}"', from=3-1, to=1-1]
\arrow["{- \ \circ f}"', from=3-4, to=1-4]
\arrow["{\mathcal{L}_{X}}", from=3-4, to=3-1]
\end{tikzcd}
\end{equation*}
Here $f^{*}$ sends a bundle over $X$ to the bundle over
$Y$ obtained by pulling back in $\mathsf{Top}$ sortwise.
Suppose that we have a left ultrafunctor $\mathcal{F}$ from $X$ to
$\mathcal{M}$. First, suppose that
$\mathcal{M}=\mathsf{k}\text{-}\mathrm{CompMet}$:
\begin{equation*}
\begin{tikzcd}
E^{\prime} \arrow[d, "\pi _{2}"] \arrow[r, "\underline{f}"] & E
\arrow[d, "\pi _{1}"]
\\
Y \arrow[r, "f"] & X
\end{tikzcd}
\end{equation*}
The set $E^{\prime}=\coprod _{y \in Y}E_{f(y)}$ admits two bundle topologies
with the same projection map $\pi _{2}$, the first being the pullback topology,
and the second being the topology resulting from the left ultrafunctors
$ \mathcal{F} \circ f$. We now show they coincide:

Before that let us introduce a notation convention: Let
$E=\coprod _{x \in X}M_{x}$, then for any element $ g \in M_{x}$, we are
going to denote the element $(x,g) \in E$ by $g^{(x)}$.

Now suppose that $\mu $ is an ultrafilter on
$E^{\prime}=\coprod _{y \in Y} \mathcal{F}(f(y))$ with the pullback topology,
that converges to some point $g^{(y)}$. First, we have that
$\pi _{2} \mu $ converges to $y$ by definition of the pullback
topology. Now suppose that
$\sigma _{f \pi _{2} \mu }(g^{(f(y))})=(b_{x})_{x \in X}$, then we get
that $\sigma _{ \pi _{2} \mu }(g^{(y)})= (b_{f(y)})_{y \in Y}$ (this follows from \ref{ultresult}).

Now notice the following
$ \coprod _{y \in Y }B(b_{f(y)},\epsilon ) \in \mu \iff \coprod _{x \in X}B(b_{x},
\epsilon ) \in \underline{f}\mu $. Hence $\mu $ converges to $g^{(y)}$ in the topology
resulting from the left ultrafunctor $ \mathcal{F} \circ f$. On the other
hand, suppose that $\mu $ converges to $g^{(y)}$ in the topology resulting
from the left ultrafunctor $\mathcal{F} \circ f$, first we get that
$\pi _{2} \mu $ converges to $\pi _{2}(g)$, also
$\pi _{1} \underline{f} \mu $ converges to $\pi _{1} \underline{f}g$ and
using
$ \coprod _{y \in Y }B(b_{f(y)},\epsilon ) \in \mu \iff \coprod _{x \in X}B(b_{x},
\epsilon ) \in \underline{f}\mu$ we get that $ \underline{f} \mu $ converges to $g^{(f(y))}$. This implies
that the ultrafilter $\mu $ converges to $g$ in the pullback topology.

Now we turn to the case where
$\mathcal{M}=\mathrm{CompMet}_{\mathfrak{L}}$ or
$\mathcal{M}=\mathrm{CompMet}_{\mathfrak{L},\mathbb{T}}$, working in the
same setting (a bundle $E$ over $X$ and continuous function $f$ from
$Y$ to $X)$).

As in the previous case, we are going to get two bundles, one from the
left ultrafunctor composition and the other from taking the pullback of
the bundle $E$ along $f$ sortwise. We know that for each sort the two topologies
on the sorted bundles agree, as we have shown. Also, for each
$y \in Y$, the map between the structures (which are the fibres of
$y$ in both bundles) is the identity, and hence a morphism in the category
of structures (or models).

All these results combined mean that the family of functors
$\{ \mathcal{L}_{X} \}_{X \in \mathsf{CompHaus}} $ defines a natural equivalence
of pseudofunctors between, on one hand the functor that
sends a compact Hausdorff space to the category $\mathrm{Bun}/X$, and
on the other hand the functor that sends a compact Hausdorff space to
the category of left ultrafunctors from $X$ to $\mathrm{CompMet}_{\mathfrak{L},\mathbb{T}}$, or
in other words defines an equivalence of fibred categories
between $\mathrm{Bun}_{\mathfrak{L},\mathbb{T}}$ and
$\mathsf{CompHaus}^{\mathsf{o}}_{\mathcal{M}}$.
\end{proof}

\section{Examples}
\label{8}

At this point, it is important for us to give examples of our constructions
of bundles and show that they correspond to the already existing notions
of continuous families of metric structures. But first, we need to explain
how to axiomatise some structures in continuous model theory.

\subsection{Banach bundles}
\label{sec8.1}

\subsubsection*{Axiomatisation of Banach spaces}

The signature of Banach spaces includes a sort for each ball of radius
$n$, inclusion symbols between sorts, and additional symbols for the
$\mathbb{K}$-vector space structure. This means:
\begin{enumerate}
\item A constant symbol $0$ (a function symbol) with formal range
$D_{1}$. This symbol should be interpreted as the $0$ of the vector space
(we can get rid of that symbol since we can get $0$ by multiplication by
$0$, strictly speaking, we must also check whether including this symbol
or not is going to affect the continuous model-theoretic ultraproduct,
since we want to be able to recover the usual definition of ultraproduct
of Banach spaces or any related structure).
\item For each natural number $n \geq 1$, we define a sort $D_{n}$; this
sort should be interpreted as the closed ball of radius $n$.
\item For every pair of sorts we define a function symbol $+_{n,m}$ which
has formal domain $D_{n} \times D_{m}$ and formal range $D_{n+m}$ and should
be interpreted as the addition, the modulus of continuity of this symbol
is $2\mathrm{Id}$.
\item For every pair of sorts $D_{n}$ and $D_{m}$ such that
$n < m $, we define a function symbol $\iota _{n,m}$, which should be interpreted
as the inclusion of the ball $D_{n}$ inside the ball $D_{m}$. This symbol
is of uniform continuity modulus the identity function.
\item For every sort $D_{n}$ and every $k \in \mathbb{C}$, we define a
function symbol $m_{n,k}$; this function should be interpreted as the multiplication.
The formal domain of this symbol is $D_{n}$, and the formal range is
$D_{m}$ where $m =\lceil |k|.n \rceil $, the modulus of this symbol is
$|k|.\mathrm{Id}$.
\item[$6^{\prime}$] If we want to have isometries between Banach spaces, we
are going to add the following relation symbols $k_{n}$ with domain
$D_{n}$, which take values in the interval $[0, n]$, with uniform continuity
modulus the identity function. $k_{n}(x)$ should be interpreted as
$n-\|x\|$. Note that these symbols have not been introduced in the literature
before.
\end{enumerate}

Now we are going to list the necessary axioms informally, here
$\|x\|$ means $d(x,0)$ (notice that it is possible to make the norm an
additional function symbol (sortwise) and add axioms ensuring that the
distance and the norm define the same metric):
\begin{enumerate}
\item $\mathbb{K}$ vector space axioms ($\mathbb{K}$ is by default
$\mathbb{C}$ unless it's indicated to be $\mathbb{R}$).
\item Norm axioms: which are axioms ensuring that the norm (which is defined
for each sort $\|x\| =d(x,0)$) is a norm.
\item Axioms that ensure that the inclusion function is compatible with
distance, addition, and additive inverse.
\item Axioms ensuring that each $D_{n}$ is interpreted as the ball of radius
$n$. These are $\mathrm{Sup}_{x \in D_{1}}(\|x\| \dotminus 1)$, and
$\mathrm{Sup}_{x \in D_{n}}\mathrm{Inf}_{y \in D_{1}}(d(x,\iota _{1,n}(y))
\dotminus (\|x\| \dotminus 1))$, here $\dotminus $ denotes truncated subtraction i.e. $x \dotminus y= \max(x-y,0)$ (what the last axiom is telling us informally is that an
element $x$ in $D_{n}$ has norm less than or equal to $1$, iff there exists
an element $y$ in $D_{1}$ such that $\iota _{1,n}(y)=x$, see
\cite{farah2021model}).
\item[$5^{\prime}$] Axiom ensuring that the new symbol $k_{n}$ is interpreted
as $n-\|x\|$, formally speaking this axiom should be
$\mathrm{Sup}_{x \in D_{n}}|(k_{n}(x) - (n - \|x\|))|$.
\end{enumerate}
As stated before, axiomatising Banach spaces without this newly introduced
symbol leads to the category of Banach space with contractions, while axiomatising
Banach spaces with the newly introduced symbol $k_{n}$, will force maps
to be isometries, and hence this is going to lead to two different notions
of bundles of Banach spaces, which turned out to be already existing in
the literature.

\subsubsection*{Definition of Banach bundles}
\label{definition_of_Baanch_bundles_VTEX1}

This definition is the one present in \cite{hofmann1977bundles}, and we
are going to be calling it a semi-continuous bundle of Banach spaces.

We say that a triple $(E,X,\pi )$ defines a bundle of Banach spaces, where
$E$ and $X$ are topological spaces ($X$ is usually required to be Hausdorff,
in our work we studied the case where the space $X$ is compact Hausdorff) and
$\pi : E \rightarrow X$ is a function required to satisfy the following
conditions:
\begin{enumerate}
\item For every $x$, $\pi ^{-1}(x) $ is a Banach space.
\item $\pi $ is continuous and open.
\item Scalar multiplication from $\mathbb{K} \times E $ to $E$, and addition
from $E\times _{X} E$ to $E$ are continuous.
\item Norm $\| \  \|$ from $E$ to $[0,\infty )$ is upper semi-continuous
(it is not hard to see that in the presence of the other axioms, this is
equivalent to saying that the distance from $E \times _{X} E$ to
$[0,\infty )$ is upper semi-continuous).
\item For any $x \in X$, if we call $\mathcal{N}_{x}$, the set of all open
neighbourhoods of $x$, then
$\{\coprod _{y \in U} B(0_{y},r)\}_{r>0,U \in \mathcal{N}_{x}}$ is a neighbourhood
basis at $0_{x}$.

First, notice that axiom $3$ can be replaced with the following, apparently
weaker axiom $3^{*}$:
\item[$3^{*}$] For each $k \in \mathbb{K}$, the function from $E$ to
$E$ defined by multiplication by $k$ is continuous, also addition from
$E \times _{X} E$ to $E$ is continuous.
\end{enumerate}
An unnecessary condition is imposed in the definition
\cite{hofmann1977bundles} which is requiring the map
$x \mapsto 0_{x}$ to be continuous (we can deduce this easily from condition
$5$).

Another unnecessary condition required in \cite{hofmann1977bundles} is
the requirement that the subspace topology agrees with the Banach space
topology on each fibre. The argument for dropping it can
be found in \cite[proposition 1.3]{fell1969extension} (notice that the
argument uses the fact that the norm is continuous, but this can be easily
replaced by the requirement that the norm is upper semi-continuous since
the neighbourhood filter of $0 \in [0,\infty )$ is the same in the left
order topology and the usual topology), also the argument uses a different
equivalent version of axiom $5$  (namely the axiom
$5^{*}$ that we will be introducing in few paragraphs).

In what follows, a section (or local section) from $U \subseteq X$ to
$E$, here $U$ is open, means a continuous map such that
$\pi \circ f =\mathrm{Id}_{U}$, such section is called global
if $U=X$.

In \cite{hofmann1977bundles} the definition above is called a pre-bundle,
an additional condition is imposed in \cite{hofmann1977bundles} in order
to obtain the definition of bundle: for every $f \in E$, and for every
$\epsilon >0$, there exists a local section $\gamma $ such that
$\| \gamma (\pi (f)) -f \| < \epsilon $.

A bundle for which every element has a global section that hits it is called
a full bundle in \cite{hofmann1977bundles}, another name for this property
is a bundle with enough cross-sections. A good result is that every pre-bundle
over a locally paracompact space is a full bundle (so when it comes to
our work in which we studied bundles over a compact Hausdorff space, every pre-bundle
is a full bundle). This is due to a result by Douady and Dal Soglio-H\'{e}rault
which can be found in the appendix of \cite{Fell1977}.
\noindent
In what follows, we are going to call a semi-continuous Banach bundle a
triple $(E,X,\pi )$ satisfying these five conditions.

Next, we state the following theorem regarding this definition of bundles:
%
\begin{theorem}
\label{neighbourhood_theorem_VTEX1}
Let $(E,X,\pi )$ be a Banach bundle, and suppose that
$\mu $ is an ultrafilter on $E$ such that $\pi \mu $ converges to
$y$; furthermore suppose that $\gamma $ is a section such
that  $\gamma (y)=f$, then the set
$\{ \coprod _{x \in U} B(\gamma (x),r) \}_{U \in N_{y},r>0}$ is a basis
for the neighbourhood system at $f$, here $N_{y}$ is the set of all open
neighbourhoods at $y$.
\end{theorem}
\begin{proof}
Take the homeomorphism from $E$ to itself defined by
$g \mapsto g +\gamma (\pi (g))$, and use axiom $5$ in the first definition.
\end{proof}

Before continuing, we should note that there is an alternative way to state
axiom $5$ above:
\begin{enumerate}
\item[$5^{*}$] Suppose that $(b_{i})$ is a net such that
$\|b_{i}\|\rightarrow 0$, and such that $\pi (b_{i}) \rightarrow x$ then
$(b_{i})$ converges to $0_{x}$ (we can write this axiom in ultrafilter
terms as follows if $\mu $ is an ultrafilter on $E$ such that
$ \| \  \| \mu $ converges to $0 \in [0,\infty )$ and $\pi \mu $ converges
to $x$ then $\mu $ converges to $0_{x}$).
\end{enumerate}
Here we should note that when we say that $ \| \  \| \mu $ (or
$\|b_{i}\|$) converges to $0 \in [0,\infty )$, we are either equipping
$[0,\infty )$ with the left order topology, or with the usual topology
because we remind the reader that the neighbourhood filter of $0$ is the
same in these two topologies.

Now, we show that the axioms $5$ and $5^{*} $ are equivalent
(in the presence of the other four axioms):

Let $E$ be a bundle satisfying axioms
$1 \mathrm{-}2\mathrm{-}3\mathrm{-}4\mathrm{-}5^{*}$. We need to check that
the axiom $5$ is satisfied; this axiom states that the set
$\{\coprod _{y \in U}B(0_{y},r)\}_{U \in \mathcal{N}_{x},r>0}$, is a neighbourhood
basis for $0_{x}$.

To show that we can use the Lemma~\ref{important_lemma_VTEX1}, towards this let
$V$ be an open neighbourhood of $0_{x}$ and let $\mu $ be an ultrafilter
on $E$. If
$\{ \coprod _{y \in U} B(0_y,r) \}_{U \in \mathcal{N}_{x},r>0 }
\subseteq \mu $ this would imply that $\| \ \| \mu $ converges to
$0$, and that $ \pi \mu $ converges to $x$. Hence $\mu $ converges
to $0_{x}$, but this implies that $V \in \mu $, and hence by \ref{important_lemma_VTEX1} there exists $r>0$ and $U$ open neighbourhood of
$x$, such that $\coprod _{y \in U}B(0_{y},r ) \subseteq V$.

Now, suppose that we have a bundle satisfying axioms
$1\mathrm{-}2\mathrm{-}3\mathrm{-}4\mathrm{-}5$, we need to check that
axiom $5^{*}$ holds; Towards that suppose that $\mu $ is
an ultrafilter on $E$, such that $ \| \  \| \mu $ converges to
$0 \in [0,\infty )$ and $\pi \mu $ converges to $x$; both these conditions
imply that for any $r >0$ and $U$ open neighbourhood of $x$,
$\coprod _{y \in U}B(0_{y},r) \in \mu $ then $\mu $ converges to
$0_{x}$, since
$ \{ \coprod _{y \in U}B(0_{y},r)\}_{U \in \mathcal{N}_{x},r>0}$ is a basis
of the neighbourhood filter of $0_x$.

There is another definition of Banach bundles given in
\cite{Fell1977}. In that definition, the norm function is required to be
continuous instead of being just semi-continuous. And we are going to call
such a bundle a continuous Banach bundle. Note that in that definition
the \textbf{bundle space $E$} is required to be Hausdorff, but
this requirement can be dropped provided the \textbf{base space} is Hausdorff
(see \cite[16.4]{gierz2006bundles}), our work will provide
an alternative proof of this fact when the base space is
compact Hausdorff.

\subsubsection*{Relating the definition of Banach bundles to our work}
\noindent\textbf{Semi-continuous bundles}
Now we should explain how to relate the concept of semi-continuous Banach
bundles as defined in \cite{hofmann1977bundles}, to the bundles of models
for the continuous model theory of Banach spaces (the classic definition
not including the symbol $k_{n}$). The idea is clear: given
a family of bundles of balls $(E_{n})_{n \in \mathbb{N}}$ over $X$ (a bundle
for which every fibre is the ball of radius $n$ of the Banach space), which
is the notion of bundles corresponding to the continuous model theory of
Banach spaces, we can construct a bundle of Banach space as introduced
by Hofmann by taking $E = \bigcup _{n} E_{n}$ equipped with the final topology
along the inclusion maps. On the other hand, given a bundle in the definition
of Hofmann we can easily recover the bundle in our definition by defining
$E_{n} =\{ f \in E \mid \|f\| \leq n \}$.

\begin{theorem}%
\label{thm8.2}
Let $X$ be a compact Hausdorff space, then there exists an equivalence
of categories of Banach bundles over $X$, and that of bundles of models
of the continuous model of Banach spaces over $X$.
\end{theorem}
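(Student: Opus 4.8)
The plan is to construct a functor in each direction between the category of semi-continuous Banach bundles over $X$ (in the sense of \cite{hofmann1977bundles}) and the category of bundles of models of the continuous theory of Banach spaces over $X$, as those latter bundles are already known (via Theorem \ref{third theorem}) to be equivalent to left ultrafunctors into the model category. Concretely, the idea sketched just above the statement gives the two assignments: from a Banach bundle $(E,X,\pi)$ I would extract the family of sort-bundles $E_n = \{ f \in E \mid \|f\| \leqslant n \}$ with the subspace topology, and conversely from a bundle of models $(E_n)_{n}$ (each $E_n$ being a bundle of $n$-bounded complete metric spaces, glued by the inclusion symbols $\iota_{n,m}$) I would form $E = \bigcup_n E_n = \varinjlim_n E_n$ equipped with the final topology along the inclusion maps. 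The bulk of the work is to check that these assignments land in the correct categories and are mutually inverse up to natural isomorphism.

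First I would verify that $E_n = \{f \in E \mid \|f\| \leqslant n\}$ is a bundle of $n$-bounded complete metric spaces in the sense of Definition \ref{bundle definition}. The fibrewise metric is the one induced by the norm; each fibre is the closed ball of radius $n$ in a Banach space, hence complete and $2n$-bounded (one rescales $n$ to match the convention). Upper semi-continuity of the fibrewise distance (Axiom 1) follows from upper semi-continuity of the norm together with continuity of subtraction, using Axiom 4 and Axiom $3^*$ of the Banach bundle definition. Openness and continuity of $\pi$ restricted to $E_n$ (Axiom 2) require a short argument that $\pi|_{E_n}$ remains open, which I would obtain from openness of $\pi$ on $E$ combined with the translation homeomorphisms $g \mapsto g + \gamma(\pi(g))$ of Theorem \ref{neighbourhood theorem}. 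Axiom 3 (the $\subseteq_\epsilon$ enlargement property) is exactly where Theorem \ref{neighbourhood theorem} does the decisive work: the sets $\coprod_{x \in U} B(\gamma(x),r)$ form a neighbourhood basis, and since every point is hit by a local section (every pre-bundle over a compact Hausdorff base is full, by the Douady–Dal Soglio-Hérault result cited), one can translate the basis at $0$ to any point. I must also confirm that the function and relation symbols of the Banach signature ($+_{n,m}$, $m_{n,k}$, $\iota_{n,m}$, and the norm relation) are interpreted by globally continuous, respectively upper semi-continuous, maps — and these are precisely Banach bundle Axioms $3^*$ and $4$, restricted to the relevant sort-products.

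For the reverse assignment I would check that gluing the sort-bundles along the final topology recovers Banach bundle Axioms $1$ through $5$. Axioms $1,3^*,4$ transcribe the structure axioms (3) and (4) of Definition \ref{sorted bundle definition} for the function and relation symbols of the Banach signature; the compatibility axioms of the theory guarantee each fibre is genuinely a Banach space and that $\iota_{n,m}$ is the inclusion, so the union is well-defined. The one genuinely delicate point — and I expect it to be \emph{the main obstacle} — is matching the two flavours of the ``smallness at $0$'' condition: Banach bundle Axiom $5$ (equivalently $5^*$) versus bundle-of-structures Axiom $2$ (the $V \subseteq_\epsilon W$ condition). I would route this through the ultrafilter reformulation $5^*$ already proved equivalent to $5$ in the excerpt, and through Axiom $3$ of Definition \ref{bundle definition}; the technical friction is that the final topology on $\bigcup_n E_n$ must be shown to agree on each ball with the bundle topology of $E_n$, so that openness and the enlargement property pass between the two presentations. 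Once both functors are shown to preserve morphisms (fibrewise linear contractions on one side, fibrewise structure maps on the other — these coincide because a fibrewise map commuting with $+$, scalar multiplication, and non-increasing on the norm relation is exactly a linear contraction) and to be mutually inverse on objects and morphisms, the desired equivalence of categories follows, and composing with Theorem \ref{third theorem} identifies Banach bundles with left ultrafunctors from $X$ into the model category of Banach spaces with linear contractions.
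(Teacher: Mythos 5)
Your proposal is correct and takes essentially the same approach as the paper: the same pair of mutually inverse assignments ($E_n=\{f\in E\mid \|f\|\leqslant n\}$ with the subspace topology in one direction, $E=\varinjlim_n E_n$ with the final topology along the inclusions in the other), the same appeal to Theorem \ref{neighbourhood theorem} and to fullness of bundles over a compact Hausdorff base for the enlargement axiom, and the same identification of the delicate points (the Axiom $5/5^*$ matching, and the agreement of the final topology with each $E_n$'s bundle topology, which the paper settles with the $\epsilon$-thin neighbourhood Lemma \ref{thin lemma}). The only divergence is that for Axiom $5^*$ the paper detours through the left-ultrafunctor correspondence ($\cat{Ban}_{\mathrm{diss}}\simeq\cat{Ban}_1$ together with the $\sigma_\mu$-characterisation of the bundle topology), whereas your sketched direct route via the continuous zero section and the enlargement axiom would also work; this is a local simplification rather than a genuinely different argument.
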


\begin{proof}
Let us further explain to the reader what we are trying
to do, we already have an equivalence of ultracategories between dissections
of Banach spaces and the category of Banach spaces, we want to extend this
equivalence to the level of bundles, and we claim that our notion of bundles
as developed in section~\ref{3} through \ref{6} (the bundles of the continuous
theory of Banach spaces, each of which is a family of sorted bundles
$(E_{n})_{n \in \mathbb{N}}$, where each fibre is exactly the ball of radius
$n$ of the Banach space, satisfying certain axioms), and bundles as a single
topological space, as defined above, are equivalent.

Suppose that we have a family of sorted bundles
$(E_{n})_{n \in \mathbb{N}}$. Take the topological space
$E=\bigcup _{n =1}^{\infty} E_{n}$ (equipped with the final topology along
the inclusion maps i.e. the colimit of
$E_{1} \xhookrightarrow{} E_{2} \xhookrightarrow{} \dots \xhookrightarrow{}  E_{n}
\dots $). This space satisfies the fact that the projection $\pi $ and
scalar multiplication by any $k \in \mathbb{K}$ are continuous, by the
universal property of the final topology of
$\bigcup _{n =1}^{\infty} E_{n}$; The fact that the addition (distance) function is continuous (upper semi-continuous) follows from the following argument: if we equip the space $E \times_{X} E$ with the directed colimit topology (along $(i,j)\leq (i',j')$ iff $i \leq i'$ and $j \leq j'$), then this map is automatically continuous (upper semi-continuous) by the universal property of the directed colimit topology on $E \times_X E$, the problem is that the directed colimit topology is in general finer than the pullback topology on $E \times_X E$. We show they coincide, let $\mu$ be an ultrafilter on $E \times_X E$ converging to $(f,g)$ in the pullback topology, it's not hard to see that this ultrafilter contains the filter $\pi_1 \mu \times \pi_2 \mu$. So it's enough to show that $\pi_1\mu$ restricts to some ultrafilter on some $E_i$ and $\pi_2 \mu$ restricts to an ultrafilter on $E_j$, since if this is the case, then $ \mu$ converges to $(f,g)$ in the colimit topology, but we can get that by upper semi-continuity of the global norm on $E$ in particular $\pi_1 \mu$ restricts to an ultrafilter on $E_{\lceil\|f\|\rceil+1}$, similarly $\pi_2 \mu$ restricts to an ultrafilter on $E_{\lceil\|g\|\rceil+1}$.

The fact that $\pi $ is open follows from the fact that an open set
$V$ in $E$ can be written as
$V =\bigcup _{n=1}^{\infty}V \bigcap E_{n} $, hence
$\pi (V) =\bigcup _{n=1}^{\infty}\pi (V \bigcap E_{n} )$, but
since the restriction of $\pi $ to every sorted bundle is open, then
$\pi $ is open. So we have shown that the bundle
$E=\bigcup _{n \in \mathbb{N}}E_{n}$ satisfies axioms
$1\text{-}2\text{-}3^{*}\text{-}4$.

Finally, let us show that the bundle $E =\bigcup _{n \in \mathbb{N}}E_{n}$
in our definition satisfies axiom $5^{*}$. Let $\mu $ be an ultrafilter
on $E$, such that $ \pi \mu $ converges to $x$, and also suppose that
$ \| \ \| \mu $ converges to $0$.

We know that every bundle of the continuous theory corresponds to a left
ultrafunctor $\mathcal{F}$ from $X$ to the ultracategory
$\mathsf{Ban}_{1}$ i.e. Banach spaces with contractions, this can be done
by regarding the following composition:
\[
\begin{tikzcd}
X &&& {\mathsf{Ban}_{\mathrm{diss}}} &&& {\mathsf{Ban}_{\mathrm{1}}}
\arrow["(\mathcal{F}_{n})_{n \in \mathbb{N}}", from=1-1, to=1-4]
\arrow["\simeq ", from=1-4, to=1-7]
\end{tikzcd}
\]
Here $\simeq $ is the equivalence of ultracategories between
$\mathsf{Ban}_{\mathrm{diss}}$, the category of dissections of Banach spaces
and $\mathsf{Ban}_{1}$ the category of Banach spaces, and
$(\mathcal{F}_{n})_{n \in \mathbb{N}}$ is the family of left ultrafunctors
to $\mathsf{n}\text{-}\mathrm{CompMet}$, which defines a left ultrafunctor
from $X$ to $\mathsf{Ban}_{\mathrm{diss}}$, thus by construction
$\mathcal{F}$ (defined on objects by $\mathcal{F}(x) =E_{x}$) satisfies
the commutativity of the following diagram:
\begin{equation*}
\begin{tikzcd}
{{E_{x}}_{n}} &&& {\int _{X}{E_{y}}_{n}d\mu \simeq (\int _{X}E_{y} d
\mu )_{n}}
\\
\\
{E_{x}} &&& {\int _{X}E_{y} d\mu}
\arrow["{\sigma ^{(n)}_{\pi \mu}}", from=1-1, to=1-4]
\arrow[hook, from=1-1, to=3-1] \arrow[hook, from=1-4, to=3-4]
\arrow["{\sigma _{\pi \mu}}"', from=3-1, to=3-4]
\end{tikzcd}
\end{equation*}

First $\mu $ restricts to an ultrafilter on $E_{1}$, i.e.
$E_{1} \in \mu $, this follows from the fact that $\| \ \| \mu $ converges
to $0$.

Thus since $\sigma _{\pi \mu}(0_{x})=(0_{y})_{y \in X}$ (because it's a
Banach spaces map), we can deduce, by the diagram above that
$\sigma _{\pi \mu}^{(1)}(0_{x})=(0_{y})_{y \in X}$.

Now using semi-continuity of the norm and the fact that
$\| \  \| \mu $ converges to $0$, we get that
$\coprod _{x \in U}B(0_{y},\epsilon ) \in \mu $ for any $U$ open neighbourhood
$U$ of $x$ and $\epsilon >0$, thus $\mu $ converges to
$0_{x}$ (this follows from the definition of topology associated to a left
ultrafunctor \ref{topology}, and we know that every bundle of metric spaces
bounded by $n$ over $X$, comes from a left ultrafunctor from $X$ to
$n\text{-}\mathrm{CompMet}$), thus we showed that our definition satisfies
axiom $5^{*}$, which we showed to be equivalent to axiom $5$.

Now suppose that we have a bundle of Banach spaces $(E,X,\pi)$ in the
definition above. We claim that $(E_{n})_{n \in \mathbb{N}}$, where each
$E_{n}=\{f \in E \mid \|f\| \leq n \}$ equipped with the subspace topology,
is a bundle of the continuous model theory of Banach spaces. First, for
each $E_{n}$ the global distance function is upper semi-continuous
and the restriction of $\pi $ to each $E_{n}$ is continuous.
Now let us show that for each $E_{n}$  axiom
$(1)$ of \ref{sorted_bundle_definition_VTEX1} is satisfied:

Since the base space is compact Hausdorff then the bundle $E$ has enough
cross-sections. Let $W$ be an open set and let $f \in W$, we know that
by \ref{neighbourhood_theorem_VTEX1} there exists a set of the form
$\coprod _{y \in U}B(\sigma (y),\epsilon )$, such that
$\coprod _{y \in U}B( \sigma (y),\epsilon ) \subseteq W$. Now we get
$\coprod _{y \in U}B( \sigma (y),\epsilon /2) \subseteq _{\epsilon /2}
W$, here $U$ is some open neighbourhood of $\pi (f)$. The final thing is
to justify why the sets of the form
$\coprod _{y \in U}B( \sigma (y),\epsilon ) $ are open. To answer this,
notice that they are the image of the sets of form
$\coprod _{y \in U}B( 0,\epsilon ) $ by the homeomorphism defined in the
proof of \ref{neighbourhood_theorem_VTEX1}, and these are open by semi-continuity
of the norm.

Now, to show that $\pi |_{E_{n}}$ is open; let $O$ be an open set in
$E_{n}$, that means that there exists an open set $O'$ in
the topology of $E$, such that $O = O^{\prime} \bigcap E_{n}$. Define
$E_{n}^{\mathrm{o}}$ to be $\{ f \in E \mid \|f\|<n \}$, this set is open
by upper semi-continuity of the norm. Let $x \in \pi (O)$. Take
$f \in O \bigcap \pi ^{-1}(x) $. Since the subspace topology of
$\pi ^{-1}(x) $ agrees with the metric topology of $\pi ^{-1}(x)$ (this
result follows from axiom $(5)$), then $O \bigcap \pi ^{-1}(x)$ is an open
set in the metric topology of $\pi ^{-1}(x) \bigcap E_{n}$, which is the
closed ball $B(0_{x},n)$ in the Banach space $\pi ^{-1}(x)$. This means
that there exists a sequence of elements $(y_{i})$ of $O$ that converges
into $f$, such that $\|y_{i}\| < n $ for every $i$; this means that for
any open set $O$, we have
$\pi (O) =\pi ( O \bigcap E_{n}^{\mathrm{o}}) =\pi (O^{\prime} \bigcap E_{n}^{\mathrm{o}})$
which is open since $\pi $ is open.

Finally, we have that the function from $X$ to $E_{1}$ defined by
$x \mapsto 0_{x}$ is continuous (as we stated before this can be deduced
from axiom $5$ of the definition of Banach bundles), and for any
$n,m$ the inclusion of $E_{n}$ inside  $E_{m}$
is continuous. So the collection $(E_{n})_{n \in \mathbb{N}}$ is a bundle
of structures of the language of Banach spaces as we defined it in \ref{sorted_bundle_definition_VTEX1}, where each fibre is a model of the theory
of Banach space, so this is a bundle of the continuous theory of Banach
spaces as we defined it in \ref{bundle_of_models_VTEX1}.

So far, we have shown that the nested union of every family of sorted bundles
as defined above is a Banach bundle, and vice versa, the dissection of
a Banach bundle is a bundle of the continuous theory of Banach spaces.
We need to check that these two processes (which are obviously functorial)
are inverses: Given a bundle of the continuous model theory
$(E_{n})_{n \in \mathbb{N}}$, it is clear that the topology of each
$E_{n}$ is the subspace topology inside
$\bigcup _{n \in \mathbb{N}}E_{n}$. On the other hand, suppose that we
are given a Banach bundle $E$, we want to show that its topology is the
final topology of the colimit of
$E_{1} \xhookrightarrow{} E_{2} \xhookrightarrow{} \dots
\xhookrightarrow{} E_{n} \dots $, by the universal property of the colimit,
the topology of the colimit is finer than that of $E$, on the other hand,
let $\mu $ be a converging ultrafilter on $E$, since $E$ has a basis of
some $\epsilon $-thin neighbourhood by \ref{thin_lemma_VTEX1}, there exists
$n$, such that $E_{n} \in \mu $, which shows the colimit topology is coarser
than that of $E$ (notice that this is just a generalisation of the argument
that shows the topology of any normed space $M$ is the colimit of
$M_{1} \xhookrightarrow{} M_{2}\xhookrightarrow{} \dots
\xhookrightarrow{} M_{n} \dots $).
\end{proof}

Before continuing let us state a useful lemma that also follows from the
last argument, which extends the result of subsection \ref{charecterisation}.
%
\begin{lemma}
\label{third_lemma_VTEX1}
Let $X$ be a compact Hausdorff space, and let $\mathcal{F}$ be a left ultrafunctor
from $X$ to $\mathsf{Ban}_{1}$, and let $E$ the corresponding semi-continuous
Banach bundle, then a set $V \subseteq E$ is open, if for every ultrafilter
$\mu $ on $X$ converging to $x \in \pi (V)$, and every
$f \in V \bigcap \pi ^{-1}(x)$, if
$\sigma _{\mu}(f) =(b_{x})_{x \in X}$, then there exists
$U \in \mu $ and $\epsilon > 0$, such that
$\coprod _{x \in U}B(b_{x}, \epsilon ) \subseteq V$.
\end{lemma}
\noindent\textbf{Continuous bundles}
%
\begin{theorem}
\label{thm8.3}
Continuous Bundles over $X$ are the bundles of the theory of Banach spaces
as defined above with the new function symbol and the new corresponding
axiom.
\end{theorem}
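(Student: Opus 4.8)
The plan is to bootstrap off the equivalence just proved for semi-continuous Banach bundles, isolating the single new ingredient contributed by the relation symbol $k_n$. Recall that the theory under consideration is the theory of Banach spaces together with the symbols $k_n$ (one per ball sort $D_n$) and the axiom $\la{Sup}_{x \in D_n}|k_n(x)-(n-\|x\|)|$, which forces the interpretation of $k_n$ on any model-bundle to be exactly $k_n^E = n - \|\cdot\|$ on $E_n$. Forgetting the symbols $k_n$ sends a bundle of this extended theory to a bundle of the plain theory of Banach spaces, which by the previous theorem is the same as a semi-continuous Banach bundle in the sense of \cite{hofmann1977bundles}. So everything except the behaviour of the norm is already accounted for, and the whole proof reduces to matching the \emph{continuity} of the norm (Fell's extra requirement, \cite{Fell1977}) with the model-theoretic data carried by $k_n$.

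First I would record the elementary semi-continuity dictionary. By Axiom (4) of Definition \ref{sorted bundle definition}, a bundle of structures must make every global relation upper semi-continuous; applied to $k_n^E = n - \|\cdot\|$ this says precisely that $\|\cdot\|$ is \emph{lower} semi-continuous on each $E_n$. On the other hand the norm $\|f\| = d_{\pi(f)}(f,0_{\pi(f)})$ is always \emph{upper} semi-continuous: it is the composite of the continuous zero section $x \mapsto 0_x$ (continuity of which was noted to follow from axiom $5$ of the Banach-bundle axioms) with the upper semi-continuous global distance of Axiom (4). Hence on a bundle of the extended theory the norm is simultaneously upper and lower semi-continuous, i.e. continuous; conversely a continuous norm is in particular lower semi-continuous, so setting $k_n^E := n - \|\cdot\|$ produces an upper semi-continuous relation satisfying the defining axiom. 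This is the crux: $k_n$ upper semi-continuous $\iff$ norm lower semi-continuous, and combined with the free upper semi-continuity this is exactly continuity of the norm.

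Next I would globalize from sorts to the total space. The identifications above are made on each $E_n$ with its bundle (equivalently subspace) topology; since $E = \bigcup_n E_n$ carries the colimit topology along the inclusions $E_1 \hookrightarrow E_2 \hookrightarrow \cdots$ (as established in the semi-continuous case), a map out of $E$ is continuous iff its restriction to each $E_n$ is, so per-sort continuity of the norm upgrades to continuity of $\|\cdot\|\colon E \to [0,\infty)$, which is exactly Fell's condition. For morphisms I would check that the two categories have the same arrows: in the extended theory the relation-morphism inequality for $k_n$ reads $n-\|g(a)\| \le n-\|a\|$, i.e. $\|g(a)\| \ge \|a\|$, while the contraction condition coming from the distance relation (together with $g(0)=0$) gives $\|g(a)\| \le \|a\|$; together they force each fibre map to be a linear isometry, matching the morphisms of continuous Banach bundles. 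With objects and morphisms matched, the functors $\ca{L}$ and $\ca{R}$ of the previous theorem restrict to the claimed equivalence.

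The main obstacle I anticipate is not any single hard estimate but the careful bookkeeping of semi-continuity \emph{directions} and the passage between the three guises of ``continuity'' involved: topological continuity of the norm map $E \to [0,\infty)$, the model-theoretic upper semi-continuity demanded of the relation $k_n$, and the per-sort-versus-total-space distinction. Once the dictionary ``$k_n$ upper semi-continuous $\Leftrightarrow$ norm lower semi-continuous'' is set down and combined with the automatic upper semi-continuity of the norm, the equivalence follows formally from the semi-continuous case; the only place requiring genuine care is verifying that the colimit topology on $E$ lets per-sort continuity of the norm glue to global continuity, which is where the structure established in the previous theorem does the work.
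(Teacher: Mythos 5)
Your proposal is correct and follows essentially the same route as the paper's (much terser) proof: the key dictionary is identical, namely that upper semi-continuity of the global relation $k_n^E = n - \|\cdot\|$ is exactly lower semi-continuity of the norm on each sort $E_n$, which combined with the upper semi-continuity already present in the semi-continuous case yields continuity of the norm on $E = \bigcup_n E_n$. Your additional verifications — the converse direction, the colimit-topology gluing, and the isometry characterisation of morphisms via $\|g(a)\| \geq \|a\|$ against the contraction inequality — are details the paper leaves implicit or disposes of in the surrounding discussion, not a genuinely different argument.
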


\begin{proof}
Since we added new relation symbols $k_{n}$, upper semi-continuity in the
global function corresponding to these symbols, implies lower semi-continuity
in norm on each $E_{n}$, which in turn, implies lower semi-continuity of
the norm on the semi-continuous bundle
$E = \bigcup _{n =1}^{\infty}E_{n}$. And thus, the norm function is continuous,
hence we obtain continuous Banach bundles.
\end{proof}

Now we show that the bundle space of continuous Banach bundles is always
Hausdorff (this result is correct even if the base space
is not compact Hausdorff, but in the setting of this paper we restrict
to the compact Hausdorff base space case). For that we first show the
following theorem:

\begin{theorem}
\label{thm8.4}
Let $E$ be a bundle of models of continuous model theory, which is a single
topological space and satisfies a version of Lemma~\ref{third_lemma_VTEX1} (this
could be just the simple case where we have metric spaces bounded by
$n$, or Banach spaces where we can form the bundle by taking the union
bundles of balls and equipping it with the final topology along the inclusions),
then the bundle is Hausdorff iff the maps $\sigma _{\mu}$ of the left ultrafunctor
associated with the bundle are injective.
\end{theorem}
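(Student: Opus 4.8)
The plan is to characterize Hausdorffness through uniqueness of ultrafilter limits and feed the explicit convergence relation of Theorem \ref{topology} into it. First I would reduce the whole statement to ultrafilters living on the base space $X$. By the Note \ref{note regarding left ultrafunctors}, for an ultrafilter $\mu$ on an arbitrary set $S$ with structure map $M : S \to X$ one has $\sigma_\mu(f) = (b^f_{M(s)})_{s \in S}$, where $\sigma_{M\mu}(f) = (b^f_y)_{y \in X}$. If $\sigma_\mu(f) = \sigma_\mu(g)$, then pushing a witnessing set $\{s : d(b^f_{M(s)}, b^g_{M(s)}) < \epsilon\} \in \mu$ forward along $M$ shows $\{y : d(b^f_y, b^g_y) < \epsilon\} \in M\mu$, hence $\sigma_{M\mu}(f) = \sigma_{M\mu}(g)$. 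Thus injectivity of every $\sigma_\mu$ is equivalent to injectivity of every $\sigma_\nu$ with $\nu$ an ultrafilter on $X$, and I may work only with those. In the Banach (union-of-balls) case one additionally localizes a given ultrafilter on $E$ to a single ball $E_n$ using the $\epsilon$-thin neighbourhood basis of Lemma \ref{thin lemma}, after which the argument is identical to the bounded case. I also recall that a topological space is Hausdorff if and only if no ultrafilter converges to two distinct points.

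For the direction ``injective $\Rightarrow$ Hausdorff'', suppose an ultrafilter $\eta$ on $E$ converges to both $f$ and $g$. Condition $C_1$ forces $\pi\eta$ to converge to $\pi f$ and to $\pi g$; as $X$ is Hausdorff this gives $\pi f = \pi g =: x$, and $\nu := \pi\eta$ is an ultrafilter on $X$ converging to $x$. Writing $\sigma_\nu(f) = (b_y)$ and $\sigma_\nu(g) = (c_y)$, condition $C_2$ yields $\coprod_y B(b_y,\epsilon) \in \eta$ and $\coprod_y B(c_y,\epsilon) \in \eta$ for every $\epsilon > 0$. Intersecting these and applying the open continuous map $\pi$ shows $\{y : d(b_y,c_y) < 2\epsilon\} \in \nu$, so by the ultraproduct distance formula $d((b_y),(c_y)) = \inf_{U \in \nu}\sup_{y \in U} d(b_y,c_y)$ we conclude $\sigma_\nu(f) = \sigma_\nu(g)$. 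Injectivity of $\sigma_\nu$ then gives $f = g$, so $E$ is Hausdorff.

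For the converse, assume $E$ Hausdorff, fix $\mu$ on $X$ converging to $x$, and suppose $\sigma_\mu(f) = \sigma_\mu(g)$ with $f,g \in E_x$. I would build a single ultrafilter on $E$ converging to both points. The family $\{\,W \cap \pi^{-1}(U) : W \in \mathcal{N}_f,\ U \in \mu\,\}$ has the finite intersection property: each member is nonempty because $\pi(W)$ is an open neighbourhood of $x$, hence lies in $\mu$ and meets $U$. By Lemma \ref{finite intersection} it extends to an ultrafilter $\eta$ with $\eta \supseteq \mathcal{N}_f$ (so $\eta \to f$) and $\pi\eta = \mu$. Now write $\sigma_\mu(f) = (b_y)$, $\sigma_\mu(g) = (c_y)$. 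Convergence to $f$ gives $\coprod_y B(b_y,\epsilon/2) \in \eta$, while $\sigma_\mu(f) = \sigma_\mu(g)$ gives $\pi^{-1}(\{y : d(b_y,c_y) < \epsilon/2\}) \in \eta$; the triangle inequality places their intersection inside $\coprod_y B(c_y,\epsilon)$, so $C_2$ for $g$ holds and $\eta \to g$. Hausdorffness then forces $f = g$, proving $\sigma_\mu$ injective.

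The main obstacle, and the only step requiring genuine care, is the converse: arranging an ultrafilter on the total space $E$ that converges simultaneously to $f$ and $g$ while projecting \emph{exactly} onto $\mu$. This hinges on openness of $\pi$ (Axiom 2 of Definition \ref{bundle definition}) for the finite-intersection verification, and on the dictionary between membership of the ``tube'' sets $\coprod_y B(\cdot,\epsilon)$ in $\eta$ and vanishing of the ultraproduct distance. Everything else is routine bookkeeping with the conditions $C_1$ and $C_2$ of Theorem \ref{topology}.
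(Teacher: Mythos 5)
Your proof is correct and follows essentially the same route as the paper's: both directions hinge on the $C_1$/$C_2$ convergence characterization of Theorem \ref{topology}, with the forward direction intersecting the two $\epsilon$-tubes to force $\sigma_{\pi\eta}(f)=\sigma_{\pi\eta}(g)$ and then invoking injectivity, and the converse manufacturing a single ultrafilter converging to both points and invoking Hausdorffness. The only differences are cosmetic: the paper builds that ultrafilter directly from the tube sets $\coprod_{x \in A}B(a_x,\epsilon)$ (for $A \in \mu$, $\epsilon>0$) rather than from $\mathcal{N}_f$ refined by $\{\pi^{-1}(U) : U \in \mu\}$, and it leaves implicit both your reduction to ultrafilters on the base space and the localization to a single ball in the Banach case.
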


\begin{proof}
In this proof, we use the left ultrafunctor bundle equivalence, so in particular,
we show the theorem above for bundles constructed using the functor
$\mathcal{L}$ from left ultrafunctors to bundles.

We start the proof as follows: let $\mu $ be an ultrafilter on the bundle space converging
to $f$ and $g$, suppose that
$\sigma _{ \pi \mu}(f)=(a_{x})_{x \in X}$ and also suppose that
$\sigma _{\pi \mu}(g)=(b_{x})_{x \in X}$. We already know that for any
$\epsilon >0$ we have $\coprod _{x \in X}B(b_{x},\epsilon ) \in \mu $, also
we have that $\coprod _{x \in X}B(a_{x},\epsilon ) \in \mu $, but this
simply implies that there exists a set $U \in \pi \mu $, such that for every
$x \in U $ $d(b_{x},a_{x}) < 2 \epsilon $, thus
$d((b_{x}),(a_{x}))\leq 2 \epsilon $, and since $\epsilon $ is arbitrary,
this implies that $(a_{x})= (b_{x})$ and since $\sigma _{\pi \mu}$ is injective,
we deduce that $f=g$ and hence the bundle topology is Hausdorff. On the
other hand, suppose that the bundle is Hausdorff, let $\mu $ be an ultrafilter
on $X$ converging to $x$, and suppose that
$\sigma _{\mu}(f) = \sigma _{\mu}(g)$, we need to show that $f =g $, suppose
$\sigma _{\mu}(f)=(a_{x})_{x \in X}$. Take the family of
sets
$(\coprod _{x \in A }B(a_{x},\epsilon ))_{\epsilon >0, A \in \mu}$, this
is clearly a filter basis, and thus extends to an ultrafilter, this ultrafilter
converges to $f$ and to $g$ and thus since $E$ is Hausdorff, we get that
$f=g$.
\end{proof}

Now we know that in the cases of Banach spaces with isometries, the maps
$\sigma _{\mu}$ are isometries, hence injective, hence the bundle space
is Hausdorff.

\subsection{Bundles of $\mathrm{C}^*$-algebras}
\label{sec8.2}

The signature of $\mathrm{C}^{*}$-algebras is built on that of Banach spaces. So
we require on top of Banach spaces signature, these additional symbols:
\begin{itemize}
\item For every sort $D_{n}$, we define a function symbol $*_{n}$ from
$D_{n}$ to $D_{n}$, the modulus of this symbol is the identity function.
\item For every pair of sorts $D_{n}, D_{m}$, we define a function symbol
$\mathrm{dot}_{n,m}$ with formal domain $D_{n} \times D_{m}$ and formal
range $D_{n.m}$, the modulus of this symbol is $(n+m) \mathrm{Id}$ (we
are of course going to be writing $xy$ instead of
$\mathrm{dot}_{n,m}(x,y)$).
\end{itemize}

Of course, we require $*$ operation and multiplication axioms in the case
of $\mathrm{C}^{*}$-algebras, for example $\|x\| =\|x^{*}\|$ (which formally
stated is an infinite family of axioms for every sort of the form:
$\mathrm{Sup} _{x \in D_{n}}| \ \|x\| -\|x^{*}\| |$); the $\mathrm{C}^{*}$ identity,
which can be stated as an infinite family of axioms of the form
$\mathrm{Sup}_{x \in D_{n}} ( | \  \|x^{*}x\| - \|x\|^{2}|)$, and of course
the fact that $(x^{*})^{*} =x$ (again axiomatised with an infinite family
of axioms), and axioms ensuring that the algebra is a Banach algebra (for
example we need: $\|xy\| \leq \|x\|. \|y\|$, this can be written formally
as
$\mathrm{Sup}_{x\in D_{n}} \mathrm{Sup}_{y\in D_{m}} \|xy\|
\dotminus \|x\|.\|y\|$ ). For a detailed account of the axioms see
\cite{farah2021model}.

\begin{definition}
\label{defn8.1}
A semi-continuous bundle of $\mathrm{C}^*$-algebra is a semi-continuous bundle of Banach
spaces, such that every fibre is a $\mathrm{C}^{*}$-algebra, and such that
the global multiplication and $*$ maps are continuous
\cite{forger2013locally,Williams2007CrossedPO,Niels}.
\end{definition}

This concept is equivalent to $C_{0}(X)$ algebras as defined in
\cite{dadarlat2009continuous,blanchard2004global,Niels} (sometimes called
$C(X)$ algebra):

A $C_{0}(X)$ algebra $A$ is defined to be an inclusion $\iota $ of
$C_{0}(X)$ inside $\mathcal{Z}(\mathcal{M}(A))$, such that
$C_{0}(X)A$ is dense in $A$, a detailed account of this equivalence can
be found in \cite[Appendix  C]{Williams2007CrossedPO} or
\cite{Niels}, an important detail to note is that $A$ is the
$\mathrm{C}^{*}$-algebra of continuous sections to the ``topological''
bundle, so, in particular, the space $A$ can be used to define the left
ultrastructure on the left ultrafunctor corresponding to the bundle. Also,
it should be noted that semi-continuous bundles over $X$ are equivalent
to continuous functions from $\mathrm{Prim}(A)$ to $X$ where $A$ is a
$\mathrm{C}^{*}$-algebra \cite{Williams2007CrossedPO}.

Similarly, we may require the global norm function to be continuous, so
we can get continuous bundles of $C^{*}$ algebras as defined in
\cite{dupre1974hilbert,Niels}, this turns out to be equivalent to continuous
fields of $\mathrm{C}^{*}$-algebras as defined in
\cite[CH.10]{dixmier1982c} (for this equivalence see
\cite{dupre1974hilbert}), and to $C_{0}(X)$ algebras satisfying that for
each $a \in A$ the map $N(a)$ on $X$, defined by
$x \mapsto \|a_{x}\|$ is continuous, here $a_{x}$ is the image of
$a$ in the quotient $A/\mathcal{I}_{x}$, where $\mathcal{I}_{x}$ is the
ideal of $A$ generated by elements $\{\iota (f) \mid f(x)=0 \}$
\cite{blanchard2004global}.

Equivalently, these are $C_{0}(X)$ algebras satisfying that
$\mathrm{Res}_{\iota}:\ \mathrm{Spec}(A) \rightarrow \mathrm{Spec}(C_{0}(X))
\simeq X: \mathrm{ker}(\sigma ) \mapsto \mathrm{ker}(\bar{\sigma})
\circ \iota $ is open \cite{Niels}, here $\bar{\sigma}$ is the extension
of the representation $\sigma $ to $\mathcal{M}(A)$, the
multiplier algebra of $A$ (this is the $\mathrm{C}^*$-algebra equivalent of the Stone-Cech
compactification).

These definitions fit our framework, since we want every map corresponding
to a function symbol to be continuous. Of course, when dealing with continuous
$\mathrm{C}^{*}$ bundles we should add the additional relation symbols
$(k_{n})_{n \in \mathbb{N}}$ as we did with Banach spaces. In other words,
semi-continuous bundles correspond to the usual axiomatisation of
$\mathrm{C}^{*}$-algebras which has $*$ homomorphisms as morphisms, while
continuous bundles arise from the axiomatisation of $C^{*}$ algebras with
the additional symbols $(k_{n})_{n \in \mathbb{N}}$ which gives injective
$*$ homomorphisms as morphisms.

\subsection{Bundles of Hilbert spaces}
\label{sec8.3}

The axiomatisation of Hilbert spaces is also built upon that of Banach
spaces, there are two different ways, one should give us Hilbert spaces
with isometries and the other with contractions. If we want isometries,
we add a family of symbols for the real and imaginary part of the inner
product with specific axioms ensuring it's an inner product.

On the other hand, if we want the maps of models to be just
contractions, this can be done by adding the parallelogram law as an axiom
to the axioms of Banach spaces with contractions, the parallelogram law
can be stated as the following axiom:
$\mathrm{Sup}_{y \in D}\mathrm{Sup}_{x \in D}(| \ \|x-y\|^{2}+\|x+y\|^{2}
-2\|x\|^{2}-2\|y\|^{2} |)$.

\begin{definition}
\label{defn8.2}
A bundle of Hilbert spaces is a continuous bundle of Banach spaces, where
each fibre is a Hilbert space, more precisely we say that a triple
$(E,X,\pi )$ defines a bundle of Hilbert spaces, where $E$ and $X$ are topological
spaces ($X$ is usually required to be Hausdorff, in our work we studied
the case where the space $X$ is compact Hausdorff) and
$\pi : E \rightarrow X$ is a function required to satisfy the following
conditions:
\begin{enumerate}
\item For every $x$, $\pi ^{-1}(x) $ is a Hilbert space.
\item $\pi $ is continuous and open.
\item Scalar multiplication from $\mathbb{K} \times E $ to $E$, and addition
from $E\times _{X} E$ to $E$ are continuous.
\item Norm $\| \  \|$ from $E$ to $[0,\infty )$ is continuous.
\item For any $x \in X$, if we call $\mathcal{N}_{x}$ the set of all open
neighbourhoods of $x$, then
$\{\coprod _{y \in U} B(0_{y},r)\}_{r>0,U \in \mathcal{N}_{x}}$ is a neighbourhood
basis at $0_{x}$.
\end{enumerate}
\end{definition}

This corresponds exactly to the bundle of the model theory of Hilbert space
with isometries, which is the theory of Hilbert spaces with real and imaginary
parts of the inner product symbols (for each sort).

To see why we necessarily get the continuity of the norm,
notice that in a Hilbert space $\|x\|=\sqrt{\langle x ,x \rangle}$. Now,
by our requirement for a bundle the function $E \times _{X} E$ to
$[0,k]$ defined by $\Re (\langle x ,y \rangle )$ is upper semi-continuous,
but this implies that it is also lower semi-continuous since the map defined
by $\Re (\langle -x , y \rangle )=-\Re (\langle x , y \rangle )$ is upper
semi-continuous (using the fact that multiplying by $-1$ is continuous),
which implies that $\Re (\langle x ,y \rangle )$ is continuous, hence the
norm is continuous.

If we want Bundles of Hilbert spaces with semi-continuous norm, we need
to use the second axiomatisation of Hilbert spaces (with the parallelogram
identity as an axiom), which corresponds to the category of Hilbert spaces
with contractions (these bundles are to our best knowledge not explored
in literature).

\subsection{Tracially continuous W* bundles}
\label{sec8.4}

Now, we move to a more subtle case, which is tracially continuous
$\mathrm{W}^{*}$-bundles, or bundles of tracial von Neumann algebras, we
use the axiomatisation of tracial von Neumann algebras as present in
\cite{goldbring2022survey}, in which the ultraproduct corresponds to the
tracial ultraproduct \cite{goldbring2022survey}, which is nothing but the Ocneanu ultraproduct (for more
information see \cite{ANDO20146842}) when all states are tracial. The
interesting dichotomy in this axiomatisation is the fact that sorts are
interpreted as bounded operator unit balls with the $\| \ \|_{2}$ topology.

We want to show the following theorem:

\begin{theorem}
\label{thm8.5}
There is an equivalence of categories, between tracially continuous
$\mathrm{W}^{*}$-bundles over $X$, and the bundle of models of the continuous
model theory of tracial von Neumann algebras over $X$.
\end{theorem}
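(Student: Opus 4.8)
The plan is to reduce the statement to the general equivalence already proved in Theorem \ref{third theorem} and then to identify the abstract category of bundles of models with the concrete category of tracially continuous $\la{W}^*$ bundles, exactly as was done above for Banach and $\la{C}^*$ bundles. First I would fix the language $\go{L}$ and theory $\bb{T}$ of tracial von Neumann algebras as axiomatised in \cite{goldbring2022survey}, and recall the two features that make this case special: each sort $D_n$ is interpreted as the \emph{operator}-norm ball of radius $n$ but is metrised by the uniform $2$-norm $\|\cdot\|_2$, and the resulting model-theoretic ultraproduct is the Ocneanu ultraproduct (\cite{goldbring2019correspondences}). With these conventions in place, Theorem \ref{third theorem} applied to $\bb{T}$ already yields an equivalence between $\mathrm{Left \ ultrafunctors}(X,\cat{CompMet}_{\go{L},\bb{T}})$ and $\mathrm{Bun}(\cat{CompMet}_{\go{L},\bb{T}},X)$, so the only remaining content is the comparison of this latter category with tracially continuous $\la{W}^*$ bundles.

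For that comparison I would mediate everything through the algebra of continuous sections, which is the form in which a $\la{W}^*$ bundle is usually presented. Given a bundle of models $(E_n)_{n\in\bb{N}}$ of $\bb{T}$ in the sense of Definitions \ref{sorted bundle definition} and \ref{bundle of models}, the continuous sections of the ball-bundles $E_n$ assemble into a $*$-algebra $M=\bigcup_n\Gamma(E_n)$ of uniformly operator-bounded sections; the fibre over $x$ is the tracial von Neumann algebra $E_x$, and the conditional expectation $\bb{E}:M\to C(X)$ is read off from the trace relation symbol. The continuity and upper semi-continuity axioms of \ref{sorted bundle definition} then translate into continuity of the $\la{W}^*$-bundle operations and of the maps $x\mapsto\tau_x(a)$ for sections $a$, i.e.\ into tracial continuity. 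Conversely, from a tracially continuous $\la{W}^*$ bundle $M$ I would let $E_n$ be the ball-bundle whose fibre at $x$ is the operator-$n$-ball of the fibre $M_x$ with its $\|\cdot\|_2$ topology, topologised so that the operator-bounded continuous sections are continuous, and verify the axioms of \ref{sorted bundle definition} using that operator balls are $\|\cdot\|_2$-complete. The unlabelled cross-section description of $\sigma_\mu$ at the end of Section \ref{4} is what glues the two pictures together: it identifies the left-ultrastructure maps with evaluation of sections into the ultraproduct, which is here precisely the Ocneanu ultraproduct.

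The main obstacle, and the genuine point of departure from the Banach and $\la{C}^*$ cases, is the interaction of the two norms. There a single norm controls both the sort filtration and the metric, so a norm-convergent ultrafilter is automatically eventually bounded and the balls can be glued into a single Fell-type total space; here the sorts are filtered by the operator norm while the metric is the $2$-norm, and a $\|\cdot\|_2$-convergent net need not be operator-bounded, so that naive gluing fails and the correspondence must be routed through the uniformly operator-bounded section algebra $M$. The delicate points are (i) that the model-theoretic ultraproduct of the sorted balls reassembles the Ocneanu ultraproduct of the fibres, so that the {\L}o\'s theorem and hence Theorem \ref{third theorem} genuinely apply to this theory, and (ii) that the abstract continuity requirement on the trace relation symbol is equivalent to continuity of $x\mapsto\|a_x\|_2$ for sections, which is exactly the defining feature of tracial continuity. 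Once these are secured, the verification that the two assignments are mutually inverse functors is formal and runs parallel to the Banach computation.
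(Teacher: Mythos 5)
Your reduction to Theorem \ref{third theorem} and your diagnosis of the central difficulty (the sorts are filtered by the operator norm while the metric is the $2$-norm, so the Fell-type gluing used for Banach and $\mathrm{C}^*$ bundles fails) both match the paper. But your route through the section algebra $M=\bigcup_n\Gamma(E_n)$ has a genuine gap at the point where the equivalence is actually established. To get a natural isomorphism on the round trip (bundle of models) $\to$ (section algebra) $\to$ (bundle of models), you need the evaluation map $M/\ca{I}_x\to E_x$ to be surjective, i.e.\ you need an \emph{a priori} ``enough continuous sections'' theorem for abstract bundles of models of the theory of tracial von Neumann algebras. For Banach bundles this is the Douady--Dal Soglio-H\'erault theorem, which the paper cites; no analogue is available here at the outset (the fibres are metric, not locally convex in any way you have exploited, and the two-norm problem you yourself identify is exactly what blocks transporting the Banach argument), and your proposal neither proves nor even flags it. Note that invoking Evington--Pennig only helps in the other direction: a $\mathrm{W}^*$ bundle presented as a section algebra has enough sections by construction, but an abstract bundle of models does not, and deducing it from the theorem you are trying to prove would be circular.

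The paper's architecture is designed precisely to avoid this. Both of its constructions keep the fibres fixed on the nose: a topological $\mathrm{W}^*$ bundle yields the sorted ball bundles by taking subspaces, and a left ultrafunctor yields the topological bundle by equipping $\coprod_x\ca{F}(x)$ with the \emph{initial} topology of its fibrewise GNS inclusion into the Hilbert bundle of $\mathbf{GNS}\circ\ca{F}$ (using that $\mathbf{GNS}$ is a left ultrafunctor). The inverse-functor verification then reduces to two topology-comparison statements --- each sorted bundle, and each topological $\mathrm{W}^*$ bundle, is homeomorphic onto its image in the Hilbert bundle --- where sections are only ever needed on the topological-bundle side, where Evington--Pennig supplies them. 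Relatedly, your closing claim that once the two ``delicate points'' are secured the inverse verification is ``formal and runs parallel to the Banach computation'' is not tenable: the Banach computation is exactly what breaks (as your own $X=\{*\}$-type observation shows, the $\|\cdot\|_2$ topology is not the colimit of the ball topologies), and the GNS-embedding machinery is not a formality but the substitute for it. To repair your proposal you would either have to prove an enough-sections theorem for these bundles of models (e.g.\ by a fibrewise nearest-point-projection argument from Hilbert-bundle sections, which itself requires continuity of the projections onto varying convex sets), or restructure the proof so that fibres are never re-manufactured from sections --- which is what the paper does.
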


Tracially continuous $\mathrm{W}^{*}$-bundles are defined (\cite{ozawa2013dixmier},
\cite{bosa2019covering}, \cite{evington2016locally}) as a unital inclusion
of $C(X) \xhookrightarrow[]{} \mathcal{Z}(A)$ where $A$ is a
$\mathrm{C}^{*}$-algebra, together with a $\mathrm{C}^{*}$ conditional
expectation $E$ from $A$ to $C(X)$ satisfying the following properties:
\begin{enumerate}
\item $E(a_{1}a_{2})=E(a_{2}a_{1})$.
\item $E(a^{*}a)=0$ iff $a=0$.
\item the unit ball of $A$ is complete with respect to the $2$-norm defined
by $\|a\|_{2} = \|E(a^{*}a)^{1/2}\|_{C(X)}$.
\end{enumerate}

However, in \cite{evington2016locally}, the authors showed a theorem allowing
us to express a bundle as a topological space $B$ over $X$ (which they
called the topological bundle) satisfying the following axioms:
\begin{itemize}[leftmargin=2.5em]
\item[(i)] (Global) Addition $B \times _{X} B \rightarrow B$ is continuous.
\item[(ii)] (Global) Scalar multiplication, viewed as a map
$\mathbb{C} \times B \rightarrow B$, is continuous.
\item[(iii)] The global $*$ operation viewed as a map
$B \rightarrow B$ is continuous.
\item[(iv)] The map $X \rightarrow B$ which sends $x$ to the additive identity
$0_{x}$ of $B_{x}$ is continuous, and so is the analogous map
$X \rightarrow B$ which sends $x$ to the multiplicative identity
$1_{x}$ of $B_{x}$.
\item[(v)] The map $\tau : B \rightarrow \mathbb{C}$ that
restricts to the corresponding trace on each fibre is continuous, and so
is the map $\|\cdot \|_{2}:B \rightarrow \mathbb{C}$ arising from combining
the 2-norms from each fibre.
\item[(vi)] A net $(b_{\lambda}) \subseteq B$ converges to $0_{x}$ whenever
$\pi (b_{\lambda}) \rightarrow x$ and
$\|b_{\lambda}\|_{2} \rightarrow 0$.
\item[(vii)] Multiplication, viewed as a map
$B \times _{X} B \rightarrow B$, is continuous on $\|\cdot \|$-bounded
subsets.
\item[(viii)] The restriction $\pi |_{B_{1}}: B_{1} \rightarrow X$ is open.
\end{itemize}

\textbf{Note.} If we want to be precise, the equivalence shown in
that paper assumes that the topological bundles have enough cross sections,
but in his PhD thesis Evington showed that any such topological bundle
always have enough cross sections by imitating the argument done by Douady
and Dal Soglio-Herault \cite{EvingtonPhD}.

Before continuing, we can immediately notice that in the presence of the
other axioms, and as in every other definition of metric bundles, axiom
$\mathrm{(ii)}$ can be replaced with:
\begin{itemize}[leftmargin=2.1em]
\item[(ii')] for every scalar $k$, the map $B \xrightarrow[]{k} B$, defined
by scalar multiplication by $k$ is continuous.
\end{itemize}

Also, we can notice that the axiom $\mathrm{(v)}$ can be replaced with
either one of the following equivalent (in the presence of other axioms)
axioms:
\begin{itemize}[leftmargin=2.3em]
\item[(v')] The maps $\tau : \ B \rightarrow \mathbb{C}$ which restricts
to the corresponding trace on each fibre is continuous.
\item[(v'')] The maps
$\|\cdot \|_{2} \ : \ B \rightarrow \mathbb{C}$ arising from combining
the 2-norms from each fibre is continuous.
\end{itemize}
To see why notice that
$\tau (a)=(1/4)\sum _{k=0}^{3} i^{k} \|a+i^{k}.1\|_{2}^{2}$, here
$i$ is the root of $x^{2}+1$.
\newline
We also prefer to write axiom $(\mathrm{vi})$ in a filter language for
our convenience:
\begin{itemize}[leftmargin=2.3em]
\item[(vi')] an ultrafilter $\mu $ on $B$ converges to $0_{x}$ iff
$\pi \mu $ converges to $x$ and $\| \  \|_{2}\mu $ converges to $0$.
\end{itemize}
Showing that $(\mathrm{vi})$ implies $(\mathrm{vi}^{\prime})$ is straightforward,
for the other direction we use Lemma~\ref{important_lemma_VTEX1}, we omit the
details.

\medskip\noindent\textbf{Proof description}
\begin{enumerate}
\item Showing that the $\mathbf{GNS}$ construction from the category of tracial
von Neumann algebras to the category of Hilbert spaces is a left ultrafunctor.
\item Starting from a topological bundle as defined by
\cite{evington2016locally}, we construct a bundle of models of the continuous
model theory of tracial von Neumann algebras. This bundle looks like
$(E_{n})_{n \geq 1}$, where each bundle is a bundle of balls of operator
norm radius $n$ each equipped with the $2$-norm on the tracial von Neumann
algebra, and hence using the left ultrafunctor-bundle of models equivalence,
we get a left ultrafunctor from $X$ to the category of tracial von Neumann
algebras. Let us call this process $\mathcal{F}$.
\item Starting from a left ultrafunctor from $X$ to the category of tracial
von Neumann algebras, we construct a topological bundle as follows, we
take the composition with the $\mathbf{GNS}$ construction to construct
a Hilbert bundle, and then we construct the topological
$\mathrm{W}^{*}$-bundle by equipping the disjoint union of fibres with
the initial topology along the inclusion map defined by the
$\mathbf{GNS}$ construction on each fibre. Let us call this process
$\mathcal{G}$.

At this point, we want to show that the two constructions
$\mathcal{F}$ and $\mathcal{G}$ are inverse of each other.
\item We show that given a left ultrafunctor, then every bundle of operator
norm balls $E_{n}$ equipped with the $2$-norm, coming from the corresponding
bundle of models $(E_{n})_{n \geq 1}$ is homeomorphic onto its image inside
the Hilbert bundle.
\item We show that given a topological bundle $E$, then after constructing
the associated left ultrafunctor and composing with the
$\mathbf{GNS}$ construction, $E$ is going to be homeomorphic onto its image
inside the Hilbert bundle.

This result shows that the two processes are really inverses; to see why:

Starting from a topological bundle $E$, if $\mathcal{F}(E) = F$, then the
corresponding bundle of models to $F$ is $(E_{n})_{n \geq 1}$, and the corresponding
topological bundle to $\mathcal{G}\mathcal{F}(E)$ is homeomorphic to
$E$ using the result $5$ above.

On the other hand, suppose that we have a left ultrafunctor $F$, with corresponding
bundle of models $(E_{n})$, then take $\mathcal{G}(F)=E$;
we have shown that each $E_{n}$ as well as $\mathcal{G}(F)=E$
are homeomorphic onto their images inside the Hilbert bundle, and hence
the bundle of models corresponding to $E$ is going to be just the collection
of operator norm balls of $E$ inside the Hilbert bundle, hence by result
$4$, the bundle of models of tracial von Neumann algebras
$(\mathcal{G}(F))_{n \geq 1}$ and $(E_{n})_{n \geq 1}$ are isomorphic and
hence the left ultrafunctors $\mathcal{F}\mathcal{G}(F)$ and $F$ are isomorphic.
\end{enumerate}

\subsubsection*{The proof}

Before starting our proof, we assume the reader is familiar
with \cite{evington2016locally}, but we are going to briefly explain the
constructions there:

Given a $\mathrm{W}^{*}$-bundle $\mathcal{A}$ over $X$, i.e.
an inclusion of $C(X)$ inside $Z(\mathcal{A})$ together with a conditional
expectation $E$ satisfying certain conditions that we stated before, one
defines the fibres of the topological bundle by taking
$\mathcal{A}_{x}=\mathcal{A}/I_{x}$, where
$\mathcal{I}_{x}=\{a \mid E(a^{*}a)(x)=0\}$,  alternatively
these fibres can be viewed as the images of $\mathcal{A}$ under the
$\mathbf{GNS}$ construction corresponding to the normal state on
$\mathcal{A}$ defined by $a\mapsto E(a)(x)$. Now the bundle topology
on the space $\coprod _{x \in X}\mathcal{A}_{x}$ is defined to be the topology
generated by the basic open sets of the form
$\coprod _{x \in X}\mathsf{B}(a(x),\epsilon )$, for $a$ continuous bounded
section of the projection map.

On the other hand, given a topological bundle, one may define an inclusion
$C(X) \xhookrightarrow{} \mathcal{A}$, where $\mathcal{A}$ is the
$\mathrm{C}^{*}$-algebra of all bounded, continuous sections to the projection
map, and the conditional expectation is defined by
$E(a)(x) =\tau _{x} (a(x))$.

Now we get to see why the case of relating the bundles of the continuous
model theory of tracial von Neumann algebras to that defined above is more
subtle, let us look at the trivial case where $X=\{*\}$, in that case the
bundle reduces to a von Neumann algebra with the $\| \ \|_{2}$ norm topology,
and such topology is $\mathbf{not}$ the inductive limit topology of the
operator norm balls with the $\| \ \|_{2}$ norm. So the question becomes,
for an arbitrary compact Hausdorff space $X$, to find a suitable topology
on the nested union of bundles of bounded operator norm balls (with the
$ \| \ \|_{2}$ topology fibre-wise). In order to do so, we will attempt
to construct a bundle of Hilbert spaces for which every fibre is the
$\mathbf{GNS}$ Hilbert space corresponding to the tracial von Neumann algebra.
Before that, we start by stating a necessary theorem:
%
\begin{theorem}
\label{thm8.6}
The $\mathbf{GNS}$ construction is a left ultrafunctor from the category of
tracial von Neumann algebras to that of Hilbert spaces (with isometries).
\end{theorem}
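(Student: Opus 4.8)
The plan is to construct the functor and its left ultrastructure explicitly and then to verify the three axioms by exploiting the fact that, after choosing representatives, every map in sight acts as the identity on families. First I would recall that the $\mathbf{GNS}$ functor sends a tracial von Neumann algebra $(M,\tau)$ to the Hilbert space completion $L^2(M,\tau)$ of $M$ under $\langle a,b\rangle = \tau(b^*a)$, and sends a trace-preserving $*$-homomorphism $\phi \colon M \to N$ to the linear map $a \mapsto \phi(a)$. This is an isometry of Hilbert spaces because $\|\phi(a)\|_2^2 = \tau_N(\phi(a^*a)) = \tau_M(a^*a) = \|a\|_2^2$, so $\mathbf{GNS}$ indeed lands in Hilbert spaces with isometries, and functoriality is immediate.

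Next I would build the comparison maps. Using the cited axiomatisation, the model-theoretic ultraproduct $\int_X M_x\,d\mu$ is the Ocneanu (tracial) ultraproduct, whose elements are represented by operator-norm-bounded families $(a_x)$ modulo $\lim_\mu \|a_x\|_2 = 0$, with trace $\lim_\mu \tau_x(a_x)$. Since $M$ is always $\|\cdot\|_2$-dense in $L^2(M,\tau)$, I would define $\sigma_\mu$ on the dense subalgebra $\int_X M_x\,d\mu \subseteq L^2(\int_X M_x\,d\mu)$ by sending the class of $(a_x)$ to the class of the same family, now regarded inside the Hilbert-space ultraproduct $\int_X L^2(M_x)\,d\mu$. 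The key computation is the norm identity
\[
\Big\| (a_x) \Big\|_{L^2(\int_X M_x d\mu)} = \Big(\lim_\mu \tau_x(a_x^*a_x)\Big)^{1/2} = \lim_\mu \|a_x\|_2 = \Big\| (a_x) \Big\|_{\int_X L^2(M_x) d\mu},
\]
where the middle equality uses continuity of $t \mapsto t^{1/2}$ for ultralimits. This shows $\sigma_\mu$ is well defined and isometric on the dense subalgebra, hence extends uniquely to a linear isometry on all of $L^2(\int_X M_x\,d\mu)$ by completeness; note it need not be surjective, which is harmless since comparison maps of a left ultrafunctor need not be isomorphisms.

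It then remains to check Axioms $0$, $1$, and $2$ of the left ultrastructure. For Axiom $0$ (naturality in $(M_x)$), the induced map $\int_X \psi_x\,d\mu$ acts as $(a_x)\mapsto(\psi_x(a_x))$ on representatives, and so does its Hilbert-space counterpart; since $\sigma_\mu$ is the identity on representatives, the naturality square commutes on the dense subalgebra and hence everywhere by continuity. For Axiom $1$, under the canonical isomorphisms $\epsilon$ identifying $\int_X M_x\,d\delta_{x_0}$ with $M_{x_0}$ and $\int_X L^2(M_x)\,d\delta_{x_0}$ with $L^2(M_{x_0})$, the map $\sigma_{\delta_{x_0}}$ becomes the identity of $L^2(M_{x_0})$ on representatives, so the triangle commutes. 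For Axiom $2$ I would invoke the explicit description of the categorical Fubini transform proved earlier in the paper, namely $\Delta_{\mu,\nu_\bullet}\big((b_t)_t\big)=\big((b_t)_t\big)_s$, which is again the identity on representatives; combined with the representative description of $\sigma$, the defining diagram of Axiom $2$ commutes on the dense subalgebra $\int_T M_t\,d(\int_S \nu_s\,d\mu)$ and extends to the whole $\mathbf{GNS}$ space by continuity of all the maps involved.

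The main obstacle I expect is conceptual rather than computational: one must be careful that $L^2$ of the \emph{tracial} (Ocneanu) ultraproduct is genuinely a proper subspace of the metric ultraproduct of the individual $L^2(M_x)$, because the former is completed from operator-norm-bounded families while the latter allows arbitrary $\|\cdot\|_2$-bounded families. Thus $\sigma_\mu$ is an isometric embedding but not an isomorphism, and every diagram must be checked as an equality of isometries rather than of invertible maps. The one computation requiring genuine care is the norm identity above, together with the verification that the trace $\lim_\mu \tau_x(\cdot)$ on the Ocneanu ultraproduct matches the inner product $\lim_\mu \langle\cdot,\cdot\rangle$ on the Hilbert ultraproduct; once this is in place, all three axioms reduce to the observation that $\sigma$, $\Delta$, and $\epsilon$ are all realised as the identity on representing families, so commutativity on the dense subalgebra propagates to the completion.
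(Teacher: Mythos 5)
Your proposal is correct and follows essentially the same route as the paper: you define $\sigma_\mu$ as the ``identity on representatives'' map from the $\mathbf{GNS}$ space of the Ocneanu ultraproduct into the Hilbert-space ultraproduct of the $\mathbf{GNS}$ spaces, establish well-definedness and isometry by the same trace/norm computation, and reduce the left-ultrafunctor axioms (invoking the paper's explicit description of the categorical Fubini transform for the Fubini axiom) to the observation that all comparison maps act as the identity on representing families. Your additional remarks --- the explicit extension by density from the dense subalgebra and the warning that $\sigma_\mu$ is a non-surjective isometric embedding --- only make explicit what the paper leaves implicit.
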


\begin{proof}
Let $(B_{i},\phi _{i})_{i \in I}$ be a family of tracial von Neumann algebras
and let $\mu $ be an ultrafilter on $I$, and define the map
$\sigma ^{\prime}_{\mu} \ : \mathcal{H}_{\phi _{\mu}} \rightarrow \int _{I}
\mathcal{H}_{\phi _{i}}d\mu $ by sending $\widehat{(a_{i})}$ to
$(\widehat{a_{i}})$. Of course, we need to make sure that such construction
is well-defined, to do that notice that
$\|\widehat{(a_{i})}\|^{2}_{2,\mu}=\tau _{\mu}(\widehat{(a_{i})}^{*}
\widehat{(a_{i})})=\lim _{\mu}\tau _{i}(a_{i}^{*}a_{i})=\lim _{\mu}\| a_{i}
\|^{2}_{2,i}$ which is by definition the square of the norm in
$\int _{I} \mathcal{H}_{\phi (i)} d\mu $.

\begin{note*}
\normalfont The reader may notice that we defined
$\sigma ^{\prime}_{\mu}$ only on elements of the form $\widehat{(a_{i})}$, but
these elements are by definition, dense in
$\mathcal{H}_{\phi _{\mu}}$.
\end{note*}

This shows that such construction is an isometry and hence well-defined.
Now, showing that the $\mathbf{GNS}$ construction is a left ultrafunctor
is mostly trivial, and requires, for axiom $(3)$ of left ultrafunctor axioms,
using the description of the categorical Fubini transform that we explained
in \ref{description}.

\begin{note*}
\normalfont We defined $\mathbf{GNS}$ only on objects, but its definition
on morphisms is clear. Notice that maps between tracial von Neumann algebras
(seen as models of their continuous model as defined in
\cite{goldbring2022survey}) are $2$-norm isometries (this follows from
a similar argument to the argument that showed that maps
of the continuous model theory of Hilbert spaces with an inner product
symbol are isometries), and hence the induced maps between the
$\mathbf{GNS}$ Hilbert spaces are isometries, as we want them to be (since
we want to work with continuous Hilbert bundles).\qedhere
\end{note*}
\end{proof}
\noindent\textbf{Every topological bundle defines a bundle of models}
Let $B$ be a topological $\mathrm{W}^{*}$-bundle; first we claim that
$(B_{n})_{n \geq 1}$ is a sorted bundle (that means that each
$B_{n}$ is a bundle corresponding to a sort in the continuous first-order
axiomatisation of tracial von Neumann algebras appearing in
\cite{goldbring2022survey}), which in turn implies that it's a left ultrafunctor
from $X$ to the ultracategory of tracial von Neumann algebras.

Now we show that we have a bundle of models:

The first thing we need is that the restriction of the projection to each
sorted ball is open but this is just axiom $(
\mathrm{viii})$; the next thing we are going to show is that the global
functions corresponding to relation and function symbols appearing in the
axiomatisation \cite{goldbring2022survey} are continuous; the continuity
for the $1$ and $0$ symbols for each sorted bundle follows from axioms
$(\mathrm{iv})$; continuity of scalar multiplication with appropriate source
and target sorted bundle follows from axiom $(\mathrm{ii})$; continuity
of addition from appropriate fibre product of sorted bundles follows from
axiom $(\mathrm{i})$, while continuity of subtractions follows from continuity
of addition and multiplication by $-1$, continuity of $*$ operation on
each sorted bundle follows from axiom $(\mathrm{iii})$, continuity of multiplication
on sorted norm balls is just axiom $(\mathrm{vii})$ (reminder that the
sorted bundles are by definition operator norm bounded on each fibre).
Now, the continuity of $2$-norm and trace operations on each sorted bundle
follows from axiom $\mathrm{(v)}$ of the definition
of the topological bundle. So by our equivalence of sorted bundles and
left ultrafunctors, we can deduce that every bundle in the definition of
\cite{evington2016locally}, defines a left ultrafunctor from
the compact Hausdorff base space to the ultracategory of tracial von Neumann
algebras.

\medskip\noindent\textbf{Every left ultrafunctor defines a topological bundle}
Let $X$ be a compact Hausdorff space, and suppose that we
have a left ultrafunctor $\mathcal{F}$ from $X$ to the ultracategory of
tracial von Neumann algebras (reminder that, by composing with
$\mathbf{GNS}$ we get a left ultrafunctor from $X$ to
$\mathsf{Hilb}$, which as we saw earlier defines a Hilbert bundle as defined
in \cite{Fell1977}); now we claim that the space
$\coprod _{x \in X}\mathcal{F}(x)$ equipped with the initial topology of
its inclusion in $\coprod _{x \in X}(\mathbf{GNS}\circ \mathcal{F})(x)$ is
a topological $\mathrm{W}^{*}$-bundle, where
$\coprod _{x \in X}(\mathbf{GNS}\circ \mathcal{F})(x)$ is regarded as bundle
of Hilbert spaces by the adequate topology resulting from the left ultrafunctor
bundle equivalence; to show that we need to show the topological bundle
axioms:

In this proof, we denote $2$-norm ball by $\mathsf{B}(a,r)$ (these can
be subsets of tracial von Neumann algebras or their Hilbert $2$-norm completion),
while if $B$ is a von Neumann algebra bundle, we denote by $B_{n}$ the
subset of $B$ of all elements with operator norm less than or equal to
$n$; we are also going to denote by $\sigma _{\mu}$ the left ultrastructure
of $\mathcal{F}$, $\sigma ^{\prime}_{\mu}$ the left ultrastructure of the
$\mathbf{GNS}$, and by $\sigma ''_{\mu}$ the left ultrastructure of
$\mathbf{GNS} \circ \mathcal{F}$.

Axioms $(\mathrm{i})$ and $(\mathrm{ii})$ are Hilbert bundle properties,
so they extend to subspaces (reminder that what we claim that
$\coprod _{x \in X}\mathcal{F}(x)$ is a W* bundle when equipped with the
initial topology of its inclusion by the Hilbert bundle whose fibres are
the GNS construction of each $\mathcal{F}(x)$).

Now, we turn to axiom $(\mathrm{iii})$, let us call 
$B=\coprod _{x \in X}\mathcal{F}(x)$ with projection map $\pi $, and \newline
$B^{\prime}=\coprod _{x \in X}(\mathbf{GNS}\circ \mathcal{F})(x)$ with projection
map $\pi ^{\prime}$, and let us call $\widehat{~}$ the inclusion map (so
the inclusion of $a$ is $\widehat{a}$ for example).

Let $\mu $ be an ultrafilter on $B$ and let $\widehat{\mu}$ be the ultrafilter
 $\widehat{~} \mu $ (which is the pushforward of $\mu $ by the map
 $\widehat{~}$), and suppose that $\mu $ converges to $a$; we want to
show that $*\mu $ converges to $a^{*}$ or equivalently
$\widehat{ * \mu}$ converges to $\widehat{a^{*}}$ (because we defined the
topology on $B$ to be the initial topology by the map that sends
$a \in B_{i}$ to $\widehat{a} \in \mathcal{H}_{\phi _{i}}$). By
definition, we have that
$\pi \widehat{ * \mu} =\pi ^{\prime}\widehat{ \mu}$ converges to
$\pi (a)$; now suppose that $\sigma _{\pi \mu}(a)=(b_{x})_{x \in X}$, first
let us remind how the map $\sigma ''_{\pi \mu}$ which corresponds to
the left ultrastructure of the composition of $\mathcal{F}$ with the $\mathbf{GNS}$
left ultrafunctor, is constructed. We define
$\sigma ''_{\pi \mu}= \sigma' _{\pi \mu} \circ \mathbf{GNS}(\sigma _{\pi \mu})$.
Then $\sigma ''_{\pi \mu}(\widehat{a})=(
\widehat{b_{x}})_{x \in X}$, now since
$\sigma _{\pi \mu}(a)=(b_{x})_{x \in X}$, and since
$\sigma _{\pi \mu}$ is a $*$ homomorphism, then
$\sigma _{\pi \mu}(a^{*})= (b^{*}_{x})_{x \in X}$, then we get that
$\sigma ''_{\pi \mu}(\widehat{a^{*}})=(\widehat{b^{*}_{x}})_{x \in X}$.
We want to show that for any $\epsilon >0$  $\coprod _{x \in X}\mathsf{B}(b^{*}_{x},\epsilon ) \in \widehat{*\mu}$,
but we already have that
$\coprod _{x \in X}\mathsf{B}(\widehat{b_{x}},\epsilon ) \in
\widehat{\mu}$ (by definition of the topology of Hilbert bundle coming
from left ultrafunctor) which implies that
$\coprod _{x \in X}\mathsf{B}(b_{x},\epsilon ) \in \mu $, and
this implies that
$\coprod _{x \in X}\mathsf{B}(b^{*}_{x},\epsilon ) \in *\mu $, which in
turn implies that
$\coprod _{x \in X}\mathsf{B}(\widehat{b^{*}_{x}},\epsilon ) \in
\widehat{*\mu}$.

Now we get into axiom $\mathrm{(iv)}$, we already have the continuity of
the $0$ selection by a property of Hilbert bundles; for the $1$ selection
we do the same proof as in the case of adjoint by noticing that
$\sigma ''_{\mu}(\widehat{1_{x}})=(\widehat{1_{y}})_{y \in X}$. Axiom
$\mathrm{(v'')}$ is true for any Hilbert bundle. Now we showed that
the axiom $\mathrm{(vi')}$ holds for Banach bundles  (axiom
$5^{*}$ of the definition of Banach bundles present in
\hyperref[definition_of_Baanch_bundles_VTEX1]{8.1}), and so it holds in our case.

For axiom $\mathrm{(vii)}$, we should first remind that the
$\| \ \|_{2}$ norm satisfies the following inequality
$\|ab\|_{2} \leq \|a\| \ \|b\|_{2}$ (because left multiplication on the
Hilbert space is a representation of the von Neumann algebra), also we
have $\|ab\|_{2} \leq \|b\| \ \|a\|_{2}$ (because
$\|ab\|_{2}=\|b^{*}a^{*}\|_{2}$, since the state is tracial). Now, let
$\mu $ be an ultrafilter on $B \times _{X} B$, such that there exists some
$n$ such that $B_{n} \times _{X} B_{n} \in \mu $, that converges to
$(a,b)$, which is equivalent to saying that $\widehat{\mu}$ converges to
$(\widehat{a},\widehat{b})$, where $\widehat{\mu}$ is the pushforward of
$\mu $ by the map $\langle \ \widehat{},\  \widehat{} \ \rangle $. Let
$\epsilon >0$ and suppose that
$\sigma _{\pi \pi _{1} \mu}(a)=(a_{x})_{x \in X}$, and that
$\sigma _{\pi \pi _{1} \mu}(b)=(b_{x})_{x \in X}$. Now using the fact that
$\sigma _{\pi \pi _{1} \mu}$ is a $*$ homomorphism and hence a contraction,
we get the existence of  a set
$X^{\prime} \in \pi \pi _{1} \mu $, such that for any $x \in X^{\prime}$, $\|a_{x}\|
\leq \|a\| +\epsilon '' \leq n +\epsilon ''$, where
$\epsilon ''$ is to be chosen later. Now using the definition of topology
associated to a left ultrafunctor, we have
$\coprod _{x \in X^{\prime}}\mathsf{B}(\widehat{b_{x}},\epsilon ^{\prime}) \in
\widehat{ \pi _{2} \mu}$, and
$\coprod _{x \in X^{\prime}}\mathsf{B}(\widehat{a_{x}},\epsilon ^{\prime}) \in
\widehat{ \pi _{1} \mu}$ for some $\epsilon ^{\prime}$ that we are going to
choose later. Now take any $f$ such that
$\widehat{f} \in \coprod _{x \in X^{\prime}}\mathsf{B}(\widehat{a_{x}},
\epsilon ^{\prime}) \bigcap \widehat{B_{n}}$, and any $g$ such that
$\widehat{g} \in \coprod _{x \in X^{\prime}}\mathsf{B}(\widehat{b_{x}},
\epsilon ^{\prime}) \bigcap \widehat{B_{n}}$, and such that $f,g$ are in the same
fibre; we have that
$\| a_{x}b_{x} -fg\|_{2} \leq \|a_{x}\| \  \|b_{x}-g\|_{2} + \|g\| \ \|a_{x}-f\|_{2}
\leq 2(n+ \epsilon '')\epsilon ^{\prime} $, so we choose
$\epsilon ^{\prime}$ and $\epsilon ''$, such that
$2(n +\epsilon '' )\epsilon ^{\prime} \leq \epsilon $. Now, we know that
$\coprod _{x \in X^{\prime}}\mathsf{B}(\widehat{a_{x}},\epsilon ^{\prime})
\bigcap \widehat{B_{n}} \times _{X} \coprod _{x \in X^{\prime}}\mathsf{B}(
\widehat{b_{x}} ,\epsilon ^{\prime}) \bigcap \widehat{B_{n}} \in
\widehat{\mu} $, and hence
$ \coprod _{x \in X} \mathsf{B}(\widehat{ab},\epsilon ) \in
\widehat{. \mu}$, and hence $\widehat{.\mu}$ converges to
$\widehat{ab}$, and thus $. \mu $ converges to $ab$.

Now, we turn to the last axiom $\mathrm{(viii)}$; let
$V=\widehat{B_{1}} \bigcap W$ be an open set in the subspace topology of
$\widehat{B_{1}}$ (remember that we are equipping $B_{1}$ with the subspace
topology of its image $\widehat{B_{1}}$), here $W$ is an open set in the
topology of the Hilbert bundle. Let $\mu $ be an ultrafilter on $X$ that
converges to some $x \in \pi (B_{1} \bigcap W)$, then there exists
$a \in (B_{1} \bigcap W)$ in the fibre over $x$; we may assume without
loss of generality that $\|a\| < 1$, why is that? Since $W$ is open, then
its intersection with any fibre is open in that fibre (in a Hilbert bundle
the subspace topology agrees with the metric topology on each fibre), then
there exists some $\delta $, such that if
$ \| a -a^{\prime}\|_{2} < \delta $, we have then $a^{\prime} \in W_{x}$, let us
take the element $a^{\prime} =(1-\delta ) a$ then
$\|a^{\prime}\| < \|a\| \leq 1 $ and also $a^{\prime} \in W_{x}$, so we can always
pick $\|a\|< 1$. Suppose that $\sigma _{\mu}(a)=(c_{x})_{x \in X}$, which
implies that
$\sigma ''_{\mu}(\widehat{a})=(\widehat{c_{x}})_{x \in X}$. By the fact
that $\sigma ''_{\mu} $ is contractive ($*$ homomorphism), we may deduce
that $\|(c_{x})\| \leq \|a\| < 1$.

Now, since $W$ is open there exists $U_{1} \in \mu $ and
$\epsilon >0$, such that
$\coprod _{x \in U}\mathsf{B}(\widehat{c_{x}},\epsilon ) \subseteq W $
(using \ref{third_lemma_VTEX1}). Now the fact that
$\|(c_{x})\| \leq \|a\|$ means that for any $\epsilon ^{\prime}$, there exists
a set $U_{2} \in \mu $, such that for any $y \in U_{2}$, we have
$\|c_{y}\|\leq \|a\| + \epsilon ^{\prime}$; we choose $\epsilon ^{\prime}$ such that
$\|a\|+ \epsilon ^{\prime} < 1$. We get that
$U_{1} \bigcap U_{2} \subseteq \pi (W \bigcap B_{1})$, this implies that
$\pi (W \bigcap B_{1}) \in \mu $ thus $\pi (W \bigcap B_{1})$ is open (by
the ultrafilter characterisation of open sets in topological spaces).

We conclude by stating the following theorems:

\begin{theorem}
\label{thm8.7}
Let $\mathcal{F}$ be a left ultrafunctor from $X$ to the category of tracial
von Neumann algebras, and let $(E_{n})_{n \in \mathbb{N}}$ be the corresponding
family of sorted bundles, then any sorted bundle $E_{n}$ is homeomorphic
onto its image by the $\mathbf{GNS}$ construction on each fibre inside
the Hilbert bundle.
\end{theorem}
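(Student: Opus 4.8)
The plan is to show that the map $j : E_n \to B'$ realising the $\mathbf{GNS}$ construction fibrewise, sending $a \in \ca{F}(x)$ with $\|a\| \leqslant n$ to $\widehat{a} \in \ca{H}_{\phi_x}$ (where $B' = \coprod_{x \in X}(\mathbf{GNS} \circ \ca{F})(x)$ is the associated Hilbert bundle), is a homeomorphism onto its image $j(E_n) \subseteq B'$. Note first that $j$ is a fibrewise isometry, since $\|\widehat{a}-\widehat{b}\|_{\ca{H}} = \|a-b\|_2$ by definition of the $\mathbf{GNS}$ norm, and is injective because the trace is faithful (so $\|a-b\|_2 = 0$ forces $a = b$); hence $j$ is a bijection of the underlying set of $E_n$ onto $j(E_n)$. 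Here $E_n$ carries the bundle topology produced from the left ultrafunctor $\ca{F}$ restricted to the sort of radius $n$ (Theorem \ref{topology}), while $j(E_n)$ carries the subspace topology from $B'$; the whole content of the statement is that these two topologies correspond under $j$.

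Since both the topology of $E_n$ and the topology of $B'$ are characterised by ultrafilter convergence (Theorem \ref{topology}), and the subspace topology on $j(E_n)$ is governed by convergence in $B'$ of pushed-forward ultrafilters, it suffices to prove the single equivalence: for every ultrafilter $\eta$ on $E_n$ and every $f \in E_n$,
\[
\eta \longrightarrow f \ \text{ in } E_n \quad\Longleftrightarrow\quad j\eta \longrightarrow jf \ \text{ in } B'.
\]
The forward direction yields continuity of $j$ and the backward direction continuity of $j^{-1}$ on the image, so proving both at once settles the theorem. Condition $C_1$ transfers trivially: the two bundles share the projection to $X$, and $j$ commutes with it, so $\pi\eta \to \pi f$ and $\pi' (j\eta) \to \pi'(jf)$ are the same assertion.

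For condition $C_2$ the essential input is the compatibility of the left ultrastructures. I would use the relation, already exploited in the construction of the Hilbert bundle, that the comparison maps $\sigma''$ of $\mathbf{GNS}\circ\ca{F}$ factor as $\mathbf{GNS}$ applied to the comparison maps $\sigma$ of $\ca{F}$ followed by the comparison maps $\sigma'$ of $\mathbf{GNS}$, together with the fact that $\sigma'$ sends $\widehat{(b_x)}$ to $(\widehat{b_x})$. Concretely, choosing a representative $(b_x)_{x \in X}$ of $\sigma_{\pi\eta}(f)$ inside the radius-$n$ sort, one obtains $\sigma''_{\pi\eta}(\widehat{f}) = (\widehat{b_x})_{x \in X}$, so the two generalised sections have representatives corresponding under $j$. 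By the fibrewise isometry,
\[
j^{-1}\!\left(\coprod_{x \in X} B_{\ca{H}}(\widehat{b_x},\epsilon)\right) = \coprod_{x \in X} \mathsf{B}(b_x,\epsilon),
\]
the ball on the right being taken inside the radius-$n$ fibre with its $2$-norm metric. Consequently $\coprod_{x} B_{\ca{H}}(\widehat{b_x},\epsilon) \in j\eta$ if and only if $\coprod_{x} \mathsf{B}(b_x,\epsilon) \in \eta$, which is exactly the assertion that $C_2$ holds for $(\eta,f)$ iff it holds for $(j\eta, jf)$ in $B'$. Combined with the $C_1$ equivalence this establishes the displayed equivalence, and since every ultrafilter on $j(E_n)$ is the pushforward of a unique ultrafilter on $E_n$, $j$ is a homeomorphism onto its image.

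The step I expect to demand the most care is the compatibility of the three comparison maps and the attendant correspondence of representatives: one must verify that a representative $(b_x)$ of $\sigma_{\pi\eta}(f)$ chosen within the radius-$n$ sort genuinely yields $(\widehat{b_x})$ as a representative of $\sigma''_{\pi\eta}(\widehat{f})$, and that the pushforward of $\eta$ along $j$ meshes correctly with the subspace topology on $j(E_n)$. Once this identification is in place, the remainder is the ultrafilter bookkeeping licensed by Theorem \ref{topology} together with the isometry identity for the $\mathbf{GNS}$ norm.
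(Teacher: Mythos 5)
Your proposal is correct and takes essentially the same approach as the paper: the paper's (much terser) proof rests precisely on the equivalence $\coprod_{x \in X}\mathsf{B}(c_x,\epsilon) \in \mu \iff \coprod_{x \in X}\mathsf{B}(\widehat{c_x},\epsilon) \in \widehat{\mu}$, which is exactly the crux of your condition-$C_2$ transfer. Your write-up simply makes explicit the supporting facts the paper leaves implicit — the fibrewise isometry of the $\mathbf{GNS}$ map, the factorisation $\sigma''_{\mu} = \sigma'_{\mu} \circ \sigma_{\mu}$, and the ultrafilter characterisation of both topologies.
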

\begin{proof}
The proof easily follows from the fact that for any ultrafilter
$\mu $ on the base space $X$, we have the following: if
$\sigma _{\mu}(b)=(c_{x})_{x \in X}$ then
$ \coprod _{x \in X}\mathsf{B}(c_{x},\epsilon ) \in \mu \iff \coprod _{x \in X}
\mathsf{B}(\widehat{c_{x}},\epsilon ) \in \widehat{\mu}$.
\end{proof}
%
\begin{theorem}
\label{thm8.8}
Let $E$ be a topological $\mathrm{W}^{*}$-bundle, then $E$ is homeomorphic
onto its image by the $\mathbf{GNS}$ on each fibre inside the Hilbert bundle.
\end{theorem}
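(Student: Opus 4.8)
The plan is to show that the fibrewise $\mathbf{GNS}$ inclusion $\widehat{\,\cdot\,}\colon E \to B'$, where $B' = \coprod_{x\in X}\mathcal H_{\phi_x}$ carries the Hilbert-bundle topology coming from the left ultrafunctor $\mathbf{GNS}\circ\ca F$, is a homeomorphism onto its image. Since the trace is faithful on each fibre (axiom $(2)$ of the abstract $\la{W}^*$ bundle gives $E(a^*a)=0\iff a=0$, hence each fibre trace $\tau_x$ is faithful), $\widehat{\,\cdot\,}$ is injective on each fibre and is fibre preserving, so it is injective. Because both the topology $\tau_E$ of $E$ and the topology of $B'$ are determined by their ultrafilter--convergence relations (via the characterisation ``$U$ open iff every ultrafilter converging into $U$ contains $U$'' used throughout), it suffices to prove, for every ultrafilter $\mu$ on $E$ and every $f\in E$, the equivalence
\[
\mu \to f \ \text{in}\ \tau_E \quad\Longleftrightarrow\quad \widehat{\mu}\to\widehat f \ \text{in}\ B',
\]
where $\widehat\mu$ is the pushforward of $\mu$ along $\widehat{\,\cdot\,}$; this yields continuity of $\widehat{\,\cdot\,}$ and of its inverse on the image simultaneously. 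The essential feature of the $\la{W}^*$ case is that, unlike for Banach bundles, a convergent $\mu$ need not concentrate on any ball $E_n$ (the trivial base $X=\{*\}$ already shows the $\|\cdot\|_2$ topology is not the inductive limit of the operator-norm balls), so the ambient Hilbert bundle must genuinely be used and one cannot reduce to the sorted statement that each $E_n$ embeds homeomorphically.

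I would record two elementary inputs. First, the transfer of balls: for any family $(c_x)_{x}$ with $c_x\in E_x$ and any $\epsilon>0$,
\[
\coprod_{x} \mathsf B(c_x,\epsilon)\in\mu \quad\Longleftrightarrow\quad \coprod_{x} \mathsf B(\widehat{c_x},\epsilon)\in\widehat\mu,
\]
which is immediate from $\widehat{\,\cdot\,}$ being a fibrewise $2$-norm isometry, giving $\widehat{\,\cdot\,}^{-1}\bigl(\coprod_x\mathsf B(\widehat{c_x},\epsilon)\bigr)=\coprod_x\mathsf B(c_x,\epsilon)$, together with the defining property $S\in\widehat\mu\iff\widehat{\,\cdot\,}^{-1}(S)\in\mu$ of the pushforward (this is exactly the one-line observation behind the preceding theorem on sorted bundles). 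Second, the description of $\tau_E$ by sections: since $X$ is compact Hausdorff the bundle has enough continuous bounded sections (Douady--Dal Soglio-H\'erault, as in the Banach case), so through $f$ there is a continuous section $\gamma$ with $\gamma(\pi f)=f$, the sets $\coprod_{x\in U}\mathsf B(\gamma(x),\epsilon)$ for $U\ni\pi f$ open and $\epsilon>0$ form a neighbourhood basis at $f$, and by the section formula for the left ultrastructure one has $\sigma_{\pi\mu}(f)=(\gamma(x))_x$ whenever $\pi\mu\to\pi f$; consequently $\sigma''_{\pi\mu}(\widehat f)=(\widehat{\gamma(x)})_x$.

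With these in hand the two directions are short. For $(\Rightarrow)$, suppose $\mu\to f$ in $\tau_E$; then every basic neighbourhood $\coprod_{x\in U}\mathsf B(\gamma(x),\epsilon)$ lies in $\mu$, whence $\pi\mu\to\pi f$ (condition $C_1$ for $B'$) and $\coprod_{x}\mathsf B(\gamma(x),\epsilon)\in\mu$ for all $\epsilon$; applying the ball transfer with $c_x=\gamma(x)$ gives $\coprod_x\mathsf B(\widehat{\gamma(x)},\epsilon)\in\widehat\mu$, which is precisely condition $C_2$ for the Hilbert bundle relative to $\sigma''_{\pi\mu}(\widehat f)$, so $\widehat\mu\to\widehat f$. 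For $(\Leftarrow)$, suppose $\widehat\mu\to\widehat f$ in $B'$; then $\pi\mu\to\pi f$ and $\coprod_x\mathsf B(\widehat{\gamma(x)},\epsilon)\in\widehat\mu$ for all $\epsilon$, so by the transfer $\coprod_x\mathsf B(\gamma(x),\epsilon)\in\mu$; intersecting with $\pi^{-1}(U)\in\mu$ shows that every basic neighbourhood $\coprod_{x\in U}\mathsf B(\gamma(x),\epsilon)$ of $f$ lies in $\mu$, i.e. $\mu\to f$ in $\tau_E$. This proves the convergence equivalence, hence that $\widehat{\,\cdot\,}$ is a homeomorphism onto its image.

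The main obstacle is the step that legitimises working fibre-by-fibre at the limit point while keeping $\mu$ global: identifying $\sigma_{\pi\mu}(f)$ with the germ $(\gamma(x))_x$ of an actual continuous section through $f$, which relies on having enough sections over the compact Hausdorff base and on the section formula for the left ultrastructure. Everything else is the bookkeeping of pushing the ball condition $C_2$ back and forth across the fibrewise isometry $\widehat{\,\cdot\,}$; the delicate conceptual point, already flagged by the trivial-base example, is precisely that this cannot be reduced to the bounded (sorted) statement, so the Hilbert bundle must serve as the ambient space.
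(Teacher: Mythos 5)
Your overall route is sound and genuinely different from the paper's, but it has one concrete gap: the justification for ``enough continuous bounded sections.'' You invoke Douady--Dal Soglio-H\'erault ``as in the Banach case,'' but that theorem is about Banach bundles, and a topological $\la{W}^*$ bundle is not one: its fibres are von Neumann algebras topologized by the $2$-norm, and they are \emph{not} complete in that norm (their completions are the GNS Hilbert spaces); nor are the sorted bundles $E_n$ Banach bundles (their fibres are operator-norm balls, not linear spaces). So the fullness theorem does not apply to $E$, and since the existence of a continuous bounded section $\gamma$ through an arbitrary point is the load-bearing input of your whole argument (it feeds both the section formula for $\sigma_{\pi\mu}$ and the neighbourhood-basis description of $\tau_E$), this must be repaired. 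The fix is exactly what the paper's own proof uses: by the Evington--Pennig equivalence between topological and algebraic $\la{W}^*$ bundles, $E$ is the bundle of fibres $A/\ca{I}_x$ where $A$ is the $\mathrm{C}^*$-algebra of continuous bounded sections, so every fibre element is $a(x)$ for some $a \in A$. A second, smaller soft spot: your claim that the sets $\coprod_{x\in U}\mathsf{B}(\gamma(x),\epsilon)$ form a neighbourhood basis at $f$ in $\tau_E$ is not a quotable Banach-bundle fact either; it needs to be rederived from the $\la{W}^*$ axioms (openness of the $\epsilon$-tubes follows from continuity of $\|\cdot\|_2$ and of fibrewise translation by $\gamma$, i.e.\ axioms (i), (ii), (v); the ``every open set contains a tube'' half follows from axiom (vi$'$) together with lemma \ref{important lemma}). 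This is routine and parallels arguments already in the paper, but it is a claim about a different axiom system and should be proved, not cited.

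With those two repairs your proof is correct, and it is structured quite differently from the paper's. The paper argues with nets and never identifies $\sigma_{\pi\mu}(f)$ explicitly: given $b_\alpha \to b$ it forms the differences $a(\pi(b_\alpha)) - b_\alpha$, uses axiom (vi) on the $\la{W}^*$ side and axiom $5/5^*$ of Banach bundles on the Hilbert side to control them, handles the section net $a(\pi(b_\alpha))$ via the preceding theorem (each sorted bundle $E_n$ embeds homeomorphically) plus operator-norm boundedness, and concludes by continuity of addition in both bundles. You instead work with ultrafilters and the convergence characterizations $C_1$/$C_2$ of both topologies, identify $\sigma_{\pi\mu}(f) = (\gamma(x))_x$ via the section formula, and transport the ball condition across the fibrewise isometry; this makes the two directions symmetric one-liners and replaces the paper's algebraic bookkeeping (addition, subtraction, axiom (vi)) with the topology-characterization machinery of sections \ref{3} and \ref{4}. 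What your approach buys is uniformity: it is visibly the same argument as the sorted-bundle theorem, with the unboundedness handled only through the boundedness of the single section $\gamma$. What the paper's approach buys is economy of hypotheses: it needs no neighbourhood-basis lemma for $\tau_E$ and no section formula for $\sigma$, only the sorted homeomorphism theorem and the bundle axioms themselves.
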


\begin{proof}
Let $E$ be a topological $W^{*}$ bundle over $X$, the Hilbert bundle is
constructed by showing that the set of sorted bundles $(E_{n})$ corresponds
to a left ultrafunctor from $X$ to the category of tracial von Neumann
algebras, and then by composing with the $\mathbf{GNS}$ left ultrafunctor
and obtaining  a left ultrafunctor from $X$ to
$\mathsf{Hilb}$, which corresponds to a bundle of Hilbert spaces.

Let $(b_{\alpha})$ be a net on $E$ converging to $b$, let us call
$\pi (b)=x$. By the equivalence established by
\cite{evington2016locally}, this bundle can be regarded as a certain inclusion
of $C(X)$ inside the centre of $A$, the $\mathrm{C}^{*}$-algebra of bounded,
continuous sections over the topological bundle, and each fibre in this
case is isomorphic to the quotient of $A$ by $\mathcal{I}_{x}$; All this
implies that there exists a bounded, continuous section $a$, such that
$a(x)=b$. Now the net $(a(\pi (b_{\alpha}))-b_{\alpha})$ converges to
$0_{x}$, this implies that $\pi (a(\pi (b_{\alpha}))-b_{\alpha})$ converges
to $x$, and the norm of $\|(a(\pi (b_{\alpha}))-b_{\alpha}\|_{2}$ converges
to $0$ by axiom $(\mathrm{vi})$ of the topological bundle
axioms; since the $\mathbf{GNS}$ construction on each fibre is $2$-norm
isometric, we get by the axiom $5$ of the definition of Banach bundles,
that $(\widehat{a(\pi (b_{\alpha}))}-\widehat{b_{\alpha}})$ converges to
$\widehat{0_{x}}$. Now, we show that
$(\widehat{a(\pi (b_{\alpha}))})$ converges to $\widehat{b}$; we know that
for sorted bundles the inclusion is a homeomorphism, so it's enough to
show that the net $(a(\pi (b_{\alpha}))$ is operator norm bounded, but
this net is operator norm bounded by $\|a\|$, so we have that.

On the other hand, suppose that $\widehat{(b_{\alpha})}$ converges to
$\widehat{b}$; we wish to show that $(b_{\alpha})$ converges to $b$, in
order to do that we summon a continuous section $a$ to do the reverse of
the last argument, we know that
$(\widehat{a(\pi (b_{\alpha}))}) =
(\widehat{a(\pi ^{\prime}(\widehat{b_{\alpha}}))})$ (here $\pi ^{\prime} $ is the
projection map on the Hilbert bundle) converges to
$\widehat{a(x)}=\widehat{b}$, hence we may deduce that
$(\widehat{a(\pi (b_{\alpha}))}-\widehat{b_{\alpha}})$ converges to
$\widehat{0_{x}}$.

Now $ \  \ \widehat{}$ (the inclusion map)   is a $2$-norm isometry on each fibre and
$\pi (a(\pi (b_{\alpha}))-b_{\alpha})=\pi ^{\prime}(
\widehat{a(\pi (b_{\alpha}))}-\widehat{b_{\alpha}})$ converges to
$x$, this implies that $(a(\pi (b_{\alpha}))-b_{\alpha})$ converges to
$0_{x}$. Now we use the fact that $(a(\pi (b_{\alpha})))$ is operator norm
bounded to deduce that $(a(\pi (b_{\alpha}))$ converges to $a(x)=b$ (since
$(\widehat{a(\pi (b_{\alpha}))})$ converges to
$\widehat{a(x)}=\widehat{b}$, and the bounded bundles $E_{n}$ are homeomorphic
onto their images by the map $\widehat{}$), and hence
$(b_{\alpha})$ converges to $b$.
\end{proof}

So the last two theorems indicate that the two processes we showed first
are inverses of each other, as we have explained in the proof description.
We finish by stating a nice theorem/conclusion to this section:

\begin{theorem}%
\label{thm8.9}
Let $E$ be a topological $\mathrm{W}^{*}$-bundle, then there exists a Hilbert
bundle whose fibres are the corresponding $ \mathbf{GNS}$ constructions
for each trace, and such that the subspace topology of the Hilbert bundle,
of the subspace which equals on each fibre the image of the tracial von
Neumann algebra by the $\mathbf{GNS}$ construction, is homeomorphic to
the $\mathrm{W}^{*}$-bundle.
\end{theorem}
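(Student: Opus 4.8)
The statement is essentially a repackaging of the two preceding theorems together with the constructions of this section, so the plan is to assemble these pieces rather than to prove anything genuinely new. First I would start from the topological $\mathrm{W}^*$ bundle $E$ and invoke the result established above that its family of sorted operator-norm balls $(E_n)_{n \geq 1}$ forms a bundle of models of the continuous theory of tracial von Neumann algebras; by the bundle/left-ultrafunctor equivalence (Theorem \ref{third theorem}) this yields a left ultrafunctor $\ca{F} : X \to \ca{M}$, where $\ca{M}$ is the ultracategory of tracial von Neumann algebras, acting on objects by $\ca{F}(x) = E_x$.

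Next I would compose $\ca{F}$ with the $\mathbf{GNS}$ left ultrafunctor, shown earlier in this section to be a left ultrafunctor to Hilbert spaces with isometries, to obtain a left ultrafunctor $\mathbf{GNS} \circ \ca{F} : X \to \cat{Hilb}$. Applying the functor $\ca{L}$ in the continuous Hilbert bundle case produces a Hilbert bundle $\ca{L}(\mathbf{GNS} \circ \ca{F})$ whose fibre over each $x$ is exactly $(\mathbf{GNS} \circ \ca{F})(x) = \ca{H}_{\phi_x}$, the GNS Hilbert space of the trace $\phi_x$ on the fibre $E_x$. This is precisely the Hilbert bundle asserted to exist in the statement.

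Finally, I would consider inside this Hilbert bundle the subspace $\coprod_{x \in X}\widehat{E_x}$, consisting fibrewise of the image of the tracial von Neumann algebra $E_x$ under its GNS map and equipped with the subspace topology. The claim that this subspace is homeomorphic to $E$ is exactly the content of the immediately preceding theorem, which established that a topological $\mathrm{W}^*$ bundle is homeomorphic onto its image under the fibrewise $\mathbf{GNS}$ construction inside the Hilbert bundle. Concatenating the three steps above delivers the statement.

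The genuine content — and hence the main obstacle — lies not in this assembly but in that preceding theorem, whose proof rests on matching convergence in $E$ with convergence of GNS images in the Hilbert bundle. The delicate points there are: the use of the Evington--Pennig description of $E$ via the $\la{C}^*$-algebra $A$ of bounded continuous sections, in order to produce, for any $b$ in a fibre, a bounded continuous section $a$ with $a(\pi(b)) = b$; the fact that $\mathbf{GNS}$ is fibrewise a $2$-norm isometry, so that $\|\cdot\|_2$-convergence is both preserved and reflected; and the operator-norm boundedness of the section $a$ (by $\|a\|$), which lets one pass through the already-established homeomorphism between each sorted ball $E_n$ and its GNS image. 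Once these are in hand, the convergence axiom (axiom $(\mathrm{vi})$ of the topological $\mathrm{W}^*$ bundle, equivalently axiom $5$ of the Banach-bundle definition, applied to $a(\pi(b_\alpha)) - b_\alpha \to 0_x$) closes the argument in both directions.
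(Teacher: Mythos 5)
Your proposal is correct and matches the paper's treatment: the paper states this theorem as a concluding summary, with the Hilbert bundle obtained by composing the left ultrafunctor associated to $(E_n)_{n\geq 1}$ with the $\mathbf{GNS}$ left ultrafunctor and applying the bundle/left-ultrafunctor equivalence, and the homeomorphism onto the fibrewise GNS image being exactly the immediately preceding theorem. You also correctly locate where the real work lies (the continuous-section argument via the Evington--Pennig description, the fibrewise $2$-norm isometry, and operator-norm boundedness), which is precisely the content the paper proves beforehand rather than inside this theorem.
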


\subsection{Note regarding the examples}
\label{sec8.5}

The reader may notice that in the definition of bundles existing in the
literature, we can always define a category of bundles by defining a morphism
of bundles between $(E^{\prime},Y,\pi ^{\prime})$ and $(E,X,\pi )$ to be continuous
maps $f$ and $f^{\prime}$ such that the following diagram commutes:
\begin{equation*}
\begin{tikzcd}
{E{'}} && E
\\
\\
Y && X \arrow["{\pi ^{\prime}}", from=1-1, to=3-1]
\arrow["{f^{\prime}}", from=3-1, to=3-3] \arrow["\pi "', from=1-3, to=3-3]
\arrow["f"', from=1-1, to=1-3]
\end{tikzcd}
\end{equation*}

Now, it's not difficult to see that the equivalence we showed is functorial.
We already know that bundles of models form a topological stack over the
category of compact Hausdorff spaces, with Cartesian lifts given by pullback
in $\mathsf{Top}$ sortwise, this allows us to extend the result to all
bundles above; indeed the argument is easy for Banach, Hilbert and
$\mathrm{C}^{*}$ bundles, since we know that in that case
$E =\varinjlim E_{n}$, where $(E_{n})_{n \geq 1}$ are the sorted bundles,
and taking pullbacks commutes with colimits in the category
$\mathsf{Top}$; for $\mathrm{W}^{*}$-bundles, the argument is more subtle
as usual since it's not true in general $E =\varinjlim E_{n}$.

Our goal is to show the following theorem:

\begin{theorem}
\label{thm8.10}
Suppose that $(E,X,\pi )$ is a $\mathrm{W}^{*}$-bundle and let
$f: \ Y \rightarrow X$ be a map of compact Hausdorff topological spaces,
then the following are true:%
\begin{itemize}
\item The pullback along $f$ is a $\mathrm{W}^{*}$-bundle.
\item This pullback is the left ultrafunctor associated to
the composition of the left ultrafunctor corresponding to the bundle with
$f$ (regarded as a left ultrafunctor), in other words, it's the Cartesian
lift over $f$.
\end{itemize}
\end{theorem}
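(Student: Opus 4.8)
The plan is to avoid manipulating the total space $E$ directly. Since for a $\mathrm{W}^*$ bundle $E$ is \emph{not} the directed colimit $\varinjlim E_n$ of its operator-norm balls, the ``pullback commutes with colimits'' argument used for Banach, Hilbert and $\mathrm{C}^*$ bundles is unavailable. Instead I would realise both the given bundle and its pullback as subspaces of the ambient Hilbert bundle, where the colimit description \emph{is} valid. Concretely, let $\ca{F}: X \to \text{(tracial von Neumann algebras)}$ be a left ultrafunctor whose associated $\mathrm{W}^*$ bundle $\ca{G}(\ca{F})$ is homeomorphic to $(E,X,\pi)$, as furnished by the equivalence of this section; then, up to this homeomorphism, $E$ carries the subspace topology of its fibrewise image inside the continuous Hilbert bundle $H := \coprod_{x \in X}(\mathbf{GNS}\circ\ca{F})(x)$, so that the inclusion $\iota: E \hookrightarrow H$ is a topological embedding over $X$.

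For the first bullet I would first handle the Hilbert bundle, where the easy argument applies: a Hilbert (hence Banach-type) bundle satisfies $H = \varinjlim H_n$, and since pullback commutes with directed colimits in $\cat{Top}$, the functoriality statement of Section~\ref{7} identifies $f^*H$ (the pullback in $\cat{Top}$ along $f: Y \to X$) with the continuous Hilbert bundle of $(\mathbf{GNS}\circ\ca{F})\circ f = \mathbf{GNS}\circ(\ca{F}\circ f)$. Next I would invoke the elementary fact that base change in $\cat{Top}$ preserves embeddings: as a set $f^*E = \coprod_{y\in Y}\ca{F}(f(y))$, and pulling $\iota$ back along $f$ yields an embedding $f^*\iota: f^*E \hookrightarrow f^*H$ (indeed $f^*E = f^*H \cap (Y\times E)$ inside $Y\times H$, so it inherits the subspace topology). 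Consequently $f^*E$, equipped with its subspace topology in $f^*H$, is by definition exactly $\ca{G}(\ca{F}\circ f)$ — the topological $\mathrm{W}^*$ bundle produced by applying the inverse construction $\ca{G}$ to the composed left ultrafunctor $\ca{F}\circ f$. This proves the pullback is a $\mathrm{W}^*$ bundle.

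For the second bullet I would appeal to the Cartesian equivalence of Section~\ref{7}: the functor $\ca{L}$ (which on each fibre is the construction $\ca{G}$ of this section) extends to a Cartesian equivalence between $\la{Bun}$ and $\cat{CompHaus}_{\ca{M}}$, under which the Cartesian lift of $f$ is, by construction, the object corresponding to the composite left ultrafunctor $\ca{F}\circ f$; under the $\mathrm{W}^*$-bundle equivalence of this section this object is precisely $\ca{G}(\ca{F}\circ f)$. Since the previous paragraph identifies $f^*E = \ca{G}(\ca{F}\circ f)$ on the nose, the pullback along $f$ in $\cat{Top}$ \emph{is} this Cartesian lift, as required.

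The main obstacle is confined to the first bullet and is exactly the failure of $E = \varinjlim E_n$ for $\mathrm{W}^*$ bundles. The crux is to verify carefully that the subspace topology on $f^*E$ inherited from $f^*H$ coincides with the intrinsic $\mathrm{W}^*$-bundle topology of $\ca{G}(\ca{F}\circ f)$. This rests on two compatibilities — that pullback preserves the embedding $E\hookrightarrow H$, and that $f^*H$ is genuinely the Hilbert bundle of the composed functor — both of which reduce to facts already established for Hilbert bundles together with the behaviour of fibre products with subspaces in $\cat{Top}$; everything else (that each sortwise pullback $f^*E_n$ is a bundle of the theory of tracial von Neumann algebras) follows immediately from the general fibration result.
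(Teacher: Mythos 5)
Your proposal is correct and takes essentially the same route as the paper: both arguments embed the $\mathrm{W}^*$ bundle fibrewise into the ambient Hilbert bundle via $\mathbf{GNS}$, identify the pullback of that Hilbert bundle with the Hilbert bundle of $\mathbf{GNS}\circ(\ca{F}\circ f)$ (where the colimit description is valid), and then use the compatibility of pullbacks with subspace topologies in $\cat{Top}$ to conclude that $f^*E$ is exactly the $\mathrm{W}^*$ bundle $\ca{G}(\ca{F}\circ f)$, which yields both bullets at once. Your treatment is somewhat more explicit than the paper's (in particular in spelling out the embedding $f^*E = f^*H \cap (Y\times E)$ and the appeal to the Cartesian equivalence for the second bullet), but the decomposition and the key facts invoked are the same.
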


\begin{proof}
Let $F$ be the left ultrafunctor corresponding to the bundle $E$; we know
that the Hilbert bundle resulting from composition
$\mathbf{GNS } \circ F \circ f$ is the pullback along $f$ of the Hilbert
bundle resulting from the composition $\mathbf{GNS } \circ F$, let us call
this resulting bundle $H^{\prime}$. Let $E^{\prime}$ be the $\mathrm{W}^{*}$-bundle
corresponding to composition $F \circ f$, then
$E^{\prime}$ is homeomorphic onto its image in $H^{\prime}$, and $E^{\prime}$ as a set
is the pullback of $E$ (the justification of this statement is that at
the level of each sort $E^{\prime}_{n}$ is the pullback of $E_{n}$ as we have
shown), but since pullbacks commute with taking subspaces, then
$E^{\prime}$ is the pullback of $E$ in $\mathsf{Top}$.
\end{proof}

\section{Application: another proof of Lurie's result}
\label{9}

Now we use this already developed theory to find another proof of Lurie's
result of equivalence between \emph{sheaves of sets} (where the site is
$\mathcal{O} (X)$ where $X$ is compact Hausdorff) and \emph{left ultrafunctors}
from $X$ to $\mathsf{{Set}}$:

\begin{theorem}
\label{thm9.1}
Let $X$ be a compact Hausdorff space, then there is an equivalence of categories
between $\mathrm{Sh}(X)$ and the category
$\mathrm{Lult}(X, \mathsf{Set})$.
\end{theorem}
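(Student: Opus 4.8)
The plan is to realise the stated equivalence as a composite of three equivalences, exactly as advertised in the introduction:
\[
\mathrm{Sh}(\ca{O}(X)) \;\simeq\; \cat{Et}(X) \;\simeq\; \mathrm{Bun}(\cat{D},X) \;\simeq\; \mathrm{Leftultrafunctor}(X,\cat{Set}),
\]
where $\cat{Et}(X)$ is the category of étale bundles (local homeomorphisms) over $X$ and $\cat{D}$ is the category of discrete metric spaces. First I would fix $k$ (say $k=1$) and identify $\cat{Set}$, as an ultracategory, with $\cat{D}$: the category of $k$-bounded complete metric spaces whose distance takes only the values $0$ and $k$. Contractions between such spaces are exactly arbitrary functions (the inequality $d(f(x),f(y))\le d(x,y)$ is vacuous when the source distance is $k$ and automatic when it is $0$), so the underlying-set functor is an equivalence of ordinary categories. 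It remains an equivalence of \emph{ultra}categories because in the ultraproduct of \S\ref{2} one has $(f_s)\sim(g_s)$ iff $\{s : d(f_s,g_s)<\epsilon\}\in\mu$ for all $\epsilon>0$, which for the $\{0,k\}$-valued metric reduces to $\{s : f_s=g_s\}\in\mu$ --- precisely the set-theoretic identification --- and the resulting quotient again carries the $\{0,k\}$-valued metric. Finally $\cat{D}$ is the category of models of the one-sorted continuous theory axiomatised by $\sup_{x,y}\big(d(x,y)\wedge(k-d(x,y))\big)=0$, so Theorem \ref{third theorem} yields $\mathrm{Leftultrafunctor}(X,\cat{D})\simeq\mathrm{Bun}(\cat{D},X)$.

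The heart of the argument, and the step I expect to be the main obstacle, is the middle equivalence $\mathrm{Bun}(\cat{D},X)\simeq\cat{Et}(X)$. I would show that the three bundle axioms of Definition \ref{bundle definition}, specialised to $\{0,k\}$-valued fibres, are exactly the conditions defining a local homeomorphism. Axiom (2) gives that $\pi$ is continuous and open. For a discrete distance, upper semi-continuity of $d:E\times_X E\to[0,k]$ (Axiom (1)) is equivalent to openness of the preimage $d^{-1}([0,b))$ for $0<b\le k$, which equals the diagonal $\Delta_E\subseteq E\times_X E$; hence Axiom (1) says precisely that $\Delta_E$ is open. Given an open diagonal together with $\pi$ open and continuous, each $e\in E$ has an open neighbourhood $U$ with $U\times_X U\subseteq\Delta_E$, i.e. $\pi|_U$ is injective, so $\pi|_U:U\to\pi(U)$ is a continuous open bijection onto an open set, hence a homeomorphism: $\pi$ is étale. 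Conversely every local homeomorphism has discrete fibres, an open diagonal, and is open and continuous. Axiom (3) is automatic here: taking $\epsilon<k$ one computes $V_\epsilon=V$ (since $d(g,h)<\epsilon$ forces $g=h$), so $V\subseteq_{\epsilon} W$ holds as soon as $V\subseteq W$. On morphisms, a map of bundles is a continuous map over $X$ that is a fibrewise contraction; between discrete fibres this is any fibrewise function, hence just a continuous map over $X$, matching morphisms of étale spaces. Since the metric on each fibre is uniquely determined, this is in fact an isomorphism of categories $\mathrm{Bun}(\cat{D},X)\cong\cat{Et}(X)$.

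To finish I would invoke the classical equivalence $\cat{Et}(X)\simeq\mathrm{Sh}(\ca{O}(X))$ between local homeomorphisms over $X$ and sheaves on $X$ (sending an étale bundle to its sheaf of continuous local sections, with quasi-inverse the espace-étalé construction). Composing the three equivalences yields the desired $\mathrm{Sh}(X)\simeq\mathrm{Leftultrafunctor}(X,\cat{Set})$, and by tracing the functors one sees that the left ultrafunctor attached to a sheaf is obtained by applying the inverse functor $\ca{R}$ to its espace étalé, giving a construction genuinely different from Lurie's functorial one. The only delicate point in the whole chain is the translation of Axiom (1) into openness of the diagonal and the verification that this, together with openness of $\pi$, forces local injectivity; everything else is either classical or a direct specialisation of the theorems already proved.
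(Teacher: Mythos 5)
Your proposal is correct and takes essentially the same route as the paper: the same chain $\mathrm{Sh}(\ca{O}(X)) \simeq \text{étale bundles over } X \simeq \mathrm{Bun}(\text{discrete metric spaces},X) \simeq \mathrm{Left\ ultrafunctors}(X,\cat{Set})$, with the same continuous axiomatisation of discrete metric spaces, the same translation of Axiom (1) into openness of the diagonal, the same trivialisation of Axiom (3) via $V_{\epsilon}=V$ for $\epsilon<k$, and the same observation that $\cat{Set}$ and discrete metric spaces are equivalent as ultracategories. The only difference is presentational: you spell out the standard argument that an open diagonal together with an open continuous $\pi$ forces local injectivity, where the paper invokes this characterisation of étale maps as known.
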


Before proving this result, notice that the category $\mathsf{{Set}}$ is equivalent
to the category of discrete metric spaces. Let us axiomatise discrete metric
spaces using continuous model theory: The language of discrete
metric spaces is mono-sorted with an upper bound to distance the constant
$1$, with no function symbols, and no relation symbols (unless you want
to count the distance as a relation symbol). If we call $S$ the single
sort, we get that the set $\mathbb{T}$ of axioms for discrete metric spaces
contains only one sentence
$\mathrm{Sup}_{x \in S} \mathrm{Sup}_{y \in S} \mathrm{min} (d(x,y),|1
- d(x,y)|)$ which translates to the fact that the metric is discrete. We
note also that the equivalence between discrete metric spaces axiomatised
this way and $\mathsf{{Set}}$ is an equivalence of ultracategories (preserves
the ultraproduct).

Now we want to show that the bundle $E_{S}$ is an \'{e}tale space over
$X$, remember that this is equivalent to saying that the diagonal map:
\begin{equation*}
\begin{tikzcd}
{E_{S}} && { E_{S} \times _{X} E_{S}}
\arrow["\Delta ", from=1-1, to=1-3]
\end{tikzcd}
\end{equation*}
is open (in the case where the projection $\pi $ onto the base is open,
which we have by definition of bundles of continuous theories), which is
equivalent to saying that the diagonal of $E_{S} \times _{X} E_{S}$ is
open, since the diagonal map is an embedding. It follows from upper semi-continuity
of the distance function on $E_{S}$, that for any $\epsilon > 0$ the set
$\{ (v,v^{\prime}) \in E\times _{X} E \mid d_{\pi (v)}(v,v^{\prime}) <\epsilon \
\}$ is open, so if we take $\epsilon <1$, we get the diagonal of
$E \times _{X}E$ (since the metric on each fibre is discrete).

On the other hand, suppose that we have an \'{e}tale space
$(E,X,\pi )$ then let us prove that, in this case, we get a bundle of discrete
metric spaces: The three axioms are easily verifiable: for axiom
$(1)$ let $(e,f) \in E \times _{X} E$, the case where $e \neq f$ is trivial,
thus let us suppose that $e=f$, in that case, we know that there exists
a neighbourhood $U$ of $e$, such that $\pi (U) \simeq U$. Now take the
neighbourhood $U \times _{X} U$ of $(e,e)$, for any
$(g,h) \in U \times _{X} U$, we have that $g=h$ and hence $d(g,h)=0$, and
hence distance is upper semi-continuous. For axiom $(2)$ $\pi $ is continuous
by definition and also it's known that $\pi $ is open~(\cite[II.6 Proposition 1]{maclane2012sheaves}).
For axiom $(3)$ let $e \in E$, and let $W$ be a neighbourhood of $e$; since
$e \in E$ there exists some neighbourhood $U$ of $e$, such that
$\pi (U) \simeq U$ (via $\pi |_{U}$); now take $V=U \bigcap W$ and any
$0<\epsilon <1$, we claim that $V_{\epsilon}=V$ and it's easy to see why.

These maps extend to morphisms, since morphisms of \'{e}tale spaces over
$X$, and maps of bundles of discrete metric spaces are defined the same
way.

So, we get an equivalence between \'{e}tale spaces and bundles of sets
(seen as discrete metric spaces). Now we already know that \'{e}tale spaces
are sheaves of sets on $X$, on the other hand, we also know that bundles
of discrete metric spaces are equivalent to left ultrafunctors from
$X$ to the category of discrete metric spaces, which is equivalent to that
of sets.

\begin{note*}
\normalfont Let $E$ be an \'{e}tale space over $X$, then we have a good
description of the left ultrastructure of the associated left ultrafunctor,
since by definition the \'{e}tale space has enough local sections. So if
$\mu $ converges to $x \in X$, then
$\sigma _{\mu}(a) =(f(y))_{y \in U}$, where $f$ is the local section
that hits $a \in E_{x}$.
\end{note*}

\section{Bundles of pointed metric spaces}
\label{10}

There is a natural notion of ultraproduct of complete pointed metric spaces,
constructed in a similar fashion to the ultraproduct of bounded metric
spaces as follows: suppose $(M_{i},p_{i})_{i \in I}$ is a family of pointed
metric spaces, and suppose that $\mu $ is an ultrafilter on $I$; we define
$\int _{I} M_{i} d\mu $ to be the space of all bounded sequences (with
respect to the point of each space), quotiented by the equivalence relation
$(x_{i}) \sim (y_{i})$ iff $\lim _{\mu}d_{i}(x_{i},y_{i}) =0$, and by taking
as point for the space the equivalence class of $(p_{i})_{i \in I}$. The
same argument as in the bounded case shows that such space is complete,
this allows the definition of an ultracategory of metric spaces with contractions.

\begin{definition}
\label{defn10.1}
We say that $(E,X,\pi )$ where $X$ and $E$ are topological spaces and
$\pi : \  E \rightarrow X$, such that for every $x$, $\pi^{-1}(x)$ is a complete pointed metric space, defines a bundle of complete pointed metric spaces if it satisfies the following set of axioms:
\begin{itemize}
\item Axiom(1): The global distance function is upper semi-continuous.
\item Axiom(2): $\pi $ is continuous and open.
\item Axiom(3): For every open set $W$ and every $f \in W$ there exists
an  open neighbourhood $V$ of $f$ and $\epsilon >0$, such that
$V \subseteq _{\epsilon} W$.
\item Axiom(4): The point selection function $x \mapsto p(x)$ is continuous.
\label{definition2}
\end{itemize}
\end{definition}
Let $\mathsf{Point}_{1}$ denote the category of complete pointed metric spaces with
contractions.
%
\begin{theorem}%
\label{thm10.1}
Let $X$ be a compact Hausdorff space, then there exists an equivalence
of categories of bundles of pointed complete metric spaces over $X$, and
left ultrafunctors from $X$ to $\mathsf{Point}_{1}$.
\end{theorem}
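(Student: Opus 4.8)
The plan is to run the same two-functor argument that established Theorem \ref{first theorem}, replacing $\mathsf{k\text{-}CompMet}$ by $\cat{Point}_1$ throughout and adding the bookkeeping needed to handle the base point and the loss of the global bound $k$. First I would define $\mathcal{L}$ exactly as in Section \ref{tau}: given a left ultrafunctor $\mathcal{F}\colon X\to\cat{Point}_1$, endow $E=\coprod_{x\in X}\mathcal{F}(x)$ with the topology characterised by the convergence relation of conditions $C_1$ and $C_2$ of Theorem \ref{topology}. The verification that this relation satisfies Wyler's axioms UQ1 and UQ4 (Theorem \ref{Wyler}) is insensitive to the bound $k$ --- only the balls $B(b_x,\epsilon)$ and the Fubini description of $\Delta$ enter --- so it carries over verbatim, and likewise the proofs that Axioms (1)--(3) of Definition \ref{definition2} hold, since these use only upper semicontinuity of the distance and the $\epsilon$-thin basis of Lemma \ref{thin lemma}.

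The genuinely new point for $\mathcal{L}$ is Axiom (4). Here I would use that the comparison maps $\sigma_\mu$ are morphisms of $\cat{Point}_1$ and hence preserve base points: if $\mu\to x_0$ then $\sigma_\mu(p(x_0))=(p(y))_{y\in X}$, the base point of the ultraproduct. Continuity of the section $x\mapsto p(x)$ then follows immediately, since for $\mu\to x_0$ we have $\pi(p\mu)=\mu\to x_0$ (condition $C_1$) and $\{x': p(x')\in B(p(x'),\epsilon)\}=X\in\mu$, which is condition $C_2$ for the representative $(p(y))_{y}$.

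Next I would construct the inverse functor $\mathcal{R}$ as in Section \ref{sigma}: for a pointed bundle $E$ and $\mu\to x$, define $\sigma_\mu(f)$ as the limit of the Cauchy filter generated by the sets $A_W$. The main obstacle lies precisely here --- with $k$ removed I must check that this construction lands inside the pointed ultraproduct, which consists only of families \emph{bounded with respect to the base point}. The mechanism is Axiom (4): the base point supplies a canonical global continuous section $p$, so by the section description of the left ultrastructure (the final theorem of Section \ref{4}) one has $\sigma_\mu(p(x))=(p(y))_{y}$. Since the $A_{L_\epsilon}$ for $\epsilon$-thin $L$ force the generated filter to be Cauchy (the Cauchy-filter theorem of Section \ref{sigma} transfers unchanged), its limit $(b^f_y)$ exists in the complete space of all classes, and the contraction estimate $d\big((b^f_y),(p(y))\big)\le d(f,p(x))<\infty$ --- obtained exactly as in the proof that $\sigma_\mu$ is a contraction --- shows $(b^f_y)$ is bounded, hence a bona fide element of the pointed ultraproduct. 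The same estimate shows $\sigma_\mu$ fixes the base point and is $1$-Lipschitz, so it is a $\cat{Point}_1$-morphism, and the verification of the left-ultrastructure Axioms (1)--(2) is then identical to Section \ref{sigma}, every intermediate family being base-point bounded for the same reason.

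Finally, I would assemble the adjunction $\mathrm{Hom}(\mathcal{L}(\mathcal{F}),E)\simeq\mathrm{Hom}(\mathcal{F},\mathcal{R}(E))$ and prove that unit and counit are isomorphisms exactly as in Section \ref{4}; none of those arguments used the bound, only upper semicontinuity, the $\epsilon$-thin basis and Lemma \ref{important lemma}. I expect the only place requiring real care to be the boundedness bookkeeping described above: each time a representative $(b_y)$ of an ultraproduct class is produced it must be checked to be base-point bounded, but in every case this follows from the contraction property together with $\sigma_\mu(p(x))=(p(y))$, so the global bound $k$ is systematically replaced by the fibrewise finite distance to the base point.
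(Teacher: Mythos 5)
Your proposal is correct and takes essentially the same approach as the paper: the paper in fact omits the proof entirely, saying only that one repeats the argument of Sections \ref{3} and \ref{4} with $\mathsf{k\text{-}CompMet}$ replaced by $\cat{Point}_1$, which is precisely what you do. Your two additions --- deducing Axiom (4) from the fact that the $\sigma_\mu$ are base-point-preserving, and using the contraction estimate against the canonical section $x \mapsto p(x)$ to show the Cauchy-filter limits land in the base-point-bounded part of the ultraproduct --- are exactly the bookkeeping the paper's ``repeat the steps'' glosses over, and both arguments are sound.
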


We are not going to do the proof of this theorem. The proof is just repeating
the steps of the equivalence between left ultrafunctors from $X$ to
$\mathsf{k} \text{-} \mathrm{CompMet}$ and bundles of metric spaces bounded
by $k$ as was done in \ref{3}.

This equivalence is not part of the examples section, because we don't
know of any axiomatisation of pointed complete metric spaces in continuous
model theory. Indeed, if we try to imitate the Banach space case, and define
a language with a sort intended to be interpreted as the closed ball of
a radius $n$ for every $n \in \mathbb{N}$, and a constant symbol for the
point of the space ($p \in B_{1}$), then dissections of pointed metric
spaces are not an elementary class in this language; to justify this take
the following example: for every $i \in \mathbb{N}$, let
$X_{i}=\{p_{i}, x_{i}\}$ and let
$d(p_{i}, x_{i}) =1 +1/i$; the dissection of such space is
${X_{i}}_{1}= \{p_{i}\}$ and ${X_{i}}_{n}= \{p_{i},x_{i}\}$ for
$n >1$. Let $\mu $ be a non-principal ultrafilter on $\mathbb{N}$, then
$(\int {X_{i}}_{1} d\mu ) = \{ (p_{i})_{i \in \mathbb{N}}\}$, but if
$(\int {X_{i}}_{n} d\mu )_{n \in \mathbb{N}}$ was a ``model of the continuous
theory of pointed complete spaces'', then $(\int {X_{i}}_{1} d\mu )$ should
be $\{ (p_{i})_{i \in \mathbb{N}},(x_{i})_{i \in \mathbb{N}}\}$. So
we may deduce that dissections of pointed spaces are not
axiomatisable in this language.

One final thing to note is that when we showed that Banach (Hilbert,
$\mathrm{C}^{*}$, etc.) bundles are equivalent to left ultrafunctors from
their base spaces to their respective categories, we have
used an approach based on bundles of complete bounded metric spaces as
our main building block. Another viable approach could have been using
bundles of pointed metric spaces as the main ingredient. Indeed, some of
the results may have been easier to show, but our approach has the advantage
of having a notion of bundles that works for any continuous theory.

\section*{Acknowledgment}
\label{sec:ack}
\addcontentsline{toc}{section}{\nameref{sec:ack}}
 This work was done as part of my Doctorate research. I would like to express gratitude to my thesis supervisor Simon Henry for his valuable guidance, expertise and feedback. This work was partially supported by the Natural Sciences and Engineering Research Council of Canada (NSERC), funding reference number RGPIN-2020-067 awarded to Simon Henry, and by the Ontario Ministry of Colleges and Universities through the Ontario Graduate Scholarship and the QEII Graduate Scholarship in Science and Technology.

\bibliographystyle{alphaabbr}
{\footnotesize
\bibliography{bib}}
\end{document}